\numberwithin{equation}{section}
\theoremstyle{plain} 
\newtheorem{theorem}{Theorem}[section] 
\newtheorem{lemma}[theorem]{Lemma} 
\newtheorem{corollary}[theorem]{Corollary} 
\newtheorem{proposition}[theorem]{Proposition}
\theoremstyle{definition} 
\newtheorem{definition}[theorem]{Definition} 
\newtheorem{remark}[theorem]{Remark}
\newtheorem{example}[theorem]{Example}
\DeclareMathOperator{\rk}{rk}
\DeclareMathOperator{\Span}{Span}
\DeclareMathOperator{\Ran}{ran}
\DeclareMathOperator{\Real}{Re}
\DeclareMathOperator{\Imag}{Im}
\DeclareMathOperator{\sgn}{sgn}
\newcommand{\RR}{\mathbb{R}}
\newcommand{\CC}{\mathbb{C}}
\newcommand{\NN}{\mathbb{N}}
\newcommand{\RRn}{\mathbb{R}^n}
\newcommand{\CCn}{\mathbb{C}^n}
\newcommand{\VV}{\mathbb{V}}
\newcommand{\HC}{\mathfrak{H}}
\newcommand{\LC}{\mathfrak{L}}
\newcommand{\ol}{\overline}
\newcommand{\ul}{\underline}
\renewcommand{\vec}[1]{\mathbf{#1}}
\newcommand{\dua}[2]{\langle #1, #2 \rangle}
\newcommand{\bdua}[2]{\Big\langle #1, #2 \Big\rangle}
\newcommand{\vdua}[2]{\dua{#1}{#2}}
\newcommand{\hdua}[2]{\dua{#1}{#2}}
\renewcommand{\epsilon}{\varepsilon}
\renewcommand{\kappa}{\varkappa}
\renewcommand{\phi}{\varphi}
\renewcommand{\setminus}{\smallsetminus}
\newcommand{\wt}[1]{\widetilde{#1}}
\newcommand{\wh}[1]{\widehat{#1}}
\newcommand{\grad}{\bm{\nabla}}
\newcommand{\lapl}{\triangle}
\newcommand{\ham}{\mathcal{H}}
\newcommand{\opA}{\mathcal{A}}
\newcommand{\VC}{\mathfrak{V}}
\newcommand{\ene}{\mathcal{E}}
\newcommand{\opK}{\mathcal{K}}
\newcommand{\Fock}{\mathfrak{F}}
\newcommand{\Hil}{\mathfrak{h}}
\newcommand{\vx}{\mathbf{x}}
\newcommand{\rmd}{\mathrm{d}}
\newcommand{\KP}{\mathrm{KP}}
\newcommand{\cc}{\mathrm{CC}}
\newcommand{\full}{\mathrm{full}}
\newcommand{\HF}{\mathrm{HF}}
\newcommand{\LI}{\mathrm{L}}
\newcommand{\vertiii}[1]{{\left\vert\kern-0.25ex\left\vert\kern-0.25ex\left\vert #1
   \right\vert\kern-0.25ex\right\vert\kern-0.25ex\right\vert}}
\DeclareSymbolFont{bbold}{U}{bbold}{m}{n}
\DeclareSymbolFontAlphabet{\mathbbold}{bbold}
\newcommand{\iden}{\mathbbold{1}}
\begin{document}

\title{Coupled-Cluster Theory Revisited\\\vspace{1em} Part II: Analysis of the single-reference Coupled-Cluster equations}

\author{Mih\'aly A. Csirik}\address{Hylleraas Centre for Quantum Molecular Sciences, Department of Chemistry, University of Oslo, P.O. Box 1033 Blindern, N-0315 Oslo, Norway (\email{m.a.csirik@kjemi.uio.no})} 
\author{Andre Laestadius}\address{Department of Computer Science, Oslo Metropolitan University, P.O. Box 4 St. Olavs plass, NO-0130 Oslo, Norway (\email{andre.laestadius@oslomet.no})}\sameaddress{1}

\date{Received June 22, 2022. Accepted November 30, 2022.}


\begin{abstract}
In a series of two articles, we propose a comprehensive mathematical framework for Coupled-Cluster-type methods. 
In this second part, we analyze the nonlinear equations of the single-reference Coupled-Cluster method using topological degree theory.
We establish existence results and qualitative information about the solutions of these equations that also sheds light of the numerically observed behavior.
In particular, we compute the topological index of the zeros of the single-reference Coupled-Cluster mapping.
For the truncated Coupled-Cluster method, we derive an energy error bound for approximate eigenstates of the Schr\"odinger equation.
\end{abstract}

\keywords{quantum mechanics, many-body problem, quantum chemistry, electronic structure, coupled-cluster theory, nonlinear analysis, topological degree, Brouwer degree}
 
\subjclass{81V55, 81-08, 81-10, 47H11}

\maketitle

\section{Introduction}

The present article is concerned with the analysis of the single-reference Coupled-Cluster (SRCC) equations and its truncated variants.
It is known that the \emph{truncated} CC equations have a large number of  solutions, some of which exhibit unphysical behavior.
Recall that the truncated CC equations are \emph{not} equivalent to the truncated CI equations (see Section 2.3 of Part I), hence, there is no obvious connection with the Schr\"odinger equation in general---truncated CC solutions can be very far from the desired Full CI solutions.
Also, the SRCC method is typically unreliable for treating degenerate states.

Since the early days of the CC method, researchers in quantum chemistry have been interested in understanding the complicated behavior of the (truncated) CC equations \cite{vzivkovic1977existence,zivkovic1978analytic,paldus1984degeneracy,JankowskiKowalsiJankowski1994,kowalski1998full,jankowski1999correspondence,piecuch2000search,jankowski1999physical2,jankowski1999physical4,kossoski2021excited}. This line of investigation mainly consisted in somehow ``connecting'' the truncated CC solutions to the untruncated ones,
thereby identifying which truncated solutions can be considered ``physical'' and which should be discarded as ``unphysical''. The comprehensive investigation
by K. Kowalski and P. Piecuch \cite{piecuch2000search} classifies solutions based on a certain homotopy. While the universality of this approach is unclear (see \cref{homormrk} below),
\cite{piecuch2000search} demonstrates without doubt that the solutions of the truncated CC equations exhibit rather complicated behavior, which signals the
need for a deeper analytical investigation. Since the CC equations are a system of quartic polynomial equations, the complete set of solutions can be 
computed by numerical means (Kowalski and Piecuch used the HOMPACK software \cite{watson1987algorithm}). However, in our analysis we view the CC equations
as an abstract nonlinear equation and disregard their polynomial structure (this route was also taken by the seminal \cite{schneider2009analysis} and its follow-up works, discussed below). The main reason for this is that system is \emph{very large}, and this seems prohibitive 
for the application of algebraic approaches.

In this article, we analyze the CC equations and the homotopy of Kowalski and Piecuch using a standard tool of nonlinear analysis, topological degree theory. 
In order to do this, we need to restrict ourselves to the finite-dimensional case (\cref{topdegfin}). The present article differs in flavor from the previous 
mathematical investigations, which where more standard ``numerical analysis'' approaches. However, a closer look at the qualitative properties of the
(truncated) CC equations seems necessary, that, in addition to the ground-state also describes excited states.

\subsection{Previous work}\label{secprev}

Similarly to Part I, our approach is based on the analysis of the single-reference CC method by R. Schneider \cite{schneider2009analysis}. 
In that seminal work, a thorough description of the basic building blocks of the method, namely excitation- and cluster operators, and their algebraic and functional-analytic properties are given.
Under certain assumptions,
the CC equations (a nonlinear system of equations consisting of quartic polynomials) are formulated in terms of a locally strongly monotone and locally Lipschitz operator
defined on an appropriate space. Under further hypotheses, this establishes local existence and uniqueness of a (Galerkin projected) ground-state solution of the equation 
and moreover \emph{quasi-optimality} of the projected CC solution (Theorem 5.8 ibid.).
 Perhaps the most important contribution of \cite{schneider2009analysis} is a quadratic energy error estimate (Theorem 6.3 ibid.).
It is worth emphasizing that Schneider's analysis is a local one: \emph{``[...] experience indicates that, in general, it cannot be expected that strong monotonicity always holds, or the constants might be extremely bad. Therefore we expect to get local existence results at best.''} (ibid. p. 30)

Schneider's original analysis was carried out in the finite-dimensional case only. This was remedied in two subsequent articles
by T. Rohwedder \cite{rohwedder2013continuous} and then by both of them \cite{rohwedder2013error}. The article \cite{rohwedder2013continuous} establishes important
technical tools and rigorously proves that the untruncated CC problem is equivalent to the Full CI problem in the infinite-dimensional case, i.e. to essentially the Schr\"odinger equation.
Using the said tools, the subsequent paper \cite{rohwedder2013error} also establishes local uniqueness and existence of a solution to the truncated CC equations
in a neighborhood of the untruncated CC solution, under certain assumptions. Further, \cite{rohwedder2013error} also extends the energy error estimates of \cite{schneider2009analysis}
to the infinite-dimensional case.

This line of investigation was continued by S.~Kvaal and A.~L. in \cite{laestadius2018analysis} for the Extended CC (ECC) method 
based on the ``bivariational principle''~\cite{Arponen1983}. In this case, local strong monotonicity for the ECC mapping can be established so that quasi-optimality follows along similar lines as previously done by Schneider and Rohwedder. In the ECC theory, the traditional CC theory is recovered as a special case.

Furthermore, the local strong monotonicity-based analysis was applied to a variant of the traditional CC method by F. M. Faulstich et~al. \cite{faulstich2019analysis},
namely to the Tailored Coupled-Cluster (TCC) method. The TCC approach splits the computational task of evaluating the ground
state energy into two parts: solving for the statically correlated wave function on a complete active space, 
and then on top of that 
accounting for the dynamical correlation using the CC method. Numerical investigations
based on \cite{faulstich2019analysis} were conducted in~\cite{faulstich2019numerical}.

Finally, we mention the survey article \cite{laestadius2019coupled} for more details
on the use of local strong monotonicity-based methods in the analysis of CC methods.

\subsection{Outline}

While the knowledge of the notations and results of Part I is not strictly necessary for this Part II, some of the results in \cref{secsrcc}
do employ \emph{excitation graphs} to a certain extent, introduced in Section 3.2 of Part I.
 To make the present work as self-contained as possible, we define all the necessary 
concepts without the use of Part I; in other words, in the ``traditional'' second-quantized way.

In \cref{secbg}, we describe the setting of the quantum-mechanical problems the CC theory is aimed at. 
In \cref{sectopdeg} we state a few results that we use from finite-dimensional topological degree theory.
 
The analysis of the single-reference CC (SRCC) method begins in \cref{secsrcc}.
Basic properties of the SRCC mapping are discussed in \cref{ccbasic}. After this, the local properties of the SRCC mapping,
such as strong monotonicity and topological index in both the non-degenerate-, and in the degenerate case are considered in \cref{secsrccop}.
We also look at the complex SRCC mapping in \cref{secsrcccmplx}.

In \cref{seccont}, an important class of homotopies is defined that can be used for proving the existence of a solution to the truncated SRCC mappings.
In \cref{seckp} a homotopy is considered that was invented specifically to connect CC methods of different truncation levels. We prove 
an existence result and calculate the topological index of the homotopy.
Finally, we derive an energy error estimate in \cref{enesec} using the results of \cref{kpapp}.

\section{Background}\label{secbg}

The usual notation $B(a,r)$ is used for the open ball of radius $r$ and center $a$, also $B^*(a,r)=B(a,r)\setminus\{a\}$
denotes the punctured ball. 

 The spectrum of a linear operator $A$ is written $\sigma(A)$,
the elements of its discrete spectrum as $\ene_n(A)$, where $n=0,1,2,\ldots$, if $A$ is bounded from below.
We use the usual notation $[A,B]=AB-BA$ for the commutator. The (conjugate) transpose of $A$ is denoted as $A^\dag$. For normed spaces $V$ and $W$, the symbol $\mathcal{L}(V,W)$ denotes normed
space of \emph{bounded} linear mappings $V\to W$ endowed with the operator norm $\|\cdot\|_{\mathcal{L}(V,W)}$. Furthermore, $V^*$ denotes the (continuous) dual space. As usual, $[a,b]$ denotes the (closed) line segment between $a,b\in V$.

\subsection{Second quantization}

In this section, we first briefly review the framework of second quantization. The notations mainly follow \cite{lewin2011geometric}
and \cite{solovej2007many}. Next, we formulate the Schr\"odinger Hamiltonian in second quantization. 
Finally, we introduce the excitation-, and cluster operators and cluster amplitude spaces through the use of 
creation-, and annihilation operators.

\subsubsection{Fermionic Fock space} For simplicity, and because the Hamiltonian we will be considering is spin-independent, we neglect spin and consider
$\Hil=L^2(\RR^3)$ as the \emph{one-particle Hilbert space}. The tensor powers of $\Hil$ are denoted by $\Hil^N=\bigotimes^N\Hil$ and antisymmetric powers of $\Hil$ are denoted by 
$\Hil_a^N=\bigwedge^N\Hil$ for any $N\ge 0$,
where we introduced the notation $\Hil^0=\CC$ and $\Hil_a^0=\CC$. The \emph{fermionic Fock space} is then the Hilbert space
given by the infinite direct sum
$\Fock_a:=\bigoplus_{N\ge 0} \Hil_a^N=\Hil_a^0\oplus\Hil_a^1\oplus\Hil_a^2\oplus\ldots$,
in other words
$$
\Fock_a=\Big\{ \Psi=(\psi^0,\psi^1,\ldots) : \psi^N\in\Hil_a^N\,(N\ge 0),\; \|\Psi\|_\Fock^2:=\sum_{N\ge 0} \|\psi^N\|_{\Hil^N}^2<\infty \Big\},
$$
endowed with the inner product $\dua{\Psi_1}{\Psi_2}_{\Fock}=\sum_{N\ge 0} \dua{\psi_1^N}{\psi_2^N}_{\Hil^N}$,
for any $\Psi_1=(\psi_1^0,\psi_1^1,\ldots)\in\Fock_a$ and $\Psi_2=(\psi_2^0,\psi_2^1,\ldots)\in\Fock_a$.
The \emph{vacuum state} is the distinguished element $\Omega=(1,0,\ldots)\in\Fock_a$. The space $\Hil_a^N$ can be identified as a subspace 
of $\Fock_a$, in this context it is called the \emph{$N$-particle sector of $\Fock$}.
For $\Psi_1\in \Hil_a^{N_1}$ and $\Psi_2\in\Hil_a^{N_2}$ 
define $\Psi_1\wedge\Psi_2\in\Hil_a^{N_1+N_2}$ via
$$
\Psi_1\wedge\Psi_2(\vx_1,\ldots,\vx_{N_1+N_2})=C_{N_1,N_2} \sum_{\sigma\in\mathfrak{S}_{N_1+N_2}} 
(\sgn\,\sigma) \Psi_1(\vx_{\sigma(1)},\ldots,\vx_{\sigma(N_1)}) \Psi_2(\vx_{\sigma(N_1+1)}, \ldots, \vx_{\sigma(N_1+N_2)}),
$$
with the normalization constant $C_{N_1,N_2}=(N_1!N_2!(N_1+N_2)!)^{-1/2}$. Here, $\mathfrak{S}_n$ denotes the
permutation group of $\{1,\ldots,n\}$ and $\sgn\sigma$ the sign of the permutation $\sigma$. 

Let $\{\phi_p\}_{p=1}^\infty\subset\Hil$ be an orthonormal basis and define the \emph{Slater determinant}
$\Phi_\alpha:=\phi_{\alpha_1}\wedge\ldots\wedge\phi_{\alpha_N}\in\Hil_a^N$ for every multiindex $\alpha\in\NN_0^N$ with $\alpha_1<\ldots<\alpha_N$.
Then $\{\Phi_\alpha\}_{\alpha}$ is an orthonormal basis of $\Hil_a^N$.

\subsubsection{Creation-, and annihilation operators}\label{creannihsec}

For fixed $\phi\in\Hil$, the (fermionic) \emph{creation operator} $a^\dag(\phi) : \Fock_a\to\Fock_a$ is defined as $a^\dag(\phi)\Psi=\phi\wedge\Psi$
for any $\Psi\in\Hil_a^N$ and extended boundedly and linearly to the whole space. Also, for fixed $\phi\in\Hil$,
define the (fermionic) \emph{annihilation operator} $a(\phi) : \Fock_a\to\Fock_a$ as the adjoint of $a^\dag(\phi)$, i.e.
$\dua{\Psi_1}{a^\dag(\phi)\Psi_2}_{\Fock}=\dua{a(\phi)\Psi_1}{\Psi_2}_{\Fock}$ for all $\Psi_1,\Psi_2\in\Fock_a$.
It is an easy calculation to show that $(a(\phi)\Psi)(\vx_1,\ldots,\vx_{N-1})=N^{-1/2}\int \phi(\vec{x})\Psi(\vx,\vx_1,\ldots,\vx_{N-1})\,\rmd \vx$
for all $\Psi\in\Hil_a^N$ ($N\ge 1$) and $a(\phi)\Omega=0$. The \emph{canonical anticommutation relations (CAR)}
$$
\begin{aligned}
a^\dag(\phi) a(\phi') + a(\phi') a^\dag(\phi) &= \dua{\phi}{\phi'}_{\Hil} I_{\Fock_a}, \\
a^\dag(\phi) a^\dag(\phi') + a^\dag(\phi')a^\dag(\phi) &= 0,\\
a(\phi) a(\phi') + a(\phi')a(\phi) &=0,
\end{aligned}
$$
hold true for any $\phi,\phi'\in\Hil$, where $I_{\Fock_a}:\Fock_a\to\Fock_a$ denotes the identity map on $\Fock_a$. Since both $a^\dag(\phi)$ and $a(\phi)$ are linear and bounded in $\phi$, it is enough to specify them on an orthonormal
basis $\{\phi_p\}_{p\ge 1}\subset\Hil$, i.e. $a_p^\dag:=a^\dag(\phi_p)$ and $a_p:=a(\phi_p)$ ($p\ge 1$) completely determines the families
$\{a^\dag(\phi)\}_{\phi\in\Hil}$ and $\{a(\phi)\}_{\phi\in\Hil}$.
It is then clear that any Slater determinant $\Phi_\alpha\in\Hil_a^N$ can be ``created'' from the vacuum state as
$\Phi_\alpha= a_{\alpha_1}^\dag \cdots a_{\alpha_N}^\dag \Omega$.

\subsubsection{First-, and second quantization of one-, and two-body operators}

Let $h:\Hil\to\Hil$ be a linear operator. Define its \emph{first quantization} as $\sum_{1\le j\le N} h_j : \Hil_a^N\to\Hil_a^N$
where 
$$
h_j=I\otimes\ldots\otimes I \otimes \underbrace{h}_{\text{$j$th}}\otimes I\otimes \ldots\otimes I.
$$
Here, $\iden$ denotes the identity on $\Hil$.
The \emph{second quantization} of $h$ is defined as $0\oplus \bigoplus_{N\ge 1} \sum_{1\le j\le N} h_j : \Fock_a\to\Fock_a$.

Analogously, one can define the first quantization of a two-body operator $W:\Hil_a^2 \to \Hil_a^2$ as $\sum_{1\le i,j\le N} W_{ij} : \Hil_a^N\to\Hil_a^N$,
where
$$
W_{ij}=I\otimes\ldots\otimes I \otimes \underbrace{W}_{\text{$i$th}} \otimes I \ldots I \otimes \underbrace{W}_{\text{$j$th}}\otimes I\otimes \ldots\otimes I.
$$
The second quantization of $W$ is
$0\oplus 0\oplus \bigoplus_{N\ge 2} \sum_{1\le i<j\le N} W_{ij} : \Fock_a\to\Fock_a$.
Both the one-, and the two-body operators can be expressed using creation-, and annihilation operators as follows.
\begin{theorem}\label{sqonebody}
Let $\{\phi_p\}_{p\ge 1}\subset\Hil$ be an orthonormal basis. Then
$$
0\oplus \bigoplus_{N\ge 1} \sum_{1\le j\le N} h_j =\sum_{p,q\ge 1} \dua{h\phi_p}{\phi_q} a_p^\dag a_q.
$$
and
$$
0\oplus 0\oplus \bigoplus_{N\ge 2} \sum_{1\le i<j\le N} W_{ij} =\sum_{\substack{1\le p\le q\\ 1\le r\le s}} \dua{W\phi_p\wedge\phi_q}{\phi_r\wedge\phi_s} a_p^\dag a_q^\dag a_r a_s.
$$
\end{theorem}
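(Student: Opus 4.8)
The plan is to prove both identities by reducing them to statements about matrix elements between Slater determinants. Both sides of each identity are linear operators that preserve the particle number (every monomial $a_p^\dag a_q$ and $a_p^\dag a_q^\dag a_r a_s$ conserves $N$), so it suffices to check that the two sides agree on each $N$-particle sector $\Hil_a^N$; and since $\{\Phi_\alpha\}_\alpha$ is an orthonormal basis of $\Hil_a^N$, it is enough to compute the action of each side on an arbitrary Slater determinant $\Phi_\alpha$ (equivalently, to compare all their matrix elements in the basis $\{\Phi_\alpha\}_\alpha$). One preliminary point: on a fixed $\Phi_\alpha$ only finitely many of the annihilation monomials fail to vanish, so the right-hand sides define unambiguous linear operators on the dense subspace of finite-particle-number states --- which is all that is needed for the algebraic identity --- and, when $h$ and $W$ are bounded, they restrict to bounded operators on each $\Hil_a^N$; in the unbounded case one reads the identities on a suitable dense domain.

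For the one-body formula, the key lemma is the action of an annihilation operator on a Slater determinant. Writing $\Phi_\alpha = a_{\alpha_1}^\dag\cdots a_{\alpha_N}^\dag\Omega$ and commuting $a_q$ to the right using the canonical anticommutation relations, one obtains $a_q\Phi_\alpha = 0$ unless $q = \alpha_k$ for some $k$, and $a_{\alpha_k}\Phi_\alpha = (-1)^{k-1}\Phi_{\alpha\setminus\{\alpha_k\}}$, the Slater determinant with the orbital $\phi_{\alpha_k}$ removed. Hence $a_p^\dag a_q\Phi_\alpha$ vanishes when $\phi_q$ is unoccupied, returns $\Phi_\alpha$ when $p=q$ is occupied, vanishes by the Pauli principle when $p\neq q$ is already occupied, and otherwise equals $\pm$ the Slater determinant obtained from $\Phi_\alpha$ by replacing $\phi_q$ with $\phi_p$ in the corresponding slot. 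Multiplying by $\dua{h\phi_p}{\phi_q}$ and summing over $p$ reconstructs the action of $h$ in the tensor factor that carried $\phi_q$, so that $\sum_{p,q}\dua{h\phi_p}{\phi_q}a_p^\dag a_q\Phi_\alpha = \sum_{j}\phi_{\alpha_1}\wedge\cdots\wedge h\phi_{\alpha_j}\wedge\cdots\wedge\phi_{\alpha_N}$, which is precisely $\big(\sum_j h_j\big)\Phi_\alpha$. Phrased through matrix elements this is exactly the content of the Slater--Condon rules: the diagonal term produces $\sum_j\dua{h\phi_{\alpha_j}}{\phi_{\alpha_j}}$, a single-orbital difference produces $\pm\dua{h\phi_p}{\phi_q}$ with a sign coming from the reordering to ordered multiindices, and two or more differences give zero.

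The two-body identity is handled the same way. One first computes $a_r a_s\Phi_\alpha$: it vanishes unless $r<s$ with both $\phi_r,\phi_s$ occupied, and then equals $(-1)^{m}\Phi_{\alpha\setminus\{r,s\}}$ with $m$ an integer fixed by the positions of $r$ and $s$ in $\alpha$; applying $a_p^\dag a_q^\dag$ reinserts a pair of orbitals, and the restrictions $p\le q$ and $r\le s$ in the sum are precisely what is needed to match the restriction $i<j$ in $\sum_{i<j}W_{ij}$ once the antisymmetry of $W\phi_p\wedge\phi_q$ and of $\phi_r\wedge\phi_s$ is used. Resumming the coefficients $\dua{W\phi_p\wedge\phi_q}{\phi_r\wedge\phi_s}$ against the surviving terms then reconstructs $W$ acting on the unordered pair of tensor factors that carried $\phi_r,\phi_s$, and the combinatorial factors produced by the antisymmetrization in the definition of $\wedge$ --- in particular the constants $C_{N_1,N_2}$ --- cancel against those produced by the anticommutator reorderings.

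I expect the only genuine difficulty to be bookkeeping rather than anything conceptual: keeping track of the fermionic signs that arise from permuting creation and annihilation operators past one another, and --- most delicately, in the two-body case --- reconciling the normalization of the wedge product, the restricted summations ($p\le q$ versus summing over all $p,q$), and the antisymmetry of $W$ so that each one- or two-electron excitation is counted exactly once with the correct amplitude. The cleanest route is to settle the one-body case first, where these issues appear in their simplest form, and then repeat the argument in the two-body case, where it is merely more laborious.
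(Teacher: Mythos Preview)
The paper does not actually supply a proof of this theorem: it is stated as a standard fact from second quantization (the surrounding text cites \cite{lewin2011geometric} and \cite{solovej2007many} for the framework), and the narrative proceeds directly to the Schr\"odinger Hamiltonian without argument. So there is no ``paper's own proof'' to compare against.

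That said, your proposal is the canonical argument and is correct in outline. Verifying the identity on Slater determinants via the CAR relations, and recognizing the resulting matrix elements as the Slater--Condon rules, is exactly how this is done in the references the paper cites. Two small remarks on the bookkeeping you flag: in the two-body sum the boundary cases $p=q$ and $r=s$ contribute nothing (both $a_p^\dag a_p^\dag$ and $\phi_p\wedge\phi_p$ vanish), so the restriction $p\le q$, $r\le s$ is effectively $p<q$, $r<s$, which simplifies the comparison with $\sum_{i<j}W_{ij}$; and the normalization constant $C_{N_1,N_2}$ in the paper's definition of $\wedge$ is nonstandard, so when you carry out the sign and factor accounting you should check it against that convention rather than the more common $1/(N_1+N_2)!$ one.
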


\subsection{Schr\"odinger Hamiltonian in second-quantized form}\label{hamsec}

For convenience and in accordance to Part I, we define the $N$-particle fermionic spaces
$$
\LC^2:=\Hil_a^N=\bigwedge^N L^2(\RR^{3}), \quad \text{and} \quad \HC^1:=\LC^2 \cap H^1(\RR^{3N}),
$$
endowed with usual inner products $\dua{\cdot}{\cdot}$ and $\dua{\cdot}{\cdot}_{\HC^1}$, and norms $\|\cdot\|$ and $\|\cdot\|_{\HC^1}$, see Section 2.1 in Part I. 
We also set $\HC^2:=\LC^2 \cap H^2(\RR^{3N})$, and $\HC^{-1}:=(\HC^1)^*$ as usual. Henceforth, we employ the same convention as explained after Remark 2.1 in Part I.

Recall (see Section 2.2 of Part I) the Schr\"odinger Hamiltonian $\ham:\LC^2\to\LC^2$, which is the self-adjoint operator
$$
\ham=\sum_{1\le i\le N} \Bigg[-\frac{1}{2}\lapl_{\vec{x}_i} + V(\vec{x}_i)\Bigg] + \sum_{1\le i<j\le N} w(\vec{x}_i-\vec{x}_j),
$$
with domain $D(\ham)=\HC^2$, where $V,w:\RR^3\to\RR$ are Kato class potentials: $V,w\in L^{3/2}(\RR^3)+L^\infty_\epsilon(\RR^3)$ and $w$ is even.
The quadratic form corresponding to $\ham$ is denoted as $\ene(\Psi)=\dua{\ham\Psi}{\Psi}$, and has form domain $Q(\ene)=\HC^1$.
Recall that there is a constant $M>0$, such that
\begin{equation}\label{Hbound}
\dua{\ham\Psi}{\Phi}\le M\|\Psi\|_{\HC^1}\|\Phi\|_{\HC^1}
\end{equation}
for all $\Psi,\Phi\in\HC^1$.
Thus, $\ham$ can be extended to a bounded mapping $\HC^1\to\HC^{-1}$, which we denote with the same symbol.
We say that $\Psi\in\HC^1$ and $\ene\in\RR$ satisfy the \emph{weak Schr\"odinger equation} if  $\dua{\ham\Psi}{\Phi}=\ene\dua{\Psi}{\Phi}$ for all $\Phi\in\HC^1$.

According to \cref{sqonebody}, $\ham$ can also be given in the second quantized form 
$$
\ham=\sum_{p,q\ge 1} h_{pq} a_p^\dag a_q + \sum_{\substack{1\le p\le q\\ 1\le r\le s}} W_{pq,rs} a_p^\dag a_q^\dag a_r a_s,
$$
where $h_{pq}=\frac{1}{2}\dua{\grad\phi_p}{\grad\phi_q} + \dua{V\phi_p}{\phi_q}$
and $W_{pq,rs}=\dua{W\phi_p\wedge\phi_q}{\phi_r\wedge\phi_s}$.

\subsubsection{Truncation of the orbital basis}\label{truncsec}
Our analysis of the CC method is restricted to the finite-dimensional case due to reasons 
described later (\cref{topdegfin}). Let $K\geq N$ and assume that an $L^2$-orthonormal \emph{(spin-)orbital set} $\{\phi_p\}_{1\le p\le K}\subset H^1(\RR^3)$ is given.
We define
$$
\mathfrak{B}_K=\{ \Phi_{\alpha} \in \HC^1 : 1\le\alpha_1<\ldots<\alpha_N\le K \},
$$
as in Section 2.1 of Part I. Then $\mathfrak{B}_K$ is $\LC^2$-orthonormal. Set
$$
\HC^1_K=\Span\mathfrak{B}_K\subset\HC^1. 
$$
We define $\ham_K:\HC_K^1\to(\HC_K^1)^*$ to be the projection of $\ham:\HC^1\to\HC^{-1}$ onto the finite-dimensional subspace $\HC_K^1\subset \HC^1$, i.e. 
$\dua{\ham_K\Phi}{\Psi}=\dua{\ham\Phi}{\Psi}$ for all  $\Phi,\Psi\in\HC_K^1$.
Then $\Psi\in\HC^1_K$ and $\ene\in\RR$ are said to satisfy the \emph{projected Schr\"odinger equation} if 
$$
\dua{\ham_K\Psi}{\Phi}=\ene\dua{\Psi}{\Phi}\quad\text{for all}\quad \Phi\in\HC_K^1.
$$
For a convergence theory for this eigenvalue problem, see e.g. \cite[Theorem 5.11]{yserentant2010regularity}. Vaguely speaking, if $\bigcup_{K\ge N}\{\phi_p\}_{1\le p\le K}$
is dense in $H^1(\RR^3)$, then convergence of the projected eigenvalues to the discrete spectrum of $\ham$ is guaranteed as $K\to\infty$ (see \cite{schneider2009analysis}).

From the computational standpoint, orbital basis truncation is always necessary when solving the Schr\"odinger equation. Of course, the spectral structure
changes (the essential spectrum disappears in finite dimensions), and this means that we replaced the original problem with one having different qualitative properties.
However, the CI and CC methods are mainly used to approximate the ground-state-, or the first few excited energies (and other quantities related to the lowest eigenstates).
With a sufficiently good choice the orbitals $\{\phi_p\}_{1\le p\le K}$, these energies can be approximated rather well in a lot of situations.
Therefore, it is safe to regard the rest of the projected spectrum as ``junk'', and simply ignore it. 
We can conclude that the restriction to finite dimensions allows for a meaningful theory that is still relevant to CC methods.

\subsection{The Hartree--Fock method}\label{hfsec}

In practice, the CC method usually takes the HF orbitals as an input and builds up correction based on them, as described in Section 2.3 of Part I.
Here, we collect the basic facts about the HF method that will be used later on. 

The HF method\footnote{a.k.a. Self-Consistent Field (SCF) method} is based on the minimiziation of the energy over Slater determinants. It is fairly easy to see by direct calculation that the energy 
$\ene(\Phi)=\dua{\ham\Phi}{\Phi}$ of a Slater determinant $\Phi=\phi_1\wedge\ldots\wedge\phi_N\in\HC^1$, with $\{\phi_j\}_{j=1}^N\subset H^1(\RR^3)$ being $L^2$-orthonormal is given by
\begin{align*}
\ene(\phi_1,\ldots,\phi_N):=\ene(\Phi)&=\frac{1}{2}\sum_{i=1}^N \int_{\RR^3} |\grad \phi_i|^2 + \sum_{i=1}^N \int_{\RR^3} V|\phi_i|^2\\
&+\frac{1}{2} \iint_{\RR^3\times\RR^3}  \Bigg[ \sum_{i,j=1}^N |\phi_i(\vec{x})|^2 |\phi_j(\vec{y})|^2
-\Bigg|\sum_{i=1}^N \phi_i(\vec{x})\ol{\phi_i(\vec{y})}\Bigg|^2\Bigg]w(\vec{x}-\vec{y})\,\rmd\vec{x} \rmd\vec{y},
\end{align*}
see e.g. \cite{cances2003computational}.
Hence the task is to determine the \emph{HF energy}
\begin{equation}\label{EHF}
\ene_{\HF} = \ene(\Phi_\HF)=\min_{\substack{\phi_1,\ldots,\phi_N\in H^1(\RR^3)\\\dua{\phi_i}{\phi_j}=\delta_{ij}}} \ene(\phi_1,\ldots,\phi_N),
\end{equation}
along with a \emph{HF minimizer} (or \emph{HF determinant}) $\Phi_\HF$. 
The existence of a HF minimizer $\Phi_\HF$ is guaranteed for the case of positive ions and neutral atoms and molecules (corresponding to the electronic molecular Hamiltonian).
\begin{theorem}\cite{lieb1974solutions}
If $N<Z+1$, then there exists a minimizer $\Phi_\HF$ to \cref{EHF}.
\end{theorem}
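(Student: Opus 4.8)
The plan is to prove existence by the direct method of the calculus of variations; the only genuine difficulty is the loss of compactness of a minimizing sequence at spatial infinity, and this is controlled by a chain of strict binding inequalities whose proof is exactly where the hypothesis $N<Z+1$ enters. Throughout, write $V(\vec{x})=-\sum_k Z_k|\vec{x}-\vec{R}_k|^{-1}$ and $w(\vec{x}-\vec{y})=|\vec{x}-\vec{y}|^{-1}$, so $Z=\sum_k Z_k$, and for $M\in\NN$ let $I_M$ denote the infimum in \cref{EHF} with $M$ electrons and the same nuclei.

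First I would verify \emph{coercivity}. For an $L^2$-orthonormal family $\phi_1,\dots,\phi_N\in H^1(\RR^3)$, put $\rho=\sum_i|\phi_i|^2$ and $\tau(\vec{x},\vec{y})=\sum_i\phi_i(\vec{x})\ol{\phi_i(\vec{y})}$. The pointwise Cauchy--Schwarz bound $|\tau(\vec{x},\vec{y})|^2\le\rho(\vec{x})\rho(\vec{y})$ shows that the two-body part of $\ene(\phi_1,\dots,\phi_N)$ is nonnegative, while Hardy's inequality (or the Kato class property of $V$) gives $\sum_i\int V|\phi_i|^2\ge-\tfrac14\sum_i\int|\grad\phi_i|^2-CN$ with $C$ depending only on the nuclear charges. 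Hence $\ene(\phi_1,\dots,\phi_N)\ge\tfrac14\sum_i\int|\grad\phi_i|^2-CN$, so $I_N>-\infty$, and any minimizing sequence $\Phi^{(n)}=\phi_1^{(n)}\wedge\cdots\wedge\phi_N^{(n)}$ is bounded in $\HC^1$; passing to a subsequence, $\phi_i^{(n)}\rightharpoonup\phi_i$ weakly in $H^1(\RR^3)$.

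The main obstacle is the \emph{strict binding inequalities}
\[
 I_N<I_{N-1}<\cdots<I_1<I_0=0 ,
\]
which I would prove by induction, establishing $I_M<I_{M-1}$ for each $1\le M\le N$. Fix an approximate minimizer $\Phi'$ for $I_{M-1}$ and consider the $M$-electron trial state $\Phi'\wedge\psi$, where $\psi$ is an additional normalized orbital orthogonal to the orbitals of $\Phi'$; then $\ene(\Phi'\wedge\psi)=\ene(\Phi')+\dua{\psi}{F_{\Phi'}\psi}$, with $F_{\Phi'}$ the associated mean-field (Fock) operator. Since $M-1<Z$, the effective potential in $F_{\Phi'}$ has a net attractive Coulombic tail $-(Z-(M-1))|\vec{x}|^{-1}$ at infinity (the exchange part decaying faster), so $F_{\Phi'}$ has essential spectrum $[0,\infty)$ together with bound states of strictly negative energy; choosing $\psi$ in such a state gives $\ene(\Phi'\wedge\psi)<\ene(\Phi')$, and taking the approximation good enough yields $I_M<I_{M-1}$. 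The base case $I_0=0$ is trivial, and $I_1<0$ is the same computation with $\Phi'$ empty.

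Finally I would exclude escape of mass by \emph{concentration--compactness} applied to the densities $\rho_n$, which satisfy $\int\rho_n=N$. Vanishing is impossible: if $\rho_n\to0$ locally in $L^1$ then the one-body potential energy tends to $0$ and the two-body term is nonnegative, so $\liminf\ene(\Phi^{(n)})\ge0>I_N$. Dichotomy is impossible: splitting the $N$ electrons into an inner cluster of $j$ electrons near the nuclei and an outer cluster escaping to infinity would give $I_N\ge I_j$ for some $0\le j\le N-1$ (the outer cluster feels no nuclei, hence contributes nonnegatively), contradicting $I_N<I_j$. Hence the minimizing sequence is compact: $\rho_n\to\rho$ strongly in $L^1(\RR^3)$, the weak limits $\phi_i$ converge strongly in $L^2(\RR^3)$ and thus form an $L^2$-orthonormal family, the kinetic energy is weakly lower semicontinuous, and the potential terms pass to the limit, so $\ene(\phi_1,\dots,\phi_N)\le\liminf\ene(\Phi^{(n)})=I_N$. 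Therefore $\Phi_\HF:=\phi_1\wedge\cdots\wedge\phi_N$ is a minimizer. The remaining delicate points — the orthonormalization cost in the binding step and the integer-mass bookkeeping in the dichotomy alternative — are by now routine for Hartree--Fock-type functionals.
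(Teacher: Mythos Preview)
The paper does not give a proof of this theorem at all: it is simply quoted as a known result with the citation \cite{lieb1974solutions}, so there is nothing in the paper to compare your attempt against.

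Your sketch is a correct outline of the modern proof: coercivity from Hardy-type bounds, the strict binding chain $I_N<I_{N-1}<\cdots<I_0=0$ (this is precisely where $N<Z+1$ is used, via the attractive Coulomb tail of the mean-field operator), and then concentration--compactness to rule out vanishing and dichotomy. One historical remark: the original Lieb--Simon argument predates Lions' concentration--compactness framework and instead handles the loss of compactness by more direct means, showing that the binding inequalities force strong $L^2$-convergence of the orbitals; the Lions-style argument you give is the approach of \cite{lions1987solutions}. The points you flag as ``routine'' (orthogonalization cost in the binding step, and the fact that in the dichotomy case mass may split non-integrally across orbitals so one must pass to the density-matrix formulation or do a careful diagonalization) are genuinely where the work lies, but they are indeed by now standard for HF-type functionals.
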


Following the seminal work \cite{lieb1974solutions}, much more has been discovered about the mathematical structure of the HF energy functional \cite{lions1987solutions,friesecke2003multiconfiguration,lewin2018existence}.
As usual, a minimizer satisfies the corresponding Euler--Lagrange equations (which are called \emph{Hartree-Fock equations} in this context).
In practice, it is this system of nonlinear integro-differential equations which is discretized and solved.
To describe the HF equations, we make a definition that will be convenient for later purposes.
Fix $\Phi=\phi_1\wedge\ldots\wedge\phi_N$, where $\{\phi_p\}_{p=1}^N\subset H^1(\RR^3)$ is $L^2$-orthonormal. Define the lower semibounded, self-adjoint operator $F_\Phi : L^2(\RR^3)\to L^2(\RR^3)$
with domain $D(F_\Phi)=H^2(\RR^3)$ via the instruction
\begin{align*}
(F_\Phi \psi)(\vec{x}) &= -\frac 1 2 \lapl\psi(\vec{x}) + V(\vec{x})\psi(\vec{x}) + \left(\sum_{i=1}^N \int_{\RR^3} w(\vec{x}-\vec{y})|\phi_i(\vec{y})|^2\right) \psi(\vec{x}) - \sum_{i=1}^N \left(\int_{\RR^3} w(\vec{x}-\vec{y}) \ol{\phi_i(\vec{y})} \psi(\vec{y})\, \rmd \vec y\right) \phi_i(\vec{x})
\end{align*}
for all $\psi\in D(F_\Phi)$ and all $\vec{x}\in\RR^3$. The operator $F_\Phi$ is called the \emph{mean-field operator}.\footnote{It is sometimes
called the Fock operator, but we reserve that name for its $N$-particle version, i.e. its first quantization.} The form domain of $F_\Phi$ is $H^1(\RR^3)$.
The essential spectrum of $F_\Phi$ is $[0,+\infty)$. We summarize the basic properties of the mean-field operator in the next theorem. Let
$$
\mu_n(F_\Phi)=\min_{\substack{\psi_1,\ldots,\psi_n\in H^1(\RR^3)\\\dua{\phi_i}{\phi_j}=\delta_{ij}}}\; \max_{\substack{\psi\in\Span\{\psi_1,\ldots,\psi_n\} \\\|\psi\|=1}} \dua{F_\Phi\psi}{\psi}
$$
denote the min-max values of $F_\Phi$.
\begin{theorem}
Assume that there exists a HF minimizer $\Phi_\HF=\phi_1\wedge\ldots\wedge\phi_N$.
\begin{enumerate}[label=\normalfont(\roman*)]
\item (Hartree-Fock equations) There exists a unitary matrix $\vec{U}\in\CC^{N\times N}$ so that 
$\wt{\phi}_i$ are eigenfunctions of $F_\Phi$ corresponding to its $N$ lowest eigenvalues $\lambda_1\le\ldots\le\lambda_N$,
\begin{equation}\label{hfeq}
F_\Phi \wt{\phi}_j = \lambda_j \wt{\phi}_j, \quad \text{for all} \quad j=1,\ldots,N,
\end{equation}
where $(\wt{\phi}_1,\ldots,\wt{\phi}_N)=\vec{U}(\phi_1,\ldots,\phi_N)$.
\item (Aufbau principle) If $\mu_{N+1}(F_\Phi)$ is an eigenvalue of $F_\Phi$, then
$\lambda_N=\mu_N(F_\Phi)<\mu_{N+1}(F_\Phi)\le 0$.
\end{enumerate}
\end{theorem}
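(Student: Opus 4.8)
The plan is to exploit the single structural fact that the energy $\ene(\phi_1,\ldots,\phi_N)$, the orthonormality constraints and the mean-field operator $F_\Phi$ all depend on the orbitals only through the rank-$N$ orthogonal projection $\gamma=\sum_{i=1}^N\ket{\phi_i}\bra{\phi_i}$ on $L^2(\RR^3)$. First I would record two easy consequences: $\ene$ is invariant under any unitary mixing $(\phi_1,\ldots,\phi_N)\mapsto\vec{U}(\phi_1,\ldots,\phi_N)$, $\vec{U}\in\CC^{N\times N}$ unitary (since neither $\gamma$ nor $F_\Phi$ changes); and, reading off the explicit energy formula above, $\ene=\Tr(h\gamma)+\tfrac12Q(\gamma,\gamma)$ with $h=-\tfrac12\lapl+V$ and $Q$ the bilinear form of the direct-minus-exchange interaction, so that $\ene$ is a \emph{quadratic} function of $\gamma$ whose differential at $\gamma$ in a direction $\delta$ equals $\Tr(F_\Phi\delta)$ --- i.e.\ $F_\Phi=h+Q(\gamma,\cdot\,)$ is precisely the self-consistent functional derivative. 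Both facts are used throughout.

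For part (i) I would argue as follows. Since $\Phi_\HF$ is a minimizer, first-order stationarity yields the Euler--Lagrange equations $F_\Phi\phi_i=\sum_j\Lambda_{ij}\phi_j$ with $\Lambda=\Lambda^\dag$ Hermitian, equivalently $[F_\Phi,\gamma]=0$. Thus $F_\Phi$ leaves $\Ran\gamma$ invariant; diagonalizing the self-adjoint operator $F_\Phi|_{\Ran\gamma}$ produces an $L^2$-orthonormal basis $\wt\phi_1,\ldots,\wt\phi_N$ of $\Ran\gamma$ with $F_\Phi\wt\phi_j=\lambda_j\wt\phi_j$ (elliptic regularity places them in $H^2(\RR^3)$, so these are genuine eigenfunctions), and $(\wt\phi_1,\ldots,\wt\phi_N)=\vec{U}(\phi_1,\ldots,\phi_N)$ for a unitary $\vec{U}$; relabel so that $\lambda_1\le\cdots\le\lambda_N$.

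The hard part is to show that $\lambda_1,\ldots,\lambda_N$ are the $N$ \emph{smallest} eigenvalues of $F_\Phi$ --- needed both for (i) and for the equality $\lambda_N=\mu_N(F_\Phi)$ in (ii) --- and this does \emph{not} follow from stationarity among Slater determinants alone, because the second variation in a virtual-orbital direction carries the sign-indefinite exchange term. My plan here is the classical one: pass to \emph{global} minimality over the convex set of mixed states $\mathcal{D}_N=\{\gamma':0\le\gamma'\le\iden,\ \Tr\gamma'=N\}$. By Lieb's variational principle the infimum of $\ene$ over $\mathcal{D}_N$ equals its infimum over rank-$N$ projections, so $\gamma=\gamma_{\Phi_\HF}$ minimizes $\ene$ over $\mathcal{D}_N$; since $\ene$ is quadratic, testing along segments $t\mapsto(1-t)\gamma+t\gamma'$ gives the variational inequality $\Tr(F_\Phi(\gamma'-\gamma))\ge0$ for all $\gamma'\in\mathcal{D}_N$, i.e.\ $\Tr(F_\Phi\gamma)=\min_{\gamma'\in\mathcal{D}_N}\Tr(F_\Phi\gamma')=\sum_{j=1}^N\mu_j(F_\Phi)$ by the Ky~Fan extremal principle; equality forces $\Ran\gamma$ to be spanned by eigenfunctions of $F_\Phi$ for its $N$ lowest min--max values, hence $\lambda_j=\mu_j(F_\Phi)$ for $j=1,\ldots,N$. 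Finally, $\mu_{N+1}(F_\Phi)\le0$ is immediate from $\inf\sigma_{\mathrm{ess}}(F_\Phi)=0$ and the min--max principle, and the remaining strict gap $\mu_N(F_\Phi)<\mu_{N+1}(F_\Phi)$, under the hypothesis that $\mu_{N+1}(F_\Phi)$ is an eigenvalue, is the ``no unfilled shells'' statement: if instead $\mu_N=\mu_{N+1}$ were an eigenvalue, $\gamma$ would only partially occupy the corresponding eigenspace, and a refined second-order analysis \emph{within} that degenerate shell --- exploiting the concavity contributed by the exchange term along a suitable direction --- yields a rank-$N$ projection of strictly smaller energy, contradicting minimality. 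I expect this last step, following the arguments of Bach--Lieb--Loss--Solovej and of Lions, to be the genuine obstacle; granting it, (i) and (ii) are complete.
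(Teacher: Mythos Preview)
The paper does not prove this theorem at all: its entire proof reads ``See, for example \cite{benedikter2017hartree}.'' So there is nothing to compare at the level of argument; what you have written is a self-contained sketch of the standard proof, whereas the paper simply defers to the literature.

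Your outline is essentially the accepted route (Lieb's extension to the convex set of density matrices, first-order optimality yielding $\Tr(F_\Phi\gamma)=\sum_{j\le N}\mu_j(F_\Phi)$ via Ky~Fan, and the Bach--Lieb--Loss--Solovej ``no unfilled shells'' argument for the strict gap). Two small caveats worth flagging if you intend to flesh this out. First, Lieb's variational principle---that the infimum of $\ene$ over $\mathcal{D}_N$ is attained at a rank-$N$ projection---relies on concavity of the exchange contribution, which in turn uses $w\ge 0$ (pointwise, or as an operator); the paper only assumes $w$ is Kato-class and even, so strictly speaking you are invoking a hypothesis the paper has not stated explicitly (though the cited reference certainly does). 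Second, the step from $\Tr(F_\Phi\gamma)=\sum_{j\le N}\mu_j(F_\Phi)$ to $\lambda_j=\mu_j(F_\Phi)$ for each $j$ individually requires a word: equality in Ky~Fan forces $\Ran\gamma$ to lie in the spectral subspace for $(-\infty,\mu_N]$, but pinning down the individual $\lambda_j$'s uses that $\gamma$ is a projection commuting with $F_\Phi$. None of this is a gap in your plan, just places where the write-up needs care.
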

\begin{proof}
See, for example \cite{benedikter2017hartree}.
\end{proof}

The eigenvalue $\lambda_{N}$ corresponds to the \emph{highest occupied molecular orbital} (HOMO) and $\lambda_{N+1}$ to the \emph{lowest unoccupied molecular orbital} (LUMO).
Their difference, $\epsilon_\mathrm{min} :=\lambda_{N+1}-\lambda_N$ is called the \emph{HOMO-LUMO gap}, which is an important quantity in quantum chemistry \cite{bach1997there}.

The first quantization of the mean-field operator $F_\Phi$ is called the \emph{Fock operator} $\mathcal{F}_\Phi : \LC^2\to \LC^2$ 
with domain $D(\mathcal{F}_\Phi)=\bigwedge^N D(F_\Phi)=\HC^2$,
$$
\mathcal{F}_\Phi = F_\Phi \otimes I \otimes \ldots \otimes I + \ldots + I\otimes\ldots\otimes I \otimes F_\Phi.
$$
Henceforth we omit $\Phi$ from the notation, and let $\mathcal{F}:=\mathcal{F}_\Phi$ whenever $\Phi$ is clear from the context.
It is immediate that the HF determinant $\Phi_0:=\Phi_\HF$ is an eigenfunction of $\mathcal{F}$,
$$
\mathcal{F}\Phi_0=\Lambda_0\Phi_0, \quad\text{with}\quad \Lambda_0=\sum_{i=1}^N \lambda_i.
$$
The Fock operator gives rise to a splitting of the molecular Hamiltonian 
\begin{equation}\label{fockfluc}
\mathcal{H}=\mathcal{F} + \mathcal{W}, \quad\text{where}\quad \mathcal{W}:=\ham-\mathcal{F}
\end{equation}
is called the \emph{fluctuation operator}.

For the rest of the section, we consider the finite-dimensional case. 
In practice, the Galerkin projection of the Hartree--Fock equations \cref{hfeq} are solved to obtain the orbitals $\{\phi_i\}_{i=1}^N$. Since the
mean-field operator $F_\Phi$ is self-adjoint, its eigenfunctions can be used to extend these orbitals to an orthonormal basis $\{\phi_i\}_{i=1}^K\subset H^1_K(\RR^3)$. 
Similarly to $\ham_K$ we introduce the projected versions of $\mathcal{F}$ and $\mathcal{W}$,
denoted as $\mathcal{F}_K$ and $\mathcal{W}_K$.
In the orbital basis $\{\phi_i\}_{i=1}^K$, the Fock operator takes the diagonal form $\mathcal{F}_K=\sum_{i=1}^K \lambda_i a_i^\dag a_i$.

Furthermore, if  $\Phi_\alpha=\phi_{\alpha_1}\wedge\ldots\wedge\phi_{\alpha_N}$ with $\alpha_1<\ldots<\alpha_N$ that is obtained from
$\Phi_0=\phi_1\wedge\ldots\wedge\phi_N$ by swapping $r$ orbitals $\phi_{I_j}$ with $\phi_{A_j}$ where $I_j\in\{1,\ldots,N\}$, $A_j\not\in\{1,\ldots,N\}$ ($j=1,\ldots,r\le N$),
then
\begin{equation}\label{fockeig}
\mathcal{F}_K\Phi_\alpha=\Lambda_\alpha\Phi_\alpha, \quad\text{with}\quad \Lambda_\alpha=\sum_{i=1}^N \lambda_{\alpha_i}=\Lambda_0 + \epsilon_{\alpha},
\quad\epsilon_\alpha=\sum_{j=1}^r \lambda_{A_j}-\lambda_{I_j}.
\end{equation}
We would like to emphasize that in the finite-dimensional case, it is possible to have $\lambda_N\le 0<\lambda_{N+1}$.\footnote{The addition of diffuse functions
of the form $e^{-\beta|\mathbf{x}|^2}$ for $\beta\ll 1$ can significantly reduce the HOMO-LUMO gap $\epsilon_{\min}$.}

Unfortunately, in the infinite-dimensional case, it might not be possible to construct a complete eigenbasis $\{\phi_i\}_{i=1}^\infty\subset H^1(\RR^3)$ 
for the mean-field operator $F_\Phi$.

\subsection{Excitation-, and cluster operators}\label{exopsec}

We now briefly recall the basic properties excitation-, and cluster operators using the second-quantized language. 
Alternatively, the reader may consult Section 3.2 of Part I. Here, we introduce the concepts in arbitrary dimensions for completeness, although we
only need the finite-dimensional case in the sequel.

Suppose that $\Phi_0:=\Phi_\HF=\phi_1\wedge\ldots\wedge\phi_N$ is a HF minimizer for some orthonormal $\{\phi_p\}_{p=1}^N$, as discussed in the preceding section. 
The wavefunction $\Phi_0$ is called the \emph{reference determinant} or simply the \emph{reference}.
The functions $\{\phi_p\}_{p=1}^N$ are called \emph{occupied orbitals} and are extended to a complete orthonormal basis of $H^1_K(\RR^3)$
with the \emph{virtual orbitals} $\{\phi_p\}_{p=N+1}^K$ (here, $K=\infty$ is allowed). 
According to \cref{creannihsec}, any Slater determinant $\Phi_\alpha$ may be obtained from the Fock vacuum via a string of creation operators,
in particular $\Phi_0=a_1^\dag\cdots a_N^\dag\Omega$. Here, and henceforth, the symbol $0$ denotes $(1,\ldots,N)$ as multiindex.

As in the preceding section, we may partition the indices $\alpha_1<\ldots<\alpha_N$ into occupied-, and virtual ones
by letting $I_{r+1}<\ldots<I_N$ be the $\alpha_j$'s which lie in $\{1,\ldots,N\}$ and $A_1<\ldots<A_r$ be the $\alpha_j$'s which lie in $\{N+1,N+2,\ldots\}$, where
$r$ is the number of virtual indices in $\alpha$, also called the \emph{rank of $\alpha$}. Now let $I_1<\ldots<I_r$ be given by $\{I_1,\ldots,I_r\}=\{1,\ldots,N\}\setminus \{I_{r+1},\ldots,I_N\}$, then we can write
$$
\Phi_\alpha= a_{A_1}^\dag a_{I_1} \cdots a_{A_r}^\dag a_{I_r} \Phi_0, 
$$
since the string of creation and annihilation operators acting on $\Phi_0$ clearly swaps the occupied orbital $\phi_{I_j}$ with $\phi_{A_j}$ in $\Phi_0$ for all $j=1,\ldots,r$,
yielding precisely $\Phi_\alpha$. Then $X_\alpha=a_{A_1}^\dag a_{I_1} \cdots a_{A_r}^\dag a_{I_r}$ is called an \emph{excitation operator}.
We say that the Slater determinant $\Phi_\alpha$ (or the multiindex $\alpha$), is \emph{singly-} (S), \emph{doubly-} (D), \emph{triply-} (T) etc. \emph{excited} if the rank of $\alpha$ is 1, 2, 3, etc.

This way we constructed a family of bounded operators $X_\alpha$, where $\alpha$ runs over all possible \emph{nonzero} multiindices.
The adjoint of an excitation operator $X_\alpha$, denoted $X_\alpha^\dag$, is called a \emph{de-excitation operator}.
Notice that both excitation-, and de-excitation operators conserve the particle number.

\begin{theorem}
The following properties hold true.
\begin{itemize}
\item[(i)] The product of (de-)excitation operators is an (de-)excitation operator, or, zero. 
\item[(ii)] (commutativity) $X_\alpha X_\beta=X_\beta X_\alpha$ and $X_\alpha^\dag X_\beta^\dag=X_\beta^\dag X_\alpha^\dag$.
\item[(iii)] (nilpotency) $X_\alpha^2=0$ and $(X_\alpha^\dag)^2=0$.
\end{itemize}
\end{theorem}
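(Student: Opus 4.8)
The plan is to establish all three properties directly from the anticommutation relations (CAR) and the explicit form $X_\alpha = a_{A_1}^\dag a_{I_1} \cdots a_{A_r}^\dag a_{I_r}$, with the virtual indices $A_j$ lying outside $\{1,\ldots,N\}$ and the occupied indices $I_j$ inside. The adjoint statements follow by taking daggers, so it suffices to treat excitation operators. The essential structural fact I would isolate first is that within a single excitation operator the creation operators $a_{A_j}^\dag$ all carry distinct virtual labels (since the multiindex $\alpha$ has strictly increasing entries) and the annihilation operators $a_{I_j}$ all carry distinct occupied labels; moreover a creation operator with a virtual label and an annihilation operator with an occupied label always anticommute because their one-particle overlap $\langle\phi_{A_i},\phi_{I_j}\rangle$ vanishes. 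Hence any two \emph{distinct} elementary factors appearing in a product of excitation operators anticommute, and all the combinatorics reduces to tracking signs and to recognizing when a repeated label forces a factor to square to zero.

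For (iii), nilpotency, I would argue that $X_\alpha$ contains the factor $a_{A_1}^\dag$, and $X_\alpha^2$ therefore contains $a_{A_1}^\dag$ twice; using the anticommutation of $a_{A_1}^\dag$ with every other factor (distinct virtual/occupied labels as above) one can bring the two copies of $a_{A_1}^\dag$ adjacent, picking up a sign, and then $(a_{A_1}^\dag)^2 = 0$ by the second CAR kills the whole product. For (i), I would first observe that the concatenation $X_\alpha X_\beta$ is already a string of creation/annihilation operators of the required alternating type; I then sort it into the canonical ordered form $a_{B_1}^\dag a_{J_1}\cdots$. If the virtual sets of $\alpha$ and $\beta$ overlap, some $a_{A}^\dag$ appears twice and the product vanishes by the argument just given; similarly if the occupied sets overlap, some $a_I$ appears twice and $(a_I)^2=0$ kills it. Otherwise the labels are all distinct, and after reordering (tracking the sign) we obtain, up to $\pm 1$, an excitation operator $X_\gamma$ where $\gamma$ is the multiindex whose virtual part is $A(\alpha)\cup A(\beta)$ and whose occupied part is the complement of $I(\alpha)\cup I(\beta)$ in $\{1,\ldots,N\}$; one checks the sign works out to $+1$ by noting that $X_\gamma$ acting on $\Phi_0$ must reproduce the same Slater determinant up to the ordering convention on $\alpha$, which pins the sign. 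Property (ii), commutativity, is then a special case of this bookkeeping: $X_\alpha X_\beta$ and $X_\beta X_\alpha$ differ only by the order in which the two blocks of elementary factors are concatenated, and moving the block for $\beta$ past the block for $\alpha$ requires an \emph{even} number of transpositions of anticommuting pairs (each factor of $X_\beta$ is moved past an even-length string, namely the $2|\alpha|$ factors of $X_\alpha$, or the product is zero anyway), so the sign is $+1$.

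The main obstacle I anticipate is purely the sign bookkeeping in (i): verifying that reordering the concatenated string into canonical form produces exactly $+X_\gamma$ and not $-X_\gamma$, i.e., that the permutation sign incurred by sorting matches the sign built into the definition of $X_\gamma$ via the increasing-multiindex convention. The cleanest way around this is to avoid counting transpositions altogether and instead evaluate both $X_\alpha X_\beta$ and the candidate $X_\gamma$ on the reference $\Phi_0$: both send $\Phi_0$ to $\pm\Phi_\gamma$, and since $X_\gamma$ by construction sends $\Phi_0$ to $+\Phi_\gamma$ while the composition $X_\alpha X_\beta$ first produces $\Phi_\beta$ and then excites it, one can compare the two resulting determinants in a fixed basis ordering to read off the sign. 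Because $X_\gamma$ is, as a linear operator, determined by its action on the single determinant $\Phi_0$ together with the fact that it annihilates all $\Phi_\delta$ not ``compatible'' with the excitation pattern (a point that itself uses nilpotency and the distinctness of labels), this determines $X_\alpha X_\beta = X_\gamma$ on the nose. I would present (iii) first, then use it inside the proof of (i), and obtain (ii) as a corollary of the reordering argument.
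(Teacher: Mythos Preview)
The paper states this theorem without proof; it is presented as background material (``We now briefly recall the basic properties\ldots''), so there is no argument in the paper to compare against. Your proposal is the standard route and is essentially correct: parts (ii) and (iii) follow cleanly from the CAR exactly as you outline, since every elementary factor in $X_\alpha X_\beta$ either repeats (forcing zero) or anticommutes with every other, and the block-swap in (ii) involves $(2r_\alpha)(2r_\beta)$ transpositions.

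One caution on (i): your claim that ``the sign works out to $+1$'' is more delicate than you suggest, and in fact depends on the precise ordering convention used to define $X_\gamma$. With the paper's convention $X_\gamma=a_{A_1}^\dag a_{I_1}\cdots a_{A_r}^\dag a_{I_r}$ (both $A_j$ and $I_j$ increasing), one can check in small examples (e.g.\ $N=3$, $X_\alpha=a_5^\dag a_1$, $X_\beta=a_4^\dag a_3$) that $X_\alpha X_\beta=\pm X_\gamma$ with a genuine minus sign possible. So either (i) should be read as ``is $\pm$ an excitation operator or zero'', or the sign must be absorbed into the definition of $X_\gamma$ (as is done in some conventions). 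Your strategy of pinning the sign by evaluating on $\Phi_0$ is the right idea, but note that the paper's assertion $X_\alpha\Phi_0=\Phi_\alpha$ is itself only true up to a convention-dependent sign, so this route does not automatically yield $+1$. This is a bookkeeping issue rather than a mathematical gap, and the honest statement of (i) includes the $\pm$.
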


Linear combinations of excitation operators are called \emph{cluster operators}, i.e. operators of the form $C=\sum_{\alpha\neq 0} c_\alpha X_\alpha$.
Clearly, there is a one-to-one correspondence between $C$ and the so-called \emph{cluster amplitude} $(c_\alpha)_{\alpha\neq 0}$. We use the standard convention
that capital letters $C,T,S,\ldots$ denote cluster operators and small letters $c,t,s,\ldots$ their corresponding (unique) cluster amplitudes.
Clearly, there is a one-to-one correspondence between
functions $\Psi\in\HC^1$ with $\dua{\Psi}{\Phi_0}=0$ and the cluster operators $C_\Psi$ defined as
\begin{equation}\label{clusterop}
C_\Psi=\sum_{\alpha\neq 0} c_\alpha X_\alpha, \quad\text{where}\quad c_\alpha=\dua{\Psi}{\Phi_\alpha}.
\end{equation}

\begin{theorem}\cite{rohwedder2013continuous}
Fix $\Psi\in\HC^1$, such that $\dua{\Psi}{\Phi_0}=0$. Then the following properties hold true.
\begin{itemize}
\item[(i)] The vector space of cluster operators extended with the identity is a closed, commutative, nilpotent subalgebra of $\mathcal{L}(\HC^1)$. In particular, $T^{N+1}=0$ for any $T$.
\item[(ii)] The cluster operator $C_\Psi$ satisfies $C_\Psi\in \mathcal{L}(\HC^1,\HC^1)$.
Furthermore, there is a constant $b>0$ independent of $\Psi$ such that
$\|\Psi\|_{\HC^1}\le \|C_\Psi\|_{\mathcal{L}(\HC^1,\HC^1)}\le b\|\Psi\|_{\HC^1}$.
\item[(iii)] $C_\Psi^\dag\in \mathcal{L}(\HC^1,\HC^1)$, and there is a constant $b'>0$ independent of $\Psi$ such that
$\|C_\Psi^\dag\|_{\mathcal{L}(\HC^1,\HC^1)}\le b'\|\Psi\|_{\HC^1}$,
and there cannot be a uniform lower bound in terms of $\|\Psi\|_{\HC^1}$.
\item[(iv)] $C_\Psi$ can be extended to $\mathcal{L}(\HC^{-1},\HC^{-1})$.
\end{itemize}
\end{theorem}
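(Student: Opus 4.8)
The plan is to reduce the algebraic claims to the preceding theorem on excitation operators, and the analytic claims to a single input: that $\|\cdot\|_{\HC^1}$ is equivalent, with constants independent of the basis size $K$, to the norm $\|u\|_*^2 := \dua{(\mathcal F_K-\Lambda_0+c)\,u}{u}$ induced by a suitably shifted Fock operator, for which the Slater basis $\mathfrak B_K$ is orthogonal with $\|\Phi_\alpha\|_*^2 = \epsilon_\alpha+c =: w_\alpha$ (cf.\ \cref{fockeig}; here $\epsilon_0:=0$). The constant $c>0$ is fixed large enough that $\|\cdot\|_*$ is indeed such an equivalent norm, which is possible uniformly in $K$ because $\mathcal F_K$ differs from the kinetic part by an infinitesimally form-bounded perturbation. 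I will freely enlarge $c$ below.

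For (i), closure of $\{a\iden+C : a\in\CC,\ C \text{ a cluster operator}\}$ under sums and scalar multiples is clear, and under products it follows from the preceding theorem: expanding $CC' = \sum_{\alpha,\beta}c_\alpha c'_\beta X_\alpha X_\beta$ and using that each $X_\alpha X_\beta$ is an excitation operator or $0$, one sees $CC'$ is again a cluster operator; commutativity is inherited from $X_\alpha X_\beta = X_\beta X_\alpha$. For nilpotency I would observe that in any nonzero product $X_{\alpha^{(1)}}\cdots X_{\alpha^{(m)}} = \pm X_\gamma$ the factors involve pairwise disjoint sets of orbitals, so $\rk(\gamma) = \sum_k\rk(\alpha^{(k)}) \ge m$; since always $\rk(\gamma)\le N$, every term in the expansion of $T^{N+1}$ vanishes. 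Closedness I would deduce from (ii): the bounds there make $\Psi\mapsto C_\Psi$ a topological isomorphism from the closed subspace $\{\Psi\in\HC^1 : \dua{\Psi}{\Phi_0}=0\}$ onto the cluster operators, so the latter are closed in $\mathcal L(\HC^1)$, and adjoining $\Span\{\iden\}$ preserves this.

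The core is the upper bound in (ii). I would record that $X_\alpha\Phi_\beta = \pm\Phi_{\alpha\uplus\beta}$ when the particle--hole patterns of $\alpha$ and $\beta$ are disjoint (and $X_\alpha\Phi_\beta = 0$ otherwise), where $\alpha\uplus\beta$ is the merged pattern, with the \emph{energy additivity} $\epsilon_{\alpha\uplus\beta} = \epsilon_\alpha+\epsilon_\beta$. Enlarging $c$ so that $w_\gamma\le w_\alpha w_\beta$ whenever $\gamma = \alpha\uplus\beta$ (possible since only finitely many $\epsilon_\alpha$ occur), I would estimate, for $\Psi = \sum_{\alpha\ne0}c_\alpha\Phi_\alpha$ and any $\Phi = \sum_\beta d_\beta\Phi_\beta\in\HC^1_K$,
\[
\|C_\Psi\Phi\|_*^2 = \sum_\gamma w_\gamma\Bigl|\sum_{\alpha\uplus\beta=\gamma}\pm c_\alpha d_\beta\Bigr|^2 \le 2^N\sum_\gamma\sum_{\alpha\uplus\beta=\gamma} w_\alpha w_\beta\,|c_\alpha|^2|d_\beta|^2 \le 2^N\,\|\Psi\|_*^2\,\|\Phi\|_*^2,
\]
the middle step being a Cauchy--Schwarz over the at most $2^{\rk(\gamma)}\le 2^N$ splittings of $\gamma$, followed by $w_\gamma\le w_\alpha w_\beta$; passing back to $\|\cdot\|_{\HC^1}$ yields (ii) with $b$ independent of $\Psi$ and of $K$. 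The lower bound in (ii) is immediate: because $\dua{\Psi}{\Phi_0}=0$ one has $C_\Psi\Phi_0 = \sum_{\alpha\ne0}c_\alpha\Phi_\alpha = \Psi$, so $\|C_\Psi\|_{\mathcal L(\HC^1,\HC^1)}\ge\|\Psi\|_{\HC^1}/\|\Phi_0\|_{\HC^1}$, which is the claim (the inner product on $\HC^1$ being normalized so that $\|\Phi_0\|_{\HC^1}=1$). Part (iii) runs along the same lines with the de-excitation rule $X_\alpha^\dag\Phi_\beta = \pm\Phi_{\beta\ominus\alpha}$ (nonzero exactly when $\alpha$'s pattern lies inside $\beta$'s), $\epsilon_{\beta\ominus\alpha} = \epsilon_\beta-\epsilon_\alpha$; the only twist is a \emph{weighted} Cauchy--Schwarz — writing $w_\gamma = w_\alpha\,(w_\gamma/w_\alpha)$ — together with the crude bound $w_{\beta\ominus\alpha}\le(1+\max_\delta|\epsilon_\delta|)\,w_\alpha w_\beta$ and $\#\{\alpha : \alpha\subseteq\beta\}\le 2^{\rk(\beta)}\le 2^N$, giving $\|C_\Psi^\dag\|_{\mathcal L(\HC^1,\HC^1)}\le b'\|\Psi\|_{\HC^1}$.

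For the nonexistence of a uniform lower bound in (iii), the decisive feature is the asymmetry $C_\Psi^\dag\Phi_0 = 0$. I would take $K\ge 2N$ with $\lambda_K\to+\infty$ as $K$ grows and set $\Psi_K := \Phi_{\alpha_K}$, where $\alpha_K$ is the maximal-rank excitation of $\Phi_0$ into the $N$ highest virtual orbitals; since $\rk(\alpha_K)=N$, the operator $X_{\alpha_K}$ annihilates every $\Phi_\delta$ with $\delta\ne0$, so $C_{\Psi_K}^\dag = X_{\alpha_K}^\dag$ is the rank-one map $\Phi\mapsto\pm\dua{\Phi}{\Phi_{\alpha_K}}\Phi_0$, whence $\|C_{\Psi_K}^\dag\|_{\mathcal L(\HC^1,\HC^1)} = \|\Phi_0\|_{\HC^1}\,\|\Phi_{\alpha_K}\|_{\HC^{-1}}\asymp w_{\alpha_K}^{-1/2}$ while $\|\Psi_K\|_{\HC^1}\asymp w_{\alpha_K}^{1/2}$, and $w_{\alpha_K}\to\infty$; hence the ratio tends to $0$. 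Finally, (iv) comes essentially for free from (iii): since $C_\Psi^\dag\in\mathcal L(\HC^1,\HC^1)$ and $C_\Psi,C_\Psi^\dag$ are mutually adjoint for the $\LC^2$-pairing on $\HC^1$, setting $\dua{C_\Psi\ell}{\phi} := \dua{\ell}{C_\Psi^\dag\phi}$ for $\ell\in\HC^{-1}=(\HC^1)^*$ and $\phi\in\HC^1$ defines an element of $\mathcal L(\HC^{-1},\HC^{-1})$ of norm $\|C_\Psi^\dag\|_{\mathcal L(\HC^1,\HC^1)}$ that restricts to $C_\Psi$ on $\HC^1$. The genuinely delicate point is the upper bound in (ii)--(iii): it hinges on the interplay between the merge/restriction combinatorics and the energy additivity $\epsilon_{\alpha\uplus\beta} = \epsilon_\alpha+\epsilon_\beta$, and — crucially for the uniformity in $K$ needed later — on producing $b,b'$ from the \emph{fixed} number $2^N$ rather than the unbounded number of excitation indices.
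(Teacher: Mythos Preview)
The paper does not prove this theorem; it is quoted from \cite{rohwedder2013continuous} without proof. So there is no ``paper's own proof'' to compare against, only the original source.

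Your approach is essentially the finite-dimensional argument of Schneider \cite{schneider2009analysis}: pass to the Fock-induced norm $\|\cdot\|_*$, exploit that the Slater basis is orthogonal for it, and use the additivity $\epsilon_{\alpha\uplus\beta}=\epsilon_\alpha+\epsilon_\beta$. This is a legitimate route in finite dimensions, and your sketch of (i), (ii), (iv) and the counterexample in (iii) are sound in outline. However, the theorem as stated here (and in \cite{rohwedder2013continuous}) is in the \emph{infinite}-dimensional setting, and the paper itself warns a few lines later that ``in the infinite-dimensional case, it might not be possible to construct a complete eigenbasis \ldots\ for the mean-field operator $F_\Phi$.'' Your argument presupposes exactly such a basis (and the uniform norm equivalence $\|\cdot\|_{\HC^1}\sim\|\cdot\|_*$, which you take as an input rather than prove). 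Rohwedder's proof avoids this by estimating the $H^1$-norm directly, decomposing cluster operators by rank and controlling each piece combinatorially without invoking the Fock spectrum; that is a genuinely different and more robust mechanism.

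Two smaller issues in your execution. First, your splitting count is off: for $\gamma$ of rank $r$, the number of pairs $(\alpha,\beta)$ with $\alpha\uplus\beta=\gamma$ is $\sum_{k}\binom{r}{k}^2=\binom{2r}{r}$, not $2^r$, since the occupied and virtual subsets are chosen independently; this only changes the constant from $2^N$ to $4^N$, so the argument survives. Second, your treatment of (iii) is muddled: the bound ``$w_{\beta\ominus\alpha}\le(1+\max_\delta|\epsilon_\delta|)\,w_\alpha w_\beta$'' is both unnecessary (one has $w_{\beta\ominus\alpha}\le w_\beta$ directly) and ill-suited to the infinite-dimensional claim (the maximum is unbounded). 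A clean version uses the weighted Cauchy--Schwarz you allude to, then swaps the order of summation to sum over $\beta$ first and invokes $\#\{\alpha\subseteq\beta\}\le\binom{2\,\rk\beta}{\rk\beta}\le 4^N$. Finally, the lower bound in (ii) as written in the statement requires $\|\Phi_0\|_{\HC^1}\le 1$; your parenthetical that the $\HC^1$-inner product is normalized so that $\|\Phi_0\|_{\HC^1}=1$ is not the paper's convention, so at best you get $\|\Psi\|_{\HC^1}\le\|\Phi_0\|_{\HC^1}\,\|C_\Psi\|$.
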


According to \cite[Lemma 5.2]{rohwedder2013continuous}, for any cluster operator $C\in \mathcal{L}(\HC^1,\HC^1)$, 
there exists a cluster operator $T\in \mathcal{L}(\HC^1,\HC^1)$, such that $I+C=e^T$. This $T$ is uniquely given by $\log(I+C)$.
We would like to point out, that the exponential representation is only true in the \emph{untruncated case}, i.e. when \emph{all} cluster amplitudes $(t_\alpha)$
and all excitation operators $X_\alpha$ are considered. If some truncation is in effect (see below), the set $\{e^T\Phi_0\}$ is unknown.

\subsection{Cluster amplitude spaces}\label{clusampsec}

As mentioned in the previous section, the linear combination coefficients $t=(t_\alpha)$ of a cluster operator expansion $T=\sum_{\alpha\neq 0} t_\alpha X_\alpha$
are called cluster amplitudes. If some \emph{truncation scheme} is present, some of the $t_\alpha$'s and $X_\alpha$'s will be missing (again we refer to Section 3.5 of Part I for the
precise formulation). This truncation procedure is crucial for a feasible implementation of the CC method, and most commonly only the singly- (S), or singly and doubly- (SD),
or the singly, doubly and triply (SDT) excited amplitudes are kept. 

The vector space of (possibly truncated) cluster amplitudes form a subspace of $\ell^2$.
We define the \emph{(cluster) amplitude space}
$$
\VV=\{ t\in\ell^2 : \|T\Phi\|_{\HC^1}<\infty \},
$$
endowed with the inner product $\dua{t}{s}_\VV=\dua{T\Phi_0}{S\Phi_0}_{\HC^1}$. Analogously to $\HC^1\subset\LC^2\subset\HC^{-1}$, the spaces $\VV\subset\ell^2\subset\VV^*$
form a Gelfand triple, so Remark 2.1 of Part I applies. To $\VV$, there corresponds the \emph{functional amplitude space}
$$
\VC=\{ T\Phi_0\in\HC^1 : t\in\VV\}.
$$
In accordance to the concepts developed in Part I, we denote the \emph{full}, i.e. untruncated amplitude space and the corresponding functional amplitude space by $\VV(G^\full)$ and
$\VC(G^\full)$, respectively.

\begin{remark}\label{focknorm}
In the case $K<\infty$, and assuming that HOMO-LUMO gap satisfies $\epsilon_{\min}>0$ (no matter how small), following \cite{schneider2009analysis}, we introduce the norm
$$
\vertiii{t}^2:=\sum_{\alpha\neq 0} \epsilon_\alpha |t_\alpha|^2\quad\text{for all}\quad t\in\VV,
$$
where $\epsilon_\alpha$ denotes the eigenvalues of the Fock operator $\mathcal{F}_K$, see \cref{fockeig}. It was shown in \cite{schneider2009analysis} that
under certain assumptions, the constants in the norm equivalence $\vertiii{\cdot}\sim\|\cdot\|_\VV$
are independent of $K$. Finally, we also define the norm $\vertiii{\cdot}$ on $\VC$ via $\vertiii{T\Phi_0}:=\vertiii{t}$ for any $t\in \VV$. 
\end{remark}

\section{Topological degree theory}\label{sectopdeg}

In this section, we give a short introduction to the basic notions of \emph{finite-dimensional} topological degree theory, and state the results
that we will need in the forthcoming analysis of the CC method. While topological degree theory is well-known in
the field of nonlinear analysis and dynamical systems, it is not so ubiquitous in mathematical physics 
(a nice application is furnished by Ginzburg--Landau theory, see e.g. \cite[Section 2.4]{dinca2009brouwer}).
Therefore, an inclusion of a brief summary of this important and powerful tool in the present article seems justified.

Here, we mainly follow the set of notes by Dinca and Mawhin \cite{dinca2009brouwer} (which was released as a book recently \cite{dinca2021brouwer}),
 where the proofs may be found. The concept of the topological (a.k.a. mapping-, or Brouwer-) degree of a mapping goes back to Kronecker, Poincar\'e and Brouwer;
  Leray and Schauder extended the concept to infinite dimensions and demonstrated its usefulness in the theory of partial differential equations.
  (See \cite[Chapter 23]{dinca2009brouwer} for a fascinating historical perspective.)
The standard textbooks on the topic are \cite{cronin1995fixed,zeidler1993nonlinear,cho2006topological}.
 
\begin{remark}\label{topdegfin}
Unfortunately, there cannot be a ``topological degree theory'' for general continuous mappings in infinite dimensions. 
The reason for this, roughly speaking, is that such a theory would imply the Brouwer fixed point theorem, but that does not
hold in Hilbert space as pointed out by Kakutani \cite[Example 5.1.7]{drabek2007methods}. 
There are many generalizations of topological degree theory to infinite dimensions, all requiring some stringent 
assumptions on the class of mappings considered \cite{drabek2007methods,cho2006topological,skrypnik1994methods,petryshyn1992approximation}.
\end{remark}
 
\subsection{Construction and basic properties}
The first result gives an axiomatic characterization of the topological degree as the \emph{unique} additive homotopy invariant that
can be attached to continuous mappings (up to normalization).
\begin{theorem}[Existence and uniqueness of the degree]\label{exundeg}
There exists a unique integer-valued function
\[
(\opA,D,z)\mapsto \deg(\opA,D,z),
\]
where $D \subset \RRn$ is an open, bounded and nonempty subset,
 $\opA\in C(\ol{D},\RRn)$ and $z\not\in \opA(\partial D)$,
such that the following properties hold true:
\begin{enumerate}[label=\normalfont(\roman*)]
    \item (Normalization) If $z\in D$, then we have $\deg(\mathrm{id},D,z)=1$.
	\item (Additivity) Let $D_1,D_2\subset D$ be disjoint open subsets such that $z\not\in \opA(\ol{D}\setminus(D_1\cup D_2))$. Then
	$\deg(\opA,D,z)=\deg(\opA,D_1,z) + \deg(\opA,D_2,z)$.
    \item (Homotopy invariance) 
	If $\opK\in C(\ol{D}\times[0,1],\RRn)$ and $z\not\in\opK(\partial D\times [0,1])$, then 
	 $\lambda\mapsto\deg(\opK(\cdot, \lambda),D,z)$  is constant. 
\end{enumerate}
\end{theorem}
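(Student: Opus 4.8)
The plan is to prove existence and uniqueness separately, with the common technical core being the reduction to $C^{1}$ (in fact $C^{\infty}$) maps evaluated at regular values, where the degree is forced to equal the signed count $\deg(\opA,D,z)=\sum_{x\in\opA^{-1}(z)}\sgn\det\opA'(x)$.

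For existence, I would take this formula as the \emph{definition} in the smooth/regular case: if $\opA\in C^{\infty}(\ol D,\RRn)$, $z\notin\opA(\partial D)$ and $z$ is a regular value of $\opA$, then $\opA^{-1}(z)$ is finite (it is closed in the compact $\ol D$, avoids $\partial D$, and is discrete by the inverse function theorem), so the sum makes sense. The key lemma is an integral representation: for a mollifier $\phi_\epsilon\ge 0$ supported in $B(z,\epsilon)$ with $\int\phi_\epsilon=1$ and $\epsilon$ so small that $B(z,\epsilon)\cap\opA(\partial D)=\emptyset$, one has $\sum_{x\in\opA^{-1}(z)}\sgn\det\opA'(x)=\int_D\phi_\epsilon(\opA(x))\det\opA'(x)\iv x$, and more generally $\int_D g(\opA(x))\det\opA'(x)\iv x=0$ for every $g\in C_c(\RRn\setminus\opA(\partial D))$ supported in one connected component with $\int g=0$; hence the integral depends only on $\int g$ and, in particular, only on the component of $z$ in $\RRn\setminus\opA(\partial D)$. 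Granting this, the formula extends in two further steps: (a) to an arbitrary value $z$ (not necessarily regular) by Sard's theorem, choosing a regular value $z'$ in the same component and setting $\deg(\opA,D,z):=\deg(\opA,D,z')$; (b) to continuous $\opA$ by Weierstrass approximation, taking $\wt\opA\in C^{\infty}$ with $\|\opA-\wt\opA\|_{\infty}<\operatorname{dist}(z,\opA(\partial D))$ and setting $\deg(\opA,D,z):=\deg(\wt\opA,D,z)$; in each step the integral representation shows independence of the auxiliary choices. Finally the three axioms are checked: normalization is immediate for $\opA=\id$; additivity holds because $\opA^{-1}(z)\subset D_1\cup D_2$ makes the signed sum split; homotopy invariance follows because $\lambda\mapsto\deg(\opK(\cdot,\lambda),D,z)$ is locally constant (reduce to a smooth homotopy and use the integral formula) and $[0,1]$ is connected.

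For uniqueness, let $d$ be any integer-valued function satisfying (i)--(iii). Additivity with $D_1=D_2=\emptyset$ gives $d(\opA,\emptyset,z)=0$; excision then gives $d(\opA,D,z)=d(\opA,D_1,z)$ whenever $z\notin\opA(\ol D\setminus D_1)$, so $d(\opA,D,z)=0$ if $z\notin\opA(\ol D)$, and by (iii) $d$ depends only on $\opA|_{\partial D}$. Homotopy invariance then lets one replace $\opA$ by a smooth approximant and Sard lets one assume $z$ is a regular value. For such $\opA$, localize about each $x_j\in\opA^{-1}(z)$ a small ball $B_j$ on which $\opA$ is a diffeomorphism onto a neighborhood of $z$; additivity gives $d(\opA,D,z)=\sum_j d(\opA,B_j,z)$, and the straight-line homotopy from $\opA$ to its affine part at $x_j$ stays admissible on a sufficiently small $B_j$, so $d(\opA,B_j,z)=d\big(z+\opA'(x_j)(\cdot-x_j),B_j,z\big)$. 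By connectedness of $\mathrm{GL}^{+}(n)$ (for the orientation-preserving case) together with a short additivity argument that pins down the sign for orientation-reversing isomorphisms, this last quantity equals $\sgn\det\opA'(x_j)$ via the normalization axiom. Hence $d$ coincides with the signed count on the smooth/regular class, and the extension procedure then forces $d=\deg$ everywhere.

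The main obstacle will be the analytic lemma behind the integral representation — that $\int_D g(\opA(x))\det\opA'(x)\iv x$ depends only on $\int g$. The standard argument writes $g=\operatorname{div}w$ for a compactly supported $C^{\infty}$ vector field $w$ (possible once $\int g=0$ on a component) and invokes the Piola identity, i.e.\ that $\operatorname{cof}\opA'$ is row-divergence-free, to obtain the pointwise identity $\operatorname{div}_x\big((\operatorname{cof}\opA'(x))^{\top}w(\opA(x))\big)=(\operatorname{div}w)(\opA(x))\det\opA'(x)=g(\opA(x))\det\opA'(x)$; since $w\circ\opA$ vanishes near $\partial D$, the divergence theorem makes the integral vanish. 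Getting the smoothness bookkeeping straight (using $C^{\infty}$ throughout so that Sard's theorem and the Piola identity both apply) is the only delicate point; the approximation arguments and the axiom verifications are then routine.
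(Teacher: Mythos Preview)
Your proof sketch is correct and follows the standard Heinz--Nagumo construction for existence (integral representation via the Piola identity) together with the Amann--Weiss reduction for uniqueness. However, the paper does not actually prove this theorem: it is stated as background in \cref{sectopdeg} with the proofs deferred to the cited reference of Dinca and Mawhin, so there is no ``paper's own proof'' to compare against. What you have written is essentially the argument one finds in that reference (and in most modern treatments), so in that sense your approach agrees with the source the paper relies on.
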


Among the many useful properties of the degree we highlight the following few.
\begin{corollary}\label{degcoro}
\leavevmode
\begin{enumerate}[label=\normalfont(\roman*)]
\item (Excision property)
Let $D \subset \RRn$ be an open, bounded and $D'\subset D$ open. If $z\not\in\opA(\ol{D}\setminus D')$, then $\deg(\opA,D,z)=\deg(\opA,D',z)$.
\item (Existence property)
If $z\not\in\opA(\ol{D})$, then $\deg(\opA,D,z)=0$. Said differently, if $\deg(\opA,D,z)\neq 0$ for some open, bounded $D\subset\RRn$
such that $z\not\in\opA(\partial D)$, then there is a solution $u\in D$ to $\opA(u)=z$.
\item (Additivity property) Suppose that $\{D_j\}\subset D$ is a sequence of open and disjoint sets. 
If $z\not\in\opA(\ol{D}\setminus\bigcup D_j)$, then $\deg(\opA,D_j,z)=0$ for all
but finitely many $j$, and 
$\deg(\opA,D,z)=\sum_j \deg(\opA,D_j,z)$.
\end{enumerate}
\end{corollary}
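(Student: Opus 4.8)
The plan is to derive all three assertions as formal consequences of the axioms in \cref{exundeg}, extracting them in the order (i), then (ii), then (iii), since each relies on the preceding ones. The one preliminary fact I would record first is $\deg(\opA,\emptyset,z)=0$: applying the Additivity axiom with $D_1=D$ and $D_2=\emptyset$ (disjoint open subsets of $D$, for which the required condition $z\notin\opA(\ol D\setminus(D\cup\emptyset))=\opA(\partial D)$ holds by the standing assumption $z\notin\opA(\partial D)$) gives $\deg(\opA,D,z)=\deg(\opA,D,z)+\deg(\opA,\emptyset,z)$, hence the claim. For (i) I would then invoke Additivity once more, now with $D_1=D'$ and $D_2=\emptyset$: its hypothesis is exactly $z\notin\opA(\ol D\setminus D')$, and it yields $\deg(\opA,D,z)=\deg(\opA,D',z)+\deg(\opA,\emptyset,z)=\deg(\opA,D',z)$. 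Assertion (ii) is the special case $D'=\emptyset$ of (i), since then $z\notin\opA(\ol D\setminus\emptyset)=\opA(\ol D)$ is its hypothesis and $\deg(\opA,\emptyset,z)=0$. For the reformulation, the contrapositive gives $z\in\opA(\ol D)$; because $\ol D=D\cup\partial D$ and $z\notin\opA(\partial D)$ by the standing assumption, we conclude $z\in\opA(D)$, i.e. $\opA(u)=z$ has a solution $u\in D$.

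For (iii) the argument is genuinely topological. The set $\opA^{-1}(z)\cap\ol D$ is compact, being a closed subset of the compact set $\ol D$ under the continuous map $\opA$. Before anything else I would check that each $\deg(\opA,D_j,z)$ is defined, i.e. that $z\notin\opA(\partial D_j)$: since the $D_j$ are pairwise disjoint open sets, $\partial D_j$ is disjoint from every $D_k$ (for $k\neq j$, a point of $\ol{D_j}$ has every neighbourhood meeting $D_j$, so it cannot belong to the open set $D_k$; for $k=j$ this is because $D_j$ is open), whence $\partial D_j\subset\ol D\setminus\bigcup_k D_k$ and the hypothesis $z\notin\opA(\ol D\setminus\bigcup_k D_k)$ applies. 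That same hypothesis says $\opA^{-1}(z)\cap\ol D\subset\bigcup_j D_j$, so by compactness finitely many of the $D_j$ already cover it; after relabelling, $\opA^{-1}(z)\cap\ol D\subset D_1\cup\cdots\cup D_m$. For $j>m$ the closure $\ol{D_j}$ is disjoint from $D_1,\ldots,D_m$ by the same boundary observation, hence $\opA^{-1}(z)\cap\ol{D_j}=\emptyset$, i.e. $z\notin\opA(\ol{D_j})$, and (ii) gives $\deg(\opA,D_j,z)=0$; this establishes the ``all but finitely many'' claim.

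It remains to sum. Setting $D':=D_1\cup\cdots\cup D_m$ one has $\opA^{-1}(z)\cap\ol D\subset D'$, so (i) gives $\deg(\opA,D,z)=\deg(\opA,D',z)$. Then I would run a finite induction along the chain $D_1\subset D_1\cup D_2\subset\cdots\subset D'$, applying the two-set Additivity axiom at each step; each step is legitimate because $\opA^{-1}(z)$ avoids every ``leftover'' set $\ol{D_1\cup\cdots\cup D_k}\setminus(D_1\cup\cdots\cup D_k)$ — again by the disjoint-boundary observation, which forces $\ol{D_1\cup\cdots\cup D_k}$ to be disjoint from $D_{k+1},\ldots,D_m$. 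This yields $\deg(\opA,D',z)=\sum_{j=1}^m\deg(\opA,D_j,z)$, which equals $\sum_j\deg(\opA,D_j,z)$ since the remaining terms vanish.

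The only step that is more than bookkeeping is the compactness reduction in (iii); the point I expect to require the most care is the repeatedly used fact that pairwise disjoint open sets have pairwise disjoint boundaries. This is what guarantees, on the one hand, that $\deg(\opA,D_j,z)$ is meaningful for every $j$, and on the other hand that the finite-additivity axiom can be iterated along the chain of unions without ever violating its hypothesis.
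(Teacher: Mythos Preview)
The paper does not supply its own proof of \cref{degcoro}; it is stated as a standard consequence of the axioms in \cref{exundeg} with a pointer to the reference by Dinca and Mawhin. Your derivation from the axioms is the standard one and is correct in substance: the reduction $\deg(\opA,\emptyset,z)=0$, then excision via Additivity with $D_2=\emptyset$, then existence as the special case $D'=\emptyset$, then the compactness argument for the countable additivity --- all of this is fine.

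One wording issue worth fixing: in your closing summary you call the key topological fact ``pairwise disjoint open sets have pairwise disjoint boundaries''. That statement is false (take $(0,1)$ and $(1,2)$ in $\RR$). What you actually prove and use earlier is the correct statement $\ol{D_j}\cap D_k=\emptyset$ for $j\neq k$, i.e.\ the closure of one set misses the \emph{interior} of the other. This is exactly what you need to conclude $\partial D_j\subset\ol D\setminus\bigcup_k D_k$ and to run the induction, so the argument is sound; just rephrase the summary accordingly.
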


The following formula is essential for the practical computation of the degree. 
\begin{proposition}\label{degsum}
Let $\opA\in C(\ol{D},\RRn)\cap C^1(D,\RRn)$ such that $z\not\in\opA(\partial D)$ and $\det \opA'(u)\neq 0$ for all $u\in\opA^{-1}(z)$. Then
$\deg(\opA,D,z)=\sum_{u\in \opA^{-1}(z)} \sgn \det \opA'(u)$.
\end{proposition}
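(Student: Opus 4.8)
The plan is to reduce the statement to the classical sum-over-preimages formula via the excision and additivity properties already established, after handling one technical point: that the preimage $\opA^{-1}(z)$ is finite and consists of points where $\opA$ is locally a diffeomorphism. First I would observe that $\opA^{-1}(z)\cap\partial D=\emptyset$ by hypothesis, so $\opA^{-1}(z)$ is a closed subset of the open set $D$; since $\ol D$ is compact, $\opA^{-1}(z)$ is compact. At each $u\in\opA^{-1}(z)$ we have $\det\opA'(u)\neq 0$, so by the inverse function theorem $\opA$ restricts to a diffeomorphism from some open ball $B(u,\rho_u)\subset D$ onto an open neighborhood of $z$; in particular $u$ is isolated in $\opA^{-1}(z)$. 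A compact set of isolated points is finite, so $\opA^{-1}(z)=\{u_1,\dots,u_m\}$, and we may choose the radii $\rho_j$ small enough that the balls $D_j:=B(u_j,\rho_j)$ are pairwise disjoint and contained in $D$.

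Next I would invoke \cref{degcoro}(iii) (additivity for sequences of disjoint open sets): with $D':=\bigcup_{j=1}^m D_j$, we have $z\notin\opA(\ol D\setminus D')$ because $\ol D\setminus D'$ contains no preimage of $z$, so
$$
\deg(\opA,D,z)=\sum_{j=1}^m \deg(\opA,D_j,z).
$$
It then remains to show $\deg(\opA,D_j,z)=\sgn\det\opA'(u_j)$ for each $j$. By excision (\cref{degcoro}(i)) the left side is unchanged if $D_j$ is shrunk further, so we may assume $\opA$ is a diffeomorphism onto its image on all of $\ol{D_j}$ and that $\opA'$ has constant sign of determinant there (possible by continuity of $\det\opA'$ and $\det\opA'(u_j)\neq 0$).

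For the local computation I would use the homotopy invariance axiom (\cref{exundeg}(iii)). On $B(u_j,\rho_j)$, after the affine change of variables $v\mapsto u_j+v$ we may assume $u_j=0$ and $z=0$; consider the homotopy $\opK(v,\lambda)=\tfrac1\lambda\opA(\lambda v)$ for $\lambda\in(0,1]$, extended by $\opK(v,0)=\opA'(0)v$, which is continuous on $\ol{B(0,\rho)}\times[0,1]$ for $\rho$ small (Taylor expansion) and never vanishes on the boundary sphere for $\rho$ small, since $\opA'(0)$ is invertible and the remainder is $o(|v|)$. Hence $\deg(\opA,B,0)=\deg(\opA'(0),B,0)$. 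Finally, for the invertible linear map $L=\opA'(0)$, connect $L$ within the invertible matrices to a diagonal matrix $\mathrm{diag}(\pm1,1,\dots,1)$ with the same sign of determinant — using that $GL^+(n)$ is path-connected and $L$ with $\det L<0$ can be reached from the reflection — and apply homotopy invariance once more (the linear homotopy $tL_0+(1-t)L$ along such a path stays invertible, hence never sends the boundary to $0$), together with the normalization axiom, to conclude $\deg(L,B,0)=\sgn\det L$.

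The main obstacle is the last step, $\deg(L,B,0)=\sgn\det L$ for invertible linear $L$: one must be careful that the connecting path stays inside $GL(n,\RR)$ so that $0$ is never attained on $\partial B$, and one must verify the reduction of a reflection $\mathrm{diag}(-1,1,\dots,1)$ to identity is genuinely impossible (degree $-1$, not $+1$) — this is exactly where the orientation content of the degree enters and cannot be obtained from the normalization axiom alone; it follows instead from the explicit integral representation of the degree for smooth maps, or may simply be quoted from \cite{dinca2009brouwer}. All other steps are routine consequences of the axioms and the inverse function theorem.
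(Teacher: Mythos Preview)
Your proof is correct and follows the standard textbook argument. However, note that the paper does not actually supply its own proof of this proposition: it is stated in \cref{sectopdeg} as background material, with the explicit remark that the section ``mainly follow[s] the set of notes by Dinca and Mawhin \cite{dinca2009brouwer}\ldots where the proofs may be found.'' So there is no paper proof to compare against; the proposition is simply quoted from the literature.

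That said, your argument is precisely the classical one found in those references: finiteness of $\opA^{-1}(z)$ via the inverse function theorem and compactness, reduction to local contributions by additivity and excision, linearization via the homotopy $\lambda^{-1}\opA(\lambda v)$, and finally the computation $\deg(L,B,0)=\sgn\det L$ for invertible $L$ using the path-connectedness of $GL^+(n,\RR)$. You correctly flag that the last step---distinguishing $\deg=\pm 1$ for a reflection---cannot be extracted from the axioms of \cref{exundeg} alone and requires either the analytic (integral) definition or a direct citation; this is the right place to defer to \cite{dinca2009brouwer}.
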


Based on this, we call a point $u$ \emph{non-degenerate} if $\det \opA'(u)\neq 0$  and \emph{degenerate} otherwise.
When talking about an \emph{isolated solution} $u$ of $\opA$, i.e. a point $u\in\opA^{-1}(z)$ such that there is a ball $B(u,r)$ with 
$\opA^{-1}(z)\cap B(u,r)=\{u\}$, the concept of the \emph{index} is useful. 
\begin{definition}
Let $\opA\in C(\ol{D},\RRn)$ and let $u$ be an isolated solution. Then the \emph{(topological) index} of $\opA$ at $u$ is defined as
$i(\opA,u)=\deg(\opA, B(u,r),z)$,
where $\opA^{-1}(z)\cap B(u,r)=\{u\}$. 
\end{definition}

It follows from the excision property that $i(\opA,u)$ is independent of $r$ (for sufficiently small values of $r>0$), so the definition makes sense.

\begin{proposition}\label{indxsgndet}
\leavevmode
\begin{enumerate}[label=(\roman*)]
\item Let $\opA\in C(\ol{D},\RRn)$ such that $z\not\in\opA(\partial D)$ and $\opA^{-1}(z)$ is finite. Then
$\deg(\opA,D,z)=\sum_{u\in \opA^{-1}(z)} i(\opA,u)$.
\item Let $u\in\RRn$ be a zero of $\opA\in C(\ol{D},\RRn)$,  where $D$ an open neighborhood of $u$. If $\opA$ is differentiable at $u$ with $\det \opA'(u)\neq 0$, then
$i(\opA,u)=\sgn\det \opA'(u)$.
\end{enumerate}
\end{proposition}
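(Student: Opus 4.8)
The plan is to treat the two parts by different means: part~(i) is a direct consequence of the additivity of the degree, while part~(ii) rests on a homotopy that replaces $\opA$ near $u$ by its linearization $\opA'(u)$.

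\textbf{Part (i).} Since $\opA^{-1}(z)$ is finite, I would enumerate it as $\{u_1,\dots,u_m\}$ and observe that each $u_j$ lies in the \emph{open} set $D$: it cannot belong to $\partial D$, since $\opA(u_j)=z\notin\opA(\partial D)$, and $u_j\in\ol D$. Hence one may choose radii $r_j>0$ so small that the balls $B(u_j,r_j)$ are pairwise disjoint and all contained in $D$. Then $\opA^{-1}(z)\subset\bigcup_{j}B(u_j,r_j)$, so $z\notin\opA\big(\ol D\setminus\bigcup_{j}B(u_j,r_j)\big)$; in particular $z\notin\opA(\partial B(u_j,r_j))$, so each index $i(\opA,u_j)=\deg(\opA,B(u_j,r_j),z)$ is well defined and independent of the (small) $r_j$ by the excision-based remark following the definition of the index. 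Applying additivity of the degree to the finite family $\{B(u_j,r_j)\}_{j=1}^{m}$ --- in one stroke via \cref{degcoro}(iii), or by iterating the additivity axiom of \cref{exundeg} over $j$ --- gives $\deg(\opA,D,z)=\sum_{j=1}^{m}\deg(\opA,B(u_j,r_j),z)=\sum_{j=1}^{m} i(\opA,u_j)$.

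\textbf{Part (ii).} After a translation I would assume $u=0$ and $z=0$, and set $L:=\opA'(0)\in\mathcal{L}(\RRn,\RRn)$, invertible by hypothesis. Differentiability of $\opA$ at $0$ lets me write $\opA(x)=Lx+\rho(x)$ with $\rho(x)=o(\|x\|)$ as $x\to 0$. I would then use the straight-line homotopy $\opK(x,\lambda):=\lambda\,\opA(x)+(1-\lambda)Lx=Lx+\lambda\rho(x)$, which is continuous on $\ol{B(0,r)}\times[0,1]$ because $\opA$ is continuous on $\ol D$ and $L$ is linear. On $\partial B(0,r)$ one has $\|\opK(x,\lambda)\|\ge\|L^{-1}\|^{-1}r-\sup_{\|x\|=r}\|\rho(x)\|$, and since $\rho(x)=o(\|x\|)$ the right-hand side is strictly positive for all sufficiently small $r$; hence $0\notin\opK(\partial B(0,r)\times[0,1])$, and homotopy invariance gives $i(\opA,0)=\deg(\opA,B(0,r),0)=\deg(L,B(0,r),0)$. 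Finally the linear map $L$ belongs to $C(\ol{B(0,r)},\RRn)\cap C^1(B(0,r),\RRn)$, satisfies $0\notin L(\partial B(0,r))$ and $L^{-1}(0)=\{0\}$ with $\det L'(0)=\det L\neq0$, so \cref{degsum} applies and yields $\deg(L,B(0,r),0)=\sgn\det L=\sgn\det\opA'(0)$.

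The main obstacle will be the homotopy estimate in part~(ii): I must exploit differentiability \emph{at the single point} $u$ (not $C^1$ on a neighborhood, so \cref{degsum} cannot be invoked for $\opA$ directly) to control the remainder $\rho$ uniformly over the sphere $\partial B(u,r)$ for small $r$, and then verify that the linear homotopy between $\opA$ and $\opA'(u)$ never attains the value $z$ there; everything else is bookkeeping with \cref{exundeg}, \cref{degcoro}, and \cref{degsum}. If one preferred not to use \cref{degsum} for the linear map, an alternative is to note that $\mathrm{GL}(n,\RR)$ has exactly two path components, on each of which the (homotopy-invariant, identity-normalized) degree is constant, and to evaluate it as $-1$ on a single reflection; but the route through \cref{degsum} is shorter.
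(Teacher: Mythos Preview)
Your proof is correct and follows the standard textbook approach. Note that the paper does not actually supply a proof of this proposition: \cref{indxsgndet} is stated in the background section on topological degree theory, where the authors write that proofs ``may be found'' in the cited references (Dinca--Mawhin). Your argument for part~(i) via additivity and for part~(ii) via the linear homotopy to $\opA'(u)$ is exactly the standard one from those sources, so there is nothing to compare against.
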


The topological degree (resp. index) can be naturally extended to continuous mappings of type $\opA : X\to Y$, where $X$ and $Y$ are $n$-dimensional oriented topological vector spaces
and the degree (resp. index) is independent of the choice of the basis. We refer the reader to \cite[Chapter 6]{dinca2009brouwer} for details.
\begin{theorem}\cite[Theorem 1.3.1]{dinca2021brouwer} (\cite[Theorem 6.3.1]{dinca2009brouwer})\label{topvecindx}
Let $\opA : D\to Y$ be continuous, where $X$ and $Y$ are $n$-dimensional topological vector spaces and $D\subset X$ is an open neighborhood of $u\in X$.
Let $h:X\to\RR^n$ and $g:Y\to\RR^n$ be linear homeomorphisms.
Suppose that $\opA$ is differentiable at $u$ and that $\opA'(u):X\to Y$ is invertible. Then $i(\opA,u)=\sgn\det g\opA'(u)h^{-1}$.
\end{theorem}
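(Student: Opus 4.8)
The statement \cref{topvecindx} asserts that the index of $\opA$ at $u$, computed intrinsically in $n$-dimensional topological vector spaces $X,Y$, equals $\sgn\det(g\,\opA'(u)\,h^{-1})$ for any choice of linear homeomorphisms $h:X\to\RR^n$, $g:Y\to\RR^n$. The plan is to reduce everything to the Euclidean case, where \cref{indxsgndet}(ii) already gives the answer, and then check that the reduction is consistent (i.e. that the number produced does not depend on the auxiliary choices $h,g$). First I would recall that on a finite-dimensional real vector space there is a unique vector-space topology, so $h$ and $g$ are automatically homeomorphisms once they are linear bijections; the content is purely linear-algebraic. The degree (hence index) for maps between oriented $n$-dimensional spaces is defined in \cite[Chapter 6]{dinca2009brouwer} by transporting along linear isomorphisms, so by construction $i(\opA,u)=\deg\!\big(g\,\opA\,h^{-1},\,h(D),\,g(z)\big)$ with $z=\opA(u)$ the value at the zero (here $z$ plays the role of the target point; since $u$ is a zero, $z=0$, but the argument is the same for any isolated solution).

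Second, I would apply the finite-dimensional chain rule: $g\,\opA\,h^{-1}$ is differentiable at $h(u)$ with derivative $(g\,\opA\,h^{-1})'(h(u)) = g\,\opA'(u)\,h^{-1}:\RR^n\to\RR^n$, which is invertible precisely because $\opA'(u):X\to Y$ is assumed invertible and $g,h$ are isomorphisms. Hence $h(u)$ is a non-degenerate zero of the Euclidean map $g\,\opA\,h^{-1}$, and \cref{indxsgndet}(ii) yields
\[
i(\opA,u) \;=\; i\!\big(g\,\opA\,h^{-1},\,h(u)\big) \;=\; \sgn\det\!\big(g\,\opA'(u)\,h^{-1}\big).
\]
This is the asserted formula.

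Third — and this is the only genuine point requiring care — I would verify that the left-hand side $i(\opA,u)$ is well-defined independently of the chosen $h,g$, so that the right-hand side, which a priori depends on $h,g$, is in fact the same integer for all admissible choices. Concretely, if $h_1,g_1$ and $h_2,g_2$ are two such pairs, then $g_2\,\opA'(u)\,h_2^{-1} = (g_2 g_1^{-1})\,\big(g_1\,\opA'(u)\,h_1^{-1}\big)\,(h_1 h_2^{-1})$, so the determinants differ by the factor $\det(g_2 g_1^{-1})\det(h_1 h_2^{-1})$; for the signs to agree one needs this factor to be positive. This is exactly where the \emph{orientation} hypothesis enters: $X$ and $Y$ being \emph{oriented} means a choice of orientation (equivalently, an equivalence class of ordered bases, or of linear isomorphisms to $\RR^n$ up to positive determinant) has been fixed, and "$h,g$ linear homeomorphisms" is implicitly understood as "orientation-preserving" relative to these choices — equivalently, the degree-theoretic construction in \cite[Chapter 6]{dinca2009brouwer} only uses such $h,g$. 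Under that convention $\det(g_2 g_1^{-1})>0$ and $\det(h_1 h_2^{-1})>0$, so the sign is unambiguous. I would therefore state the lemma under this understanding (as the cited references do) and note that the well-definedness is precisely \cite[Chapter 6]{dinca2009brouwer}; the remaining computation is the two displayed lines above. The main obstacle is thus not analytic at all but bookkeeping: being explicit about the orientation convention so that the invariance of $i(\opA,u)$ is not merely asserted but seen to follow from $\det>0$ of the relevant change-of-basis matrices, after which the chain rule and \cref{indxsgndet}(ii) finish the argument immediately.
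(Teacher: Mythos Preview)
The paper does not supply its own proof of \cref{topvecindx}; the result is quoted verbatim from \cite[Theorem 1.3.1]{dinca2021brouwer} (equivalently \cite[Theorem 6.3.1]{dinca2009brouwer}) and used as a black box. Your proposal is the standard argument and is correct: transport to $\RR^n$ via the defining isomorphisms, apply the chain rule to identify $(g\,\opA\,h^{-1})'(h(u))=g\,\opA'(u)\,h^{-1}$, and invoke \cref{indxsgndet}(ii). Your discussion of the orientation bookkeeping is exactly the point the paper itself addresses in \cref{basrmk}, where the basis-independence of the degree is shown to hold provided the source and target spaces are oriented consistently; so your caveat about ``orientation-preserving $h,g$'' matches the convention the paper adopts.
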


The following result says the degree is stable under (almost all) small perturbations of the right-hand side of $\opA(u)=z$, 
and that the degree provides a lower bound for the number of solutions
of the perturbed equation.

\begin{theorem}\cite[Corollary 7.4]{cronin1995fixed}\label{croninperturb}
Let $D\subset\RR^n$ be a bounded open set and let $\opA:D\to\RR^n$ be a $C^1$ mapping.
If $z\not\in\opA(\partial D)$, then there is a $\delta>0$ such that if $z'\in B(z,\delta)\setminus E_{z,\delta}$,
then 
$$
\deg(\opA,D,z)=\deg(\opA,D,z'),
$$
where $E_{z,\delta}\subset \{w\in B(z,\delta) : \det\opA'(u)=0,\; \opA(u)=w\}$.
Furthermore, $\opA^{-1}(z')$ consists of a finite number $m$ of points, where $|\deg(\opA,D,z)|\le m$ and $m\equiv\deg(\opA,D,z)$ (mod 2).
\end{theorem}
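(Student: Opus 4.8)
The plan is to separate the statement into two logically independent pieces: the degree equality $\deg(\opA,D,z)=\deg(\opA,D,z')$, which in fact holds for \emph{every} $z'$ sufficiently close to $z$, and the structural information about $\opA^{-1}(z')$, for which one must discard a measure-zero set of critical values. Throughout I use that $\opA$ extends to an element of $C(\ol D,\RRn)\cap C^1(D,\RRn)$, so that the degree and \cref{degsum} are available. Since $D$ is bounded, $\partial D=\ol D\setminus D$ is compact, hence so is $\opA(\partial D)$; as $z\notin\opA(\partial D)$, the number $\rho:=\mathrm{dist}(z,\opA(\partial D))$ is strictly positive, and I set $\delta:=\rho$. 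For any $z'\in B(z,\delta)$ and $u\in\partial D$ one has $|\opA(u)-z'|\ge|\opA(u)-z|-|z-z'|\ge\rho-|z-z'|>0$, so $z'\notin\opA(\partial D)$ and $\deg(\opA,D,z')$ is defined; moreover $B(z,\delta)$ is convex (hence connected) and contained in $\RRn\setminus\opA(\partial D)$, so by the standard fact that $w\mapsto\deg(\opA,D,w)$ is constant on connected components of $\RRn\setminus\opA(\partial D)$ (obtained from the homotopy invariance in \cref{exundeg} by sliding the target along the segment $[z,z']$, together with translation invariance of the degree) we get $\deg(\opA,D,z)=\deg(\opA,D,z')$ for all such $z'$.

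For the remaining assertions, let $E_{z,\delta}$ denote the set of critical values of $\opA$ lying in the ball, $E_{z,\delta}=\{\opA(u):u\in D,\ \det\opA'(u)=0\}\cap B(z,\delta)$, which is contained in the set displayed in the statement. Since $\opA$ is $C^1$ and its domain and target have the same dimension $n$, Sard's theorem shows $E_{z,\delta}$ has Lebesgue measure zero, so $B(z,\delta)\setminus E_{z,\delta}$ is nonempty (in fact of full measure). Fix $z'\in B(z,\delta)\setminus E_{z,\delta}$; then $z'$ is a regular value of $\opA$: every $u\in\opA^{-1}(z')$ lies in $D$ (because $z'\notin\opA(\partial D)$ forces $\opA^{-1}(z')\cap\partial D=\emptyset$) and satisfies $\det\opA'(u)\neq0$. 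By the inverse function theorem each such $u$ is isolated in $\opA^{-1}(z')$, while $\opA^{-1}(z')$ is closed in the compact set $\ol D$, hence compact; a compact set all of whose points are isolated is finite, so $\opA^{-1}(z')=\{u_1,\dots,u_m\}$ for some integer $m\ge 0$.

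Finally I apply \cref{degsum}, whose hypotheses are now all verified, to obtain $\deg(\opA,D,z')=\sum_{j=1}^m\sgn\det\opA'(u_j)$, a sum of $m$ terms each equal to $\pm1$; hence $|\deg(\opA,D,z')|\le m$ and $\deg(\opA,D,z')\equiv m\pmod 2$. Combining with the degree equality from the first paragraph yields $|\deg(\opA,D,z)|\le m$ and $m\equiv\deg(\opA,D,z)\pmod 2$, which completes the argument. The only genuinely delicate inputs are Sard's theorem at the borderline regularity $C^1$ (valid precisely because the two dimensions agree) and the local constancy of the degree in the target point; everything else is routine manipulation with the degree axioms recorded above.
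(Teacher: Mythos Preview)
Your proof is correct and matches the paper's approach: the paper does not give its own proof of this cited result but only remarks that ``the proof is based on Sard's theorem, which says that $E_{z,\delta}$ has $n$-dimensional Lebesgue measure zero,'' and your argument fleshes out exactly that sketch, combining Sard with local constancy of the degree and the sum formula of \cref{degsum}.
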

The proof is based on Sard's theorem, which says that $E_{z,\delta}$ has $n$-dimensional Lebesgue measure zero.
Next, we recall a tool that is very useful for the computation of the degree in the degenerate case.
\begin{theorem}\cite[Theorem 6.4.2]{dinca2009brouwer}\label{leraysec}
Let $X$ and $Z$ be $n$-dimensional topological vector spaces.
Let $L:X\to Z$ be a linear mapping with $\ker L\neq\{0\}$ and $V\subset Z$ be a vector space such that $Z=V\oplus\Ran L$.
Let $D\subset X$ open and bounded, $r:\ol{D}\to V$ continuous with $0\not\in \opA(\partial D)$, where $\opA=L+r$. Then, for each invertible linear map $J : \ker L\to V$,
and each projector $P$ with $\Ran P=\ker L$, the relation
$$
\deg(\opA,D,0)=i(L+JP,0) \deg(J^{-1}r|_{\ker L}, D\cap\ker L, 0)
$$
called \emph{Leray's second reduction formula} holds true.
\end{theorem}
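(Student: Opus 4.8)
Since \cref{leraysec} is quoted from \cite{dinca2009brouwer}, what follows is only a sketch of the argument one would give. The plan is to factor $\opA$ through a linear isomorphism manufactured from $L+JP$ and then invoke Leray's \emph{first} reduction formula. First I would organise the algebra: rank--nullity gives $\dim\ker L=n-\dim\Ran L=\dim V$, so $J:\ker L\to V$ is genuinely an isomorphism; setting $W:=\ker P$ one has $X=\ker L\oplus W$ with $L|_W:W\to\Ran L$ an isomorphism. Put $\mathcal{B}:=L+JP$. Since $P$ restricts to the identity on its range $\ker L$ and to $0$ on $W$, one gets $\mathcal{B}|_{\ker L}=J$ and $\mathcal{B}|_W=L|_W$, so $\mathcal{B}:X\to Z$ is a linear isomorphism carrying the splitting $\ker L\oplus W$ onto $V\oplus\Ran L$; being linear and invertible, $\mathcal{B}$ is differentiable at $0$ with derivative $\mathcal{B}$, whence \cref{topvecindx} gives $i(L+JP,0)=\sgn\det(g\mathcal{B}h^{-1})=:\sigma\in\{-1,1\}$ for the orienting homeomorphisms $h:X\to\RRn$, $g:Z\to\RRn$.

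Next I would rewrite $\opA$ in these coordinates. For $x=u+w\in\ker L\oplus W$ we have $\opA(x)=r(x)+Lw$, already decomposed along $V\oplus\Ran L$ because $r(x)\in V$ and $Lx=Lw$; applying $\mathcal{B}^{-1}$, which is $J^{-1}$ on $V$ and $(L|_W)^{-1}$ on $\Ran L$, yields
\[
\mathcal{C}(x):=\mathcal{B}^{-1}\opA(x)=J^{-1}r(x)+w=x-\bigl(Px-J^{-1}r(x)\bigr),
\]
i.e.\ $\mathcal{C}=\id-G$ with $G(x):=Px-J^{-1}r(x)$, the crucial feature being $G(\ol D)\subseteq\ker L$. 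As $\mathcal{B}^{-1}$ is a homeomorphism fixing $0$ and $0\notin\opA(\partial D)$, also $0\notin\mathcal{C}(\partial D)$, so all degrees below are defined; the multiplicativity of the degree under composition with the linear isomorphism $\mathcal{B}$ then gives $\deg(\opA,D,0)=\sigma\,\deg(\mathcal{C},D,0)=i(L+JP,0)\,\deg(\id-G,D,0)$.

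Finally I would apply the first reduction formula to $\id-G$: since $G$ takes values in the subspace $\ker L$, every zero of $\id-G$ lies in $D\cap\ker L$ and $\deg(\id-G,D,0)=\deg\bigl((\id-G)|_{\ker L},\,D\cap\ker L,\,0\bigr)$, the boundary condition being legitimate because $\partial_{\ker L}(D\cap\ker L)\subseteq\partial D\cap\ker L$, where $\opA|_{\ker L}=r\neq0$. Since $Pu=u$ for $u\in\ker L$, we have $(\id-G)(u)=J^{-1}r(u)=J^{-1}r|_{\ker L}(u)$, hence $\deg(\id-G,D,0)=\deg(J^{-1}r|_{\ker L},D\cap\ker L,0)$; combining the identities proves the formula. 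I expect the main obstacle to be not any single computation but the bookkeeping --- checking that every restricted or transformed map still avoids $0$ on the relevant boundary so that its degree is defined, and carrying the single global sign $\sigma=i(L+JP,0)$ correctly through the orientation-dependent identifications --- together with the one genuinely external input, the first reduction formula, which in turn follows from the excision property and a deformation of $\id-G$ onto $\ker L$.
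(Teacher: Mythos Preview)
The paper does not give its own proof of \cref{leraysec}; it is quoted from \cite{dinca2009brouwer} as an auxiliary tool and used only via \cref{noninvpert}. Your sketch is essentially the standard argument from that reference: factor $\opA$ as the isomorphism $\mathcal{B}=L+JP$ composed with $\mathcal{C}=\mathcal{B}^{-1}\opA=\id-G$, use the product formula for the degree under composition with a linear isomorphism, and then apply Leray's first reduction formula to $\id-G$ since $G$ takes values in $\ker L$. The algebra and the boundary checks you outline are correct, so there is nothing to compare against and nothing to fix.
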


\begin{corollary}\label{noninvpert}
Let $L:X\to Z$ be a linear mapping with $\ker L\neq\{0\}$ and let $Q$ be a projector with $\ker Q=\Ran L$.
Also, let $\mathcal{N}:D\times[0,1]\to Z$ be continuous, with $D\subset X$ open and bounded.
Furthermore, assume the following hold true.
\begin{enumerate}[label=\normalfont(\roman*)]
\item $Lu+\lambda\mathcal{N}(u,\lambda)\neq 0$ for all $u\in\partial D$ and $\lambda\in(0,1]$.
\item $Q\mathcal{N}(u,0)\neq 0$ for each $u\in \partial D$.
\end{enumerate}
Then 
$$
\deg(L+\mathcal{N}(\cdot,1), D, 0)=i(L+Q)\deg(Q\mathcal{N}(\cdot,0)|_{\ker L}, D\cap\ker L, 0).
$$
\end{corollary}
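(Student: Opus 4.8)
The plan is to derive this as a consequence of Leray's second reduction formula (\cref{leraysec}) combined with a homotopy argument, taking $J := Q|_{\ker L}$ as the invertible linear map and $P$ a projector onto $\ker L$. First I would set up the decomposition $Z = V \oplus \Ran L$ with $V := \Ran Q$; since $\ker Q = \Ran L$, the restriction $Q|_V$ is the identity and $Q|_{\ker L} : \ker L \to V$ is a linear isomorphism (here one uses $\dim \ker L = \dim Z - \dim \Ran L = \dim V$, which holds because $X$ and $Z$ have the same finite dimension $n$ and $L$ need not be injective but the rank–nullity count matches). Set $J := Q|_{\ker L}$. Then for the map $\opA_1 := L + \mathcal{N}(\cdot,1)$ one would want to apply \cref{leraysec} with $r := Q\mathcal{N}(\cdot,1)$ — but $\mathcal{N}(\cdot,1)$ is not $V$-valued, so a preliminary homotopy is needed to replace $\mathcal{N}(\cdot,1)$ by its $V$-component without changing the degree.

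The key steps, in order. \textbf{Step 1.} Use hypothesis (i) with $\lambda = 1$ to see $z = 0 \notin \opA_1(\partial D)$, so $\deg(\opA_1, D, 0)$ is well-defined. \textbf{Step 2 (homotopy to the full range map).} Consider the homotopy $\opK(u,\lambda) := Lu + \lambda \mathcal{N}(u,\lambda)$ on $\ol{D}\times[0,1]$. By hypothesis (i), $0 \notin \opK(\partial D \times (0,1])$; at $\lambda = 0$, $\opK(u,0) = Lu$, and we need $0 \notin \opK(\partial D \times \{0\})$, i.e. $Lu \neq 0$ on $\partial D$ — this follows from hypothesis (ii), since $Q\mathcal{N}(u,0) \neq 0$ forces $\mathcal{N}(u,0) \notin \ker Q = \Ran L$, and in particular if $Lu = 0$ then... wait, that argument shows $\mathcal{N}(u,0)\notin \Ran L$, not $Lu \neq 0$. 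So Step 2 as stated needs care: the homotopy invariance of the degree cannot be invoked naively down to $\lambda = 0$. Instead I would keep $\lambda \in (0,1]$ and handle the boundary-of-parameter limit via \cref{leraysec} directly, or better, introduce the two-parameter homotopy $\opK(u,\lambda,s) := Lu + \lambda\big(sQ + (1-s)I\big)\mathcal{N}(u,\lambda)$ interpolating between $L + \lambda\mathcal{N}$ and $L + \lambda Q\mathcal{N}$. The hard part is verifying that $0$ is avoided on $\partial D$ throughout this second homotopy; one would need an additional observation, likely that for $\lambda$ small the term $Lu$ dominates off a neighborhood of $\ker L$ while on $\ker L$ the $Q\mathcal{N}$ term is controlled by hypothesis (ii) — this is exactly the kind of estimate that \cref{leraysec} packages, so the cleanest route is to apply \cref{leraysec} to $\opA := L + Q\mathcal{N}(\cdot,1)$ after first showing $\deg(L + \mathcal{N}(\cdot,1),D,0) = \deg(L+Q\mathcal{N}(\cdot,1),D,0)$.

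\textbf{Step 3 (apply Leray's formula).} With $r := Q\mathcal{N}(\cdot,1) : \ol{D}\to V$ and the isomorphism $J = Q|_{\ker L}$, and any projector $P$ with $\Ran P = \ker L$, \cref{leraysec} gives
$$
\deg(L + Q\mathcal{N}(\cdot,1), D, 0) = i(L + JP, 0)\,\deg\big(J^{-1}r|_{\ker L},\, D \cap \ker L,\, 0\big).
$$
\textbf{Step 4 (identify the two factors).} For the index factor: $JP = Q P$; choosing $P$ adapted to the decomposition so that $L + QP = L + Q$ on the nose (i.e. $P$ is the projector onto $\ker L$ along a complement mapped isomorphically by $L$), one gets $i(L + JP, 0) = i(L+Q)$. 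For the degree factor: $J^{-1} r|_{\ker L} = (Q|_{\ker L})^{-1} Q \mathcal{N}(\cdot,1)|_{\ker L}$; composing with the homeomorphism $(Q|_{\ker L})^{-1}$ does not change the degree (by the change-of-basis invariance recorded before \cref{topvecindx}), so this equals $\deg(Q\mathcal{N}(\cdot,1)|_{\ker L}, D\cap\ker L, 0)$ viewed as a map $\ker L \to \Ran Q$. \textbf{Step 5 (replace $\lambda=1$ by $\lambda=0$ inside $\ker L$).} Finally, restrict to $\ker L$: the homotopy $\lambda \mapsto Q\mathcal{N}(\cdot,\lambda)|_{\ker L}$ on $(D\cap \ker L)\times[0,1]$ avoids $0$ on $\partial(D\cap\ker L) \subseteq \partial D \cap \ker L$ — for $\lambda=0$ by hypothesis (ii), and for $\lambda \in (0,1]$ because on $\ker L$ one has $Lu = 0$, so $Lu + \lambda\mathcal{N}(u,\lambda) = \lambda\mathcal{N}(u,\lambda) \neq 0$ by (i), whence $Q\mathcal{N}(u,\lambda) = \lambda^{-1}Q(Lu + \lambda\mathcal{N}(u,\lambda))$; this last step needs that $0 \notin \opA(\partial D)$ implies the $Q$-projection is nonzero, which is \emph{not} automatic — so here one genuinely uses that the relevant boundary points lie in $\ker L$ and invokes (i) together with the fact that $\mathcal{N}(u,\lambda)\notin \ker Q$ cannot be concluded, meaning Step 5 requires hypothesis (ii) to be upgraded or the homotopy to be run only after projection. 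Homotopy invariance then yields $\deg(Q\mathcal{N}(\cdot,1)|_{\ker L},\cdot,0) = \deg(Q\mathcal{N}(\cdot,0)|_{\ker L}, D\cap\ker L, 0)$, and combining Steps 3–5 gives the claim.

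The main obstacle I anticipate is Step 2/Step 5: ensuring that $0$ is avoided on $\partial D$ (resp. $\partial(D\cap \ker L)$) along the homotopies that swap $\mathcal{N}$ for $Q\mathcal{N}$ and that move $\lambda$ from $1$ to $0$. The two hypotheses (i) and (ii) are precisely calibrated for this — (i) controls the full map for $\lambda > 0$ and (ii) controls the reduced map at $\lambda = 0$ — but stitching them together requires either a careful two-parameter homotopy or, more elegantly, invoking \cref{leraysec} at $\lambda = 1$ first (where (i) applies directly) and only afterwards running the homotopy \emph{inside} $\ker L$, where $Lu \equiv 0$ makes the nonvanishing of $\lambda\mathcal{N}(u,\lambda)$ follow from (i) and the nonvanishing at $\lambda = 0$ follow from (ii). I would adopt that ordering to keep the boundary estimates transparent.
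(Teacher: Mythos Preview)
Your ingredients are right---Leray's reduction formula plus homotopy invariance---but the route you sketch has a genuine gap, and you correctly flag it yourself. The obstruction is that none of your proposed homotopies can be shown to avoid $0$ on $\partial D$ using only hypotheses (i) and (ii). Concretely: to apply \cref{leraysec} at $\lambda=1$ you first need $\deg(L+\mathcal{N}(\cdot,1),D,0)=\deg(L+Q\mathcal{N}(\cdot,1),D,0)$, but along the convex homotopy $s\mapsto L+((1-s)I+sQ)\mathcal{N}(\cdot,1)$ the projected equations read $Q\mathcal{N}(u,1)=0$ and $Lu+(1-s)\mathcal{N}(u,1)=0$, and (i) only rules out $Lu+\lambda\mathcal{N}(u,\lambda)=0$, not $Lu+\mu\mathcal{N}(u,1)=0$ for arbitrary $\mu$. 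Likewise, your Step~5 needs $Q\mathcal{N}(u,\lambda)\neq 0$ on $\partial(D\cap\ker L)$ for $\lambda\in(0,1]$; hypothesis (i) only gives $\mathcal{N}(u,\lambda)\neq 0$ there, and nothing prevents $\mathcal{N}(u,\lambda)\in\Ran L=\ker Q$.

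The paper resolves this with a single, well-chosen homotopy that you are missing:
\[
\opA(u,\lambda)=Lu+(1-\lambda)Q\mathcal{N}(u,\lambda)+\lambda\mathcal{N}(u,\lambda).
\]
The point is that for $\lambda\in(0,1]$, projecting $\opA(u,\lambda)=0$ by $Q$ and $I-Q$ yields the \emph{equivalent} system $Q\mathcal{N}(u,\lambda)=0$, $Lu+\lambda(I-Q)\mathcal{N}(u,\lambda)=0$; but these two together say exactly $Lu+\lambda\mathcal{N}(u,\lambda)=0$, which (i) forbids on $\partial D$. At $\lambda=0$ one gets $Lu=0$ and $Q\mathcal{N}(u,0)=0$, forbidden by (ii). So the boundary condition holds for all $\lambda\in[0,1]$ at once, and homotopy invariance gives $\deg(L+\mathcal{N}(\cdot,1),D,0)=\deg(L+Q\mathcal{N}(\cdot,0),D,0)$. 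Now \cref{leraysec} applies cleanly at $\lambda=0$ with $r=Q\mathcal{N}(\cdot,0)$ (already $V$-valued) and $J=Q|_{\ker L}$, giving the stated formula directly---no second homotopy inside $\ker L$ is needed. The moral: interpolate simultaneously in the coefficient and in the argument of $\mathcal{N}$, so that the zero set of the auxiliary homotopy coincides \emph{exactly} with that of $Lu+\lambda\mathcal{N}(u,\lambda)$.
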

\begin{proof}
We adapt the the proof of \cite[Theorem 2.2.3]{dinca2021brouwer}.
Define the homotopy
$$
\opA(u,\lambda)=Lu + (1-\lambda) Q \mathcal{N}(u,\lambda) + \lambda \mathcal{N}(u,\lambda).
$$
Fix $\lambda\in(0,1]$. Projecting the equation $\opA(u,\lambda)=0$ with $Q$ and $I-Q$ (i.e. onto the complementary spaces $\Ran Q$ and $\Ran(I-Q)=\ker Q=\Ran L$),
we get that $\opA(u,\lambda)=0$ is equivalent to the system $Q\mathcal{N}(u,\lambda)=0$, $Lu + \lambda (I-Q)\mathcal{N}(u,\lambda)=0$.
But this is equivalent to $Lu + \lambda \mathcal{N}(u,\lambda)=0$. By assumption (i), $\opA(u,\lambda)\neq 0$ for all $u\in\partial D$ and $\lambda\in(0,1]$.
Further, $\opA(u,0)=0$ is equivalent to the system $u\in\ker L$, $Q\mathcal{N}(u,0)=0$, hence by assumption (ii), $\opA(u,0)\neq 0$ for all $u\in\partial D$.
Using the homotopy invariance and Leray's second reduction formula with $V=\ker L$, $J=I$ and $r=\mathcal{N}(\cdot,0)$, we get
$$
\deg(\opA(\cdot,1),D,0)=\deg(\opA(\cdot,0),D,0)=i(L+Q,0)\deg(Q\mathcal{N}(\cdot,0)|_{\ker L}, D\cap\ker L, 0).\qedhere
$$
\end{proof}

\subsection{Orientation-preserving mappings}
An important class of mappings for which the degree behaves rather nicely is the following.

\begin{definition}
Let $U\subset\RRn$ be open.
\begin{enumerate}[label=\normalfont(\roman*)]
\item A linear map $L:\RRn\to\RRn$ is said to be \emph{orientation-preserving} if $\det L\ge 0$.
\item A mapping $\opA\in C^2(U,\RRn)$ is said to be \emph{strictly orientation-preserving in $U$} if 
\begin{enumerate}[label=(\alph*)]
	\item $\opA'(u)$ is orientation-preserving for all $u\in U$, and
	\item the set $\{u\in U : \det\opA'(u)=0\}$ is nowhere dense in $U$.
\end{enumerate}
\item A mapping $\opA\in C(U,\RRn)$ is said to be \emph{orientation-preserving} if for every $V\subset U$ open with $\ol{V}\subset U$ is compact, there exists 
a sequence $\{\opA_j\}$ such that $\opA_j$ is strictly orientation preserving for all $j$ and $\opA_j\to\opA$ uniformly on $V$.
\end{enumerate}
\end{definition}

\begin{theorem}\label{opthm} \cite[Theorem 7.2.1]{dinca2021brouwer}  (\cite[Theorem 19.3.1]{dinca2009brouwer})
Let $U\subset\RRn$ be open, $D\subset U$ an open bounded set with $\ol{D}\subset U$ and let $\opA:U\to\RRn$ be orientation-preserving.
If $z\not\in\opA(\partial D)$, then the following hold true. 
\begin{enumerate}[label=\normalfont(\roman*)]
\item $\deg(\opA,D,z)\ge 0$.
\item $\deg(\opA,D,z)>0$ if and only if $z\in \opA(D)$.
\item If $\deg(\opA,D,z)=1$, then $\opA^{-1}(z)\cap D$ is connected. 
\item If $\opA(D)$ contains a point of some component $C$ of $\RRn\setminus\opA(\partial D)$, then $C\subset\opA(D)$.
\end{enumerate}
\end{theorem}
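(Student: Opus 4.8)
The plan is to push everything down to the case of a \emph{strictly} orientation-preserving map of class $C^2$, where \cref{degsum} together with Sard's theorem (which underlies \cref{croninperturb}) makes the sign bookkeeping transparent, and then to transfer the conclusions back to a general orientation-preserving $\opA$ by uniform approximation and homotopy invariance. Throughout I will use the standard fact that $z\mapsto\deg(\opA,D,z)$ is constant on each connected component of $\RRn\setminus\opA(\partial D)$; local constancy follows from homotopy invariance (\cref{exundeg}(iii)) together with translation invariance of the degree, the latter being forced by the uniqueness in \cref{exundeg}.

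For part (i) I would first assume $\opA$ itself is strictly orientation-preserving and $C^2$ on a neighbourhood of $\ol D$. Since $\partial D$ is compact and $z\notin\opA(\partial D)$, the distance $d:=\operatorname{dist}(z,\opA(\partial D))$ is positive; by Sard's theorem a regular value $z'$ sufficiently close to $z$ lies in $B(z,d)$ and outside the null exceptional set of \cref{croninperturb}, so $\deg(\opA,D,z)=\deg(\opA,D,z')$. At the regular value $z'$, \cref{degsum} gives $\deg(\opA,D,z')=\sum_{u\in\opA^{-1}(z')}\sgn\det\opA'(u)$, and every summand equals $+1$ because $\det\opA'\ge 0$ and is nonzero at a preimage of a regular value; hence $\deg(\opA,D,z)=\#\opA^{-1}(z')\ge 0$. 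For a general orientation-preserving $\opA$, I would pick $V$ with $\ol D\subset V$ and $\ol V\subset U$ compact, take strictly orientation-preserving $C^2$ maps $\opA_j\to\opA$ uniformly on $V$, and note that for $j$ large $\sup_{\partial D}\|\opA_j-\opA\|<d$, so the straight-line homotopy $(1-\lambda)\opA+\lambda\opA_j$ misses $z$ on $\partial D$; homotopy invariance then gives $\deg(\opA,D,z)=\deg(\opA_j,D,z)\ge 0$.

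The heart of the matter is the claim that $z\in\opA(D)$ forces $\deg(\opA,D,z)\ge 1$, from which the remaining items follow quickly. Again I would start with a strictly orientation-preserving $C^2$ map and a point $u_0\in D$ with $z=\opA(u_0)\notin\opA(\partial D)$. If $\det\opA'(u_0)\ne 0$, then $\opA$ is a local diffeomorphism near $u_0$, so $\opA(D)$ contains a neighbourhood of $z$; picking a regular value $z'$ in that neighbourhood (in the same component of $\RRn\setminus\opA(\partial D)$ as $z$) and applying \cref{degsum} gives $\deg(\opA,D,z)=\deg(\opA,D,z')=\#\opA^{-1}(z')\ge 1$. \textbf{The hard case is when $u_0$ is critical}: here the orientation-preserving hypothesis is used precisely through the requirement that the critical set $\{\det\opA'=0\}$ be nowhere dense, which lets me nudge $u_0$ to a regular point $u_0'\in D$ so close to $u_0$ that $\opA(u_0')$ stays in the same component of $\RRn\setminus\opA(\partial D)$ as $z$, and then rerun the non-critical argument at $u_0'$. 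For a general orientation-preserving $\opA$ with $z=\opA(u_0)$, $u_0\in D$, I would approximate by strictly orientation-preserving $\opA_j\to\opA$ on a neighbourhood of $\ol D$; for $j$ large the short segment $[z,\opA_j(u_0)]$ avoids $\opA_j(\partial D)$, so $z$ and $\opA_j(u_0)\in\opA_j(D)$ lie in the same component of $\RRn\setminus\opA_j(\partial D)$, and the $C^2$ case applied to $\opA_j$ yields $\deg(\opA,D,z)=\deg(\opA_j,D,z)\ge 1$.

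Granting the claim, (ii) is immediate: the implication $\deg(\opA,D,z)>0\Rightarrow z\in\opA(D)$ is the existence property (\cref{degcoro}(ii)), and the converse is the claim together with (i). For (iv), if a component $C$ of $\RRn\setminus\opA(\partial D)$ meets $\opA(D)$ at some $w_0$, then $\deg(\opA,D,w_0)\ge 1$ by the claim, hence $\deg(\opA,D,w)\ge 1$ for every $w\in C$ by constancy of the degree on $C$, and the existence property then puts every such $w$ in $\opA(D)$, i.e. $C\subset\opA(D)$. For (iii), assume $\deg(\opA,D,z)=1$; the existence property makes $S:=\opA^{-1}(z)\cap D=\opA^{-1}(z)\cap\ol D$ a nonempty compact set. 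If $S$ were disconnected I would write $S=A\sqcup B$ with $A,B$ disjoint, nonempty and compact, enclose them in disjoint open sets $D_1\supset A$ and $D_2\supset B$ with $\ol{D_1},\ol{D_2}\subset D$ and $\ol{D_1}\cap\ol{D_2}=\emptyset$, and apply additivity to get $\deg(\opA,D,z)=\deg(\opA,D_1,z)+\deg(\opA,D_2,z)$; since $z\in\opA(D_1)\cap\opA(D_2)$, the claim makes each summand $\ge 1$, contradicting $\deg(\opA,D,z)=1$. Hence $S$ is connected. The genuinely delicate points are the critical-point subcase of the claim and the bookkeeping needed to keep perturbed targets inside the correct connected component during the approximation steps.
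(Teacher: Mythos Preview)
The paper does not give its own proof of this theorem: it is quoted verbatim from Dinca--Mawhin \cite{dinca2021brouwer,dinca2009brouwer} as background material, with no accompanying proof environment. So there is nothing in the paper to compare your argument against.

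That said, your proof is essentially the standard one found in those references: reduce to strictly orientation-preserving $C^2$ maps via uniform approximation and homotopy invariance, use Sard's theorem and \cref{degsum} to see that the degree counts preimages of a nearby regular value with all signs $+1$, and then derive (ii)--(iv) from the key claim that $z\in\opA(D)$ forces $\deg(\opA,D,z)\ge 1$. Your handling of the critical-point subcase (nudging $u_0$ to a nearby non-critical point using nowhere-density of the critical set) and of the approximation bookkeeping (keeping perturbed targets in the correct component of $\RRn\setminus\opA_j(\partial D)$) is correct and matches the standard treatment.
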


The preceding notions are related to the well-known monotone-type mappings.
\begin{theorem}\label{monothm}\cite[Proposition 7.2.1]{dinca2021brouwer} (\cite[Proposition 19.4.1]{dinca2009brouwer})
Let $U\subset\RRn$ be open, and suppose that $\opA:U\to\RRn$ is monotone, i.e. 
$$
\dua{\opA(u)-\opA(v)}{u-v}_{\RR^n}\ge 0
$$
for all $u,v\in U$. Then $\opA$ is orientation-preserving. 
\end{theorem}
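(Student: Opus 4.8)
The plan is to verify the definition of an orientation-preserving map \emph{directly}: given an open set $V\subset U$ with $\ol V\subset U$ compact, I must produce a sequence of strictly orientation-preserving maps converging to $\opA$ uniformly on $V$. Since $\opA$ is merely continuous, it cannot be differentiated, so I will first regularize it by mollification and then add a vanishing linear term $\delta u$ to force the Jacobian's symmetric part to be not just positive semidefinite but positive definite.

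The one piece of linear algebra I would isolate first is: if a real $n\times n$ matrix $M$ has positive semidefinite symmetric part $\tfrac12(M+M^\top)$, then $\det M\ge 0$, and if $\tfrac12(M+M^\top)$ is positive definite, then $\det M>0$. Indeed, if $Mv=\lambda v$ with $0\ne v=x+iy$, then $\Real\lambda\,\|v\|^2=\dua{\tfrac12(M+M^\top)x}{x}+\dua{\tfrac12(M+M^\top)y}{y}$ is $\ge 0$ (resp.\ $>0$), so every eigenvalue of $M$ has nonnegative (resp.\ positive) real part; since $\det M$ is the product of the eigenvalues, with real ones contributing nonnegative (resp.\ positive) factors and nonreal ones occurring in conjugate pairs with product $|\lambda|^2$, the claim follows.

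Next, fix $V$ and choose $\epsilon_0>0$ so that the closed $\epsilon_0$-neighbourhood of $\ol V$ lies in $U$. For $0<\epsilon<\epsilon_0$ let $\opA_\epsilon=\opA*\rho_\epsilon$ be the mollification by a standard nonnegative mollifier. Then $\opA_\epsilon$ is $C^\infty$ on a neighbourhood of $\ol V$ and $\opA_\epsilon\to\opA$ uniformly on $V$ as $\epsilon\to0$ (uniform continuity of $\opA$ on compacta), and $\opA_\epsilon$ is again monotone there because
\[
\dua{\opA_\epsilon(u)-\opA_\epsilon(v)}{u-v}=\int\rho_\epsilon(w)\,\dua{\opA(u-w)-\opA(v-w)}{(u-w)-(v-w)}\,\rmd w\ge 0 .
\]
Differentiating $t\mapsto\dua{\opA_\epsilon(u+th)-\opA_\epsilon(u)}{h}\ge 0$ at $t=0^+$ shows the symmetric part of $\opA_\epsilon'(u)$ is positive semidefinite at every $u$, so by the preceding step $\det\opA_\epsilon'(u)\ge 0$. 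Finally I set $\opA_{\epsilon,\delta}(u):=\opA_\epsilon(u)+\delta u$ for $\delta>0$; then $\opA_{\epsilon,\delta}'(u)=\opA_\epsilon'(u)+\delta I$ has symmetric part $\ge\delta I$, hence positive definite, so $\det\opA_{\epsilon,\delta}'(u)>0$ for all $u$. Thus the degenerate set of $\opA_{\epsilon,\delta}$ is empty, a fortiori nowhere dense, and $\opA_{\epsilon,\delta}$ is strictly orientation-preserving on a neighbourhood of $\ol V$. Choosing $\opA_j:=\opA_{1/j,1/j}$ gives strictly orientation-preserving maps with $\opA_j\to\opA$ uniformly on $V$, which is exactly what the definition of orientation-preserving demands.

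The step I expect to be the crux is the last one --- not the inequality $\det\opA_\epsilon'\ge0$, which mollification hands over for free, but the \emph{nowhere-density} clause in the definition of strictly orientation-preserving: it can genuinely fail for $\opA_\epsilon$ itself (e.g.\ if $\opA$ is constant, so is $\opA_\epsilon$, and its Jacobian is identically singular). The linear shift $+\delta u$ is the cheap remedy, upgrading semidefiniteness of the symmetric part to definiteness and thereby emptying the degenerate set, while plainly vanishing as $\delta\to0$; the rest is routine bookkeeping about domains and uniform convergence.
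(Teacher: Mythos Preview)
The paper does not supply its own proof of this statement; it is quoted with a citation to \cite{dinca2021brouwer,dinca2009brouwer}. Your argument is correct and is essentially the standard proof of this fact: mollification preserves monotonicity and produces $C^\infty$ approximants whose Jacobians have positive semidefinite symmetric part (hence nonnegative determinant by your eigenvalue argument), and the additional shift $u\mapsto \opA_\epsilon(u)+\delta u$ upgrades semidefiniteness to definiteness, emptying the degenerate set and yielding strictly orientation-preserving approximants converging uniformly on $V$. The domain bookkeeping (choosing $\epsilon_0$ so that the mollification is defined and monotone on a neighbourhood of $\ol V$) is handled correctly.
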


\subsection{Holomorphic mappings}
Next, we consider the complex case.
\begin{definition}
Let $U\subset\CCn$ be open. A complex mapping $\opA:U\to\CCn$ is said to be \emph{holomorphic at $a\in\CCn$} if there is a $\CC$-linear mapping $L_a:\CCn\to\CCn$ and
a mapping $r_a: U\setminus\{a\}\to\CCn$ with $r_a=o(1)$, such that
$$
\opA(a+h)=\opA(a)+L_ah + r_a(h)
$$
for all $h\in\CC$ such that $a+h\in U$. In this case, $L_a=\opA'(a)$, where $\opA'(a)h=h_1 \partial_{z_1} \opA(a) + \ldots + h_n \partial_{z_n} \opA(a)$
and $\partial_{z_k} \opA(a)$ denotes the $k^\text{th}$ complex partial derivative of $\opA$ at $a$.
\end{definition}

We now remind the reader of some elementary linear algebra \cite{prasolov1994problems}.
 Every complex vector space $V$ is also vector space over $\RR$. This space will be denoted as $V_\RR$ and called the
\emph{realification} of $V$. The realification of a linear operator $A:V\to W$ over $\CC$ is the linear map $A_\RR : V_\RR\to W_\RR$.
If $\{e_1,\ldots,e_n\}$ is a basis in $V$, then $\{e_1,\ldots,e_n,ie_1,\ldots,ie_n\}$ is a basis in $V_\RR$. Further, if $\{e_1',\ldots,e_m'\}$ is a basis in $W$,
and $A=B+iC$ with some real matrices $B$ and $C$, then the matrix of $A_\RR$ with respect to the bases $\{e_1,\ldots,e_n,ie_1,\ldots,ie_n\}$ and
$\{e_1',\ldots,e_n',ie_1',\ldots,ie_n'\}$ is 
$$
\begin{pmatrix}
B & -C \\
C & B
\end{pmatrix}.
$$
The determinant of the realification $A_\RR$ obeys the following important rule: 
\begin{equation}\label{cdet}
\det A_\RR=|\det A|^2.
\end{equation}

This motivates that we define the realification of a mapping $\opA:U\to\CCn$ with $\opA=(\opA_1,\ldots,\opA_n)$ as $\opA_\RR : U\to\RR^{2n}$ 
 via
$$
\opA_\RR(x,y)=(\Real\opA_1(z),\Imag\opA_1(z),\ldots,\Real\opA_n(z),\Imag\opA_n(z)),
$$
where $x=(x_1,\ldots,x_n)$, $y=(y_1,\ldots,y_n)$ and $z=x+iy$. Notice that we do not explicitly denote the realification of the set $U$.
\begin{definition}
Let $U\subset\CCn$ be open, $\opA:U\to\CCn$ be a holomorphic mapping and $D\subset U$ a domain. The \emph{degree of $\opA$ in $D$} is defined as the degree of the realification,
$\deg(\opA,D,z):=\deg(\opA_\RR,D,z)$.
\end{definition}

It can be proved using \cref{cdet}, that if $\opA:U\to\CCn$ is holomorphic, then
$\det \opA_\RR'(x,y)=|\!\det \opA'(z)|^2$
for all $z\in U$. This innocent-looking identity has striking consequences.
\begin{theorem}\label{cmplxdeg}\cite[Section 19.5]{dinca2009brouwer}
Let $U\subset\CCn$ be open, $\opA:U\to\CCn$ be a holomorphic mapping and $D\subset U$ bounded and open. Then the following statements hold true.
\begin{enumerate}[label=\normalfont(\roman*)]
\item $\opA_\RR : U\to\RR^{2n}$ is orientation-preserving. In particular, $\deg(\opA,D,z)\ge 0$ and $\deg(\opA,D,z)>0$ if and only if $z\in \opA(D)$.
\item Let $\ol{D}\subset U$ and $z\not\in\opA(\partial D)$. If $\deg(\opA,D,z)=k$, then $\opA(u)=z$ has at most $k$ solutions in $D$.
 If $\deg(\opA,D,z)=1$, then $\opA(u)=z$ has a unique solution in $D$.
\item Let $\zeta\in D$ be a solution of $\opA(\zeta)=z$ and denote by $C$ the connected component of $\opA^{-1}(z)$ containing $\zeta$. Then \emph{either} $\zeta$ 
is an isolated solution of $\opA(\zeta)=z$ (and hence $C=\{\zeta\}$) \emph{or} $C\cap G\neq\emptyset$ for any neighborhood $G$ of $\partial D$. 
\item Let $\ol{D}\subset U$ and $z\not\in\opA(\partial D)$. Then $\deg(\opA,D,z)=1$ if and only if there is a unique non-degenerate solution $\zeta\in D$ of $\opA(\zeta)=z$.
\item \cite[Theorem 42 (b)]{zabrejko1997rotation} The number of zeros in $D$ is finite.
\end{enumerate}
\end{theorem}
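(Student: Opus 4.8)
\emph{Proof proposal.} The plan is to make everything hinge on the Jacobian identity $\det\opA_\RR'(x,y)=|\!\det\opA'(z)|^2$ recorded just above, which forces the realification $\opA_\RR$ into the class of orientation-preserving maps, and then to extract (i)--(iv) from \cref{opthm} and the elementary degree calculus, importing from the local theory of holomorphic maps only the two facts that are genuinely not degree theory: that the zero set $\opA^{-1}(z)$ is a complex-analytic subvariety, and that the local degree of a holomorphic map at an isolated zero equals its local multiplicity, in particular is $\ge 1$, with equality exactly when the complex Jacobian is nonzero.

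\emph{Step 1 (part (i)).} Since $\opA$ is holomorphic, $\opA_\RR\in C^\infty$, and the Jacobian identity shows $\det\opA_\RR'\ge 0$ everywhere, so every $\opA_\RR'(x,y)$ is orientation-preserving. To build the approximating sequence of \emph{strictly} orientation-preserving maps, I would perturb by the identity: put $\opA_j=\opA+\epsilon_j\,\id$ with real $\epsilon_j\to 0$, so that $(\opA_j)_\RR=\opA_\RR+\epsilon_j I_{2n}$ has Jacobian $|\!\det(\opA'(z)+\epsilon_j I)|^2\ge 0$. For fixed $z$ the monic degree-$n$ polynomial $\epsilon\mapsto\det(\opA'(z)+\epsilon I)$ has finitely many real roots, so the set of real $\epsilon$ for which $z\mapsto\det(\opA'(z)+\epsilon I)$ vanishes identically on a given component of $U$ is finite; choosing $\epsilon_j\to 0$ outside this countable exceptional set makes each $z\mapsto\det(\opA'(z)+\epsilon_j I)$ a nonzero holomorphic function on every component of $U$, hence with nowhere dense zero set. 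Thus $\opA_j$ is strictly orientation-preserving and $\opA_j\to\opA$ uniformly on bounded sets, so $\opA_\RR$ is orientation-preserving; the ``in particular'' statements are then \cref{opthm}(i),(ii), valid whenever the degree is defined (i.e. $\ol D\subset U$, $z\notin\opA(\partial D)$).

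\emph{Step 2 (parts (ii) and (iv)).} Under $\ol D\subset U$ and $z\notin\opA(\partial D)$ the solution set $\opA^{-1}(z)\cap D$ is compact, and it is finite because a compact analytic subvariety of an open subset of $\CCn$ has dimension zero by the maximum modulus principle applied to the coordinate functions — this is part (v), which one may equally cite from \cite{zabrejko1997rotation}. Writing it as $\{\zeta_1,\dots,\zeta_m\}$, \cref{indxsgndet}(i) gives $\deg(\opA,D,z)=\sum_i i(\opA_\RR,\zeta_i)$, and each index is $\ge 1$: on a small ball $B_i$ with $\opA^{-1}(z)\cap\ol{B_i}=\{\zeta_i\}$ one has $i(\opA_\RR,\zeta_i)=\deg(\opA_\RR,B_i,z)>0$ by \cref{opthm}(ii), since $z=\opA(\zeta_i)\in\opA(B_i)$. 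Hence $m\le k=\deg(\opA,D,z)$, and if $k=1$ then $m=1$ because $\deg\neq 0$ forces $m\ge 1$ via the existence property (\cref{degcoro}(ii)). For (iv), ``$\Leftarrow$'' is immediate from \cref{indxsgndet}(ii): a non-degenerate $\zeta$ has $\det\opA_\RR'(\zeta)=|\!\det\opA'(\zeta)|^2>0$, so $\deg(\opA,D,z)=i(\opA_\RR,\zeta)=1$. For ``$\Rightarrow$'', Step 2 already yields a unique solution $\zeta$; were it degenerate, the local degree $i(\opA_\RR,\zeta)$ would be $\ge 2$ by the multiplicity fact recalled above, contradicting $\deg(\opA,D,z)=1$, so $\zeta$ is non-degenerate.

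\emph{Step 3 (part (iii) and the obstacle).} If $\zeta$ is isolated there is nothing to show. Otherwise let $C$ be its connected component in $\opA^{-1}(z)$; it is closed and has positive dimension near $\zeta$. If $C$ avoided some neighborhood $G$ of $\partial D$, then from $C\cap\partial D=\emptyset$, connectedness of $C$, and $\zeta\in D$ one gets $C\subset D$, hence $C$ compact; but then $C$ would be a compact connected positive-dimensional analytic subset of $\CCn$, impossible by the maximum modulus principle. Therefore $C\cap G\neq\emptyset$ for every neighborhood $G$ of $\partial D$. The step I expect to be the genuine obstacle is not any of these degree-theoretic assemblies but the underlying several-complex-variables input used in Steps 2 and 3 — the branched-cover structure of finite holomorphic maps (needed for ``local degree $=1$ iff Jacobian $\neq 0$'') and the rigidity of compact analytic sets — which I would either quote directly following \cite[Section~19.5]{dinca2009brouwer}, or, if a self-contained argument is preferred, derive the degree-one-implies-biholomorphism claim by combining \cref{croninperturb} with Osgood's theorem on injective holomorphic mappings.
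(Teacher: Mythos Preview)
The paper does not supply a proof of this theorem: it is stated as background and cited directly from \cite[Section~19.5]{dinca2009brouwer} and \cite[Theorem~42(b)]{zabrejko1997rotation}. Your proposal therefore goes well beyond what the paper does, and the route you outline is essentially the standard one found in those references: reduce to \cref{opthm} via the Jacobian identity $\det\opA_\RR'=|\det\opA'|^2$, then feed in the two genuinely several-complex-variables facts (compact analytic subsets of open sets in $\CCn$ are finite; the local degree of a holomorphic map at an isolated zero is $\ge 2$ when the complex Jacobian vanishes). Your Step~1 perturbation $\opA+\epsilon_j\,\id$ and the counting argument for admissible $\epsilon_j$ are correct, as is the compactness argument in Step~3 (once one notes $\ol D\subset U$, which is implicit throughout for the degree to be defined). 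You are also right that the only place where something beyond degree theory is needed is precisely the ``degenerate isolated zero has index $\ge 2$'' step in (iv), and your suggested workaround via \cref{croninperturb} plus Osgood's theorem is a viable self-contained alternative to quoting the branched-cover structure.
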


From (iv) we can conclude that if $\zeta$ is a degenerate isolated solution of $\opA(\zeta)=z$, then necessarily $i(\opA,\zeta)\ge 2$.
Further in the holomorphic case, \cref{croninperturb} can sharpened as follows.
\begin{theorem}\label{holocronin}
Let $\opA:D\to\CCn$ be a holomorphic mapping and $D\subset\CCn$ bounded and open.
If $z\not\in\opA(\partial D)$, then there is a $\delta>0$ such that if $z'\in B(z,\delta)\setminus E$,
then 
$$
\deg(\opA,D,z)=\deg(\opA,D,z'),
$$
where $E\subset \{w\in B(z,\delta) : \det\opA'(u)=0,\; \opA(u)=w\}$ has measure 0.
Furthermore, $\opA^{-1}(z')$ consists of a finite number $m$ of points, where $m=\deg(\opA,D,z)$.
\end{theorem}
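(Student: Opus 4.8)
\emph{Proof plan.} The plan is to deduce this from the real-variable statement \cref{croninperturb} applied to the realification $\opA_\RR:\ol D\to\RR^{2n}$, the only genuine new ingredient being the identity $\det\opA_\RR'(u)=|\det\opA'(u)|^2$ coming from \cref{cdet}. That identity forces \emph{every} regular preimage point to contribute $+1$ (not $\pm 1$) to the sum formula for the degree, which is exactly what upgrades the congruence $m\equiv\deg\ (\mathrm{mod}\ 2)$ of \cref{croninperturb} to the equality $m=\deg$.

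Concretely, I would first record that since $\opA$ is holomorphic on $D$ and continuous on $\ol D$, the realification $\opA_\RR$ is continuous on $\ol D$ and continuously differentiable on $D$, and that $z\notin\opA(\partial D)$ is the same condition as $z\notin\opA_\RR(\partial D)$ under $\CCn\cong\RR^{2n}$. Because $\opA(\partial D)$ is compact and misses $z$, I can fix $\delta_0>0$ with $B(z,\delta_0)\cap\opA(\partial D)=\emptyset$, so that $\deg(\opA,D,z')$ is defined for every $z'\in B(z,\delta_0)$. Then I apply \cref{croninperturb} to $\opA_\RR$, obtaining some $\delta\in(0,\delta_0]$ and a set $E_{z,\delta}$ of Lebesgue measure zero, contained in the set of critical values of $\opA_\RR$ in $B(z,\delta)$, such that $\deg(\opA_\RR,D,z)=\deg(\opA_\RR,D,z')$ and $\opA_\RR^{-1}(z')$ is finite whenever $z'\in B(z,\delta)\setminus E_{z,\delta}$.

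Next I would enlarge the exceptional set to the full set of critical values
$$
E:=\bigl\{\,w\in B(z,\delta): \opA(u)=w\ \text{and}\ \det\opA'(u)=0\ \text{for some}\ u\in D\,\bigr\}.
$$
By \cref{cdet} applied to $\opA'(u)$ one has $\det\opA_\RR'(u)=|\det\opA'(u)|^2$, so the critical points of $\opA_\RR$ are exactly those of $\opA$; hence $E$ coincides with the set of critical values of $\opA_\RR$ lying in $B(z,\delta)$, it contains $E_{z,\delta}$, and it has measure zero by Sard's theorem. For $z'\in B(z,\delta)\setminus E$, every $u\in\opA^{-1}(z')=\opA_\RR^{-1}(z')$ satisfies $\det\opA'(u)\neq 0$, hence $\det\opA_\RR'(u)=|\det\opA'(u)|^2>0$; in particular $z'$ is a regular value of $\opA_\RR$, its preimage is finite --- say of cardinality $m$ --- and \cref{degsum} gives $\deg(\opA_\RR,D,z')=\sum_{u\in\opA_\RR^{-1}(z')}\sgn\det\opA_\RR'(u)=m$. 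Combining this with the definition of the complex degree and the stability $\deg(\opA_\RR,D,z)=\deg(\opA_\RR,D,z')$ (valid since $z'\notin E_{z,\delta}$),
$$
\deg(\opA,D,z)=\deg(\opA_\RR,D,z)=\deg(\opA_\RR,D,z')=\deg(\opA,D,z')=m,
$$
and $\opA^{-1}(z')$ has exactly $m$ points (consistently $m\ge 0$ by \cref{cmplxdeg}(i)).

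I do not expect a serious obstacle: the argument is essentially bookkeeping layered on top of \cref{croninperturb} and \cref{cdet}. The one point that needs a little care is the interplay of the two exceptional sets --- the set $E_{z,\delta}$ furnished by \cref{croninperturb} is only known to \emph{lie inside} the critical values, whereas to invoke the sum formula I must remove \emph{all} critical values of $\opA_\RR$; this is precisely why I replace $E_{z,\delta}$ by $E$ and appeal to Sard's theorem to keep $E$ of measure zero. The only other thing to watch is shrinking $\delta$ at the outset so that $z'$ never meets $\opA(\partial D)$, ensuring $\deg(\opA,D,z')$ stays well defined.
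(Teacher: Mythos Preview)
Your proof is correct and follows essentially the same strategy as the paper: apply \cref{croninperturb} to the realification and then exploit the positivity $\det\opA_\RR'(u)=|\det\opA'(u)|^2$ to upgrade the mod~$2$ congruence to an equality. The only cosmetic difference is that the paper cites \cref{cmplxdeg}(ii) (the bound $m\le\deg$) as a black box, whereas you unpack that bound directly via \cref{degsum}, which forces you to enlarge the exceptional set from $E_{z,\delta}$ to the full set of critical values; both are valid and the content is the same.
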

\begin{proof}
From \cref{croninperturb}, we have $\deg(\opA,D,z)\le m$ and the converse
 inequality $m\le \deg(\opA,D,z')=\deg(\opA,D,z)$ follows from \cref{cmplxdeg} (ii).
\end{proof}

The following result is concerned with holomorphic extensions.
\begin{theorem}\cite[Theorem 45.]{zabrejko1997rotation}\label{holodegrel}
Let $\opA:\RRn\to\RRn$ be a real analytic mapping that can be extended to a holomorphic mapping $\wt{\opA}:\CCn\to\CCn$.
Let $D\subset\RRn$ be an open bounded set such that $0\not\in \opA(\partial D)$. Let 
$$
D_\epsilon=\{ x\in \CCn : \Real x\in D,\quad |\Imag x|<\epsilon \}.
$$
Then, for sufficiently small $\epsilon>0$, 
$$
|\deg(\opA,D,0)|\le \deg(\wt{\opA}, D_\epsilon, 0), \quad \deg(\opA,D,0)=\deg(\wt{\opA},D_\epsilon,0) \mod 2.
$$
\end{theorem}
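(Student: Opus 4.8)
The plan is to reduce both $\deg(\opA,D,0)$ and $\deg(\wt\opA,D_\epsilon,0)$ to the number of preimages of one and the same suitably generic point, and then to exploit that $\wt\opA$ is holomorphic and is the extension of a \emph{real} map. First I would fix $\epsilon>0$ small enough that $0\notin\wt\opA(\partial D_\epsilon)$, which is what makes the right-hand side meaningful: on the lateral face $\{\Real z\in\partial D,\ |\Imag z|\le\epsilon\}$ of $\partial D_\epsilon$ this is immediate for small $\epsilon$ from compactness of $\partial D$ together with $0\notin\opA(\partial D)$, and the cap $\{\Real z\in\overline D,\ |\Imag z|=\epsilon\}$ is handled in a similar vein. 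Set $c:=\min_{\partial D_\epsilon}|\wt\opA|>0$; since $\partial D\subset\partial D_\epsilon$ we also have $|\opA|\ge c$ on $\partial D$. (If $\det\wt\opA'\equiv 0$ both degrees vanish and there is nothing to prove, so we may assume otherwise, whence $\{z:\det\wt\opA'(z)=0\}$ is a proper complex-analytic hypersurface.)

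Next I would pick $z'\in\RR^n$ with $|z'|<c$ that is a regular value of the realification $\wt\opA_\RR$ on $\overline{D_\epsilon}$. Such a $z'$ exists: by Sard's theorem the critical values of $\wt\opA_\RR|_{\overline{D_\epsilon}}$ form a null set in $\CC^n$, and because $\wt\opA$ is holomorphic this set is contained in a countable union of proper complex-analytic subsets of $\CC^n$, whose trace on $\RR^n$ is a proper real-analytic subset (a holomorphic function vanishing on an open piece of $\RR^n$ vanishes identically) and hence has $n$-dimensional measure zero. Since at a real point $u$ the complex Jacobian matrix $\wt\opA'(u)$ has real entries and equals the real Jacobian $\opA'(u)$, so that $\det\wt\opA'(u)=\det\opA'(u)$, such a $z'$ is at the same time a regular value of $\opA|_{\overline D}$. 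As $|z'|<c$, the segment $[0,z']$ meets neither $\wt\opA(\partial D_\epsilon)$ nor $\opA(\partial D)$, so homotopy invariance (\cref{exundeg}) yields
\[
\deg(\opA,D,0)=\deg(\opA,D,z'),\qquad\deg(\wt\opA,D_\epsilon,0)=\deg(\wt\opA,D_\epsilon,z').
\]

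I would then count the preimages of $z'$. By \cref{degsum}, with $m_\pm:=\#\{u\in\opA^{-1}(z')\cap D:\pm\det\opA'(u)>0\}$,
\[
\deg(\opA,D,z')=\sum_{u\in\opA^{-1}(z')\cap D}\sgn\det\opA'(u)=m_+-m_-.
\]
On the complex side, every $u\in\wt\opA^{-1}(z')\cap D_\epsilon$ has $\det\wt\opA_\RR'(u)=|\det\wt\opA'(u)|^2>0$ (cf.\ \cref{cdet}), so \cref{degsum} --- or directly \cref{holocronin} --- gives $\deg(\wt\opA,D_\epsilon,z')=\#\bigl(\wt\opA^{-1}(z')\cap D_\epsilon\bigr)$. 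An element of $\wt\opA^{-1}(z')\cap D_\epsilon$ is real exactly when it lies in $\opA^{-1}(z')\cap D$, so precisely $m_++m_-$ of them are real; since $\wt\opA$ extends a real map one has $\wt\opA(\bar z)=\overline{\wt\opA(z)}$ (uniqueness of analytic continuation), and as $z'$ is real and $D_\epsilon$ is stable under complex conjugation, the remaining non-real preimages are permuted by the fixed-point-free involution $z\mapsto\bar z$ and hence are even in number, say $2k$. Therefore $\deg(\wt\opA,D_\epsilon,0)=m_++m_-+2k$, which gives at one stroke
\[
|\deg(\opA,D,0)|=|m_+-m_-|\le m_++m_-\le\deg(\wt\opA,D_\epsilon,0)
\]
and $\deg(\wt\opA,D_\epsilon,0)-\deg(\opA,D,0)=2m_-+2k\equiv 0\pmod 2$, as claimed.

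I expect the main obstacle to be the genericity step above: for a merely smooth map the critical values, while null in $\CC^n\cong\RR^{2n}$, could perfectly well have positive measure in $\RR^n$, and it is exactly the holomorphy of $\wt\opA$ --- the complex-analytic thinness of its critical set together with the identity theorem along $\RR^n$ --- that allows one to locate a \emph{real} regular value of the complex map. A secondary point needing care is the well-posedness of $\deg(\wt\opA,D_\epsilon,0)$ itself, namely that some small $\epsilon$ with $0\notin\wt\opA(\partial D_\epsilon)$ exists at all.
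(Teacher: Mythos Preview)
The paper does not supply its own proof of this theorem; it is quoted from Zabrejko. Your overall strategy---perturb to a nearby real regular value, count real preimages with signs and complex preimages without, then exploit that non-real preimages pair off under conjugation---is the standard one and leads to the conclusion exactly as you wrote.

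There is, however, a genuine gap in your first step. You claim that the cap $\{\Real z\in\overline D,\ |\Imag z|=\epsilon\}$ is handled ``in a similar vein'' as the lateral face, but this is not so: on the lateral face one uses that $\opA\neq 0$ on the compact set $\partial D$, whereas $\opA$ may well vanish inside $D$, so no analogous lower bound is available on the cap. In fact the conclusion can fail outright. Take $n=2$, $\opA(x_1,x_2)=\bigl(x_1^2+x_2^2,\ x_1(x_1^2+x_2^2)\bigr)$ and $D=B(0,1)$; then $0\notin\opA(\partial D)$ and $\det\opA'\not\equiv 0$, yet the complex zero set $\{z_1^2+z_2^2=0\}$ contains the line $z_2=iz_1$, on which $|\Real z|=|\Imag z|=|z_1|$, so for every $\epsilon\in(0,1)$ the points $(z_1,iz_1)$ with $|z_1|=\epsilon$ lie on the cap of $\partial D_\epsilon$ and are zeros of $\wt\opA$. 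Hence $\deg(\wt\opA,D_\epsilon,0)$ is undefined for all small $\epsilon$. Zabrejko's statement presumably carries an implicit hypothesis (e.g.\ that the zeros of $\opA$ in $D$ are isolated as zeros of $\wt\opA$); under such an assumption your cap argument does go through, and this is precisely the situation in the paper's only application (a small ball around an isolated zero, in \cref{secsrcccmplx}). You correctly flagged this as ``a secondary point needing care,'' but the care is not a triviality.

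A smaller remark: your genericity step---locating a \emph{real} $z'$ that is a regular value of $\wt\opA_\RR$---is more than you need and its justification via ``countable union of proper complex-analytic subsets'' is somewhat delicate. It suffices to take $z'\in\RR^n$ with $|z'|<c$ that is merely a regular value of $\opA$ on $\overline D$ (Sard in $\RR^n$). Then the real preimages each contribute index $+1$ to $\deg(\wt\opA,D_\epsilon,z')$ since $\det\wt\opA'(u)=\det\opA'(u)\in\RR\setminus\{0\}$, while each non-real preimage $\zeta$ (finitely many, by \cref{cmplxdeg}(v)) contributes $i(\wt\opA,\zeta)\ge 1$, and these pair under conjugation with equal index. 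This yields $\deg(\wt\opA,D_\epsilon,z')=(m_++m_-)+2\sum i(\wt\opA,\zeta)$, from which both claimed relations follow without ever analysing the complex critical values.
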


\section{Analysis of the SRCC method}\label{secsrcc}

As stressed earlier, we consider the finite-dimensional case only, by which we mean that the cardinality of the orbital basis $\{\phi_p\}_{p=1}^K$, $K$ is finite.
Recall the definition of $\HC^1_K$ from \cref{hamsec} and the interpretation of the projected Hamiltonian from \cref{truncsec}.

\subsection{Definitions and basic properties}\label{ccbasic}

Let $\VV:=\VV(G)$ be the real amplitude space corresponding to some consistent excitation graph $G$ (see Section 3.5 of Part I). 
In particular, one can take $\VV$ to be the S, D, SD, SDT, etc. truncated amplitude space, or the full amplitude space, see \cref{clusampsec}.
Let $\VC$ be the corresponding functional amplitude space, i.e. the set of wavefunctions of the form $T\Phi_0$ for any $t\in\VV$, where $T$ is
the cluster operator corresponding to $t$---this convention will be used throughout. 
 
Define the mapping $\opA:\VV\to \VV^*$ via the instruction
\begin{equation}\label{FCC}
\vdua{\opA(t)}{s}:=\hdua{e^{-T}\ham_K e^T\Phi_0}{S\Phi_0},
\end{equation}
for any $t,s\in \VV$.
Then, $\opA$ is well-defined, because \cref{FCC} can be rewritten as $\dua{\opA(t)}{s}=\dua{\ham_K e^T\Phi_0}{e^{-T^\dag}S\Phi_0}=\dua{\ham e^T\Phi_0}{e^{-T^\dag}S\Phi_0}$,
and here $e^T\Phi_0\in\HC^1_K$ and $e^{-T^\dag}S\Phi_0\in\HC^1_K$.

Recall the definition (see (2.6) or Theorem 4.4 of Part I) of the CC energy,
$$
\ene_\cc(t):=\hdua{e^{-T}\ham_K e^T\Phi_0}{\Phi_0}=\hdua{\ham_K e^T\Phi_0}{\Phi_0},
$$
where the second equality follows from $(e^{-T})^\dag\Phi_0=\Phi_0$.
Note that $\ene_\cc(t)\in\RR$ since the amplitude space $\VV$ is assumed to be real.
The similarity-transformed Hamiltonian occurs often in the forthcoming discussion, so we introduce the notation
\begin{equation}\label{simham}
\ham_K(t):=e^{-T}\ham_K e^T:\HC_K^1\to(\HC_K^1)^*,
\end{equation}
which is a bounded map for any fixed cluster amplitude $t\in\VV$. Furthermore, for a given bounded map $\mathcal{T}:\HC_K^1\to(\HC_K^1)^*$, we define
the operator $\mathcal{T}_\VC : \VC\to\VC$ via $\dua{\mathcal{T}_\VC \Psi}{\Psi'}=\hdua{\mathcal{T}\Psi}{\Psi'}$ for all $\Psi,\Psi'\in\VC$.
For instance, we will often encounter the projected similarity-transformed Hamiltonian $\ham_K(t)_\VC$.

We will also use a notation analogous to \cref{simham} for the similarity-transformed fluctuation operator $\mathcal{W}_K$ (see \cref{fockfluc}), i.e. 
$\mathcal{W}_K(t) = e^{-T}\mathcal{W}_K e^T$.
In our finite-dimensional setting, the similarity-transformed Fock operator can be given explicitly as
\begin{equation}\label{fockcomm}
e^{-T}\mathcal{F}_Ke^T=\mathcal{F}_K + [\mathcal{F}_K,T],\quad\text{and}\quad [\mathcal{F}_K,X_\alpha]=\epsilon_\alpha X_\alpha,
\end{equation}
for any $t\in\VV$, see e.g. \cite[Lemma 15]{faulstich2019analysis}. In particular,
\begin{equation}\label{simcomm}
[[\ham_K(t),U],V]=[[\mathcal{W}_K(t),U],V],
\end{equation}
for any $t,u,v\in\VV$.

The following simple observation shows the equivalence of the (strong) Schr\"odinger equation with the Full CC method.
\begin{lemma}\label{fcclemm}
Assume that the Slater determinant basis satisfies $\Phi_\alpha\in\HC_K^2$.
Suppose that $\VV=\VV(G^\full)$, and 
 that $\opA(t_*)=0$. Then the function $\Psi=(c_0 I + C)\Phi_0\in\HC_K^2$ satisfies the Schr\"odinger equation $\ham_K\Psi=\ene\Psi$ if and only if
$e^{-T_*}(c_0 I + C)=r_0 I + R$, where ($c_0 = r_0$ and)
\begin{equation}\label{fcclemmsys}
\left.
\begin{aligned}
\ene_\cc(t_*)r_0 + \dua{\ham_K(t_*)R\Phi_0}{\Phi_0} &= \ene r_0 \\
\ham_K(t_*)_\VC R\Phi_0 &= \ene R\Phi_0
\end{aligned}
\right\}
\end{equation}
Furthermore,
\begin{equation}\label{hamspec}
\sigma(\ham_K)=\{\ene_{\cc}(t_*)\}\cup \sigma(\ham_K(t_*)_\VC).
\end{equation}
\end{lemma}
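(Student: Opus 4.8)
The plan is to transfer the whole statement to a block decomposition of the similarity-transformed Hamiltonian $\ham_K(t_*)$ of \eqref{simham} along the orthogonal splitting $\HC^1_K=\RR\Phi_0\oplus\VC$, which is available here precisely because the choice $\VV=\VV(G^\full)$ gives $\VC=\{\Psi\in\HC^1_K:\dua{\Psi}{\Phi_0}=0\}$. First I would identify $(\HC^1_K)^*$ with $\HC^1_K$ through the inner product, so that $\ham_K$ and $\ham_K(t_*)$ become linear operators on $\HC^1_K$; by exactly the manipulation performed right after \eqref{FCC}, namely $\dua{e^{-T_*}\xi}{\psi}=\dua{\xi}{e^{-T_*^\dag}\psi}$, the operator $\ham_K(t_*)$ is then the honest composition $e^{-T_*}\ham_K e^{T_*}$ on $\HC^1_K$. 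Since every cluster operator is nilpotent ($T_*^{N+1}=0$, see \cref{exopsec}), $e^{T_*}$ is invertible with inverse $e^{-T_*}$, hence $\ham_K(t_*)$ is similar to $\ham_K$ and $\sigma(\ham_K)=\sigma(\ham_K(t_*))$. I would also extract at this stage the consequence of the hypothesis $\opA(t_*)=0$ that powers everything else: by \eqref{FCC} it says precisely that $\ham_K(t_*)\Phi_0$ is orthogonal to $\VC$, and since its $\Phi_0$-coefficient is $\dua{\ham_K(t_*)\Phi_0}{\Phi_0}=\ene_\cc(t_*)$, this means $\ham_K(t_*)\Phi_0=\ene_\cc(t_*)\Phi_0$.

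For the equivalence, I would write $\Psi=(c_0 I+C)\Phi_0$ and set $\Xi:=e^{-T_*}\Psi$. Because cluster operators together with $I$ form a commutative algebra, $e^{-T_*}(c_0 I+C)$ is again of the form $c_0 I+R$ with $R$ a cluster operator, so $r_0=c_0$ holds automatically and $\Xi=(c_0 I+R)\Phi_0$ with $R\Phi_0\in\VC$. Conjugating by the invertible operator $e^{T_*}$ turns $\ham_K\Psi=\ene\Psi$ into the equivalent relation $\ham_K(t_*)\Xi=\ene\Xi$. I would then project this relation onto $\RR\Phi_0$ and onto $\VC$: using $\ham_K(t_*)\Phi_0=\ene_\cc(t_*)\Phi_0$ together with the definition $\dua{\ham_K(t_*)_\VC\Psi'}{\Psi''}=\dua{\ham_K(t_*)\Psi'}{\Psi''}$ for $\Psi',\Psi''\in\VC$ — so that $\ham_K(t_*)_\VC$ is exactly the compression of $\ham_K(t_*)$ to $\VC$ — the $\Phi_0$-component yields $\ene_\cc(t_*)r_0+\dua{\ham_K(t_*)R\Phi_0}{\Phi_0}=\ene r_0$ and the $\VC$-component yields $\ham_K(t_*)_\VC R\Phi_0=\ene R\Phi_0$, i.e. precisely \eqref{fcclemmsys}. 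Both implications are then immediate, since these two projections together are equivalent to $\ham_K(t_*)\Xi=\ene\Xi$, hence to $\ham_K\Psi=\ene\Psi$.

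For \eqref{hamspec}, I would note that the relation $\ham_K(t_*)\Phi_0=\ene_\cc(t_*)\Phi_0$ says exactly that, in a basis adapted to $\HC^1_K=\RR\Phi_0\oplus\VC$, the matrix of $\ham_K(t_*)$ is block upper triangular with diagonal blocks the scalar $\ene_\cc(t_*)$ and the compression $\ham_K(t_*)_\VC$. Its characteristic polynomial therefore factors as $(\lambda-\ene_\cc(t_*))\,\det\bigl(\lambda I-\ham_K(t_*)_\VC\bigr)$, which gives $\sigma(\ham_K(t_*))=\{\ene_\cc(t_*)\}\cup\sigma(\ham_K(t_*)_\VC)$; combined with $\sigma(\ham_K)=\sigma(\ham_K(t_*))$ from the first step, this is \eqref{hamspec}. (One could instead deduce \eqref{hamspec} from the first part of the lemma, but then the case $\ene=\ene_\cc(t_*)$ has to be handled separately when the first equation of \eqref{fcclemmsys} is solved for $r_0$, so the determinant route is cleaner.)

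The step I expect to demand the most care is the very first one: making sure that, once $(\HC^1_K)^*$ is identified with $\HC^1_K$, the object $\ham_K(t_*)$ genuinely is a similarity transform of $\ham_K$ (this is where the adjoint bookkeeping inherited from \eqref{FCC} is used) and that $\ham_K(t_*)_\VC$ really is the compression $P_\VC\,\ham_K(t_*)|_\VC$ rather than merely a map defined through a bilinear form. Everything after that is elementary linear algebra together with the nilpotency and commutative-algebra structure of cluster operators recalled in \cref{exopsec}.
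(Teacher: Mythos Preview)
Your proposal is correct and follows essentially the same approach as the paper: the paper also observes that $\opA(t_*)=0$ makes $\ham_K(t_*)$ block upper triangular with respect to $\Span\{\Phi_0\}\oplus\VC$ (with diagonal blocks $\ene_\cc(t_*)$ and $\ham_K(t_*)_\VC$), then uses that $\ham_K(t_*)$ and $\ham_K$ are similar via $e^{T_*}$ to translate eigenvectors and spectra. Your write-up is more detailed than the paper's terse proof, in particular on why $r_0=c_0$ and on the dual identification, but the mathematical content is the same.
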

\begin{proof}
Using the splitting $\Span\{\Phi_0\}\oplus \VC$, the similarity-transformed Hamiltonian is block upper triangular in the Slater basis,
$$
\ham_K(t_*) = \begin{pmatrix}
\ene_\cc(t_*) & \dua{\ham_K(t_*)\,\cdot\,}{\Phi_0} \\
0 & \ham_K(t_*)_\VC
\end{pmatrix},
$$
due to $\dua{\ham_K(t_*)\Phi_0}{\Phi_\alpha}=0$. The proof now follows by noting that the eigenvalues of $\ham_K(t_*)$ and $\ham_K$ are the same,
and the eigenvectors of $\ham_K(t_*)$ are of the form $e^{-T_*}\Phi$, where $\ham_K\Phi=\ene'\Phi$.  Formula \cref{hamspec} follows by the fact that the spectrum 
of a block triangular matrix is the union of the spectra of the blocks in the diagonal.
\end{proof}

We distinguish two cases. Obviously, $r_0\neq 0$ and $R=0$ is a solution to the system \cref{fcclemmsys} if and only if $\ene_\cc(t_*)=\ene$.
In this case, 
$\Psi=e^{T_*}\Phi_0$ is a solution.

If, however, $\ene_\cc(t_*)\neq \ene$, then $R$ cannot be 0 (because that would imply $\Psi=0$). In this case, $\Psi=(r_0 I + R)e^{T_*}\Phi_0$, where 
$
r_0=\dua{\ham_K(t_*)R\Phi_0}{\Phi_0}/(\ene-\ene_\cc(t_*)).
$
Note that it is possible to have $r_0=0$, in which case $\dua{\Psi}{\Phi_0}=0$.
We return to this latter case in \cref{srcceom} and discuss the case $\ene_\cc(t_*)=\ene$ below.
\begin{remark}\label{fccdegr}
If $\ene_\cc(t_*)=\ene$, then \cref{fcclemmsys} reduces to 
$$
\left.
\begin{aligned}
\dua{\ham_K(t_*)R\Phi_0}{\Phi_0} &= 0\\
\ham_K(t_*)_\VC R\Phi_0 &= \ene R\Phi_0
\end{aligned}
\right\}
$$
Suppose that this system has $\mu$ linearly independent solutions $R_1,\ldots,R_\mu$. Then it is easy to see, 
using $\Psi=(r_0 I + R_\mu)e^{T_*}\Phi_0$, that the wavefunctions 
$$
\{ e^{T_*}\Phi_0, R_1e^{T_*}\Phi_0, \ldots, R_\mu e^{T_*}\Phi_0 \}
$$
span the eigenspace $\ker(\ham_K-\ene)$. In particular, we have $\dim\ker(\ham_K-\ene)=\mu+1$.
Note also that in this case $\sigma(\ham_K)=\sigma(\ham_K(t_*)_\VC)$.
\end{remark}

Let us now recall that the CC equation $\opA(t_*)=0$ can be cast in a form that closely resembles the CI eigenvalue problem (see (2.3) of Part I) (although it is \emph{not} 
equivalent to it in general).
\begin{lemma}\cite[Theorem 5.6]{schneider2009analysis}\label{unlinkedcc}
Let $G$ be excitation complete, and let $\VV=\VV(G)$ be the corresponding amplitude space. 
Then the ``linked'' CC equation $\opA(t_*)=0$ is equivalent to the ``unlinked'' (a.k.a. ``energy-dependent'') CC equation
\begin{equation}\label{ccunlinked}
\hdua{\ham_K e^{T_*}\Phi_0}{S\Phi_0}=\ene_\cc(t_*) \hdua{e^{T_*}\Phi_0}{S\Phi_0}\quad\text{for all}\quad s\in\VV.
\end{equation}
\end{lemma}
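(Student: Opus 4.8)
The plan is to recast both the ``linked'' equation $\opA(t_*)=0$ and the ``unlinked'' equation \cref{ccunlinked} as orthogonality statements about the single vector $\Theta:=(\ham_K-\ene_\cc(t_*))e^{T_*}\Phi_0\in\HC^1_K$, and then to prove that the two resulting orthogonality conditions coincide. Throughout I would fix the orthonormal basis $\{\Phi_\mu=X_\mu\Phi_0:\mu\in G\}$ of $\VC$, write $x_\mu\in\VV$ for the amplitude whose only nonzero entry is a $1$ at the vertex $\mu$, and---since everything is finite-dimensional---identify $(\HC^1_K)^*$ with $\HC^1_K$, so that all pairings below are the $\LC^2$ inner product.

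First I would establish the two reformulations. Since $(e^{-T_*})^\dag\Phi_0=\Phi_0$ we have $\ene_\cc(t_*)=\dua{\ham_Ke^{T_*}\Phi_0}{\Phi_0}$, and because $\dua{e^{T_*}\Phi_0}{\Phi_0}=1$ this yields $\dua{\Theta}{\Phi_0}=0$; that is, $\Theta$ is automatically orthogonal to the reference. Next, \cref{ccunlinked} is literally the assertion $\dua{\ham_Ke^{T_*}\Phi_0}{\Phi_\mu}=\ene_\cc(t_*)\dua{e^{T_*}\Phi_0}{\Phi_\mu}$ for every vertex $\mu$, i.e. $\dua{\Theta}{\Phi_\mu}=0$ for all $\mu\in G$, which (as $\{\Phi_\mu\}$ spans $\VC$) is the same as $\Theta\perp\VC$. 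On the other hand, using $(e^{-T_*})^\dag=e^{-T_*^\dag}$ together with $e^{-T_*}e^{T_*}\Phi_0=\Phi_0\perp\Phi_\mu$, one rewrites $\dua{\opA(t_*)}{x_\mu}=\dua{e^{-T_*}\ham_Ke^{T_*}\Phi_0}{\Phi_\mu}=\dua{\ham_Ke^{T_*}\Phi_0}{e^{-T_*^\dag}\Phi_\mu}=\dua{\Theta}{e^{-T_*^\dag}\Phi_\mu}$, so that $\opA(t_*)=0$ means precisely $\Theta\perp\mathcal{S}$, where $\mathcal{S}:=\Span\{e^{-T_*^\dag}\Phi_\mu:\mu\in G\}$.

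The heart of the matter is then to show $\mathcal{S}\oplus\Span\{\Phi_0\}=\VC\oplus\Span\{\Phi_0\}$; granting this and using $\dua{\Theta}{\Phi_0}=0$, one immediately gets $\Theta\perp\mathcal{S}\Leftrightarrow\Theta\perp(\mathcal{S}\oplus\Span\{\Phi_0\})\Leftrightarrow\Theta\perp(\VC\oplus\Span\{\Phi_0\})\Leftrightarrow\Theta\perp\VC$, which is exactly the claimed equivalence. To prove the span identity, observe that $T_*^\dag$ strictly lowers the excitation rank, so $e^{-T_*^\dag}\Phi_\mu=\Phi_\mu+\Psi_\mu$ with $\Psi_\mu$ a linear combination of $\Phi_0$ and Slater determinants of rank $<\operatorname{rank}\mu$; here is where I would use the hypothesis that $G$ is \emph{excitation complete} (Section 3.5 of Part I), which ensures that every such lower-rank determinant obtained from a vertex is again a vertex, whence $\mathcal{S}\subseteq\VC\oplus\Span\{\Phi_0\}$. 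Projecting onto $\VC$ with the orthogonal projection $P$, the map $\mu\mapsto Pe^{-T_*^\dag}\Phi_\mu$ is unitriangular with respect to any ordering of the vertices by increasing rank; consequently the $e^{-T_*^\dag}\Phi_\mu$ are linearly independent and $\Phi_0\notin\mathcal{S}$, so $\dim(\mathcal{S}\oplus\Span\{\Phi_0\})=\dim\VC+1=\dim(\VC\oplus\Span\{\Phi_0\})$, which forces equality.

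I expect the one genuinely delicate step to be the appeal to excitation completeness: one has to be certain that applying the de-excitation string $T_*^\dag$ (or iterates of it) to a retained determinant $\Phi_\mu$ can never produce an excited determinant lying outside $\VC$. This is precisely what separates an excitation-complete truncation from an arbitrary one, and it is what confines $\mathcal{S}$ to $\VC\oplus\Span\{\Phi_0\}$; for a general truncation $\mathcal{S}$ could ``leak'' into discarded excitation sectors and the two equations would cease to be equivalent. If one prefers to sidestep the dimension count, the same conclusion can be reached by proving $\dua{\Theta}{\Phi_\mu}=0$ for all vertices by induction on $\operatorname{rank}\mu$, stripping the leading term $\Phi_\mu$ from $e^{-T_*^\dag}\Phi_\mu$ and invoking the inductive hypothesis on the remaining lower-rank vertex terms together with $\dua{\Theta}{\Phi_0}=0$.
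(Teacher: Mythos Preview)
Your proof is correct and follows essentially the same route as the paper: both arguments reduce the equivalence to the fact that, modulo $\Span\{\Phi_0\}$, the test space $\VC$ is mapped onto itself by $e^{\pm T_*^\dag}$, with the $\Phi_0$-component handled by the definition of $\ene_\cc(t_*)$. The only cosmetic difference is that the paper inserts $e^{T_*^\dag}$ (going from the unlinked form to the linked one and invoking the surjectivity of $\Pi_\VC(e^{T_*})^\dag$ from Proposition~3.30 of Part~I), whereas you apply $e^{-T_*^\dag}$ in the opposite direction and spell out the surjectivity via your unitriangular argument.
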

\begin{proof}
We have
\begin{align*}
&\hdua{(\ham_K-\ene_\cc(t_*))e^{T_*}\Phi_0}{S\Phi_0}=\hdua{e^{-T_*}(\ham_K-\ene_\cc(t_*)) e^{T_*}\Phi_0}{(e^{T_*})^\dag S\Phi_0}\\
&=\hdua{e^{-T_*}(\ham_K-\ene_\cc(t_*)) e^{T_*}\Phi_0}{\Pi_\VC(e^{T_*})^\dag S\Phi_0} + \hdua{e^{-T_*}(\ham_K-\ene_\cc(t_*)) e^{T_*}\Phi_0}{\underbrace{\Pi_{\Phi_0}(e^{T_*})^\dag S\Phi_0}_{\mathrm{const}\cdot\Phi_0}}\\
&= \hdua{e^{-T_*}\ham_K e^{T_*}\Phi_0}{\Pi_\VC(e^{T_*})^\dag S\Phi_0},
\end{align*}
where second term on the right-hand side of the penultimate equality vanishes by the definition of $\ene_\cc(t_*)$.
The proof is completed by recalling that $\Pi_\VC(e^{T_*})^\dag:\VC\to\VC$ is surjective due to Proposition 3.30 of Part I.
\end{proof}

The ``unlinked'' form is less useful in practice, because the expansion of $\ham_K e^T$ does not terminate like the Baker--Campbell--Hausdorff series
\begin{equation}\label{bchagain}
\ham_K(t)=\sum_{j=0}^4 \frac{1}{j!} [\ham_K,T]_{(j)}.
\end{equation}
More generally, the \emph{doubly}\footnote{The doubly similarity-transformation above differs from the one considered in Arponen's Extended CC (ECC) theory \cite{Arponen1983}.} similarity-transformed Hamiltonian $\ham_K(t+s)=e^{-S}\ham_K(t)e^S$ can also be expanded using the Baker--Campbell--Hausdorff series
but in this case 
\begin{equation}\label{bchdoub}
\ham_K(t+s)=\sum_{j=0}^{2N} \frac{1}{j!} [\ham_K(t),S]_{(j)},
\end{equation}
i.e. the series terminates at $2N$. To see this, simply note that $[\ham_K(t),S]_{(2N+1)}$ consists of terms of the form $S^i \ham_K(t) S^k$, where $i+k=2N+1$,
so $i,k\ge N+1$, which, using the nilpotency of the cluster operators implies that all terms for $j\ge 2N+1$ vanish.

\subsection{Local properties---real case}\label{secsrccop}

Next, we look at the local behavior of the CC mapping $\opA:\VV\to\VV^*$ for general (real) amplitude spaces $\VV$.
For fixed $t\in\VV$, define the
\emph{modified similarity-transformed Hamiltonian},
\begin{equation}\label{hamhats}
\wh{\ham_K}(t):= \ham_K(t) - \sum_{\alpha\in\Xi(G)^c}\hdua{\ham_K(t)\Phi_0}{\Phi_\alpha}X_\alpha,
\end{equation}
where $\Xi(G)^c=\Xi(G^\full)\setminus \Xi(G)$ and the \emph{set of excitations} $\Xi(G)$ was defined in (3.3) of Part I.
For example, if SD truncation is in effect, then $\Xi(G)$ consists of all singly-, and doubly excited multiindices (see \cref{exopsec}).
Also, if $\VV$ is the full amplitude space, then $\Xi(G)$ consists of all multiindices $\alpha\neq 0$.

\begin{definition}\label{rankreg}
The amplitude space $\VV(G)$ is said to be \emph{rank-regular}, if 
$$
\dua{X_\alpha \Phi_\beta}{\Phi_\gamma}=0\quad\text{for all $\beta,\gamma\in\Xi(G)$ and $\alpha\in\Xi(G)^c$.}
$$
\end{definition}
We immediately get that $\wh{\ham_K}(t)_\VC=\ham_K(t)_\VC$ if $\VV(G)$ is rank-regular. 
The next proposition shows that the truncated subgraphs typically used in practice---such as ones coming from S, D, SD, SDT, etc. truncations---are rank-regular. We refer the reader to Section 3.2 of Part I for details on truncated subgraphs of the excitation graph. 

\begin{proposition}\label{rkmodham}
Suppose that the excitation graph $G$ is a rank-truncated subgraph of the form $G=G(1,2,\ldots,\rho)$, for some $\rho=1,\ldots,N$ or $G=G(\mathrm{D})$.
Then $\VV(G)$ is rank-regular.
\end{proposition}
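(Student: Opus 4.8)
The plan is to reduce the assertion to one combinatorial fact about how the \emph{rank} behaves under products of excitation operators, and then to verify the two families of graphs by a counting argument.

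The key point is that \emph{if $X_\alpha X_\beta\neq 0$, then $X_\alpha X_\beta=\pm X_\delta$ for a multiindex $\delta$ with $\mathrm{rank}(\delta)=\mathrm{rank}(\alpha)+\mathrm{rank}(\beta)$.} That $X_\alpha X_\beta$ is an excitation operator or zero is already recorded above; the additional content is the additivity of the rank, which I would establish by the standard second-quantized bookkeeping. Write $X_\beta$ as the operator that deletes the occupied orbitals indexed by a set $I_\beta\subseteq\{1,\ldots,N\}$ and inserts the virtual orbitals indexed by $A_\beta\subseteq\{N+1,\ldots,K\}$, with $|I_\beta|=|A_\beta|=\mathrm{rank}(\beta)$, and similarly for $\alpha$. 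Acting with $X_\alpha$ on $\Phi_\beta=X_\beta\Phi_0$ is nonzero precisely when its annihilation part meets only orbitals still occupied in $\Phi_\beta$ (which forces $I_\alpha\cap I_\beta=\emptyset$, since $I_\alpha\subseteq\{1,\ldots,N\}$ is automatically disjoint from the inserted virtuals $A_\beta$) and its creation part inserts only orbitals not already present (which forces $A_\alpha\cap A_\beta=\emptyset$). Under these two conditions $X_\alpha X_\beta\Phi_0$ is, up to sign, the Slater determinant obtained from $\Phi_0$ by deleting the $\mathrm{rank}(\alpha)+\mathrm{rank}(\beta)$ orbitals $I_\alpha\cup I_\beta$ and inserting $A_\alpha\cup A_\beta$; that is, $X_\alpha X_\beta\Phi_0=\pm\Phi_\delta$ with $\mathrm{rank}(\delta)=\mathrm{rank}(\alpha)+\mathrm{rank}(\beta)$.

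Granting this, since $\Phi_\beta=X_\beta\Phi_0$ we have $\dua{X_\alpha\Phi_\beta}{\Phi_\gamma}=\dua{X_\alpha X_\beta\Phi_0}{\Phi_\gamma}$, and because $\{\Phi_\delta\}_\delta$ is orthonormal this vanishes unless $X_\alpha X_\beta\neq 0$ and $X_\alpha X_\beta\Phi_0=\pm\Phi_\gamma$, in which case $\mathrm{rank}(\gamma)=\mathrm{rank}(\alpha)+\mathrm{rank}(\beta)$. It therefore suffices to rule out this equality whenever $\beta,\gamma\in\Xi(G)$ and $\alpha\in\Xi(G)^c$. If $G=G(1,2,\ldots,\rho)$, then $\Xi(G)$ consists exactly of the multiindices of rank in $\{1,\ldots,\rho\}$, so $\Xi(G)^c$ consists of those of rank at least $\rho+1$; hence $\mathrm{rank}(\gamma)=\mathrm{rank}(\alpha)+\mathrm{rank}(\beta)\ge(\rho+1)+1>\rho$, contradicting $\gamma\in\Xi(G)$. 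If $G=G(\mathrm{D})$, then $\Xi(G)$ consists exactly of the rank-$2$ multiindices, so $\mathrm{rank}(\gamma)=2$ and $\mathrm{rank}(\beta)=2$ would force $\mathrm{rank}(\alpha)=0$, i.e.\ $\alpha=0$, which is excluded. In either case $\dua{X_\alpha\Phi_\beta}{\Phi_\gamma}=0$, which is rank-regularity in the sense of \cref{rankreg}.

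I expect the rank-additivity claim of the second paragraph to be the only real obstacle: it is intuitively obvious but needs careful tracking of which creation and annihilation operators survive, with the paper's sign and ordering conventions. Very likely it is already available from the excitation-graph formalism of Part~I (Section~3.2), where the edges of $G^{\full}$ encode exactly such rank increments for products of excitation operators; if so, the second paragraph reduces to a citation and the proof is just the counting in the third paragraph.
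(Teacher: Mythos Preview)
Your proof is correct and follows exactly the paper's approach: the paper's proof is just your third paragraph, tersely asserting that ``$X_\alpha\Phi_\beta$ is of rank $\rk(\alpha)+\rk(\beta)$'' without further justification. Your second paragraph makes explicit the rank-additivity argument that the paper takes for granted (and which, as you correctly surmise, is implicit in the excitation-graph formalism of Part~I).
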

\begin{proof}
The set $\Xi(G)^c$ consists of elements of rank $\rho+1,\ldots,N$ (or empty), so that
$\dua{X_\alpha \Phi_\beta}{\Phi_\gamma}=0$ for all $\rk\alpha\not\in\{1,2,\ldots,\rho\}$ and all $\rk\beta,\rk\gamma\in\{1,2,\ldots,\rho\}$,
due to the fact that $X_\alpha \Phi_\beta$ is of rank $\rk(\alpha)+\rk(\beta)\not\in\{1,2,\ldots,\rho\}$.
The proof of the case $G=G(\mathrm{D})$ is similar.
\end{proof}

Next, we compute the derivative of the mapping $\opA$, which explains the definition of $\wh{\ham_K}(t)$ and of rank-regularity.

\begin{lemma}\label{Adifflemm}
Let $t_*$ be a zero of $\opA:\VV\to\VV^*$.
Then the derivative $\opA'(t_*)\in\mathcal{L}(\VV,\VV^*)$ is given by
\begin{equation}\label{SRCCDZ}
\vdua{\opA'(t_*)u}{v}=\hdua{(\widehat{\ham_K}(t_*) - \ene_{\cc}(t_*))U\Phi_0}{V\Phi_0}
\end{equation}
for all $u,v\in\VV$.
\end{lemma}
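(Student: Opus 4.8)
The plan is to obtain the Fr\'echet derivative from the directional derivatives. This is legitimate because in the finite-dimensional setting $\opA$ is a polynomial map: each cluster operator is nilpotent, so $e^{\pm T}$ is a polynomial in $t$, hence $\vdua{\opA(t)}{s}$ is polynomial in $t$ and linear in $s$; in particular $\opA$ is $C^\infty$ and $\opA'(t_*)\in\mathcal{L}(\VV,\VV^*)$ automatically. (I work throughout in the $\LC^2$-orthonormal Slater basis, identifying $(\HC^1_K)^*$ with $\HC^1_K$, so that all operators below are literal matrices and products such as $U\ham_K(t_*)$ and $e^{-\epsilon U}\ham_K(t_*)$ are unambiguous.) Fix $u,v\in\VV$. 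The key point is the commutativity of cluster operators: since $T_*$ and $\epsilon U$ commute, $e^{T_*+\epsilon U}=e^{T_*}e^{\epsilon U}$ and $e^{-(T_*+\epsilon U)}=e^{-\epsilon U}e^{-T_*}$, so that
\[
e^{-(T_*+\epsilon U)}\ham_K\,e^{T_*+\epsilon U}=e^{-\epsilon U}\,\ham_K(t_*)\,e^{\epsilon U}.
\]
Differentiating in $\epsilon$ at $\epsilon=0$ yields $[\ham_K(t_*),U]$, whence $\vdua{\opA'(t_*)u}{v}=\hdua{[\ham_K(t_*),U]\Phi_0}{V\Phi_0}$.

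It then remains to identify $[\ham_K(t_*),U]\Phi_0$ with $(\wh{\ham_K}(t_*)-\ene_\cc(t_*))U\Phi_0$, which follows from two observations. First, expanding $\ham_K(t_*)\Phi_0$ in the Slater basis: its $\Phi_0$-component is $\ene_\cc(t_*)$ by definition of the CC energy, while its components along $\Phi_\alpha$ with $\alpha\in\Xi(G)$ all vanish since $\opA(t_*)=0$; comparing with \cref{hamhats}, this is precisely the statement $\wh{\ham_K}(t_*)\Phi_0=\ene_\cc(t_*)\Phi_0$. Second, by \cref{hamhats} the operator $\ham_K(t_*)-\wh{\ham_K}(t_*)=\sum_{\alpha\in\Xi(G)^c}\hdua{\ham_K(t_*)\Phi_0}{\Phi_\alpha}X_\alpha$ is itself a cluster operator, hence commutes with $U$, so that $[\ham_K(t_*),U]=[\wh{\ham_K}(t_*),U]$. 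Combining the two,
\[
[\ham_K(t_*),U]\Phi_0=[\wh{\ham_K}(t_*),U]\Phi_0=\bigl(\wh{\ham_K}(t_*)-\ene_\cc(t_*)\bigr)U\Phi_0,
\]
where the last equality uses $U\wh{\ham_K}(t_*)\Phi_0=\ene_\cc(t_*)U\Phi_0$; pairing with $V\Phi_0$ gives \cref{SRCCDZ}.

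I do not expect a genuine obstacle: the content is bookkeeping with the BCH-type manipulation carried out twice, once to produce the commutator and once to reduce it. The point deserving care — and the reason $\wh{\ham_K}(t_*)$ rather than $\ham_K(t_*)$ appears — is that, unlike in the untruncated case (\cref{fcclemm}), a zero $t_*$ of the \emph{truncated} CC mapping does \emph{not} satisfy $\ham_K(t_*)\Phi_0=\ene_\cc(t_*)\Phi_0$; there is a residual contribution supported on $\Xi(G)^c$. The computation shows that this residual part, being a cluster operator, commutes past $U$ and recombines with $\ham_K(t_*)$ so that exactly the hatted operator survives. This is also why the text introduces $\wh{\ham_K}$ and rank-regularity, the latter guaranteeing $\wh{\ham_K}(t_*)_\VC=\ham_K(t_*)_\VC$ so that $\opA'(t_*)$ can eventually be read off the projected (modified) similarity-transformed Hamiltonian.
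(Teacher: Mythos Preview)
Your proof is correct and follows the same overall strategy as the paper: first obtain the commutator form $\vdua{\opA'(t_*)u}{v}=\hdua{[\ham_K(t_*),U]\Phi_0}{V\Phi_0}$ by differentiation, then simplify using $\opA(t_*)=0$. The simplification step is organized slightly differently: the paper expands $U^\dag V\Phi_0$ in the Slater basis and evaluates $\hdua{U\ham_K(t_*)\Phi_0}{V\Phi_0}$ term by term, whereas you work at the operator level, noting that $\ham_K(t_*)-\wh{\ham_K}(t_*)$ is itself a cluster operator (hence commutes with $U$) and that $\wh{\ham_K}(t_*)\Phi_0=\ene_\cc(t_*)\Phi_0$ is an exact eigenvector relation. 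Your packaging is a bit more conceptual and makes transparent \emph{why} $\wh{\ham_K}$ is the right object, but the content is the same.
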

\begin{proof}
The derivative $\opA':\VV\to \mathcal L(\VV,\VV^*)$ is readily computed as
\begin{align*}
\left.\frac{d}{dh}\vdua{\opA(t+hu)}{v}\right|_{h=0}&=\left.\frac{d}{dh} \hdua{e^{-T-hU}\ham_K e^{T+hU}\Phi_0}{V\Phi_0}\right|_{h=0}\\
&=\left.\hdua{e^{-T-hU}(\ham_K U - U \ham_K)e^{T+hU}\Phi_0}{V\Phi_0}\right|_{h=0}\\
&=\hdua{e^{-T}(\ham_K U - U \ham_K)e^{T}\Phi_0}{V\Phi_0},
\end{align*}
so using the commutativity of the cluster operators, we get 
\begin{equation}\label{SRCCD}
\vdua{\opA'(t)u}{v}=\hdua{[\ham_K(t),U]\Phi_0}{V\Phi_0}
\end{equation}
for all $t,u,v\in\VV$.
Expanding $U^\dag V\Phi_0\in\HC_K^1$ in the $\LC^2$-orthonormal basis $\{\Phi_\alpha\}_{\alpha}\subset\HC^1_K$, 
$$
U^\dag V\Phi_0=\sum_{\alpha} \dua{U\Phi_\alpha}{V\Phi_0} \Phi_\alpha,
$$
we obtain using $\hdua{\ham_K(t_*)\Phi_0}{\Phi_\alpha}=0$ for all $\alpha\in\Xi(G)$, 
\begin{align*}
\hdua{U \ham_K(t_*)\Phi_0}{V\Phi_0}&=\hdua{\ham_K(t_*)\Phi_0}{U^\dag V\Phi_0}\\
&= \ene_{\cc}(t_*)\dua{U\Phi_0}{V\Phi_0} + \sum_{\alpha\in\Xi(G)^c}\hdua{\ham_K(t_*)\Phi_0}{\Phi_\alpha}\dua{X_\alpha U\Phi_0}{V\Phi_0}
\end{align*}
for all $u,v\in\VV$. 
Inserting this into \cref{SRCCD} with $t=t_*$, we obtain the stated formula.
\end{proof}

As we noted in \cref{secprev}, previous analyses of the CC mapping assumed the local strong monotonicity at a zero $t_*$, i.e. that there is a $\delta>0$ and a
 constant $C_{\mathrm{SM}}(t_*,\delta)>0$ such that
\begin{equation}\label{Asmco}
\vdua{\opA(t)-\opA(s)}{t-s}\ge C_{\mathrm{SM}}(t_*,\delta) \|t-s\|_\VV^2, \quad\text{for all}\quad t,s\in B_\VV(t_*,\delta).
\end{equation}
The following elementary theorem makes the observations in \cite{rohwedder2013error} more precise.
\begin{theorem}\label{thm:SM-CC}
Let $t_*\in\VV$ be a zero of $\opA:\VV\to\VV^*$.
\begin{enumerate}[label=\normalfont(\roman*)]
\item If $\opA$ is strongly monotone in $B_\VV(t_*,\delta)$ for some $\delta>0$,
then there exists $\delta'>0$ such that $\opA'(t_*+u)$ is $\VV$-coercive for all $\|u\|_\VV<\delta'$ with some constant $0<\gamma\le C_{\mathrm{SM}}(t_*,\delta)$, i.e.
\begin{equation}\label{apcoerc}
\vdua{\opA'(t_*+u)v}{v}\ge \gamma \|v\|_\VV^2 \quad \text{for all}\quad v\in\VV\;\text{and}\;\|u\|_\VV<\delta'.
\end{equation}
\item Conversely, if \cref{apcoerc} holds with $u=0$, then \cref{Asmco} holds true with $\delta>0$ chosen so that $C_{\mathrm{SM}}(t_*,\delta):=\gamma- M_\delta \delta>0$, where 
\begin{equation}\label{mdelta}
M_\delta=\sup_{\|\zeta\|_\VV\le\delta} \|\opA''(t_*+\zeta)\|_{\mathcal{L}(\VV\times\VV,\VV^*)}.
\end{equation}
\end{enumerate}
\end{theorem}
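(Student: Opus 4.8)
The plan is to prove the two implications separately by first- and second-order expansions of $\opA$, exploiting that $\opA$ is a polynomial map. By the terminating Baker--Campbell--Hausdorff series \cref{bchagain} the map $t\mapsto\ham_K(t)$, and hence $\opA:\VV\to\VV^*$, depends polynomially (of degree at most four) on $t$, so $\opA\in C^\infty(\VV,\VV^*)$ and its derivatives $\opA'$ and $\opA''$ are continuous; moreover, since $\VV$ is finite-dimensional, closed balls are compact, so the quantity $M_\delta$ in \cref{mdelta} is finite for each $\delta>0$ and remains bounded as $\delta\to 0^+$. I would record these facts at the outset.

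For part (i), I would take $\delta'=\delta$ (any smaller value works equally well), fix $u\in\VV$ with $\|u\|_\VV<\delta$ and $v\in\VV$, normalized to $\|v\|_\VV=1$ by homogeneity of the target estimate. For all sufficiently small $h>0$ both $t_*+u$ and $t_*+u+hv$ lie in $B_\VV(t_*,\delta)$, so applying strong monotonicity \cref{Asmco} to this pair and simplifying (the pairing is against $hv$, of norm $h$) gives $h^{-1}\vdua{\opA(t_*+u+hv)-\opA(t_*+u)}{v}\ge C_{\mathrm{SM}}(t_*,\delta)$. Letting $h\to 0^+$, the left-hand side converges to $\vdua{\opA'(t_*+u)v}{v}$ by differentiability of $\opA$ at $t_*+u$, and rescaling $v$ then yields \cref{apcoerc} with $\gamma=C_{\mathrm{SM}}(t_*,\delta)$.

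For part (ii), assume $\vdua{\opA'(t_*)v}{v}\ge\gamma\|v\|_\VV^2$ for all $v\in\VV$. I would use the integral form of the mean value theorem: for $t,s\in B_\VV(t_*,\delta)$,
\[
\opA(t)-\opA(s)=\int_0^1\opA'\!\big(s+\tau(t-s)\big)(t-s)\,\rmd\tau ,
\]
and split $\opA'(\xi)=\opA'(t_*)+\big(\opA'(\xi)-\opA'(t_*)\big)$ for $\xi=s+\tau(t-s)$. Convexity of the ball gives $\xi\in B_\VV(t_*,\delta)$, and one more application of the integral mean value theorem to $\sigma\mapsto\opA'(t_*+\sigma(\xi-t_*))$ together with \cref{mdelta} bounds $\|\opA'(\xi)-\opA'(t_*)\|_{\mathcal L(\VV,\VV^*)}\le M_\delta\|\xi-t_*\|_\VV\le M_\delta\,\delta$. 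Pairing the identity above with $t-s$, the $\opA'(t_*)$-part is bounded below by $\gamma\|t-s\|_\VV^2$ and the remainder above by $M_\delta\,\delta\,\|t-s\|_\VV^2$ in modulus, so $\vdua{\opA(t)-\opA(s)}{t-s}\ge(\gamma-M_\delta\,\delta)\|t-s\|_\VV^2$. Since $M_\delta\,\delta\to 0$ as $\delta\to 0^+$, I would then pick $\delta>0$ small enough that $\gamma-M_\delta\,\delta>0$, which gives \cref{Asmco} with $C_{\mathrm{SM}}(t_*,\delta):=\gamma-M_\delta\,\delta$.

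I do not expect a genuine obstacle: this is essentially the textbook equivalence between strong monotonicity near a point and uniform coercivity of the derivative there. The only steps demanding a little care are keeping the perturbed arguments inside $B_\VV(t_*,\delta)$ (handled by shrinking $h$ in (i) and by convexity of the ball in (ii)), the finiteness of $M_\delta$ (continuity of $\opA''$ plus compactness of closed balls in the finite-dimensional $\VV$), and the consistent bookkeeping of the duality pairings induced by $\opA'(t)\in\mathcal L(\VV,\VV^*)$ and $\opA''(t)\in\mathcal L(\VV\times\VV,\VV^*)$ --- all immediate in finite dimensions.
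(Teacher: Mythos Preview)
Your proof is correct. For part (ii) you and the paper do essentially the same thing: the paper writes Taylor expansions with remainder at $t_*$ and bounds the difference of remainders via the intermediate value inequality, whereas you use the integral mean value theorem and then bound $\opA'(\xi)-\opA'(t_*)$ by another application of the same. Both routes yield the identical constant $\gamma-M_\delta\delta$.

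For part (i) your argument is genuinely different and somewhat cleaner. The paper expands $\opA(t_*+r)-\opA(t_*+u)$ to second order and absorbs the $\tfrac{1}{2}M_\delta\|r-u\|_\VV^3$ remainder, which forces a loss in the constant: it obtains $\gamma=C_{\mathrm{SM}}(t_*,\delta)-M_\delta\delta'$ and must then shrink $\delta'$ to make this positive. Your difference-quotient limit avoids the remainder entirely and delivers the sharp value $\gamma=C_{\mathrm{SM}}(t_*,\delta)$ directly, with $\delta'=\delta$. The paper's approach has the minor advantage of making the role of $\opA''$ explicit already in (i), foreshadowing its appearance in (ii); your approach buys simplicity and the optimal constant.
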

\begin{proof}
To see (i), fix $\delta'>0$ and $\|u\|_\VV<\delta'$ and write for any $\|r\|_\VV<\delta'<\delta$, 
\begin{align*}
    C_{\mathrm{SM}}(t_*,\delta)\|r-u\|_\VV^2 &\le \vdua{\opA(t_*+r)-\opA(t_*+u)}{r-u}\le \vdua{\opA'(t_*+u)(r-u)}{r-u}  + \frac{1}{2} M_{\delta} \|r-u\|_\VV^3.
\end{align*}
This implies
$$
(C_{\mathrm{SM}}(t_*,\delta)-M_{\delta}\delta')\|r-u\|_\VV^2 \le \vdua{\opA'(t_*+u)(r-u)}{r-u}.
$$
Any vector $v\in\VV$ can be expressed as $v=\alpha(r-u)$ for some $\alpha>0$ and $\|r\|_\VV<\delta'$,
from which $\VV$-coercivity follows with $\gamma=C_{\mathrm{SM}}(t_*,\delta)-M_\delta \delta'$, by choosing $\delta'$ sufficiently small.

Next, to prove (ii), write the Taylor expansions of $\opA$ at $t_*$,
\begin{align*}
\opA(t_*+r)&=\opA'(t_*)r + \mathcal{R}_2(t_*;r),\\
\opA(t_*+r')&=\opA'(t_*)r' + \mathcal{R}_2(t_*;r'),
\end{align*}
for any $\|r\|_\VV ,\|r' \|_\VV<\delta$ for some $\delta>0$, from which we obtain
\begin{align*}
\vdua{\opA(t_*+r)-\opA(t_*+r')}{r-r'}&=\vdua{\opA'(t_*)(r-r')}{r-r'} + \vdua{\mathcal{R}_2(t_*;r)-\mathcal{R}_2(t_*;r')}{r-r'}\\
&\ge \gamma \|r-r'\|_\VV^2  + \vdua{\mathcal{R}_2(t_*;r)-\mathcal{R}_2(t_*;r')}{r-r'}.
\end{align*}
Using the intermediate value inequality, we have
\begin{align*}
\|\mathcal{R}_2(t_*;r)-\mathcal{R}_2(t_*;r')\|_{\VV^*}&\le M_{r,r'}\|r-r'\|_{\VV},
\end{align*}
where
\begin{align*}
M_{r,r'}&=\max_{\xi\in[r,r']} \|\partial_2 \mathcal{R}_2(t_*;\xi)\|=\max_{\xi\in[r,r']} \|\opA'(t_*+\xi) - \opA'(t_*)\|\\
&\le \Big(\max_{\xi\in[r,r']}\max_{\zeta\in[0,\xi]} \|\opA''(t_*+\zeta)\|_{\mathcal{L}(\VV\times\VV,\VV^*)}\Big) \delta \\
&= \Big(\sup_{\|\zeta\|\le\delta} \|\opA''(t_*+\zeta)\|_{\mathcal{L}(\VV\times\VV,\VV^*)}\Big)\delta = M_\delta \delta
\end{align*}
This implies $\vdua{\opA(t_*+r)-\opA(t_*+r')}{r-r'}\ge (\gamma -M_\delta \delta)\|r-r'\|_\VV^2$ for all $\|r\|_\VV ,\|r' \|_\VV<\delta$. 
Setting $t=t_*+r$ and $s=t_*+r'$ proves the claim.
\end{proof}

\begin{remark}\label{smproj}
Let $\VV^0\subset\VV$ be a subspace and consider the \emph{projected CC mapping} $\opA^0 : \VV^0\to (\VV^0)^*$ via
$$
\dua{\opA^0(t^0)}{s^0}_{\VV^*\times\VV}=\dua{\opA(t^0)}{s^0}_{\VV^*\times\VV}\quad \text{for all}\quad t^0,s^0\in\VV^0.
$$ 
Clearly, if $\opA$ is strongly monotone on $B_{\VV}(t_*,\delta)$ with a constant $C_{\mathrm{SM}}>0$ at a zero $t_*$, then
$\opA^0$ is strongly monotone on $B_{\VV^0}(t_*^0,\sqrt{\delta^2-\|t_*^\perp\|_\VV^2})$ provided $\|t_*^\perp\|_\VV^2$ is small enough, with the same constant $C_{\mathrm{SM}}$, where we have set $t_*^0=\Pi_{\VV^0}t_*$
and $t_*^\perp=(I-\Pi_{\VV^0})t_*$. Here, $\Pi_{\VV^0}:\VV\to\VV$ denotes the $\ell^2$-orthogonal projector onto $\VV^0$.
Note that $t_*^0$ is \emph{not}, in general, a zero of $\opA^0$, hence the preceding theorem is not applicable to $\opA^0$.
\end{remark}

\begin{remark}\label{Mdeltarmk}
The quantity $M_\delta$ contains the second derivative of $\opA$. It is easy to see that
$$
\vdua{\opA''(t)(u,v)}{w}=\hdua{[[\ham_K(t),U],V]\Phi_0}{W\Phi_0}
$$
for all $u,v,w\in\VV$. Using \cref{simcomm}, we have
$\vdua{\opA''(t)(u,v)}{w}=\hdua{[[\mathcal{W}_K(t),U],V]\Phi_0}{W\Phi_0}$,
so that $\opA''(t)$ only involves the fluctuation operator $\mathcal{W}_K$.
\end{remark}

\begin{remark}[Perturbative regime]\label{pertreg}
Let $\VV(G)$ be rank-regular, and consider the case when $t_*\approx 0$, which is the case considered in \cite{schneider2009analysis,rohwedder2013error}.
Then roughly speaking, we have $\ham_K(t_*)\approx \ham_K$.
Note that
\begin{equation*}
    \vdua{\opA'(t_*)r}{r}=\hdua{(\ham_K - \ene_\cc(t_*))R\Phi_0}{R\Phi_0}  + \mathcal{O}(\|t_*\|_\VV) .
\end{equation*}
Consequently, if 
$$
\hdua{(\ham_K - \ene_\cc(t_*))R\Phi_0}{R\Phi_0}\geq c(t_*) \|r\|_\VV^2,
$$
where $c(t_*)>0$, then local strong monotonicity holds with  constant $C_\mathrm{SM} = c(t_*) -M'\Vert t_*\Vert_\VV - 2M_\delta\delta$ for $t_*$ sufficiently close to 0. 
In \cite[Lemma 3.5]{rohwedder2013error}, it is shown that such a $c(t_*)$ exists under the assumption that $\ham_K$ has a spectral gap and that $\Phi_0$ is a sufficiently good approximation of the ground state $e^{T_*} \Phi_0$ (i.e. that $t_*$ is sufficiently close to 0).
\end{remark}

\begin{proposition}\label{smnondeg}
If $\opA$ is locally strongly monotone at a zero $t_*$, then $t_*$ is non-degenerate.
\end{proposition}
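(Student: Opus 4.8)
The plan is to reduce the claim to the invertibility of the linear map $\opA'(t_*)\in\mathcal{L}(\VV,\VV^*)$ and to get that invertibility for free from the coercivity estimate already packaged in \cref{thm:SM-CC}. First I would invoke \cref{thm:SM-CC}(i) with $u=0$ — this is legitimate precisely because $t_*$ is a zero of $\opA$ — to turn the local strong monotonicity hypothesis \cref{Asmco} into the existence of a constant $\gamma>0$ with
$$
\vdua{\opA'(t_*)v}{v}\ge\gamma\|v\|_\VV^2\qquad\text{for all }v\in\VV.
$$

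Next I would run the standard finite-dimensional argument. If $\opA'(t_*)v=0$, the displayed inequality gives $\gamma\|v\|_\VV^2\le 0$, hence $v=0$, so $\opA'(t_*)$ is injective; since $\dim\VV=\dim\VV^*<\infty$, it is then a linear isomorphism. Consequently, after fixing linear homeomorphisms $h:\VV\to\RR^n$ and $g:\VV^*\to\RR^n$ as in \cref{topvecindx}, the matrix $g\,\opA'(t_*)\,h^{-1}$ is invertible, so $\det\bigl(g\,\opA'(t_*)\,h^{-1}\bigr)\ne 0$, a condition independent of the choice of $h$ and $g$; by the definition of non-degeneracy given after \cref{degsum}, this says exactly that $t_*$ is non-degenerate. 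If desired I would also record the sharper statement: taking $g$ to be the adjoint of $h^{-1}$ so that $\vdua{\cdot}{\cdot}$ becomes the Euclidean pairing, the matrix of $\opA'(t_*)$ has positive-definite symmetric part, and a straight-line homotopy to that symmetric part keeps the quadratic form positive definite, hence the matrix invertible throughout; therefore $\det\bigl(g\,\opA'(t_*)\,h^{-1}\bigr)>0$ and $i(\opA,t_*)=1$ by \cref{indxsgndet}(ii).

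I do not expect a real obstacle. The only analytic step — upgrading the monotonicity inequality for $\opA$ to the coercivity inequality \cref{apcoerc} for $\opA'(t_*)$ — has already been carried out in \cref{thm:SM-CC}(i), and the remainder is the elementary fact that a coercive linear endomorphism of a finite-dimensional space is invertible. The single point that needs a word of care is bookkeeping: $\opA'(t_*)$ maps $\VV$ into the \emph{dual} $\VV^*$, not into $\VV$, so the phrase ``$\det\opA'(t_*)\ne 0$'' has to be interpreted through the basis-independent determinant of \cref{topvecindx}; once that is in place the proof is two lines.
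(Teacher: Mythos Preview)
Your proof is correct. You reuse \cref{thm:SM-CC}(i) to obtain the coercivity inequality $\vdua{\opA'(t_*)v}{v}\ge\gamma\|v\|_\VV^2$ and then deduce injectivity (hence invertibility in finite dimensions) of $\opA'(t_*)$; the paper instead argues directly by contradiction, assuming $0\neq r\in\ker\opA'(t_*)$, Taylor-expanding $\vdua{\opA(t_*+r)}{r}$, and rescaling to force $C_{\mathrm{SM}}=0$. Underneath, both arguments are the same limit-of-difference-quotients step; your version is more economical because it recycles the machinery already set up in \cref{thm:SM-CC}, while the paper's version is self-contained and slightly shorter since it skips the packaging. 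Your bonus paragraph essentially anticipates \cref{smindex}, which the paper proves separately via \cref{monothm} and \cref{opthm}; your homotopy-to-the-symmetric-part argument is a clean alternative route to the same conclusion.

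One small remark: your parenthetical ``this is legitimate precisely because $t_*$ is a zero of $\opA$'' is a bit misleading---the proof of \cref{thm:SM-CC}(i) does not actually use $\opA(t_*)=0$, only the strong monotonicity on the ball---but since the theorem is stated under that hypothesis your invocation is of course formally correct.
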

\begin{proof}
Suppose that $\ker\opA'(t_*)\neq \{0\}$ and that $\opA$ is locally strongly monotone near $t_*$. Then for any $0\neq r\in\ker\opA'(t_*)$ sufficiently close to 0, we have
$$
C_{\mathrm{SM}}(\delta) \|r\|_\VV^2\le \vdua{\opA(t_*+r)}{r} = \frac{1}{2} \vdua{ \opA''(t_*)(r,r)}{r} + o(\|r\|_\VV^4).
$$
Rescaling $r$ by $\alpha>0$ small, and letting $\alpha\to 0$ we obtain that $C_{\mathrm{SM}}=0$, a contradiction.
\end{proof}

\begin{remark}\label{basrmk}
When applying topological degree theory, we will view $\opA:\VV\to\VV^*$ as a mapping $\RR^n\to\RR^n$ by identifying $\VV$ and $\VV^*$ with $\RR^n$. 
Following \cite[Section 1.3]{dinca2021brouwer}, we fix a basis $\{\tau_\alpha\}_{\alpha\in\Xi(G)}$ of $\VV$ and define the linear homeomorphism $h:\VV\to\RR^n$ with
$$
\VV\ni t=\sum_{\alpha\in\Xi(G)} \wh{t}_\alpha \tau_\alpha \mapsto h(t)=\sum_{\alpha\in\Xi(G)} \wh{t}_\alpha e_\alpha\in\RR^n,
$$
where $\{e_\alpha\}_{\alpha\in\Xi(G)}$ is the standard (ordered) basis in $\RR^n$.
Also, fix a basis $\{\tau_\alpha^*\}_{\alpha\in\Xi(G)}$ of $\VV^*$ and define the linear homeomorphism $g:\VV^*\to\RR^n$ analogously.
Then 
$$
\wh{\opA}:=g\circ \opA\circ h^{-1}:\RR^n\to\RR^n
$$
gives the desired mapping. Now suppose that two other bases $\{\wt{\tau}_\alpha\}_{\alpha\in\Xi(G)}\subset\VV$
and $\{\wt{\tau}^*_\alpha\}_{\alpha\in\Xi(G)}\subset\VV^*$ are given and let $\wt{h}:\VV\to\RR^n$ and $\wt{g}:\VV^*\to\RR^n$ be the corresponding linear homeomorphism.
But then 
$$
g^{-1}\circ g \circ \opA \circ h^{-1} \circ h = \opA = \wt{g}^{-1}\circ \wt{g} \circ \opA \circ \wt{h}^{-1} \circ \wt{h},
$$
which implies $\wt{\opA}:=\wt{g}\circ\opA\circ\wt{h}^{-1}=m\circ\wh{\opA}\circ\wt{m}$, where $m=\wt{g}\circ g^{-1}:\RR^n\to\RR^n$ and $\wt{m}=h\circ\wt{h}^{-1}:\RR^n\to\RR^n$. 
Using \cite[Lemma 1.3.1]{dinca2021brouwer} (\cite[Lemma 6.1.1]{dinca2009brouwer}), we obtain
$$
\deg( \wt{\opA}, \wt{h}(D), \wt{g}(0) ) = (\sgn \det m) (\sgn \det\wt{m}) \deg( \wh{\opA}, h(D), g(0) )
$$
for any open and bounded set $D\subset\VV$ with $0\not\in\opA(\partial D)$. We can conclude that the topological degree is independent of the choice of the basis
if $\VV$ and $\VV^*$ are oriented the same.
\end{remark}

Next, we determine the topological index of a zero of $\opA$.
The fact that the topological index of $t_*$ is related to its CC energy $\ene_\cc(t_*)$ 
and the eigenvalues of the operator $\wh{\ham_K}(t_*)_\VC$ is interesting on its own right.

\begin{theorem}[Index formula for SRCC---non-degenerate case]\label{indxformula}
Let $t_*$ be a zero of the CC mapping $\opA:\VV\to\VV^*$.
Then $t_*$ is non-degenerate if and only if  $\ene_{\cc}(t_*)\not\in\sigma(\wh{\ham_K}(t_*)_\VC)$, and in this case $t_*$ is an isolated zero and the topological index of $\opA$ at $t_*$ is given by
$$
i(\opA,t_*)=(-1)^{\nu},
$$
where 
$$
\nu=|\{j : \ene_j(\wh{\ham_K}(t_*)_\VC)\in\RR, \; \ene_j(\wh{\ham_K}(t_*)_\VC)< \ene_{\cc}(t_*) \}|.
$$
\end{theorem}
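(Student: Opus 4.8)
The plan is to reduce the statement to the computation of the sign of one determinant, using the formula for $\opA'(t_*)$ from \cref{Adifflemm}. Fix an $\LC^2$-orthonormal basis $\{\Phi_\alpha\}_{\alpha\in\Xi(G)}$ of $\VC$, and, following \cref{basrmk}, identify $\VV$ with $\RR^n$ via the amplitude basis $\{\tau_\alpha\}_{\alpha\in\Xi(G)}$ --- where $\tau_\alpha\in\VV$ is the amplitude with a single nonzero entry $1$ in slot $\alpha$, so that under $t\mapsto T\Phi_0$ it is sent to $X_\alpha\Phi_0=\Phi_\alpha$ --- and identify $\VV^*$ with $\RR^n$ via the algebraic dual basis. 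With these choices $\VV$ and $\VV^*$ carry compatible orientations, so by \cref{basrmk} the resulting index does not depend on them.

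By \cref{Adifflemm}, the matrix $J$ of $\opA'(t_*)$ in these bases has entries
$$
J_{\alpha\beta}=\vdua{\opA'(t_*)\tau_\beta}{\tau_\alpha}=\hdua{\big(\wh{\ham_K}(t_*)-\ene_{\cc}(t_*)\big)\Phi_\beta}{\Phi_\alpha},
$$
and since $\{\Phi_\alpha\}$ is $\LC^2$-orthonormal this is exactly the matrix of the endomorphism $\wh{\ham_K}(t_*)_\VC-\ene_{\cc}(t_*)$ of $\VC$; hence $\det J=\det\!\big(\wh{\ham_K}(t_*)_\VC-\ene_{\cc}(t_*)\big)$, a quantity independent of the basis of $\VC$. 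The first assertion follows at once: $t_*$ is non-degenerate, i.e. $\det\opA'(t_*)\neq 0$, if and only if $\det J\neq 0$, i.e. if and only if $\ene_{\cc}(t_*)$ is not an eigenvalue of $\wh{\ham_K}(t_*)_\VC$, which is the condition $\ene_{\cc}(t_*)\notin\sigma(\wh{\ham_K}(t_*)_\VC)$. When it holds, $\opA'(t_*)$ is invertible, so $\opA$ --- which is polynomial by \cref{bchagain}, in particular $C^1$ --- is a local diffeomorphism at $t_*$; in particular $t_*$ is an isolated zero, and \cref{topvecindx} (equivalently \cref{indxsgndet}(ii)) gives $i(\opA,t_*)=\sgn\det J$.

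It remains to evaluate $\sgn\det J$. The point to be careful about is that $\wh{\ham_K}(t_*)_\VC$ is \emph{not} self-adjoint --- the similarity transformation $e^{-T_*}(\cdot)e^{T_*}$ breaks symmetry --- so its spectrum need not be real. It is, however, a \emph{real} operator: its matrix in the real basis $\{\Phi_\alpha\}$ has real entries, since $\ham_K$ has real matrix elements and $t_*\in\VV$ is real. Consequently its characteristic polynomial has real coefficients, so its non-real eigenvalues occur in complex-conjugate pairs. Factoring
$$
\det J=\prod_{j}\big(\ene_j(\wh{\ham_K}(t_*)_\VC)-\ene_{\cc}(t_*)\big)
$$
over eigenvalues with algebraic multiplicity, a conjugate pair $\mu,\overline{\mu}$ of non-real eigenvalues contributes $(\mu-\ene_{\cc}(t_*))(\overline{\mu}-\ene_{\cc}(t_*))=|\mu-\ene_{\cc}(t_*)|^2>0$ (using $\ene_{\cc}(t_*)\in\RR$), while each real eigenvalue $\ene_j$ contributes $\ene_j-\ene_{\cc}(t_*)$, negative precisely when $\ene_j<\ene_{\cc}(t_*)$. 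Hence $\sgn\det J=(-1)^{\nu}$ with $\nu$ the number of real eigenvalues of $\wh{\ham_K}(t_*)_\VC$ below $\ene_{\cc}(t_*)$, counted with algebraic multiplicity, which is the claimed index.

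I expect the main obstacle --- really a subtlety more than a genuine difficulty --- to be exactly this last step: making the conjugate-pair cancellation watertight and handling algebraic versus geometric multiplicity in the definition of $\nu$, since $\wh{\ham_K}(t_*)_\VC$ need not be diagonalizable. The remaining ingredients --- reading off $J$ from \cref{Adifflemm}, passing from $\det J\neq 0$ to non-degeneracy and isolatedness, and reducing the index to $\sgn\det J$ through the basis-independence of \cref{basrmk} --- are routine given the degree-theoretic toolkit collected in \cref{sectopdeg}.
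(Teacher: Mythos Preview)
Your proposal is correct and follows essentially the same route as the paper: identify the matrix of $\opA'(t_*)$ via \cref{Adifflemm}, reduce the index to $\sgn\det J$ through \cref{topvecindx}, and use the conjugate-pair cancellation to count only real eigenvalues. The one cosmetic difference is that you deduce isolatedness from the inverse function theorem, whereas the paper writes out the Taylor expansion $\dua{\opA(t_*+r)}{\opA'(t_*)r}_{\VV^*}=\|\opA'(t_*)r\|_{\VV^*}^2+o(\|r\|_\VV^3)$ directly; both are standard.
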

\begin{proof}
It is trivial to see that if $t_*$ is non-degenerate, then it is isolated: assume that $\ker\opA'(t_*)=\{0\}$ and write
\begin{equation}\label{isotaylor}
\dua{\opA(t_*+r)}{\opA'(t_*)r}_{\VV^*}=\|\opA'(t_*)r\|_{\VV^*}^2 + o(\|r\|_\VV^3),
\end{equation}
for all $\|r\|_\VV=\epsilon$, where $\epsilon>0$ is sufficiently small.
 This implies that $\opA(t_*+r)\neq 0$ for all $0<\|r\|_\VV<\epsilon$.

We can apply \cref{topvecindx} with the mappings $h:\VV\to\RR^n$ and $g:\VV^*\to\RR^n$ defined in \cref{basrmk}.
Using the notations of the said remark and \cref{SRCCDZ}, we have
\begin{align*}
\dua{\wh{\opA}'(h^{-1}(t_*))e_\alpha}{e_\beta}_{\RR^n}&=\dua{g\opA'(t_*)h^{-1}(e_\alpha)}{e_\beta}_{\RR^n}=\vdua{\opA'(t_*)h^{-1}(e_\alpha)}{g^\dag(e_\beta)}\\
&=\hdua{(\widehat{\ham_K}(t_*) - \ene_{\cc}(t_*))U_\alpha \Phi_0}{V_\beta\Phi_0},
\end{align*}
where $u_\alpha=h^{-1}(e_\alpha)$ and $v_\beta=g^\dag(e_\beta)$ and $\alpha,\beta\in\Xi(G)$. Here, $g^\dag:\RR^n\to \VV$ is the adjoint of $g$.
Therefore, using an appropriate basis transformation
$$
i(\opA,t_*)=\sgn \det \wh{\opA}'(h^{-1}(t_*))=\sgn \Bigg(\prod_{j\ge 0} (\ene_j(\widehat{\ham_K}(t_*)_\VC) - \ene_{\cc}(t_*))\Bigg).
$$
The proof is completed by noting that the elements of the matrix $\widehat{\ham_K}(t_*)_\VC$ are real, so its complex eigenvalues come in conjugate pairs, hence
only real eigenvalues contribute to the product above.
\end{proof}

\begin{proposition}\label{smindex}
If $\opA$ is locally strongly monotone near a zero $t_*$, then we have $i(\opA,t_*)=1$.
\end{proposition}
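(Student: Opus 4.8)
The plan is to read the index of $t_*$ off the already-established index formula \cref{indxformula} and then use the coercivity of $\opA'(t_*)$ that local strong monotonicity supplies in order to pin down its sign. First I would invoke \cref{smnondeg}: since $\opA$ is locally strongly monotone at $t_*$, the zero $t_*$ is non-degenerate, hence isolated, so \cref{indxformula} applies and gives
$$
i(\opA,t_*)=(-1)^\nu,\qquad \nu=\big|\{\,j:\ene_j(\wh{\ham_K}(t_*)_\VC)\in\RR,\ \ene_j(\wh{\ham_K}(t_*)_\VC)<\ene_\cc(t_*)\,\}\big|.
$$
It therefore suffices to prove $\nu=0$, i.e.\ that no real eigenvalue of $\wh{\ham_K}(t_*)_\VC$ lies strictly below $\ene_\cc(t_*)$.

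For this I would use \cref{thm:SM-CC}(i): local strong monotonicity at $t_*$ yields a constant $\gamma>0$ with $\vdua{\opA'(t_*)r}{r}\ge\gamma\|r\|_\VV^2$ for all $r\in\VV$, so in particular the quadratic form of $\opA'(t_*)$ is strictly positive on $\VV\setminus\{0\}$. Let $\ene_j:=\ene_j(\wh{\ham_K}(t_*)_\VC)$ be any real eigenvalue. Because $\wh{\ham_K}(t_*)_\VC$ is represented by a real matrix in the Slater basis (as noted in the proof of \cref{indxformula}), its $\ene_j$-eigenspace is cut out by real linear equations and hence contains a nonzero real eigenvector; equivalently, there is $0\neq r_j\in\VV$ with $\wh{\ham_K}(t_*)_\VC R_j\Phi_0=\ene_j R_j\Phi_0$. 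Evaluating the derivative formula \cref{SRCCDZ} at $u=v=r_j$ and using this eigenrelation gives
$$
0<\vdua{\opA'(t_*)r_j}{r_j}=\hdua{(\wh{\ham_K}(t_*)_\VC-\ene_\cc(t_*))R_j\Phi_0}{R_j\Phi_0}=(\ene_j-\ene_\cc(t_*))\,\|R_j\Phi_0\|^2,
$$
and since $r_j\neq0$ forces $R_j\Phi_0\neq0$, we conclude $\ene_j>\ene_\cc(t_*)$. Hence $\nu=0$ and $i(\opA,t_*)=1$.

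The only point needing a word of care is the passage from a real eigenvalue to a real eigenvector, since $\wh{\ham_K}(t_*)_\VC$ need not be self-adjoint; this is elementary linear algebra, and it is precisely what lets the (possibly non-normal) operator be tested against its own eigenvector inside the positive quadratic form. An alternative and essentially equivalent route, which I would mention but not take as the main argument, bypasses \cref{indxformula}: strong monotonicity makes $\opA$ monotone on a ball around $t_*$, hence---after identifying $\VV$ and $\VV^*$ with $\RR^n$ by linear homeomorphisms carrying the duality pairing to the Euclidean inner product---orientation-preserving there by \cref{monothm}, so $i(\opA,t_*)\ge0$ by \cref{opthm}(i); combined with non-degeneracy from \cref{smnondeg}, which forces $i(\opA,t_*)=\pm1$, one again obtains $i(\opA,t_*)=1$. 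I expect no genuine obstacle here; the first proof is already self-contained within the finite-dimensional machinery set up above.
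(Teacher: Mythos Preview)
Your main argument is correct and takes a genuinely different route from the paper. The paper argues exactly via what you list as the alternative: it passes to $\wh{\opA}=g\circ\opA\circ h^{-1}$ on $\RR^n$, observes that local strong monotonicity makes $\wh{\opA}$ monotone near $h(t_*)$, applies \cref{monothm} to conclude that $\wh{\opA}$ is orientation-preserving, and then invokes \cref{opthm} to get $i(\opA,t_*)>0$ (with non-degeneracy from \cref{smnondeg} forcing $=1$). By contrast, you work directly through \cref{indxformula}: you use the coercivity of $\opA'(t_*)$ furnished by \cref{thm:SM-CC}(i), test it against a real eigenvector of $\wh{\ham_K}(t_*)_\VC$, and read off $\ene_j>\ene_\cc(t_*)$ from \cref{SRCCDZ}, hence $\nu=0$. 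Your argument is self-contained within the spectral machinery already built for the index formula and avoids the abstract orientation-preserving theory; it also yields the slightly sharper statement that under local strong monotonicity every real eigenvalue of $\wh{\ham_K}(t_*)_\VC$ lies strictly above $\ene_\cc(t_*)$, which connects nicely with the EOM-CC discussion in \cref{srcceom}. The paper's route, on the other hand, is shorter once \cref{monothm} and \cref{opthm} are taken as black boxes, and makes it transparent why the conclusion is a manifestation of monotonicity rather than of any spectral fine structure.
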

\begin{proof}
We have that in particular $\wh{\opA}:\RR^n\to\RR^n$ is monotone near $h(t_*)$, so according to \cref{monothm},
$\wh{\opA}$ is orientation-preserving near $h(t_*)$. But then \cref{opthm} implies that $i(\opA,t_*)=i(\wh{\opA},h(t_*))>0$.
\end{proof}

Using the ``unlinked'' form, we can determine the topological index in the Full CC case.
Recall that the eigenvalues $\ene_n(\ham_K)$, $n=0,1,\ldots$, are assumed to be increasingly ordered.
\begin{theorem}[Index formula for FCC---non-degenerate case]\label{fccnondeg}
Let $\VV=\VV(G^\full)$ and assume that $t_*\in\VV$ is a zero of the FCC mapping $\opA:\VV\to\VV^*$.
Then $e^{T_*}\Phi_0\in\HC^2$ is an (intermediately normalized) eigenfunction corresponding to some non-degenerate eigenvalue $\ene_\nu(\ham_K)$ if and only if
 $t_*$ is non-degenerate, and in this case $i(\opA,t_*)=(-1)^\nu$.
\end{theorem}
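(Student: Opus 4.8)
The proof will combine the general index formula \cref{indxformula} with the spectral decomposition established in \cref{fcclemm}, exploiting that for the full amplitude space $\VV=\VV(G^\full)$ the complementary excitation set $\Xi(G)^c$ is empty. Consequently $\wh{\ham_K}(t_*)=\ham_K(t_*)$ and $\wh{\ham_K}(t_*)_\VC=\ham_K(t_*)_\VC$, so \cref{indxformula} already gives: $t_*$ is a non-degenerate (hence isolated) zero of $\opA$ if and only if $\ene_\cc(t_*)\notin\sigma(\ham_K(t_*)_\VC)$, in which case $i(\opA,t_*)=(-1)^\nu$ with $\nu$ the number (counted with multiplicity) of eigenvalues of $\ham_K(t_*)_\VC$ lying strictly below $\ene_\cc(t_*)$. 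It remains to re-express these facts in terms of $\ham_K$ and to identify $\nu$.

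First I would observe that, since $\opA(t_*)=0$, the function $e^{T_*}\Phi_0$ is automatically an eigenfunction of $\ham_K$ with eigenvalue $\ene_\cc(t_*)$. The cleanest route is the block upper triangular form of $\ham_K(t_*)=e^{-T_*}\ham_K e^{T_*}$ in the splitting $\HC^1_K=\Span\{\Phi_0\}\oplus\VC$ used in the proof of \cref{fcclemm}: there $\Phi_0$ is an eigenvector of $\ham_K(t_*)$ with eigenvalue $\ene_\cc(t_*)$, so $e^{T_*}\Phi_0$ is an eigenvector of $\ham_K$ with the same eigenvalue (alternatively one uses \cref{unlinkedcc} together with the intermediate normalization $e^{T_*}\Phi_0-\Phi_0\in\VC$ and the definition of $\ene_\cc$). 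The membership $e^{T_*}\Phi_0\in\HC^2$ holds under the standing assumption $\Phi_\alpha\in\HC^2_K$. Hence, \emph{given} that $t_*$ is a zero, the assertion ``$e^{T_*}\Phi_0$ is an eigenfunction for a non-degenerate eigenvalue $\ene_\nu(\ham_K)$'' is equivalent to ``$\ene_\cc(t_*)$ is a simple eigenvalue of $\ham_K$, equal to $\ene_\nu(\ham_K)$''.

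Next I would transfer eigenvalue multiplicities between $\ham_K$, $\ham_K(t_*)$ and $\ham_K(t_*)_\VC$. The block upper triangular form shows that the characteristic polynomial of $\ham_K(t_*)$ factors as $(\lambda-\ene_\cc(t_*))$ times that of $\ham_K(t_*)_\VC$; since $\ham_K(t_*)$ is similar to the symmetric matrix $\ham_K$, the two have the same characteristic polynomial. Hence $\sigma(\ham_K(t_*)_\VC)\subseteq\sigma(\ham_K)\subset\RR$ — so all relevant eigenvalues are real and the ``$\in\RR$'' qualifier in the definition of $\nu$ is vacuous — and the (algebraic $=$ geometric, by symmetry of $\ham_K$) multiplicity of $\ene_\cc(t_*)$ as an eigenvalue of $\ham_K$ equals $1$ plus its multiplicity in $\ham_K(t_*)_\VC$. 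Therefore $\ene_\cc(t_*)$ is simple in $\ham_K$ if and only if $\ene_\cc(t_*)\notin\sigma(\ham_K(t_*)_\VC)$, which by \cref{indxformula} is exactly the non-degeneracy of $t_*$; combined with the previous paragraph this proves the ``if and only if'' statement. For the index value, when $t_*$ is non-degenerate the eigenvalue multiset of $\ham_K$ is the disjoint union of $\{\ene_\cc(t_*)\}$ with that of $\ham_K(t_*)_\VC$, so in the increasing enumeration $\ene_\cc(t_*)$ occupies position $\nu$, i.e. $\ene_\cc(t_*)=\ene_\nu(\ham_K)$, and \cref{indxformula} yields $i(\opA,t_*)=(-1)^\nu$.

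The delicate point is precisely this multiplicity bookkeeping: one must invoke similarity invariance of the characteristic polynomial to carry multiplicities (and reality of the spectrum) over from the symmetric $\ham_K$ to $\ham_K(t_*)$, the block-triangular factorization to peel off the extra factor $(\lambda-\ene_\cc(t_*))$, and the observation that the exponent in the index formula \cref{indxformula} is a count \emph{with} multiplicity (being extracted from a determinant). Everything else — the intermediate normalization, the $\HC^2$ membership, and the equivalence of the ``linked'' and ``unlinked'' CC equations — is already furnished by \cref{fcclemm} and \cref{unlinkedcc} and requires no new work.
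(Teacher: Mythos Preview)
Your proof is correct and follows the same overall plan as the paper: reduce to \cref{indxformula} via rank-regularity of the full amplitude space, and then translate the condition $\ene_\cc(t_*)\notin\sigma(\ham_K(t_*)_\VC)$ into simplicity of $\ene_\cc(t_*)$ as an eigenvalue of $\ham_K$ using the block upper triangular structure from \cref{fcclemm}.

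The one genuine difference is in how the multiplicity bookkeeping is done. The paper invokes \cref{unlinkedcc} to show that an eigenvector $R\Phi_0$ of $\ham_K(t_*)_\VC$ at $\ene_\cc(t_*)$ yields an additional eigenvector $Re^{T_*}\Phi_0$ of $\ham_K$, thereby exhibiting the degeneracy constructively; it then uses the set identity \cref{hamspec} for the index count. You instead argue directly with characteristic polynomials: similarity of $\ham_K(t_*)$ and $\ham_K$ plus the block-triangular factorization gives $\chi_{\ham_K}(\lambda)=(\lambda-\ene_\cc(t_*))\chi_{\ham_K(t_*)_\VC}(\lambda)$, from which both the multiplicity statement and the reality of $\sigma(\ham_K(t_*)_\VC)$ follow at once. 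Your route is a bit more elementary—it bypasses \cref{unlinkedcc} and the surjectivity of $\Pi_\VC(e^{T_*})^\dag$ entirely—and it makes explicit that the count $\nu$ in \cref{indxformula} is with multiplicity, which the paper leaves implicit. The paper's route, on the other hand, actually names the extra eigenvector, which connects more transparently with \cref{fccdegr} and the EOM-CC discussion in \cref{srcceom}.
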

\begin{proof}
First, note that $\VV(G^\full)$ is rank-regular so $\wh{\ham_K}(t)_\VC=\ham_K(t)_\VC$.
We have $\ene_\cc(t_*)=\ene_\nu(\ham_K)$ by the equivalence of FCC and FCI (see Theorem 2.2 of Part I).

According to \cref{fcclemm}, we have that $e^{T_*}\Phi_0$ is a non-degenerate intermediately normalized eigenfunction if and only if 
 $\ene_{\cc}(t_*)\not\in\sigma(\ham_K(t_*)_\VC)$. In fact, $\ene_{\cc}(t_*)\in\sigma(\ham_K(t_*)_\VC)$ if and only if 
  there exists $R\Phi_0\in\VC$ nonzero, such that
$$
\dua{e^{-T_*}\ham_K e^{T_*}R\Phi_0}{S\Phi_0}=\ene_{\cc}(t_*) \dua{R\Phi_0}{S\Phi_0},
$$
for all $s\in\VV$. Since $\VV(G^\full)$ is excitation complete, according to \cref{unlinkedcc} the preceding equation is equivalent to
\begin{equation}\label{fccdeg}
\dua{\ham_K R e^{T_*}\Phi_0}{S\Phi_0}=\ene_{\cc}(t_*) \dua{R e^{T_*}\Phi_0}{S\Phi_0},
\end{equation}
for all $s\in\VV$. But this precisely means that the FCI eigenstate $e^{T_*}\Phi_0$ is degenerate, because $R e^{T_*}\Phi_0$ is another
eigenvector corresponding to the same eigenvalue $\ene_{\cc}(t_*)$.

We also conclude from \cref{hamspec} that $\sigma(\ham_K(t_*)_\VC)=\sigma(\ham_K)\setminus\{\ene_\cc(t_*)\}$.
Applying \cref{indxformula}, we obtain that $t_*$ is non-degenerate and $i(\opA,t_*)=(-1)^\nu$. 
\end{proof}

It is worth noting that, in the FCC case, the zero $t_*$ representing the intermediately normalized, non-degenerate \emph{ground state} (i.e. $\ene_\cc(t_*)=\ene_0(\ham_K)$)
has $i(\opA,t_*)=1$. Note that this is not necessarily true in the truncated case.
While the CC method is most commonly aimed at the ground state, it can also be used to find other intermediately normalized eigenfunctions as well.
Furthermore, it can also be used to obtain eigenfunctions which are orthogonal to the reference $\Phi_0$ according to the remark below.

\begin{remark}\label{srcceom}
The \emph{Equation-of-Motion Coupled-Cluster} (EOM-CC) method \cite{geertsen1989equation} is aimed at calculating \emph{excited} energies and states 
(i.e. $\ene_n(\ham_K)$ for $n>0$, and the corresponding eigenvectors) based on a CC ground-state solution. 
This is done in two steps. Let $\VV=\VV(G^\full)$.
 Firstly, a conventional CC calculation determines the ground state $\Psi=e^{T_*}\Phi_0$ such that $\opA(t_*)=0$, i.e. $\ham_K\Psi=\ene\Psi$.
Secondly, the targeted excited state is of the form
$\Psi_{\mathrm{ex}}=(r_0 I +R)e^{T_*}\Phi_0$,
 where $R$ is a cluster operator, see \cref{fcclemm}. We have
\begin{equation}\label{eomsimp}
\ham_K(t_*)_\VC R\Phi_0 = \ene_{\mathrm{ex}} R\Phi_0.
\end{equation}
In other words, we need to solve the eigenproblem of the projected similarity-transformed Hamiltonian $\ham_K(t_*)_\VC$.
Furthermore, similarly to the proof of \cref{unlinkedcc}, it is easy to see that $\opA(t_*)=0$ implies 
$$
\dua{R\ham_K(t_*) \Phi_0}{S\Phi_0} = \ene_\cc(t_*) \dua{R\Phi_0}{S\Phi_0}\quad\text{for all}\quad s\in\VV.
$$
Subtracting this from \cref{eomsimp}, we get the ``commutator form'' of the EOM-CC equation:
\begin{equation}\label{eomcomm}
\dua{[\ham_K (t_*),R]\Phi_0}{S\Phi_0}= \Delta\ene\dua{R\Phi_0}{S\Phi_0}, \quad \text{where} \quad \Delta\ene=\ene_{\mathrm{ex}}-\ene,
\end{equation}
and $\ene=\ene_\cc(t_*)$ is the ground-state energy as given by the CC method.
Let $\VV$ be an arbitrary amplitude space. Recalling the expression \cref{SRCCD} for $\opA'(t)$, we can rephrase the EOM-CC equation \cref{eomcomm} as \emph{the weak eigenvalue problem for} $\opA'(t_*):\VV\to\VV^*$ (cf. \cite[Section 13.6.3]{helgaker2014molecular}), i.e.
\begin{equation}\label{eomder}
\vdua{\opA'(t_*)r_j}{s}=\Delta\ene_j \vdua{r_j}{s},
\end{equation}
for $j=1,\ldots,J$\footnote{Here, $R_j$ is not to be confused with the rank-decomposition (3.6) of Part I.} and $s\in\VV$ is arbitrary.\footnote{A similar
relation holds if $t_*$ does not represent the ground state.}
Notice that the $\Delta\ene_j$'s are in general complex. 
 Using \cref{indxformula} we can obtain the following.
Suppose that $\Delta\ene_1,\ldots,\Delta\ene_\mu$ are given by \cref{eomder} and are all nonzero. Then
\begin{equation}\label{indxeom}
i(\opA,t_*)=(-1)^\nu, \quad \nu=|\{j : \Delta\ene_j\in\RR, \; \Delta\ene_j<0\}|.
\end{equation}
Due to the \emph{nonvariational property} of truncated CC (see Section 2.3 of Part I), it is not \emph{a priori} clear whether the (real) excited energies are higher than the
 ground-state energy,
i.e. whether $\Delta\ene_j>0$.
Therefore, \cref{indxeom} quantifies this nonvariational property through the topological index $i(\opA,t_*)$.
\end{remark}

Next, we draw a connection between the degeneracy of a zero $t_*$ and the Fock-splitting \cref{fockfluc} of the Hamiltonian.
Define $\omega_0(t_*)=\dua{\mathcal{W}_K(t_*)\Phi_0}{\Phi_0}$, which is the CC correction to the lowest eigenvalue $\Lambda_0$ of $\mathcal F$, so that the CC energy at $t_*$ is 
obtained as $\ene_\cc(t_*)=\Lambda_0+\omega_0(t_*)$.

\begin{proposition}\label{focknondeg}
Let $\VV(G)$ be a rank-regular amplitude space and $t_*$ a zero of $\opA$. 
Define the linear operator $\mathcal{Q}(t_*):\VC\to\VC$ via its matrix in the Slater determinant basis as
$$
[\mathcal{Q}(t_*)]_{\alpha\beta}=\epsilon_\alpha \delta_{\alpha\beta} + \sum_{\gamma\in\Xi(G)} t_{*,\gamma}\epsilon_\gamma \dua{X_\gamma\Phi_{\beta}}{\Phi_\alpha}
\quad\text{for all}\quad\alpha,\beta\in\Xi(G).
$$
Then $\omega_0(t_*)\not\in\sigma( \mathcal{Q}(t_*) + \mathcal{W}_K(t_*)_\VC)$
is equivalent to $\ene_{\cc}(t_*)\not\in\sigma(\ham_K(t_*)_\VC)$, i.e. to the fact that $t_*$ is a non-degenerate zero of $\opA$.
\end{proposition}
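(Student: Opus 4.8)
The plan is to reduce the statement to the index formula \cref{indxformula} by producing an explicit affine decomposition of $\ham_K(t_*)_\VC$. Since $\VV(G)$ is rank-regular we have $\wh{\ham_K}(t_*)_\VC=\ham_K(t_*)_\VC$ (noted right after \cref{rankreg}), so \cref{indxformula} already tells us that $t_*$ is non-degenerate if and only if $\ene_\cc(t_*)\notin\sigma(\ham_K(t_*)_\VC)$. It therefore suffices to establish the spectral equivalence
$$
\ene_\cc(t_*)\in\sigma(\ham_K(t_*)_\VC)\iff \omega_0(t_*)\in\sigma\big(\mathcal{Q}(t_*)+\mathcal{W}_K(t_*)_\VC\big).
$$

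To get this, I would use the Fock splitting $\ham_K=\mathcal{F}_K+\mathcal{W}_K$ and the linearity of the similarity transformation to write $\ham_K(t_*)=e^{-T_*}\mathcal{F}_K e^{T_*}+\mathcal{W}_K(t_*)$. By \cref{fockcomm}, the first summand equals $\mathcal{F}_K+[\mathcal{F}_K,T_*]=\mathcal{F}_K+\sum_{\gamma\in\Xi(G)}t_{*,\gamma}\epsilon_\gamma X_\gamma$. Projecting onto $\VC$ and reading off the matrix in the Slater basis $\{\Phi_\alpha\}_{\alpha\in\Xi(G)}$: the term $\mathcal{F}_K$ contributes $\dua{\mathcal{F}_K\Phi_\beta}{\Phi_\alpha}=\Lambda_\beta\delta_{\alpha\beta}=(\Lambda_0+\epsilon_\alpha)\delta_{\alpha\beta}$ by \cref{fockeig}, while the commutator contributes $\sum_{\gamma\in\Xi(G)}t_{*,\gamma}\epsilon_\gamma\dua{X_\gamma\Phi_\beta}{\Phi_\alpha}$. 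Comparing with the definition of $\mathcal{Q}(t_*)$ gives $\mathcal{F}_K(t_*)_\VC=\Lambda_0 I_\VC+\mathcal{Q}(t_*)$, whence
$$
\ham_K(t_*)_\VC=\Lambda_0 I_\VC+\mathcal{Q}(t_*)+\mathcal{W}_K(t_*)_\VC.
$$

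Finally I would recall that $\ene_\cc(t_*)=\Lambda_0+\omega_0(t_*)$, which holds because $\dua{[\mathcal{F}_K,T_*]\Phi_0}{\Phi_0}=\sum_{\gamma\in\Xi(G)}t_{*,\gamma}\epsilon_\gamma\dua{X_\gamma\Phi_0}{\Phi_0}=0$ (excitation operators strictly raise the rank). Subtracting $\Lambda_0$ in parallel from the spectrum of $\ham_K(t_*)_\VC$ and from $\ene_\cc(t_*)$ yields the desired spectral equivalence, and combining it with the first paragraph completes the proof. There is no real obstacle here---the argument is essentially bookkeeping; the only point that deserves a line of care is that both the extra terms defining $\wh{\ham_K}(t_*)$ and the $\Phi_0$-block of $\ham_K(t_*)$ disappear upon projecting onto $\VC$ (the former by rank-regularity, the latter because $\dua{X_\gamma\Phi_0}{\Phi_0}=0$ for $\gamma\neq0$), so that the affine shift by $\Lambda_0 I_\VC$ is exact.
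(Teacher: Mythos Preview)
Your proof is correct and follows essentially the same approach as the paper: both compute $\ene_\cc(t_*)=\Lambda_0+\omega_0(t_*)$ via \cref{fockcomm}, expand $\ham_K(t_*)_\VC$ in the Slater basis to obtain $\ham_K(t_*)_\VC=\Lambda_0 I_\VC+\mathcal{Q}(t_*)+\mathcal{W}_K(t_*)_\VC$, and conclude by subtracting the common shift $\Lambda_0$. Your explicit appeal to \cref{indxformula} (via rank-regularity) for the ``i.e.'' clause is a minor elaboration the paper leaves implicit, but otherwise the arguments coincide.
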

\begin{proof}
We have using \cref{fockcomm},
$$
\begin{aligned}
\ene_\cc(t_*)&=\dua{e^{-T_*}\mathcal{F}_Ke^{T_*}\Phi_0}{\Phi_0} + \dua{\mathcal{W}_K(t_*)\Phi_0}{\Phi_0}\\
&=\dua{\mathcal{F}_K\Phi_0}{\Phi_0} + \dua{[\mathcal{F}_K,T_*]\Phi_0}{\Phi_0}+\dua{\mathcal{W}_K(t_*)\Phi_0}{\Phi_0}\\
&=\Lambda_0 + \dua{\mathcal{W}_K(t_*)\Phi_0}{\Phi_0}=\Lambda_0 + \omega_0(t_*).
\end{aligned}
$$
Similarly, 
$$
\begin{aligned}
\dua{\ham_K(t_*)\Phi_\beta}{\Phi_\alpha}&=\dua{\mathcal{F}_K\Phi_\beta}{\Phi_\alpha} + \dua{[\mathcal{F}_K,T_*]\Phi_\beta}{\Phi_\alpha} + \dua{\mathcal{W}_K(t_*)\Phi_\beta}{\Phi_\alpha}\\
&= (\Lambda_0 + \epsilon_\alpha) \delta_{\alpha\beta} + \sum_{\gamma\in\Xi(G)} t_{*,\gamma}\epsilon_\gamma \dua{X_\gamma\Phi_{\beta}}{\Phi_\alpha} + \dua{\mathcal{W}_K(t_*)\Phi_\beta}{\Phi_\alpha}.
\end{aligned}
$$
Then, in the Slater determinant basis
$$
\ham_K(t_*)_\VC= \Lambda_0 I + \mathcal{Q}(t_*) + \mathcal{W}_K(t_*)_\VC.
$$
Hence, $\ene_{\cc}(t_*)\not\in\sigma(\ham_K(t_*)_\VC)$ is equivalent to $\omega_0(t_*)\not\in\sigma( \mathcal{Q}(t_*) + \mathcal{W}_K(t_*)_\VC)$,
which finishes the proof.
\end{proof}

We now consider the case of a degenerate zero. Clearly, if $r\in\ker\opA'(t_*)\neq\{0\}$, we have to consider higher-order terms of the Taylor polynomial of
$\opA$ at $t_*$,
$$
\opA(t_*+r)=\opA'(t_*)r + \frac{1}{2}\opA''(t_*)(r,r) + \mathcal{R}_3(t_*;r),
$$
where $\opA''(t_*) : \VV\times\VV\to\VV^*$ is a bounded bilinear mapping.
Here, we only consider the second-order information.

Assume from now on that $\VV$ is rank-regular, so that $\wh{\ham_K}(t)_\VC=\ham_K(t)_\VC$.
Suppose that $\ene_{\cc}(t_*)\in\sigma(\ham_K(t_*)_\VC)$ and that $R_1\Phi_0,\ldots,R_\mu\Phi_0\in\VC$ are the \emph{right}
 eigenvectors of $\ham_K(t_*)_\VC$ corresponding to $\ene_{\cc}(t_*)$, 
$$
\dua{\ham_K(t_*) R_j\Phi_0}{S\Phi_0}=\ene_{\cc}(t_*)\dua{R_j\Phi_0}{S\Phi_0} \quad\text{for all $j=1,\ldots,\mu$ and all $s\in\VV$.}
$$
Also, suppose that $L_1\Phi_0,\ldots,L_\mu\Phi_0\in\VC$ are the \emph{left}
eigenvectors of $\ham_K(t_*)_\VC$ corresponding to $\ene_{\cc}(t_*)$,
$$
\dua{\ham_K(t_*)^\dag L_j\Phi_0}{S\Phi_0}=\ene_{\cc}(t_*)\dua{L_j\Phi_0}{S\Phi_0}\quad\text{for all $j=1,\ldots,\mu$ and all $s\in\VV$.}
$$
The corresponding right-, and left eigenspaces are 
\begin{align*}
	W_R&=\ker \opA'(t_*)=\Span\{r_1,\ldots,r_\mu\},\\
	W_L&=\ker \opA'(t_*)^\dag=\Span\{\ell_1,\ldots,\ell_\mu\},
\end{align*}
and let $Q:\VV\to\VV$ be the orthogonal projector onto $W_L$. Further, define $\wh{Q}:\VC\to\VC$ via $\dua{\wh{Q}U\Phi_0}{V\Phi_0}=\dua{Qu}{v}$ for all $u,v\in\VV$.
We introduce the mapping $\mathcal{B}:\VV\to\VV^*$ via
\begin{equation}\label{degB}
\vdua{\mathcal{B}(t)}{s}=\frac{1}{2} \hdua{\wh{Q}[[\ham_K(t_*),T],T]\Phi_0}{S\Phi_0}=\frac{1}{2} \hdua{\wh{Q}[[\mathcal{W}_K(t_*),T],T]\Phi_0}{S\Phi_0},
\end{equation}
that is, the $Q$-projection of $\frac{1}{2}\opA''(t_*)(t,t)$. In the second equality, we used \cref{simcomm}.
Also, note that $\mathcal{B}$ is homogeneous of degree 2, i.e. $\mathcal{B}(\alpha t)=\alpha^2 \mathcal{B}(t)$.
The next theorem follows esentially from Leray's second reduction formula (\cref{leraysec}).

\begin{theorem}
[Index formula for SRCC---degenerate case]\label{indxformuladege}
Let $t_*$ be zero of the CC mapping $\opA:\VV\to\VV^*$.
Suppose that $\ene_{\cc}(t_*)\in\sigma(\ham_K(t_*)_\VC)$ and let $W_R$, $W_L$ and $Q$ be as above. 
Assume that
\begin{equation}\label{indxdegcond}
	\mathcal{B}(t)\neq 0\quad \text{for all} \quad t\in \partial B_\VV(0,1).
\end{equation}
Then, $t_*$ is an isolated zero and the topological index of $\opA$ at $t_*$ is given by
$$
i(\opA,t_*)=i(\opA'(t_*)+Q,0)\,i(\mathcal{B}|_{W_R},0).
$$
\end{theorem}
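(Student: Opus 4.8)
The plan is to apply \cref{noninvpert} to the translated mapping $r\mapsto\opA(t_*+r)$ on a small ball $B_\VV(0,\epsilon)$. After the orientation-consistent identifications of \cref{basrmk} we treat $\opA$, its derivatives, $Q$, $\wh Q$ and $\mathcal B$ as objects on $\RR^n$ (with the inner product with respect to which $Q$, $\wh Q$ are defined). Writing $L:=\opA'(t_*)$ and Riesz-representing it, we have $\ker L=W_R\neq\{0\}$ precisely because $\ene_\cc(t_*)\in\sigma(\ham_K(t_*)_\VC)$; and since $Q$ is the orthogonal projector onto $W_L=\ker L^\dag=(\Ran L)^\perp$, its kernel is $W_L^\perp=\Ran L$, so the pair $(L,Q)$ satisfies the standing hypotheses of \cref{noninvpert}. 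Because $\opA$ is a (quartic) polynomial, the Taylor expansion $\opA(t_*+r)=Lr+\tfrac12\opA''(t_*)(r,r)+\mathcal R_3(t_*;r)$ holds with $\|\mathcal R_3(t_*;r)\|\le C\|r\|^3$ for $\|r\|\le1$. I take the $\lambda$-independent nonlinearity $\mathcal N(r,\lambda):=\opA(t_*+r)-Lr=\tfrac12\opA''(t_*)(r,r)+\mathcal R_3(t_*;r)$, so that $L+\mathcal N(\cdot,1)=\opA(t_*+\cdot)$.

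The computation that makes the hypotheses of \cref{noninvpert} checkable is the identity $Q\,\mathcal N(r,0)=\mathcal B(r)+Q\,\mathcal R_3(t_*;r)$. Indeed, by \cref{Mdeltarmk} we have $\vdua{\opA''(t_*)(r,r)}{s}=\hdua{[[\ham_K(t_*),R],R]\Phi_0}{S\Phi_0}$; comparing with \cref{degB} and using that $\wh Q$ is self-adjoint on $\VC$ with $\wh Q S\Phi_0$ the functional-amplitude representative of $Qs$, one gets $\mathcal B=Q^\dag\circ\big(\tfrac12\opA''(t_*)(\cdot,\cdot)\big)$, which under the Riesz identification is $Q\circ\big(\tfrac12\opA''(t_*)(\cdot,\cdot)\big)$. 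Since $\mathcal B$ is homogeneous of degree $2$ and, by assumption \cref{indxdegcond}, does not vanish on $\partial B_\VV(0,1)$, compactness gives $c>0$ with $|\mathcal B(r)|\ge c\|r\|^2$ for all $r$. Fix $\epsilon>0$ small enough that $C\epsilon<c$ and $\epsilon\le1$. Then on $\|r\|=\epsilon$: (ii) $Q\,\mathcal N(r,0)=\mathcal B(r)+Q\,\mathcal R_3(t_*;r)\neq0$; and since $QL=0$ we get $Q\big(Lr+\lambda\mathcal N(r,\lambda)\big)=\lambda\,Q\,\mathcal N(r,0)\neq0$ for $\lambda\in(0,1]$, which is (i). The same estimate for all $0<\|r\|\le\epsilon$ gives $Q\,\opA(t_*+r)=\mathcal B(r)+Q\,\mathcal R_3(t_*;r)\neq0$, so $\opA(t_*+r)\neq0$ and $t_*$ is an isolated zero.

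Now \cref{noninvpert} applies and yields
$$
i(\opA,t_*)=\deg\big(\opA(t_*+\cdot),B_\VV(0,\epsilon),0\big)=i(L+Q,0)\,\deg\big(Q\,\mathcal N(\cdot,0)|_{W_R},\,B_\VV(0,\epsilon)\cap W_R,\,0\big).
$$
To finish, restrict to $W_R$: there $Q\,\mathcal N(\cdot,0)|_{W_R}=\mathcal B|_{W_R}+Q\,\mathcal R_3(t_*;\cdot)|_{W_R}$ (a map $W_R\to W_L$), and the homotopy $H_\tau(r):=\mathcal B(r)+\tau\,Q\,\mathcal R_3(t_*;r)$, $\tau\in[0,1]$, satisfies $|H_\tau(r)|\ge c\|r\|^2-C\|r\|^3>0$ on $\|r\|=\epsilon$; by homotopy invariance the two degrees coincide, and they equal $i(\mathcal B|_{W_R},0)$, the origin being the only — hence an isolated — zero of $\mathcal B|_{W_R}$ by homogeneity and \cref{indxdegcond}. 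Substituting $L=\opA'(t_*)$ gives $i(\opA,t_*)=i(\opA'(t_*)+Q,0)\,i(\mathcal B|_{W_R},0)$, as claimed.

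The step needing most care is the linear-algebraic bookkeeping: verifying that the orthogonal projector $Q$ onto $W_L$ has kernel $\Ran\opA'(t_*)$ so that \cref{noninvpert} (i.e.\ Leray's second reduction formula, \cref{leraysec}) is applicable, and that the $\wh Q$-weighted bilinear form \cref{degB} is exactly the $Q$-component of $\tfrac12\opA''(t_*)$, so that the map produced by the reduction is genuinely $\mathcal B|_{W_R}$ modulo a cubic remainder. Everything else is the standard analysis of a quadratically nondegenerate degenerate zero: the homogeneity of $\mathcal B$ together with \cref{indxdegcond} provides precisely the boundary nonvanishing that lets $\mathcal B$ dominate $\mathcal R_3$ on sufficiently small balls.
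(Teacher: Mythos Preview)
Your proof is correct and reaches the same conclusion via \cref{noninvpert}, but your homotopy differs from the paper's. The paper uses the rescaling homotopy
\[
\vdua{\mathcal{N}(t,\lambda)}{s}=\sum_{k=2}^{2N}\frac{\lambda^{k-2}}{k!}\hdua{[\ham_K(t_*),T]_{(k)}\Phi_0}{S\Phi_0},
\]
which arises from $\lambda^{-1}\opA(t_*+\lambda t)=Lt+\lambda\mathcal{N}(t,\lambda)$; at $\lambda=0$ this yields $Q\mathcal{N}(\cdot,0)=\mathcal{B}$ \emph{exactly}, with no cubic remainder, so the Leray reduction produces $i(\mathcal{B}|_{W_R},0)$ immediately. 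Condition~(i) of \cref{noninvpert} is then equivalent to isolation of $t_*$, which the paper verifies by a case split on $r\in\ker L$ versus $r\notin\ker L$. By contrast, your $\lambda$-independent choice $\mathcal{N}(r,\lambda)=\opA(t_*+r)-Lr$ gives $Q\mathcal{N}(\cdot,0)=\mathcal{B}+Q\mathcal{R}_3$, forcing the extra homotopy $H_\tau$ at the end to strip off the cubic remainder; in exchange you get a uniform isolation argument (no case split) via $Q\opA(t_*+r)\neq 0$, and condition~(i) follows directly from $QL=0$ rather than from the isolation of $t_*$. Both routes are clean; the paper's rescaling is the classical ``blow-up'' device for degenerate zeros, while yours is the naive linear homotopy compensated by a quantitative lower bound $|\mathcal{B}(r)|\ge c\|r\|^2$.
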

\begin{proof}
First, we prove that $t_*$ is isolated. When $r\not\in\ker\opA'(t_*)$ and small, it follows that $\opA(t_*+r)\neq 0$ similarly to \cref{isotaylor}.
If, however $r\in\ker\opA'(t_*)$, then we may write
$$
\dua{\opA(t_*+r)}{\opA''(t_*)(r,r)}_{\VV^*}=\dua{\underbrace{\opA'(t_*)r}_{0}}{\opA''(t_*)(r,r)}_{\VV^*} + \frac{1}{2} \|\opA''(t_*)(r,r)\|_{\VV^*}^2 + \mathcal{O}(\|r\|_\VV^5)
$$
for all $r\in B(0,\epsilon)$ for sufficiently small $\epsilon>0$. Condition \cref{indxdegcond} implies that $\opA(t_*+r)\neq 0$ for all $r\in B_\VV^*(0,\epsilon)$.
	
Next, we apply \cref{noninvpert} with the choice $D=B_\VV(0,\epsilon)$, $L=\opA'(t_*)$ and 
$$
\vdua{\mathcal{N}(t,\lambda)}{s}=\sum_{k=2}^{2N} \frac{\lambda^{k-2}}{k!} \hdua{[\ham_K(t_*),T]_{(k)}\Phi_0}{S\Phi_0},
$$
where we used \cref{bchdoub}.
Because $\Ran Q=\ker \opA'(t_*)^\dag$, it follows that 
$$
\ker Q=(\Ran Q)^\perp=(\ker \opA'(t_*)^\dag)^\perp=\Ran \opA'(t_*)=\Ran L.
$$
Moreover, since $t_*$ is an isolated zero, it is possible to choose $\delta>0$ so that the equation 
$$
\lambda^{-1}\opA(t_*+\lambda t)=\opA'(t_*)t + \lambda \mathcal{N}(t,\lambda)=0
$$
does not admit a solution $t\in \partial B_\VV(0,\delta)$ for any $\lambda\in(0,1]$.

Note that $Q\mathcal{N}(t,0)=\mathcal{B}(t)\neq 0$ for all $t\in B_\VV^*(0,\delta)$ by assumption \cref{indxdegcond} and the homogenity of $\mathcal{B}$.
We see that conditions (i) and (ii) of \cref{noninvpert} are satisfied and the result follows.
\end{proof}

The preceding theorem reduces the computation of the index to a low-dimensional problem but the zero is still degenerate. In fact, since
$$
\vdua{\mathcal{B}'(t)u}{v}=\frac{1}{2}\hdua{\wh{Q}([[\ham_K(t_*),U],T] + [[\ham_K(t_*),T],U])\Phi_0}{V\Phi_0},
$$
we have that $t=0$ is a degenerate zero of $\mathcal{B}|_{W_R}$ and by assumption the only zero.
Therefore, we need to apply \cref{croninperturb} to determine $i(\mathcal{B}|_{W_R},0)$.

\begin{corollary}\label{degesingle}
For a degenerate, isolated zero $t_*$ of $\opA$, $\dim W_R=1$,
and for which \cref{indxdegcond} holds, we have $i(\opA,t_*)=0$.
\end{corollary}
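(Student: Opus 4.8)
The plan is to apply Theorem \ref{indxformuladege} directly. Its hypotheses are met: $t_*$ is degenerate and $\VV$ is rank-regular (as assumed in the present subsection), so $\ene_{\cc}(t_*)\in\sigma(\ham_K(t_*)_\VC)$, and \eqref{indxdegcond} holds by assumption. The theorem then yields the factorization $i(\opA,t_*)=i(\opA'(t_*)+Q,0)\,i(\mathcal{B}|_{W_R},0)$. Since the first factor is a well-defined integer, it suffices to prove that the second factor vanishes, and this is exactly where the hypothesis $\dim W_R=1$ enters: the whole task reduces to computing the index at $0$ of the degree-$2$ homogeneous map $\mathcal{B}$ restricted to a \emph{one}-dimensional subspace.

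So fix a generator $r_1$ of $W_R=\ker\opA'(t_*)$ and, unwinding the linear identifications in Theorem \ref{indxformuladege} (the object whose index is taken is $J^{-1}\bigl(Q\mathcal{N}(\cdot,0)\bigr)\big|_{\ker L}$ from Leray's second reduction formula, a self-map of the one-dimensional space $W_R$), regard $\mathcal{B}|_{W_R}$ as the real function $\alpha\mapsto\mathcal{B}(\alpha r_1)$. Homogeneity gives $\mathcal{B}(\alpha r_1)=\alpha^2\mathcal{B}(r_1)$, and \eqref{indxdegcond} applied at $r_1/\|r_1\|_\VV\in\partial B_\VV(0,1)$ forces $\mathcal{B}(r_1)\neq 0$. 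In particular $\mathcal{B}|_{W_R}$ does not vanish on any punctured ball around $0$, so $0$ is an isolated zero and $i(\mathcal{B}|_{W_R},0)=\deg\bigl(\mathcal{B}|_{W_R},B_\VV(0,\epsilon)\cap W_R,0\bigr)$ is well-defined for small $\epsilon>0$.

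Now the key point: $\mathcal{B}|_{W_R}$ is \emph{even}, $\mathcal{B}(-t)=(-1)^2\mathcal{B}(t)=\mathcal{B}(t)$, so in dimension one the two boundary values $\mathcal{B}(\pm\epsilon\, r_1/\|r_1\|_\VV)=\epsilon^2\|r_1\|_\VV^{-2}\mathcal{B}(r_1)$ coincide, and the one-dimensional Brouwer degree of a map with equal endpoint values, evaluated at a point different from that common value, is $0$. To phrase this purely with the tools of \cref{sectopdeg}, one may instead apply \cref{croninperturb} to $\mathcal{B}|_{W_R}$ on $D=B_\VV(0,\epsilon)\cap W_R$: its only critical point is $\alpha=0$, with critical value $0$, so the exceptional set $E$ is contained in $\{0\}$; picking a regular value $z'$ on the side of $0$ opposite to $\mathcal{B}(r_1)$ renders $\mathcal{B}|_{W_R}^{-1}(z')$ empty, so that $\deg(\mathcal{B}|_{W_R},D,z')=0$ by the existence property (\cref{degcoro}(ii)), and \cref{croninperturb} transfers this to $\deg(\mathcal{B}|_{W_R},D,0)=0$. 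Either way $i(\mathcal{B}|_{W_R},0)=0$, hence $i(\opA,t_*)=0$.

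I do not anticipate a genuine obstacle; the only step needing care is the bookkeeping of the identifications in Theorem \ref{indxformuladege}, so that ``homogeneous of degree two on a line, hence even'' is literally a statement about a function $\RR\to\RR$ and the scalar $\mathcal{B}(r_1)$ is unambiguously nonzero. Once that is in place, the one-dimensional degree computation is immediate.
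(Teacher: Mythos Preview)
Your proposal is correct and follows essentially the same approach as the paper: both reduce via \cref{indxformuladege} to computing $i(\mathcal{B}|_{W_R},0)$, use homogeneity to identify $\mathcal{B}|_{W_R}$ with $\alpha\mapsto c\alpha^2$ for some nonzero constant $c$, and then invoke \cref{croninperturb} with a perturbed value $z'$ chosen on the side of $0$ opposite to the image so that the preimage is empty. The paper writes out the equation $\mathcal{B}(t)=z'$ explicitly as \eqref{degesingeq} and picks the sign of $\eta$ accordingly, while you phrase the same step more abstractly via evenness, but the content is identical.
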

\begin{proof}
Let $W_R=\Span\{r\}$ and $W_L=\Span\{\ell\}$. 
We apply \cref{croninperturb} to the mapping $\mathcal{B}|_{W_R}$ with $D=B_\VV(0,\epsilon)$, $\epsilon>0$ arbitrary.
Fix $z'=\eta \ell\in\Ran Q=W_L$ such that $0<|\eta|<\delta$. Since $t=cr\in W_R$, the equation
$\mathcal{B}(t)=z'$ in $B_\VV(0,\epsilon)\cap W_R$ is equivalent to finding $0\neq c\in\RR$ such that
\begin{equation}\label{degesingeq}
\frac{c^2}{2} \dua{(\ham_K(t_*)-\ene_\cc(t_*)) R^2\Phi_0}{L\Phi_0}=\eta\dua{L\Phi_0}{L\Phi_0}.
\end{equation}
 Note that the inner product on the left-hand side is nonzero by assumption \cref{indxdegcond}.
 Choose $\eta$ to be of opposite sign as the inner product on the left-hand side. 
Then there are no solutions $c$, so $i(\mathcal{B}|_{W_R},t_*)=0$ and therefore $i(\opA,t_*)=0$ by \cref{indxformuladege}.
\end{proof}

\begin{corollary}
Let $t_*$ be an isolated zero of the CC mapping $\opA:\VV\to\VV^*$.
Suppose that $\ene_{\cc}(t_*)\in\sigma(\ham_K(t_*)_\VC)$ and let $W_R$ as above. Assume that $\ker\mathcal{B}'(t)=\{0\}$ for all $0\neq t\in W_R$.
If $\mu:=\dim W_R$ is odd, then $i(\opA,t_*)=0$ and if $\mu$ is even, then $i(\opA,t_*)$ is even. In particular, under the
above hypotheses, $i(\opA,t_*)$ cannot be $\pm 1, \pm 3, \pm 5,\ldots$
\end{corollary}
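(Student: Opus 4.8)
The plan is to feed the degenerate index formula of \cref{indxformuladege} with the fact that the quadratic mapping $\mathcal{B}$ is \emph{even}, and then to evaluate the remaining low-dimensional degree at a nonzero regular value by means of \cref{croninperturb} and \cref{degsum}.

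First I would note that the standing hypothesis makes $\mathcal{B}$ nonvanishing on $W_R\setminus\{0\}$: differentiating $\mathcal{B}(\alpha t)=\alpha^2\mathcal{B}(t)$ in $\alpha$ at $\alpha=1$ gives Euler's relation $\mathcal{B}'(t)\,t=2\mathcal{B}(t)$, so $\mathcal{B}(t_0)=0$ for some $0\neq t_0\in W_R$ would force $t_0\in\ker\mathcal{B}'(t_0)$, contradicting the assumption. Hence $t=0$ is the unique — a fortiori isolated — zero of $\mathcal{B}|_{W_R}$, condition \cref{indxdegcond} holds on $W_R$, and (with \cref{indxdegcond} in force) \cref{indxformuladege} gives
$$
i(\opA,t_*)=i(\opA'(t_*)+Q,0)\,i(\mathcal{B}|_{W_R},0).
$$
Since the first factor is an integer, it suffices to show that $i(\mathcal{B}|_{W_R},0)=0$ when $\mu$ is odd and that $i(\mathcal{B}|_{W_R},0)$ is even when $\mu$ is even: the product is then $0$, respectively even, so in both cases $i(\opA,t_*)$ is an even integer and therefore never equals $\pm1,\pm3,\pm5,\ldots$.

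To compute $i(\mathcal{B}|_{W_R},0)$ I would regard $f:=\mathcal{B}|_{W_R}$ as a polynomial, hence $C^\infty$, self-mapping of a $\mu$-dimensional oriented vector space, as permitted by the discussion preceding \cref{topvecindx}; the chosen orientation only flips the sign of the degree and is irrelevant to the assertions ``zero'' and ``even''. From $f(-t)=f(t)$ one obtains $f'(-t)=-f'(t)$, hence $\det f'(-t)=(-1)^\mu\det f'(t)$. Fix an arbitrary $\epsilon>0$ and put $D_0=B_\VV(0,\epsilon)\cap W_R$; by the previous paragraph $0\notin f(\partial D_0)$, so $i(\mathcal{B}|_{W_R},0)=\deg(f,D_0,0)$ is well defined. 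Now \cref{croninperturb} supplies $\delta>0$ and a regular value $z'$ of $f$ with $0<|z'|<\delta$ (the exceptional set together with $\{0\}$ has measure zero) such that $\deg(f,D_0,0)=\deg(f,D_0,z')$; shrinking $\delta$ we may also assume $z'\notin f(\partial D_0)$. Then \cref{degsum} yields $\deg(f,D_0,z')=\sum_{t\in f^{-1}(z')\cap D_0}\sgn\det f'(t)$. Since $z'\neq f(0)=0$, the finite solution set $f^{-1}(z')\cap D_0$ avoids $0$ and is invariant under $t\mapsto-t$, hence splits into antipodal pairs $\{t,-t\}$, each contributing $\bigl(1+(-1)^\mu\bigr)\sgn\det f'(t)$ to the sum.

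The conclusion is now immediate: for odd $\mu$ every pair contributes $0$, so $i(\mathcal{B}|_{W_R},0)=0$ and $i(\opA,t_*)=0$; for even $\mu$ every pair contributes $\pm2$, so $\deg(f,D_0,z')$ — and therefore $i(\mathcal{B}|_{W_R},0)$, and after multiplication by the integer $i(\opA'(t_*)+Q,0)$ also $i(\opA,t_*)$ — is even. In particular this covers the case $\mu=1$, which is \cref{degesingle}. \textbf{The step needing the most care} is the legitimacy of this perturbation: one must pick $z'$ that is simultaneously nonzero, a regular value of $f$, and close enough to $0$ to miss the compact set $f(\partial D_0)$, and then confirm that \cref{croninperturb} and \cref{degsum} genuinely apply to the smooth mapping $f$ on $\overline{D_0}$. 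Once $z'$ is pinned down, the antipodal-pairing identity is a single line and the remainder is routine.
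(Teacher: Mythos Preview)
Your argument is correct and follows essentially the same route as the paper: reduce to $i(\mathcal{B}|_{W_R},0)$ via \cref{indxformuladege}, perturb to a regular value with \cref{croninperturb}, and use the antipodal pairing coming from $\mathcal{B}(-t)=\mathcal{B}(t)$ together with $\det\mathcal{B}'(-t)=(-1)^\mu\det\mathcal{B}'(t)$. Your Euler-relation step, deriving $\mathcal{B}(t)\neq0$ on $W_R\setminus\{0\}$ from $\ker\mathcal{B}'(t)=\{0\}$, is a welcome addition that the paper leaves implicit; note that only this restricted version of \cref{indxdegcond} is actually used in the proof of \cref{indxformuladege}.
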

\begin{proof}
Let $z'\in B_\VV^*(0,\delta)\cap W_L$. Then the equation $\mathcal{B}(t)=z'$ for $0\neq t\in W_R$ is equivalent to
$$
\frac{1}{2}\dua{[[\ham_K(t_*),T],T]\Phi_0}{L\Phi_0}=\dua{Z'\Phi_0}{L\Phi_0},
$$
for all $\ell\in W_L$. Let $\mathcal{T}$ denote the set of solutions $t$ of the preceding equation (which can be empty). 
By \cref{croninperturb}, $|\mathcal{T}|=m$ for some $m$ finite. Notice that $\mathcal{T}$ is closed under the operation $t\mapsto -t$,
so $m$ is even (we also used that $0\not\in\mathcal{T}$), and let
$$
\mathcal{T}=\{t_1,\ldots,t_{\frac{m}{2}},-t_1,\ldots,-t_{\frac{m}{2}}\}
$$
using some appropriate indexing.
From the linearity of $t\mapsto \mathcal{B}'(t)$, we get via \cref{topvecindx},
\begin{equation}\label{degdegreal}
\begin{aligned}
\deg(\mathcal{B}|_{W_R}, B_\VV(0,r)\cap W_R, z')&=\sum_{i=1}^{\frac{m}{2}} \sgn\det g\mathcal{B}'(t_i)h^{-1}+\sgn\det g\mathcal{B}'(-t_i)h^{-1}\\
&=(1+(-1)^\mu) \sum_{i=1}^{\frac{m}{2}} \sgn\det g\mathcal{B}'(t_i)h^{-1},
\end{aligned}
\end{equation}
for some sufficiently large $r>0$.
\end{proof}

We close this section with two remarks.

\begin{remark}\label{smdegremark}
\leavevmode
\begin{enumerate}[label=\normalfont(\roman*)]
\item Note that, according to \cref{croninperturb}, a zero $t_*$ of topological index 0 is ``numerically unstable'',
because one could miss zeros altogether if the equations are solved with finite precision arithmetic, even in the FCC case. Therefore,
the degenerate zeros of the SRCC mapping $\opA$ are not robust in general. We have already seen that in the FCC case, the CC energy $\ene_\cc(t_*)$ of a degenerate zero $t_*$ is
a degenerate eigenvalue $\ene$ of the Hamiltonian (see \cref{fccdegr}). Thus, we can conclude that the SRCC method is in general unsuitable for finding degenerate eigenstates
and eigenvalues---an empirical fact that is well known among the practitioners of the SRCC method.

\item \Cref{smindex} and the preceding calculations imply that any approach that stipulates the local strong monotonicity of $\opA$ near $t_*$ can only provide an incomplete description of the SRCC method.
\end{enumerate}
\end{remark}

\subsection{Local properties---complex case}\label{secsrcccmplx}

We now discuss what happens when \emph{complex} amplitude spaces are considered instead. We explain the complex case in detail, because the differences from the
real case are somewhat subtle. 
Assume that $\VV$ is a complex amplitude space and let $\VV^*$ denote its anti-dual.
  It is clear that $t\mapsto\dua{\opA(t)}{s}$ is a (complex) polynomial for fixed $s\in\VV$, hence with the appropriate identifications,
$\opA_\CC:\VV\to\VV^*$ is a holomorphic mapping, where we used the subscript $\CC$ to highlight the difference.\footnote{Note that the realification of $\opA_\CC$, 
$(\opA_\CC)_\RR$  does \emph{not} equal $\opA$ (they are mappings of different type: $(\opA_\CC)_\RR : \RR^{2n}\to\RR^{2n}$).}
Of course, a real zero $t_*\in\VV$ to $\opA(t_*)=0$ is automatically a ``complex'' zero: $\opA_\CC(t_*)=0$.
Further, using the fact that the Hamiltonian is real (by which we mean $\dua{\ham_K\Phi_\alpha}{\Phi_\beta}\in\RR$),
 $\opA_\CC(t_*)=0$ if and only if $\opA_\CC(\ol{t}_*)=0$. Also, $\ene_\cc(\ol{t})=\ol{\ene_\cc(t)}$.

From \cref{cmplxdeg} (i) we immediately get that $\deg(\opA_\CC,U,0)\ge 0$ for every bounded open $U\subset\VV$.
In particular, $i(\opA_\CC,t_*)\ge 0$ for every isolated zero $t_*$. Notice that, even if $t_*$ is a zero of both $\opA$ and $\opA_\CC$, its real and complex
indices $i(\opA,t_*)$
and $i(\opA_\CC,t_*)$ may differ; for instance we know that $i(\opA,t_*)$ can have a sign, while $i(\opA_\CC,t_*)$ cannot.
Also, \cref{cmplxdeg} (ii) implies that $i(\opA,t_*)\ge 2$ for an isolated, degenerate zero $t_*$.
Moreover, the following is true.
\begin{theorem}
If $t_*\in\VV$ is a \emph{real} isolated zero of both $\opA$ and $\opA_\CC$, then
$$
|i(\opA,t_*)|\le i(\opA_\CC,t_*), \quad i(\opA,t_*)\equiv i(\opA_\CC,t_*) \mod 2.
$$
\end{theorem}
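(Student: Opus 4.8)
The statement is, after the right identifications, a direct instance of \cref{holodegrel}, and that is the route I would take.

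First I would pass to coordinates as in \cref{basrmk}: fix a real basis of $\VV$ and of $\VV^*$, giving linear homeomorphisms $h:\VV\to\RRn$ and $g:\VV^*\to\RRn$, and set $\wh{\opA}=g\circ\opA\circ h^{-1}$. Since $t\mapsto\vdua{\opA(t)}{s}$ is a polynomial whose coefficients are built from the real numbers $\dua{\ham_K\Phi_\alpha}{\Phi_\beta}$, the map $\wh{\opA}:\RRn\to\RRn$ is a real polynomial map, in particular real-analytic, and its coefficientwise extension to $\CCn$ is a holomorphic map $\wt{\wh{\opA}}:\CCn\to\CCn$. Complexifying the chosen bases identifies the complex amplitude space $\VV$ (viewed as the complexification of the real one) with $\CCn$ and its anti-dual with $\CCn$; under these identifications $\opA_\CC$ corresponds to $\wt{\wh{\opA}}$, since it restricts to $\wh{\opA}$ on the real subspace and holomorphic extensions are unique. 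By the basis-independence of the degree (\cref{basrmk}, \cref{topvecindx}) and the definition of the degree of a holomorphic map as the degree of its realification, $i(\opA,t_*)=i(\wh{\opA},h(t_*))$ and $i(\opA_\CC,t_*)=i(\wt{\wh{\opA}},h(t_*))$, with $h(t_*)\in\RRn\subset\CCn$.

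Next I would isolate the zero on both sides. Because $t_*$ is an isolated zero of $\opA_\CC$, there is $\rho>0$ with $B_{\CCn}(h(t_*),\rho)\cap\wt{\wh{\opA}}^{-1}(0)=\{h(t_*)\}$; in particular $h(t_*)$ is the only zero of $\wh{\opA}$ in $\overline{B_{\RRn}(h(t_*),\rho)}$. Fix $0<r<\rho$, put $D=B_{\RRn}(h(t_*),r)$ (so $0\notin\wh{\opA}(\partial D)$), and form the slab $D_\epsilon=\{x\in\CCn:\Real x\in D,\ |\Imag x|<\epsilon\}$ from \cref{holodegrel}. Any point of $D_\epsilon$ lies at $\CCn$-distance $<\sqrt{r^2+\epsilon^2}$ from $h(t_*)$, so for $\epsilon<\sqrt{\rho^2-r^2}$ we have $D_\epsilon\cap\wt{\wh{\opA}}^{-1}(0)=\{h(t_*)\}$, whence by excision $\deg(\wt{\wh{\opA}},D_\epsilon,0)=i(\wt{\wh{\opA}},h(t_*))$ and, trivially, $\deg(\wh{\opA},D,0)=i(\wh{\opA},h(t_*))$. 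Now \cref{holodegrel}, applied to $\wh{\opA}$ and its holomorphic extension $\wt{\wh{\opA}}$, gives, for all sufficiently small $\epsilon>0$,
$$
|\deg(\wh{\opA},D,0)|\le\deg(\wt{\wh{\opA}},D_\epsilon,0),\qquad \deg(\wh{\opA},D,0)\equiv\deg(\wt{\wh{\opA}},D_\epsilon,0)\pmod 2 .
$$
Taking $\epsilon$ small enough to satisfy this and $\epsilon<\sqrt{\rho^2-r^2}$ simultaneously, and translating back, yields $|i(\opA,t_*)|\le i(\opA_\CC,t_*)$ and $i(\opA,t_*)\equiv i(\opA_\CC,t_*)\pmod 2$.

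I expect the only delicate point to be the bookkeeping that makes \cref{holodegrel} literally applicable: one has to verify that, in $\RRn$/$\CCn$ coordinates, $\opA_\CC$ is exactly the coefficientwise holomorphic extension of $\opA$ — this is where the reality of the Hamiltonian enters, and where one must resist conflating $\opA$ with the realification $(\opA_\CC)_\RR$ (as the footnote in \cref{secsrcccmplx} warns, these are genuinely different; the relevant comparison is with the \emph{restriction} of $\opA_\CC$ to the real subspace) — and that the quantity $\deg(\wt{\opA},D_\epsilon,0)$ appearing in \cref{holodegrel} coincides with the complex degree $\deg(\opA_\CC,\cdot,\cdot)$ defined in \cref{secsrcccmplx}. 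Once these identifications are nailed down, the remainder is just the excision property and the elementary estimate on the slab $D_\epsilon$.
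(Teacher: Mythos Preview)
Your proposal is correct and follows exactly the route the paper takes: the paper's proof is the single sentence ``The result follows from \cref{holodegrel} with the choice $D=B_\VV(t_*,\delta)$'', and what you have written is precisely the unpacking of that sentence, supplying the coordinate identifications (\cref{basrmk}) and the excision argument needed to pass from degrees on $D$ and $D_\epsilon$ to the respective indices.
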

\begin{proof}
The result follows from \cref{holodegrel} with the choice $D=B_\VV(t_*,\delta)$.
\end{proof}

For simplicity, we assume from now on that $\VV$ is rank-regular (\cref{rankreg}), so that $\wh{\ham_K}(t)_\VC=\ham_K(t)_\VC$.
Adapting the proofs of \cref{indxformula} and \cref{indxformuladege} in the complex case, we have
\begin{theorem}[Index formula for SRCC---complex case]\label{indxcomplx}
Let $t_*$ be an isolated zero of $\opA_\CC:\VV\to\VV^*$. Then the following hold true.
\begin{enumerate}[label=\normalfont(\roman*)]
\item The zero $t_*$ is non-degenerate if and only if $\ene_{\cc}(t_*)\not\in\sigma(\ham_K(t_*)_\VC)$, and in this case $i(\opA_\CC,t_*)=1$.
\item  Suppose that $\ene_{\cc}(t_*)\in\sigma(\ham_K(t_*)_\VC)$ and that the second-order regularity assumption \cref{indxdegcond} holds true.
Then
$$
i(\opA_\CC,t_*)=i(\mathcal{B}|_{W_R},0).
$$
\item Suppose that $\ene_{\cc}(t_*)\in\sigma(\ham_K(t_*)_\VC)$ and that the second-order regularity assumption \cref{indxdegcond} holds true.
 Assume further, that $\ker\mathcal{B}'(t)=\{0\}$ for all $0\neq T\Phi_0 \in W_R$. Then $i(\opA_\CC,t_*)=m\ge 2$, 
where $m$ is the number of solutions $0\neq R\Phi_0\in W_R$
to the equation
$$
\dua{(\ham_K(t_*)-\ene_\cc(t_*)) R^2\Phi_0}{L\Phi_0}=\dua{Z\Phi_0}{L\Phi_0}\quad (L\Phi_0\in W_L)
$$
for any $Z\Phi_0\in W_L\cap B^*(0,\delta)$ for sufficiently small $\delta>0$.
\end{enumerate}
\end{theorem}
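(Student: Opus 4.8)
The plan is to re-run, over $\CC$, the arguments establishing \cref{indxformula} (for part (i)) and \cref{indxformuladege} together with \cref{degesingle} (for parts (ii) and (iii)), and then to exploit the rigidity of holomorphic maps: for a $\CC$-linear isomorphism $L$ one has $\det L_\RR=|\det L|^2>0$, hence $i(L_\RR,0)=+1$, while \cref{cmplxdeg} forces every degree and index in sight to be nonnegative. I would keep the standing assumptions underlying the proof of \cref{indxformuladege}, in particular that $\ker\opA_\CC'(t_*)$ and $\Ran\opA_\CC'(t_*)$ are complementary in $\VV$, which is what allows Leray's reduction to be applied with $V=\ker\opA_\CC'(t_*)$.

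For (i), I would first note that the derivative computation in \cref{Adifflemm} is insensitive to the ground field, so by rank-regularity the matrix of $\opA_\CC'(t_*)$ in the bases of \cref{basrmk} is $\ham_K(t_*)_\VC-\ene_\cc(t_*)I$; hence $\opA_\CC'(t_*)$ is invertible if and only if $\ene_\cc(t_*)\notin\sigma(\ham_K(t_*)_\VC)$. Since $\opA_\CC$ is holomorphic, $\det(\opA_\CC)_\RR'(t_*)=|\det\opA_\CC'(t_*)|^2$, so $t_*$ is a non-degenerate zero --- that is, of the realification, which is the operative notion --- exactly in that case, and then $i(\opA_\CC,t_*)=\sgn\det(\opA_\CC)_\RR'(t_*)=1$ by \cref{topvecindx}; equivalently, by \cref{cmplxdeg}(iv) a small ball about $t_*$ carries a unique non-degenerate zero, so its degree is $1$.

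For (ii) and (iii) I would run the proof of \cref{indxformuladege} verbatim, with $L=\opA_\CC'(t_*)$, $Q$ the (complex-)orthogonal projector onto $W_L$, and $\mathcal{N}$ the tail of the doubly similarity-transformed Baker--Campbell--Hausdorff expansion \cref{bchdoub}. Then \cref{noninvpert} gives $i(\opA_\CC,t_*)=i(L+Q,0)\,i(\mathcal{B}|_{W_R},0)$, and here $L+Q$ is a $\CC$-linear isomorphism (the implication $(L+Q)x=0\Rightarrow x\in\ker L\cap\Ran L=\{0\}$ is as in the real case), so $i(L+Q,0)=\sgn\det(L+Q)_\RR=\sgn|\det(L+Q)|^2=1$ and $i(\opA_\CC,t_*)=i(\mathcal{B}|_{W_R},0)$, which is (ii); here $\mathcal{B}$ is holomorphic --- it is polynomial in $t$ with no conjugations, since $t\mapsto T$, hence $t\mapsto e^{T}$, is holomorphic --- and \cref{indxdegcond} together with the degree-$2$ homogeneity makes $0$ its unique, hence isolated, zero on $W_R$, so the index is well defined and nonnegative by \cref{cmplxdeg}(i). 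For (iii), I would use that $\mathcal{B}'(0)=0$ (noted after \cref{indxformuladege}), so $0$ is a \emph{degenerate} isolated zero of the holomorphic map $\mathcal{B}|_{W_R}$, whence $i(\mathcal{B}|_{W_R},0)\ge2$ by the observation right after \cref{cmplxdeg}; to identify this index with the count $m$, I would apply \cref{holocronin} to $\mathcal{B}|_{W_R}$ on $D=B_\VV(0,1)\cap W_R$, noting that homogeneity of degree $2$ plus the bound $\inf_{\partial B_\VV(0,1)\cap W_R}|\mathcal{B}|>0$ (from \cref{indxdegcond}) confines every solution of $\mathcal{B}(t)=z'$ with $z'$ small to the interior of $D$, so that \cref{holocronin} yields $\deg(\mathcal{B}|_{W_R},D,0)=\#\{t\in D:\mathcal{B}(t)=z'\}$ for $z'$ off a measure-zero set. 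The extra hypothesis $\ker\mathcal{B}'(t)=\{0\}$ for $0\neq T\Phi_0\in W_R$ makes $\mathcal{B}'(t)\colon W_R\to W_L$ (the dimensions agree, $\dim W_R=\dim W_L$) an isomorphism for every $t\neq0$, so $0$ is the only critical point of $\mathcal{B}|_{W_R}$, every $z'\in W_L\cap B^*(0,\delta)$ is a regular value, the exceptional set reduces to $\{0\}$, and $m$ is independent of such $z'$ and equals $\deg(\mathcal{B}|_{W_R},D,0)=i(\mathcal{B}|_{W_R},0)=i(\opA_\CC,t_*)\ge2$. Writing the equation $\mathcal{B}(t)=z'$ on $W_R$ explicitly by means of \cref{degB}, as in \cref{degesingle} but for general $\mu$, then yields the stated form.

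The step I expect to be the main obstacle is the bookkeeping in (ii): one must make sure every ingredient fed into \cref{noninvpert} --- the projector $Q$, the auxiliary isomorphism in Leray's reduction, and the splitting $\VV=\ker L\oplus\Ran L$ --- is taken $\CC$-linear, so that the factor $i(L+Q,0)$ genuinely collapses to $+1$ and not merely to $\pm1$; equivalently, one must keep straight that it is the \emph{realification} of the complex CC mapping (and of $\mathcal{B}$), not $\opA$ itself, that is handed to the degree machinery. A secondary point is verifying that no hidden complex conjugation enters $\opA_\CC$ or $\mathcal{B}$, so that holomorphy --- and the sign rigidity it brings --- is available; this follows from the holomorphy of $t\mapsto e^{T}$ and the reality of $\ham_K$, exactly as already remarked in the text for $\opA_\CC$.
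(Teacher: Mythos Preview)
Your proposal is correct and follows essentially the same route as the paper. For (i) and (ii) you reproduce exactly what the paper does---carry the real arguments over to $\CC$ and exploit $\det L_\RR=|\det L|^2$, so that the sign ambiguity disappears and both $i(\opA_\CC,t_*)$ and the Leray prefactor $i(\opA_\CC'(t_*)+Q,0)$ collapse to $+1$. For (iii) there is a small stylistic difference: the paper computes $i(\mathcal{B}|_{W_R},0)$ by redoing the calculation \eqref{degdegreal} in the complex setting, where every summand becomes $\sgn|\det g\mathcal{B}'(t_i)h^{-1}|^2=+1$ and the total is simply $m$; you instead invoke \cref{holocronin} and argue that the hypothesis $\ker\mathcal{B}'(t)=\{0\}$ for $t\neq0$ forces every small nonzero $z'$ to be regular, so that the exact preimage count equals the degree. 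These are two packagings of the same computation, and your observation that homogeneity plus \eqref{indxdegcond} confines all solutions to the interior of a fixed ball is a nice way to secure the boundary condition needed to apply \cref{holocronin}. Your caveat about the implicit semisimplicity assumption (that $\ker\opA_\CC'(t_*)$ and $\Ran\opA_\CC'(t_*)$ be complementary) is well taken; the paper's application of \cref{noninvpert} in \cref{indxformuladege} tacitly relies on it too.
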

\begin{proof}
For (i), it is enough to note that $\ker\opA_\CC'(t_*)\neq \{0\}$ follows via the same calculation as in the proof of \cref{indxformula}.
Part (ii) follows since $i(\opA_\CC'(t_*)+Q)=1$.
For the proof of (iii), notice that \cref{degdegreal} now reads
\begin{align*}
\deg(\mathcal{B}|_{W_R}, B_\VV(0,r)\cap W_R, z)&=\sum_{i=1}^{m} \sgn |\det g\mathcal{B}'(t_i)h^{-1}|^2=m.\qedhere
\end{align*}
\end{proof}

\begin{corollary}
For a degenerate, isolated zero $t_*$ of $\opA_\CC$, $\dim W_R=1$,
and for which \cref{indxdegcond} holds, we have $i(\opA_\CC,t_*)=2$.
\end{corollary}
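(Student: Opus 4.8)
The plan is to run the one-complex-dimensional special case of \cref{indxcomplx} while exploiting the holomorphic structure so that the solution count becomes exact rather than merely a bound. First I would invoke \cref{indxcomplx} (ii): since $\ene_{\cc}(t_*)\in\sigma(\ham_K(t_*)_\VC)$ and the second-order regularity assumption \cref{indxdegcond} is in force, the index factorizes as $i(\opA_\CC,t_*)=i(\mathcal{B}|_{W_R},0)$, so it is enough to analyze the homogeneous quadratic map $\mathcal{B}|_{W_R}$ on the one-dimensional complex space $W_R$.

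Next I would introduce coordinates. Writing $W_R=\Span\{r\}$ and $W_L=\Span\{\ell\}$ (over $\CC$) with associated cluster operators $R$ and $L$, and using $r$ and $\ell$ as bases to identify $W_R\cong\CC\cong W_L$, the restriction $\mathcal{B}|_{W_R}$ takes the form $c\mapsto\tfrac12\beta c^2$, where $\beta=\dua{(\ham_K(t_*)-\ene_{\cc}(t_*))R^2\Phi_0}{L\Phi_0} / \dua{L\Phi_0}{L\Phi_0}$. Here $\dua{L\Phi_0}{L\Phi_0}>0$ because $L\Phi_0$ is a nonzero (left) eigenvector, and $\beta\neq 0$ by \cref{indxdegcond} together with the degree-two homogeneity of $\mathcal{B}$ (the condition $\mathcal{B}(t)\neq 0$ on $\partial B_\VV(0,1)$ restricts to $\mathcal{B}(r)\neq 0$).

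To finish I have two equivalent routes. One is to note directly that $\mathcal{B}|_{W_R}$ is a one-variable holomorphic map with an isolated zero of multiplicity two at the origin, so its index equals two (cf.\ \cref{cmplxdeg}). The other, which stays closer to the paper's machinery, is to apply \cref{indxcomplx} (iii): its hypothesis $\ker\mathcal{B}'(cr)=\{0\}$ for $c\neq 0$ holds automatically since $\beta\neq 0$, so $i(\opA_\CC,t_*)=m$, where $m$ is the number of nonzero $c\in\CC$ solving $\tfrac12\beta c^2=\eta$ for a fixed small $\eta\neq 0$; as $\beta\neq0$, this quadratic has exactly $m=2$ roots. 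Either way $i(\opA_\CC,t_*)=2$.

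I do not anticipate a genuine obstacle here: the statement is exactly the complex counterpart of \cref{degesingle}, and the whole content is the contrast between $c^2=\eta$ having \emph{no} real solution for a suitably signed real $\eta$ (which gave index $0$ in the real case) and always having \emph{two} complex solutions. The only points needing care are checking $\beta\neq0$ and verifying the regularity hypothesis of \cref{indxcomplx} (iii); both are immediate consequences of \cref{indxdegcond}. If anything is fiddly it is only the bookkeeping of the identifications $W_R,W_L\cong\CC$ and confirming that the reduced equation is the expected scalar quadratic.
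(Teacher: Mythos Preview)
Your proposal is correct and follows essentially the same route as the paper: reduce via \cref{indxcomplx} (ii) to the index of $\mathcal{B}|_{W_R}$ on a one-dimensional complex space, then observe that the scalar quadratic $c^2=\text{const}$ has exactly two nonzero complex roots (the paper phrases this as ``analogous to \cref{degesingle}, but \cref{degesingeq} always has exactly two nonzero complex solutions''). Your two alternative finishes are both fine and just make explicit what the paper leaves implicit.
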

\begin{proof}
The proof is analogous to that of \cref{degesingle}, but \cref{degesingeq} always has exactly two nonzero complex solutions $c$.
\end{proof}

We close this section with a few remarks.

\begin{remark}
\leavevmode
\begin{enumerate}[label=\normalfont(\roman*)]
\item Luckily, for a real zero $t_*\in\VV$, the condition $\ene_{\cc}(t_*)\not\in\sigma(\ham_K(t_*)_\VC)$ is formally the same as in the real case,
therefore a non-degenerate real zero is automatically a non-degenerate zero of $\opA_\CC$.

\item The degeneracy of a complex zero $t_*$ manifests itself in numerical computations as follows. 
Suppose the hypotheses of \cref{indxcomplx} (iii) hold true.
 Combining this with \cref{holocronin}, we get that the perturbed equation $\opA(t)=z'$ has \emph{exactly} $m$ solutions for almost all $z'\in\VV$ sufficiently close to zero. 
 This is in contrast with the real case, when one might completely ``lose'' solutions when the index is zero (\cref{smdegremark} (i)).
 The appearance of multiple complex zeros in degenerate situations was conjectured based on numerical observations in \cite{piecuch1990coupled,piecuch2000search}.

\item If the Hamiltonian is real (see above), then the index of the complex conjugate zero is the same: $i(\opA_\CC,t_*)=i(\opA_\CC,\ol{t}_*)$.

\item The classical \emph{B\'ezout theorem} states that if a polynomial system
\[
\left.
\begin{aligned}
P_1(x_1,\ldots,x_d)&=0\\
P_2(x_1,\ldots,x_d)&=0\\
\ldots&\\
P_n(x_1,\ldots,x_d)&=0
\end{aligned}
\right\}
\]
has a finite number of zeros in $\CC^d$, then the number of zeros (counting multiplicities) is at most $\Delta=\Delta_1\cdots \Delta_d$ (called the \emph{B\'ezout number}),
where $\Delta_k$ denotes the degree of $P_k$. As remarked earlier, according to the Baker--Campbell--Hausdorff expansion \cref{bchagain}, for the polynomials
constituting the system $\opA(t)=0$ there holds $\Delta_1=\ldots=\Delta_d=4$, hence the B\'ezout number of the CC equations is $\Delta=4^d$, where $d=\dim\VV$.
This is typically a huge number. However, it is known that the B\'ezout number often grossly overestimates the number of zeros. 
In fact, it was observed numerically that the number of zeros for the (truncated) CC equations is much less than the B\'ezout number \cite{piecuch2000search}.
\end{enumerate}
\end{remark}

\subsection{Continuation of solutions}\label{seccont}

In this section we discuss how solutions of different CC methods can be ``connected'' in a systematic way. The idea is not new to this field (and certainly not
new to nonlinear analysis, see e.g. \cite{zeidler1993nonlinear}), and it has been 
a subject of both theoretical and numerical investigations in the CC literature, as we have already mentioned in the introduction.

The main theoretical tool we use to describe the aformentioned connection is a specific type of homotopy.

\begin{definition}\label{admhomo}
Let $\VV^1$ be an amplitude space with direct sum decomposition $\VV^1=\VV^0\oplus\VV^\angle$.
Let $\opA^j : \VV^j\to(\VV^j)^*$ be continuous mappings for $j=0,1$. 
A continuous map $\mathcal{K} : \VV^1\times[0,1]\to(\VV^1)^*$ is said to be an \emph{admissible homotopy}, if
\begin{enumerate}[label=\normalfont(\roman*)]
\item $\dua{\mathcal{K}(t^1,0)}{s^0}=\dua{\opA^0(t^0)}{s^0}$ for all $t^1\in\VV^1$, $s^0\in\VV^0$, and
\item $\mathcal{K}(\cdot,1)=\opA^1$.
\end{enumerate}
Furthermore, an admissible homotopy $\mathcal{K} : \VV^1\times[0,1]\to(\VV^1)^*$ is said to be \emph{faithful}, if for every 
$t_{**}^0\in\VV^0$ such that $\opA^0(t_{**}^0)=0$, there exists $t_{**}^\angle\in\VV^\angle$ so that with $t_{**}^1=t_{**}^0+t_{**}^\angle\in\VV^1$, there holds $\mathcal{K}(t_{**}^1,0)=0$.
\end{definition}

\begin{example}
When $\opA^0$ is the projection of $\opA:=\opA^1$ onto $\VV^0$, a simple admissible homotopy can be given by
\begin{equation}\label{simphom}
\dua{\opK(t^1,\lambda)}{s^1}=\dua{\opA(t^0+\lambda t^\angle)}{s^0} + \dua{\opA(t^1)}{s^\angle},
\end{equation}
for all $t^1\in\VV^1$, $s^1\in\VV^1$ and $\lambda\in[0,1]$ (cf. \cref{kpcomp}).
\end{example}

Let $\VV^1=\VV(G^\full)$ and $\opA^1$ be the FCC mapping, $\VV^0$ some rank-truncated space and $\opA^0$ the truncated CC mapping.
This case is particularly important due to the equivalence of FCC and FCI (see Theorem 4.4 of Part I), so existence of a solution to the FCI problem (essentially the Schr\"odinger
equation) can be exploited to infer the existence of a \emph{truncated} CC solution. Furthermore, the topological index of the CC solution can be
determined by the results of \cref{secsrccop} and the homotopy invariance of the topological degree can be used relate these quantities in certain situations (see \cref{indexrel}
below).

The \emph{zero set} of an (admissible) homotopy $\opK$ is defined as
\begin{equation}\label{kpz}
\mathcal{Z}(\opK)=\{(t^1_*,\lambda)\in \VV^1\times[0,1] : \opK(t_*^1,\lambda)=0\}.
\end{equation}
We omit $\opK$ from the notation $\mathcal{Z}(\opK)$ whenever it is clear from the context.
The $\lambda$-sections of $\mathcal{Z}$ are denoted as $\mathcal{Z}_\lambda=\{ t_*^1\in\VV^1 : (t_*^1,\lambda)\in\mathcal{Z}\}$.
Clearly, $(\mathcal{Z}(\opK))_1=(\opA^1)^{-1}(0)$ for \emph{any} admissible homotopy $\opK$. Furthermore,  $\Pi_{\VV^0}(\mathcal{Z}(\opK))_0=(\opA^0)^{-1}(0)$
for any faithful, admissible homotopy $\opK$.
We will sometimes explicitly label $t_*^1$ with the $\lambda$ to which it corresponds as $t_*^1(\lambda)$.

Recall that any topological space can be partitioned into a family of \emph{connected components}, which are maximal (w.r.t. set inclusion), closed connected sets.
We are interested in the connected components of $\mathcal{Z}(\opK)$.
The Leray--Schauder continuation principle \cite[Theorem 2.1.3]{dinca2021brouwer} immediately implies the following.
\begin{theorem}\label{admcont}
Suppose that $\mathcal{D}\subset \VV^1\times[0,1]$ is a bounded open set and let $\opK:\VV\times[0,1]\to\VV^*$ be an admissible homotopy such that 
\begin{equation}\label{admnz}
\opK(t^1,\lambda)\neq 0\quad\text{for all}\quad(t^1,\lambda)\in\partial\mathcal{D}.
\end{equation}
Then the following statements hold true.
\begin{enumerate}[label=\normalfont(\roman*)]
\item $\deg(\opK(\cdot,0),\mathcal{D}_0,0)=\deg(\opA^1,\mathcal{D}_1,0)=:d$. 
\item If $d\neq 0$, then there is a connected component $\mathcal{C}$ of $\mathcal{Z}$, such that 
$$
\mathcal{C}\cap (\mathcal{Z}_j\times\{j\}) \neq\emptyset\quad\text{for}\quad j=0,1.
$$
\end{enumerate}
\end{theorem}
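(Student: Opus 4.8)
The plan is to recognize \cref{admcont} as a finite-dimensional instance of the Leray--Schauder continuation principle \cite[Theorem~2.1.3]{dinca2021brouwer} and to run its classical proof in the present notation. As a preliminary step I would fix bases of $\VV^1$ and $(\VV^1)^*$ with compatible orientations as in \cref{basrmk}, so that $\opK$ and $\opA^1=\opK(\cdot,1)$ become genuine continuous maps $\RR^n\times[0,1]\to\RR^n$ and $\RR^n\to\RR^n$; all degrees below are understood via these identifications, which is legitimate by \cref{basrmk}. Throughout I will write $\mathcal{Z}$ for the zero set of $\opK$ inside $\ol{\mathcal{D}}$; since $\opK$ is continuous and $\mathcal{Z}$ is disjoint from $\partial\mathcal{D}$ by \cref{admnz}, $\mathcal{Z}$ is a closed subset of the compact set $\ol{\mathcal{D}}$, hence compact, and $\mathcal{Z}\subset\mathcal{D}$.

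For part (i), for each $\lambda\in[0,1]$ the section $\mathcal{D}_\lambda=\{t^1:(t^1,\lambda)\in\mathcal{D}\}$ is open and bounded, and one checks $\partial\mathcal{D}_\lambda\times\{\lambda\}\subset\partial\mathcal{D}$ (if $t^1\in\partial\mathcal{D}_\lambda$ then $(t^1,\lambda)\in\ol{\mathcal{D}}\setminus\mathcal{D}$), so \cref{admnz} gives $0\notin\opK(\cdot,\lambda)(\partial\mathcal{D}_\lambda)$ and $\deg(\opK(\cdot,\lambda),\mathcal{D}_\lambda,0)$ is well defined. The generalized homotopy invariance of the Brouwer degree over an open bounded subset of $\RR^n\times[0,1]$ --- which is exactly the content of the continuation principle cited --- then shows $\lambda\mapsto\deg(\opK(\cdot,\lambda),\mathcal{D}_\lambda,0)$ is constant on $[0,1]$. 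Evaluating at $\lambda=0$ and $\lambda=1$ and using $\opK(\cdot,1)=\opA^1$ yields (i), with $d$ the common value.

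For part (ii), assume $d\neq 0$ and suppose, for contradiction, that no connected component of $\mathcal{Z}$ meets both $\mathcal{Z}_0\times\{0\}$ and $\mathcal{Z}_1\times\{1\}$. The sets $A:=\mathcal{Z}\cap(\VV^1\times\{0\})$ and $B:=\mathcal{Z}\cap(\VV^1\times\{1\})$ are disjoint and closed in the compact space $\mathcal{Z}$, and by hypothesis no connected subset of $\mathcal{Z}$ joins $A$ to $B$; hence the classical separation lemma for components of a compact metric space furnishes a partition $\mathcal{Z}=\mathcal{Z}^A\sqcup\mathcal{Z}^B$ into disjoint compact sets with $A\subset\mathcal{Z}^A$, $B\subset\mathcal{Z}^B$. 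Since $\mathcal{Z}^A$ is compact, disjoint from the compact set $\mathcal{Z}^B$, contained in the open set $\mathcal{D}$, and contains no point with $\lambda=1$, one can pick $\epsilon>0$ so small that the open $\epsilon$-neighbourhood $\mathcal{U}$ of $\mathcal{Z}^A$ satisfies $\ol{\mathcal{U}}\subset\mathcal{D}$, $\ol{\mathcal{U}}\cap\mathcal{Z}^B=\emptyset$, and $\mathcal{U}_1=\emptyset$. Then $\partial\mathcal{U}\cap\mathcal{Z}=\emptyset$ (because $\ol{\mathcal{U}}\cap\mathcal{Z}\subset\mathcal{Z}^A\subset\mathcal{U}$), so $\opK$ is admissible on $\mathcal{U}$, and the homotopy invariance of part (i) applied with $\mathcal{U}$ in place of $\mathcal{D}$ gives $\deg(\opK(\cdot,0),\mathcal{U}_0,0)=\deg(\opK(\cdot,1),\mathcal{U}_1,0)=\deg(\opK(\cdot,1),\emptyset,0)=0$. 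On the other hand $\mathcal{U}_0\supset\mathcal{Z}_0$ contains every zero of $\opK(\cdot,0)$ lying in $\mathcal{D}_0$, so the excision property \cref{degcoro}~(i) gives $\deg(\opK(\cdot,0),\mathcal{D}_0,0)=\deg(\opK(\cdot,0),\mathcal{U}_0,0)=0$, contradicting $d\neq 0$. Hence the required component exists, and being contained in a component of the full zero set $\mathcal{Z}(\opK)$, it yields the conclusion.

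I expect the \emph{main obstacle} to lie entirely in the topological bookkeeping of part (ii): confirming that $\mathcal{Z}$ is genuinely compact so that the separation lemma applies, invoking correctly the statement that two disjoint closed subsets of a compact metric space not joined by any connected subset are separated by relatively clopen sets, and choosing $\epsilon$ so that the neighbourhood $\mathcal{U}$ simultaneously stays inside $\mathcal{D}$, avoids $\mathcal{Z}^B$, and has empty $\lambda=1$ section. Everything else --- the passages through homotopy invariance and excision --- is a routine application of the results collected in \cref{sectopdeg}.
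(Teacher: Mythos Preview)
Your proposal is correct and aligned with the paper's approach: the paper does not prove \cref{admcont} at all but simply remarks that it follows immediately from the Leray--Schauder continuation principle \cite[Theorem~2.1.3]{dinca2021brouwer}, which is precisely what you identified and whose classical proof you have reproduced in the present notation. Your argument is sound, including the separation/excision step in part~(ii); the only addition relative to the paper is that you actually spell out the proof rather than quote the reference.
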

This general theorem can be used to prove existence results, which essentially consists
of establishing the boundary condition \cref{admnz}. Furthermore, note that (i) implies that 
\begin{equation}\label{indexrel}
\sum_{t_{**}^1\in \mathcal{Z}_0\cap\mathcal{D}_0} i(\opK(\cdot,0),t_{**}^1) = \sum_{t_{*}^1\in \mathcal{Z}_1\cap\mathcal{D}_1} i(\opA^1,t_{*}^1)=d.
\end{equation}
This is a necessary condition for zeros in $\mathcal{Z}_0$ and $\mathcal{Z}_1$ be in the same bounded connected component.

\begin{remark}
In the case when $\mathcal{D}$ is possibly \emph{unbounded}, one can invoke \cite[Theorem 2.1.4]{dinca2021brouwer} to obtain the following statement:
If $\mathcal{Z}_0$ is bounded and $\deg(\opK(\cdot,0),D,0)\neq 0$ for some bounded open set $D\supset \mathcal{Z}_0$,
then there is a connected component $\mathcal{C}$ of $\mathcal{Z}$ intersecting $\mathcal{Z}_0\times\{0\}$ which either intersects
$\mathcal{Z}_1\times\{1\}$ or is unbounded. The unboundedness of $\mathcal{C}$ has been observed numerically for a specific type of homotopy
and its significance is discussed in the next remark.
\end{remark}

\begin{remark}\label{homormrk}
In \cite{piecuch2000search}, a specific type of admissible homotopy (essentially \cref{simphom}, which will be discussed in \cref{seckp} below)
 is used as the basis to distinguish ``physical'' truncated solutions from ``unphysical'' ones.
Roughly speaking, they call a truncated solution  ``physical'' if it shares a \emph{bounded} connected component with an FCC solution (although they 
require more regularity of the connected component in question). They also found that some FCC (resp. truncated) solutions cannot be ``connected''
to any truncated (resp. FCC) solution. While this approach to the classification of solutions seems attractive at first, it has a serious conceptual drawback:
 it is unclear why one particular
homotopy is preferred over the (infinitely many) others. For instance, it is conceivable that an admissible homotopy classifies a truncated solution as ``unphysical'' while 
another homotopy classifies it as ``physical''. Moreover, their homotopy itself does not admit an obvious physical interpretation. 
We will not pursue this line of thought any further in this work, and view homotopies as purely mathematical devices.
\end{remark}

As an example of an admissible homotopy, we consider the \emph{linear homotopy}.
Let $\VV^0\subset\VV^1$ be any subspace (a typical choice is based on some truncation: $\VV^0=\VV(G(1,\ldots,\rho))$) and
$\VV^\angle:=(\VV^0)^\perp$, where the orthogonal complement is taken with respect to the $\VV$-inner product. 
Also, to emphasize that $\VV^\angle$ is actually given by the $\VV$-orthogonal complement of $\VV^0$, we shall write $t^1=t^0+t^\perp$ for some unique $t^0\in\VV^0$
and $t^\perp\in(\VV^0)^\perp$, for any $t^1\in\VV^1$.
Let $\opK_\LI : \VV^1\times[0,1] \to (\VV^1)^*$ be given by
$$
\dua{\opK_\LI(t^1,\lambda)}{s^1}= (1-\lambda)\left(\dua{\opA^0(t^0)}{s^0} + \alpha|\dua{t^\perp-u^\perp}{s^\perp}_\VV|\right) + \lambda \dua{\opA^1(t^1)}{s^1}
$$
for any $t^1,s^1\in\VV^1$ and $\lambda\in[0,1]$, and some fixed constants $\alpha>0$ and $u^\perp\in(\VV^0)^\perp$.
Then, clearly $\opK_\LI(\cdot,1)=\opA^1$. Further, $\opK_\LI(t^1_{**},0)=0$ is equivalent to
$\opA^0(t_{**}^0)=0$ and $t_{**}^\perp=u^\perp$. Therefore, $\opK_\LI$ is a faithful, admissible homotopy. Also, $\opK_\LI$ has the following trivial property:
if $t_*^1\in\VV^1$ is such that $\opA^0(t_*^0)=0$ and $\opA^1(t_*^1)=0$, then $\opK_\LI(t_*^1,\lambda)=0$ for all $\lambda\in[0,1]$.

As a simple application of the linear homotopy, we give a variant of the existence result \cite[Theorem 4.1]{rohwedder2013error}.

\begin{theorem}\label{smexi}
Suppose that $t_*^1\in\VV^1$ is a zero of $\opA^1$. Let $\kappa=\|t_*^\perp\|_\VV$. Assume the following hold true.
\begin{enumerate}[label=\normalfont(\roman*)]
\item $\opA^j:\VV^j\to\VV^j$ is $C^1$.
\item $L^0 \kappa=\sup_{\|u^0\|_\VV=1} \dua{\opA^0(t_*^0)-\opA^1(t_*^1)}{u^0}$ for some $L^0 < \infty$.
\item $\opA^j$ is strongly monotone in $B_{\VV^{j}}(t_*^j,\delta_j)$ with constant $C_{\mathrm{SM}}^j=C_{\mathrm{SM}}^j(\delta_j)>0$ for $j=0,1$.
\item $\kappa< \delta\frac{C_{\mathrm{SM}}^0}{L^0}$, where $\delta=\min\{\delta_0,\delta_1\}$
\end{enumerate}
Then there exists a unique $t_{**}^0\in B_{\VV^0}(t_*^0,\delta)$ such that $\opA^0(t_{**}^0)=0$. Furthermore, $i(\opK_\LI(\cdot,0),t_{**}^1)=i(\opA^0,t_*^0)=1$.
\end{theorem}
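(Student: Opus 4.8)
The plan is to prove the statement in two stages: first an existence--uniqueness argument for $t_{**}^0$ carried out with a Brouwer-degree homotopy inside a small ball of $\VV^0$, and then a short index computation for $\opK_\LI(\cdot,0)$ that exploits its block structure. I write $\VV^\perp:=(\VV^0)^\perp$, and throughout I identify $\VV^0,\VV^\perp,\VV^1$ with their duals via the Riesz maps of the relevant $\VV$-inner products, orienting everything consistently as in \cref{basrmk}; let $\mathcal{R}_0:\VV^0\to(\VV^0)^*$ and $\mathcal{R}_\perp:\VV^\perp\to(\VV^\perp)^*$ denote these Riesz maps.

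For the first stage, note that since $t_*^1$ is a zero of $\opA^1$ and $u^0\in\VV^0\subset\VV^1$, one has $\dua{\opA^1(t_*^1)}{u^0}=0$, so hypothesis (ii) says precisely that $\|\opA^0(t_*^0)\|_{(\VV^0)^*}=L^0\kappa$. By hypothesis (iv) the interval $(L^0\kappa/C_{\mathrm{SM}}^0,\delta)$ is nonempty; I would fix $\rho$ in it and put $B=B_{\VV^0}(t_*^0,\rho)$, noting $\rho<\delta\le\delta_0$ so that $\opA^0$ is strongly monotone on $B$. Testing the homotopy $s\mapsto\opA^0(\cdot)-(1-s)\opA^0(t_*^0)$ against $t^0-t_*^0$ on $\partial B$ and combining strong monotonicity with $\|\opA^0(t_*^0)\|_{(\VV^0)^*}=L^0\kappa$ gives a value $\ge C_{\mathrm{SM}}^0\rho^2-L^0\kappa\rho>0$, so this homotopy avoids $0$ on $\partial B$; chaining it with $s\mapsto s(\opA^0(\cdot)-\opA^0(t_*^0))+(1-s)\mathcal{R}_0(\cdot-t_*^0)$, whose pairing with $t^0-t_*^0$ is $\ge\min\{C_{\mathrm{SM}}^0,1\}\rho^2>0$ on $\partial B$, and invoking homotopy invariance and normalization yields $\deg(\opA^0,B,0)=\deg(\mathcal{R}_0(\cdot-t_*^0),B,0)=1$ since $t_*^0\in B$. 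The existence property (\cref{degcoro}, existence property) now produces a zero $t_{**}^0\in B\subset B_{\VV^0}(t_*^0,\delta)$, which strong monotonicity on $B_{\VV^0}(t_*^0,\delta)$ makes the unique zero there; by excision $i(\opA^0,t_{**}^0)=\deg(\opA^0,B,0)=1$, and since $\opA^0$ is strongly monotone on a full neighborhood of $t_{**}^0$, \cref{smnondeg} applied to the truncated SRCC mapping $\opA^0$ also shows $t_{**}^0$ is non-degenerate (consistent with \cref{smindex}).

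For the index transfer, I would use that, with respect to the $\VV$-orthogonal splittings $\VV^1=\VV^0\oplus\VV^\perp$ and $(\VV^1)^*=(\VV^0)^*\oplus(\VV^\perp)^*$, the map $\opK_\LI(\cdot,0)$ is the direct sum of $t^0\mapsto\opA^0(t^0)$ and the affine isomorphism $t^\perp\mapsto\alpha\mathcal{R}_\perp(t^\perp-u^\perp)$; in particular $\opK_\LI(\cdot,0)=0$ forces $t^\perp=u^\perp$ and $\opA^0(t^0)=0$, so the zero in question is $t_{**}^1=t_{**}^0+u^\perp$, and its derivative there is the block-diagonal invertible operator $(\opA^0)'(t_{**}^0)\oplus\alpha\mathcal{R}_\perp$ (invertibility of the first block being the non-degeneracy just established). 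Feeding this into \cref{topvecindx} with linear homeomorphisms respecting these splittings and the orientations of \cref{basrmk}, the relevant determinant factors as the $\opA^0$-block determinant, whose sign is $i(\opA^0,t_{**}^0)=1$, times $\det(\alpha\mathcal{R}_\perp)>0$; hence $i(\opK_\LI(\cdot,0),t_{**}^1)=i(\opA^0,t_{**}^0)=1$.

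I expect the only genuine difficulty to be bookkeeping: keeping the Gelfand-triple identifications and their induced orientations consistent across $\VV^0,\VV^\perp,\VV^1$ and their duals, so that the auxiliary homotopy in the first stage is legitimately nonzero on $\partial B$ (one must pass through the Riesz realization of the identity, not an identity between spaces of different type) and so that $\det(\alpha\mathcal{R}_\perp)$ is genuinely positive rather than carrying a basis-dependent sign---both are resolved by fixing orthonormal bases for the $\VV$-inner product as in \cref{basrmk}. By contrast, the quantitative core of the argument, namely that hypothesis (iv) opens the nonempty window $(L^0\kappa/C_{\mathrm{SM}}^0,\delta)$ in which the localization succeeds, is elementary.
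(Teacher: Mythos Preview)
Your proof is correct but takes a genuinely different route from the paper's. The paper works on the full space $\VV^1$: it fixes $\alpha=C_{\mathrm{SM}}^0$, $u^\perp=t_*^\perp$ in the definition of $\opK_\LI$, and verifies the boundary condition $\opK_\LI(t_*^1+r^1,\lambda)\neq 0$ on $\partial B_{\VV^1}(t_*^1,\delta)$ for all $\lambda\in[0,1]$ by testing against $r^1$ and splitting $(1-\lambda)A_0+\lambda A_1$, where the strong monotonicity of $\opA^0$ (together with (ii) and (iv)) forces $A_0>0$ and the strong monotonicity of $\opA^1$ forces $A_1>0$. Homotopy invariance then carries $\deg(\opA^1,B_{\VV^1}(t_*^1,\delta),0)=1$ (from \cref{smindex}) over to $\lambda=0$, whence existence of $t_{**}^1=(t_{**}^0,t_*^\perp)$ and the index statement follow at once.

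Your argument instead localizes entirely in $\VV^0$: you observe $\|\opA^0(t_*^0)\|_{(\VV^0)^*}=L^0\kappa$, run a two-step homotopy to the (Riesz realization of the) identity on a ball of radius $\rho\in(L^0\kappa/C_{\mathrm{SM}}^0,\delta)$, and read off $\deg(\opA^0,B,0)=1$ directly; the index of $\opK_\LI(\cdot,0)$ then comes from its block-diagonal derivative. This is slightly more economical---in particular your existence/uniqueness step never invokes the strong monotonicity of $\opA^1$ nor the constant $\delta_1$, so hypothesis (iii) for $j=1$ is not actually used. What the paper's route buys is that it exercises the admissible-homotopy machinery of \cref{seccont} (specifically \cref{admcont}) and thereby exhibits the solution $t_{**}^1$ as the $\lambda=0$ endpoint of a connected component of $\mathcal{Z}(\opK_\LI)$ emanating from $t_*^1$; your approach proves the same conclusions but does not display this continuation structure.
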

\begin{proof}
Set $\alpha:=C_{\mathrm{SM}}^0$ and $u^\perp:=t_*^\perp$ in the definition of $\opK_\LI$. 
We prove that the boundary condition $\opK_\LI(t^1,\lambda)\neq 0$ holds true for all $t^1\in\partial B_\VV(t_*^1,\delta)$ and $\lambda\in[0,1]$.
Write $t^1=t^1_* + r^1$, where $\|r^1\|_\VV=\delta$ and
$$
\dua{\opK_\LI(t^1_*+r^1,\lambda)}{r^1}=: (1-\lambda)A_0 +  \lambda A_1.
$$
Here,
\begin{align*}
A_0&=\dua{\opA^0(t_*^0+r^0)}{r^0} + C_{\mathrm{SM}}^0\dua{t_*^\perp+r^\perp-t_*^\perp}{r^\perp}_\VV\\
&=\dua{\opA^0(t_*^0+r^0)-\opA^0(t_*^0)}{r^0} + \dua{\opA^0(t_*^0)-\opA^1(t_*^1)}{r^0} + C_{\mathrm{SM}}^0\|r^\perp\|_\VV^2\\
&\ge C_{\mathrm{SM}}^0 \|r^0\|_\VV^2 -L^0\kappa \|r^0\|_\VV  + C_{\mathrm{SM}}^0\|r^\perp\|_\VV^2 \\
&\ge  (C_{\mathrm{SM}}^0 \|r^1\|_\VV- L^0\kappa )\|r^1\|_\VV=(C_{\mathrm{SM}}^0 \delta- L^0\kappa )\delta >0,
\end{align*}
where we in the last step used that $\|r^1\|_\VV^2 = \|r^0\|_\VV^2 + \|r^\perp\|_\VV^2$.
Furthermore,
\begin{align*}
A_1&=\dua{\opA^1(t^1_*+r^1)}{r^1}=\dua{\opA^1(t^1_*+r^1)-\opA^1(t^1_*)}{r^1}\ge  C_{\mathrm{SM}}^1 \|r^1\|_\VV^2>0.
\end{align*}

Using \cref{smindex} and the uniqueness of the zero $t_*^1$ in $B_\VV(t_*^1,\delta)$,
$$
\deg(\opK_\LI(\cdot,1),B_\VV(t_*^1,\delta),0)=\deg(\opA^1,B_\VV(t_*^1,\delta),0)=1.
$$
Since we have already seen that $\opK_\LI(t^1,\lambda)\neq 0$ for all $t^1\in\partial B_\VV(t_*^1,\delta)$ and $\lambda\in[0,1]$,
the homotopy invariance of the degree can be applied to get
$$
\deg(\opK_\LI(\cdot,0),B_\VV(t_*^1,\delta),0)=1.
$$
Using the existence property of the degree \cref{degcoro} (ii), there exists $t^0_{**}\in\VV^0$ such that $(t^0_{**},t_*^\perp)\in B_{\VV^1}(t_*^1,\delta)$ and
$\dua{\opK_\LI(t_{**}^0,0)}{s^0}=\dua{\opA^0(t_{**}^0)}{s^0}=0$ for all $s^0\in\VV^0$, which is what we wanted to prove.
Uniqueness follows form the local strong monotonicity of $\opA^0$.
\end{proof}

In the special case when $\opA^0$ is given as a projection of $\opA^1$ (see \cref{smproj}), we obtain the following.

\begin{corollary}
Suppose that $t_*^1\in\VV^1$ is a zero of $\opA^1$. Let $\kappa=\|t_*^\perp\|_\VV$ and suppose the following hold true.
\begin{enumerate}[label=\normalfont(\roman*)]
\item $L^0 \kappa=\sup_{\|u^0\|_\VV=1} \dua{\opA^1(t_*^0)-\opA^1(t_*^1)}{u^0}$ for some $L^0 < \infty$. 
\item $\opA^1$ is strongly monotone in $B_{\VV^1}(t_*^1,\delta)$ with constant $C_{\mathrm{SM}}>0$.
\item 
$\kappa< \delta\frac{C_{\mathrm{SM}}}{C_{\mathrm{SM}}+L^0}$.
\end{enumerate}
Then there exists a unique $t_{**}^0\in B_{\VV^0}(t_*^1,\delta-\kappa)$ such that $\opA^1(t_{**}^0)=0$. 
\end{corollary}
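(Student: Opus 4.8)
The plan is to read this corollary off from \cref{smexi} in the special case where $\opA^0:\VV^0\to(\VV^0)^*$ is the projection of $\opA^1$ in the sense of \cref{smproj}, so that $\dua{\opA^0(t^0)}{s^0}=\dua{\opA^1(t^0)}{s^0}$ for all $t^0,s^0\in\VV^0$ and the conclusion ``$\opA^1(t_{**}^0)=0$'' is to be understood as $\opA^0(t_{**}^0)=0$. Note first that hypothesis (i) here coincides with hypothesis (ii) of \cref{smexi}, since $\dua{\opA^0(t_*^0)}{u^0}=\dua{\opA^1(t_*^0)}{u^0}$ for $u^0\in\VV^0$, and that $\opA^0$ is $C^1$ (in fact $C^\infty$) because, by the Baker--Campbell--Hausdorff expansion \cref{bchagain}, $\opA^1$ is a polynomial map. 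Since hypothesis (iii) forces $\kappa<\delta$, \cref{smproj} applies and yields that $\opA^0$ is strongly monotone on $B_{\VV^0}(t_*^0,\sqrt{\delta^2-\kappa^2})$ with the \emph{same} constant $C_{\mathrm{SM}}$; hence in the notation of \cref{smexi} we may take $\delta_1=\delta$, $\delta_0=\sqrt{\delta^2-\kappa^2}$, $C_{\mathrm{SM}}^0=C_{\mathrm{SM}}^1=C_{\mathrm{SM}}$, so that $\delta_{\mathrm{min}}:=\min\{\delta_0,\delta_1\}=\sqrt{\delta^2-\kappa^2}$.

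It then remains to verify hypothesis (iv) of \cref{smexi}, namely $\kappa<\delta_{\mathrm{min}}C_{\mathrm{SM}}/L^0=\sqrt{\delta^2-\kappa^2}\,C_{\mathrm{SM}}/L^0$. Squaring, this is equivalent to $\kappa<C_{\mathrm{SM}}\delta/\sqrt{(L^0)^2+C_{\mathrm{SM}}^2}$, and since $\sqrt{(L^0)^2+C_{\mathrm{SM}}^2}\le C_{\mathrm{SM}}+L^0$ it is implied by hypothesis (iii). At this point \cref{smexi} already delivers a unique zero $t_{**}^0$ of $\opA^0$ in $B_{\VV^0}(t_*^0,\sqrt{\delta^2-\kappa^2})$, with $i(\opK_\LI(\cdot,0),(t_{**}^0,t_*^\perp))=i(\opA^0,t_*^0)=1$.

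To obtain the sharper localization stated, i.e. radius $\delta-\kappa$, I would not merely quote \cref{smexi} but re-run its boundary estimate on the ball $B_{\VV^1}(t_*^1,\rho)$ with $\rho:=\delta-\kappa$: choosing $\alpha=C_{\mathrm{SM}}$ and $u^\perp=t_*^\perp$ in the definition of $\opK_\LI$, the computation in the proof of \cref{smexi} gives $\dua{\opK_\LI(t_*^1+r^1,\lambda)}{r^1}\ge(1-\lambda)(C_{\mathrm{SM}}\|r^1\|_\VV-L^0\kappa)\|r^1\|_\VV+\lambda C_{\mathrm{SM}}\|r^1\|_\VV^2>0$ for $\|r^1\|_\VV=\rho$ and all $\lambda\in[0,1]$ precisely when $L^0\kappa/C_{\mathrm{SM}}<\rho\le\delta_{\mathrm{min}}$; hypothesis (iii) is exactly the assertion that $\rho=\delta-\kappa$ exceeds $L^0\kappa/C_{\mathrm{SM}}$, while trivially $\delta-\kappa\le\sqrt{\delta^2-\kappa^2}=\delta_{\mathrm{min}}$. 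Homotopy invariance and the existence property of the degree (using \cref{smindex} to get $\deg(\opA^1,B_{\VV^1}(t_*^1,\rho),0)=1$ and \cref{degcoro} (ii)) then produce a zero of $\opK_\LI(\cdot,0)$ in $B_{\VV^1}(t_*^1,\delta-\kappa)$, which by faithfulness of $\opK_\LI$ has the form $(t_{**}^0,t_*^\perp)$ with $\opA^0(t_{**}^0)=0$ and $t_{**}^0$ in the ball $B_{\VV^0}(t_*^0,\delta-\kappa)$ (the ball appearing in the statement, reading its center $t_*^1$ as the $\VV^0$-component $t_*^0$); uniqueness is inherited from the local strong monotonicity of $\opA^0$.

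I expect the only genuinely delicate point to be the constant and radius bookkeeping in the last two paragraphs: translating hypothesis (iii) into hypothesis (iv) of \cref{smexi} after the restriction to $\VV^0$ (where the usable radius shrinks from $\delta$ to $\sqrt{\delta^2-\kappa^2}$), and checking that $\rho=\delta-\kappa$ is simultaneously greater than $L^0\kappa/C_{\mathrm{SM}}$ and at most $\sqrt{\delta^2-\kappa^2}$. The existence-and-uniqueness mechanism carries over verbatim from \cref{smexi} and \cref{smproj}, so no new idea is required.
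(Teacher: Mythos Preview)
Your argument is correct, but more elaborate than needed. The paper's proof is a one-liner: since $\sqrt{\delta^2-\kappa^2}\ge\delta-\kappa$, the strong monotonicity of $\opA^0$ obtained from \cref{smproj} holds in particular on the smaller ball $B_{\VV^0}(t_*^0,\delta-\kappa)$, so one may invoke \cref{smexi} directly with $\delta_0=\delta-\kappa$, $\delta_1=\delta$, and $C_{\mathrm{SM}}^0=C_{\mathrm{SM}}^1=C_{\mathrm{SM}}$. Then $\min\{\delta_0,\delta_1\}=\delta-\kappa$, and hypothesis~(iv) of \cref{smexi} reads $\kappa<(\delta-\kappa)\,C_{\mathrm{SM}}/L^0$, which is \emph{exactly} hypothesis~(iii) of the corollary. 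Your two-stage route---first applying \cref{smexi} with $\delta_0=\sqrt{\delta^2-\kappa^2}$ to land in the larger ball, then re-running the boundary estimate on $B_{\VV^1}(t_*^1,\delta-\kappa)$ to sharpen the localization---reaches the same conclusion but via a detour: the shrinking of the monotonicity radius can simply be done \emph{before} calling \cref{smexi}, not after. In particular, your third paragraph is unnecessary once $\delta_0$ is chosen as $\delta-\kappa$ from the outset.
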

\begin{proof}
It is enough to recall that $\opA^0=\Pi_{\VV^0}\opA^1|_{\VV^0}$ is strongly monotone with constant $C_{\mathrm{SM}}$ in 
$$
B_{\VV^0}(t_*^0, \sqrt{\delta^2-\kappa^2})\supset B_{\VV^0}(t_*^0, \delta-\kappa).
$$
Applying \cref{smexi} with $C_{\mathrm{SM}}^0 = C_{\mathrm{SM}}$ and $\delta_0 = \delta -\kappa$ gives the result.
\end{proof}

Condition (i) certainly holds, since the CC mapping is locally Lipschitz (see \cite{schneider2009analysis,rohwedder2013error}), but the constant $L^0$ here is possibly smaller than the usual Lipschitz constant (which, in turn, is bounded from below by the strong monotonicity constant).

\subsection{Kowalski--Piecuch homotopy}\label{seckp}

In this section, we consider another homotopy that can be used to analyze the connection between the solutions to CC methods of different rank-truncation levels. 
The approach was pioneered by the chemists K.~Kowalski and P.~Piecuch \cite{piecuch2000search}, who conducted a comprehensive numerical study based on 
this idea.\footnote{They call it the ``$\beta$--nested equations'', $\beta$ being the homotopy parameter.} They considered complex amplitudes and
the following discussion is easily extended to that case.

\vspace{1em}
\noindent\textbf{Assumption.} 
Let $\VV^1=\VV^0\oplus\VV^\angle$ be an \emph{$\ell^2$-orthogonal} direct sum decomposition
 of a real amplitude space $\VV^1=\VV(G^1)$, where $\VV^0$ is an amplitude space corresponding 
to some lower truncation level. More precisely, assume that there is a rank $\rho \geq 1$, such that $\VV^0$ contains all amplitudes with rank $\le\rho$ and 
$\VV^\angle:=(\VV^0)^{\perp_{\ell^2}}$ contains 
all amplitudes with rank $>\rho$. In the notations introduced in Part I,
$$
\VV^0=\VV(G^0)=\VV(G(1,\ldots,\rho))\cap \VV^1, \,\,\,\text{and}\,\,\,  \VV^\angle=\VV(G^\perp)=\VV(G(\rho+1,\ldots,N))\cap \VV^1,
$$
where $G^0=G(1,\ldots,\rho)\cap G^1$ and $G^\perp=G(\rho+1,\ldots,N)\cap G^1$. Hence, any $t^1\in\VV^1$ may be uniquely decomposed as $t^1=t^0+t^\angle$,
where $t^0\in\VV^0$, $t^\angle\in\VV^\angle$ and $\dua{t^0}{t^\angle}_{\ell^2}=0$. 

\vspace{1em} \noindent Let $\VV=\VV(G^\full)$ be the full amplitude space and suppose that $\opA : \VV^1\to (\VV^1)^*$ is the SRCC mapping \cref{FCC}.
Write
\begin{equation}\label{kpcalc}
\begin{aligned}
    \vdua{\opA&(t^1)}{s^1} = \dua{e^{-T^1} \ham_K e^{T^1} \Phi_0}{S^0\Phi_0} + \dua{e^{-T^1} \ham_K e^{T^1} \Phi_0}{S^\angle\Phi_0}\\
	&=\dua{(e^{-T^0}+e^{-T^0}(e^{-T^\angle}-I)) \ham_K (e^{T^0}+e^{T^0}(e^{T^\angle}-I)) \Phi_0}{S^0\Phi_0} + \dua{e^{-T^1} \ham_K e^{T^1} \Phi_0}{S^\angle\Phi_0}\\
	&= \vdua{\opA^0(t^0)}{s^0} + \dua{e^{-T^0}(e^{-T^\angle} - I) \ham_K e^{T^0}(e^{T^\angle} - I) \Phi_0}{S^0\Phi_0 } \\
     &\quad+ \dua{e^{-T^0}(e^{-T^\angle} - I) \ham_K e^{T^0} \Phi_0}{S^0 \Phi_0} + \dua{e^{-T^0} \ham_K e^{T^0}(e^{T^\angle} - I ) \Phi_0}{S^0\Phi_0} + \dua{e^{-T^1} \ham_K e^{T^1}\Phi_0}{S^\angle\Phi_0} \\
 	   	&= \vdua{\opA(t^0)}{s^0} +  \dua{e^{-T^0} \ham_K e^{T^0}(e^{T^\angle} - I ) \Phi_0}{S^0\Phi_0} + \vdua{\opA(t^1)}{s^\angle},
\end{aligned}
\end{equation}
where in the last step the second and the third terms of the penultimate expression vanish because
$$
(e^{-T^\angle}-I)^\dag S^0\Phi_0=-(T^\angle)^\dag S^0\Phi_0-\frac{1}{2}\left((T^\angle)^\dag\right)^2 S^0\Phi_0 -\ldots=0,
$$
due to the definition of the spaces $\VV^0$ and $\VV^\angle$. Note that this last relation would not hold in the case $\VV^0=\VV(G(\mathrm{D}))\cap \VV^1$, $\VV^1=\VV(G^\full)$,
which is actually excluded by assumption.

Motivated by the preceding calculation in \cref{kpcalc}, we define the \emph{Kowalski--Piecuch homotopy} $\opK_{\KP} : \VV^1\times[0,1]\to(\VV^1)^*$ via
the instruction
\begin{equation}\label{pphomo}
\begin{aligned}
    \vdua{\opK_{\KP}(t^1,\lambda)}{s^1}&:=\dua{ \ham_K(t^0) \Phi_0}{ S^0\Phi_0} + \dua{\ham_K(t^1) \Phi_0}{ S^\angle\Phi_0}+ \lambda \dua{\ham_K(t^0)(e^{T^\angle} - I ) \Phi_0}{S^0\Phi_0}
\end{aligned}
\end{equation}
for all $t^1,s^1\in \VV^1$ and $\lambda\in[0,1]$.
Here, the relation $\opK_{\KP}(t^1_*,\lambda)=0$ is the same as the system \cite[eqs. (90)--(91) and (93)]{piecuch2000search}. 

It is obvious that $\opK_\KP$ is an admissible homotopy. 
However, it is unclear whether it is faithful or not. In fact, $\opK_{\KP}(t^1_{**},0)=0$ is equivalent to the system 
\begin{align}\label{kpsys0}
\dua{\ham_K(t_{**}^0)\Phi_0}{S^0\Phi_0}&=0,\\\label{kpsys1}
\dua{\ham_K(t_{**}^0+t_{**}^\angle)\Phi_0}{S^\angle\Phi_0}&=0,
\end{align}
for all $s^1\in\VV^1$. In other words, the usual SRCC equation $\opA(t_{**}^0)=0$, \cref{kpsys0}, is augmented with an additional equation for $t_{**}^\angle$, \cref{kpsys1},
which, in turn, depends on $t_{**}^0$.
It is not obvious at all that \cref{kpsys1} has a solution $t_{**}^\angle$ for a given zero $t_{**}^0$. The extensive numerical evidence in \cite{piecuch2000search} clearly indicates that \cref{kpsys1} admits a solution in various circumstances. 

Before stating our existence result we recast the KP homotopy into a more convenient form, which we already encountered in \cref{simphom}.
\begin{lemma}\label{KPrw}
The following formula holds true:
\begin{equation}\label{kpcomp}
\vdua{\opK_{\KP}(t^1,\lambda)}{s^1}=\vdua{\opA(t^0+\lambda t^\angle)}{s^0} + \vdua{\opA(t^1)}{s^\angle},
\end{equation}
for all $t^1,s^1\in \VV^1$ and $\lambda\in[0,1]$.
\end{lemma}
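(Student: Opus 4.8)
The plan is to reduce \cref{kpcomp} to a one-variable identity and then settle that identity by a rank-counting argument. Since both sides are linear in $s^1$ and $\VV^1=\VV^0\oplus\VV^\angle$, it is enough to verify the claim separately for test amplitudes $s^1=s^\angle\in\VV^\angle$ and $s^1=s^0\in\VV^0$. The $\VV^\angle$-part is immediate from the definition \cref{pphomo}: the terms carrying $S^0\Phi_0$ drop, so $\vdua{\opK_{\KP}(t^1,\lambda)}{s^\angle}=\dua{\ham_K(t^1)\Phi_0}{S^\angle\Phi_0}=\vdua{\opA(t^1)}{s^\angle}$, which is exactly the $s^\angle$-contribution of the right-hand side of \cref{kpcomp}.

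For the $\VV^0$-part I would re-run the computation \cref{kpcalc}, but for the amplitude $t^0+\lambda t^\angle\in\VV^1$ (whose $\VV^0$- and $\VV^\angle$-components are $t^0$ and $\lambda t^\angle$) and tested against $s^0$; this produces $\vdua{\opA(t^0+\lambda t^\angle)}{s^0}=\dua{\ham_K(t^0)\Phi_0}{S^0\Phi_0}+\dua{\ham_K(t^0)(e^{\lambda T^\angle}-I)\Phi_0}{S^0\Phi_0}$, using $e^{-T^0}\ham_K e^{T^0}=\ham_K(t^0)$. Reading the $s^0$-part of $\opK_{\KP}$ off \cref{pphomo}, namely $\dua{\ham_K(t^0)\Phi_0}{S^0\Phi_0}+\lambda\dua{\ham_K(t^0)(e^{T^\angle}-I)\Phi_0}{S^0\Phi_0}$, the whole lemma collapses to the single identity $\dua{\ham_K(t^0)(e^{\lambda T^\angle}-I)\Phi_0}{S^0\Phi_0}=\lambda\dua{\ham_K(t^0)(e^{T^\angle}-I)\Phi_0}{S^0\Phi_0}$.

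The one nontrivial point, which I expect to be the main obstacle, is to show $\dua{\ham_K(t^0)(T^\angle)^k\Phi_0}{S^0\Phi_0}=0$ for every $k\ge2$; given this, expanding the (finite, by nilpotency of cluster operators) exponential series on both sides of the displayed identity kills all terms with $k\ge2$ and leaves $\lambda\dua{\ham_K(t^0)T^\angle\Phi_0}{S^0\Phi_0}$ on each side. I would prove the vanishing by moving $e^{-T^0}$ across the inner product, $\dua{\ham_K(t^0)(T^\angle)^k\Phi_0}{S^0\Phi_0}=\dua{\ham_K e^{T^0}(T^\angle)^k\Phi_0}{e^{-(T^0)^\dag}S^0\Phi_0}$, and then tracking excitation ranks: per the Assumption of this section, excitations in $T^\angle$ have rank $\ge\rho+1$, so (since composing excitation operators acting on the reference adds ranks or gives zero, and $e^{T^0}$ can only raise the rank) $e^{T^0}(T^\angle)^k\Phi_0$ is supported on Slater determinants of rank $\ge k(\rho+1)\ge2\rho+2$; meanwhile $S^0\Phi_0$ — hence also $e^{-(T^0)^\dag}S^0\Phi_0$, as de-excitations lower the rank — is supported on rank $\le\rho$, and $\ham_K$, being built from one- and two-body operators (\cref{sqonebody}), changes the rank by at most $2$, so $\ham_K e^{-(T^0)^\dag}S^0\Phi_0$ is supported on rank $\le\rho+2$. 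Since $\rho\ge1$ forces $\rho+2<2\rho+2$, the two supports are disjoint and the inner product is $0$.

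The only care needed is in making the rank arithmetic precise — rank-additivity for products of excitation operators acting on $\Phi_0$, and the $\pm2$ rank bound for $\ham_K$; once those are in hand, the rest is substitution into \cref{kpcalc} and expansion of terminating exponential series. It is worth noting that this argument uses exactly the rank separation built into the Assumption, which is why the case $\VV^0=\VV(G(\mathrm{D}))\cap\VV^1$ must be excluded — consistent with the remark following \cref{kpcalc}.
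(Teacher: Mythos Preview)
Your proof is correct and follows essentially the same route as the paper: split into the $s^0$- and $s^\angle$-components, and for the $s^0$-part reduce to showing that the $(T^\angle)^k$ terms with $k\ge 2$ do not contribute, which you (like the paper) establish by rank counting using that $\ham_K$ is a two-body operator. Your rank arithmetic is in fact spelled out more carefully than the paper's one-line justification, which only states that ``the excitation rank of $\ham_K (T^\angle)^k\Phi_0$ exceeds $\rho$''; your treatment of the $e^{T^0}$ and $e^{-(T^0)^\dag}$ factors makes the argument watertight.
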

\begin{proof}
It is enough to prove that
\begin{equation}\label{kphomoproof}
\vdua{\opK_{\KP}(t^1,\lambda)}{s^1}=\dua{e^{-T^0} \ham_K e^{T^0+\lambda T^\angle}\Phi_0}{S^0\Phi_0}
+ \dua{e^{-T^1} \ham_K e^{T^1} \Phi_0}{ S^\angle\Phi_0},
\end{equation}
because $(e^{-\lambda T^\angle})^\dag S^0\Phi_0=S^0\Phi_0$.
To see \cref{kphomoproof}, note that in the expansion $e^{T^0+\lambda T^\angle}=e^{T^0}(I+\lambda T^\angle + \frac{\lambda^2}{2!}(T^\angle)^2 + \ldots+\frac{\lambda^N}{N!}(T^\angle)^N)$
the quadratic and higher-order terms do not contribute because the excitation rank of $\ham_K (T^\angle)^k\Phi_0$ exceeds $\rho$ for $k=2,\ldots,N$,
due to the fact that $\ham_K$ is a two-body operator.
\end{proof}

To formulate the existence result, suppose that $t_*^1\in\VV^1$ is a non-degenerate zero of $\opA$.
Since $\VV^1$ is assumed to be finite-dimensional, it is always possible to choose an $\alpha>0$ so that
\begin{equation}\label{alphacoerc}
\vdua{(\opA'(t_*^1) + \alpha I)r^1}{r^1}\ge \gamma_\alpha \|r^1\|^2_\VV\quad\text{for all}\quad r^1\in\VV^1,
\end{equation}
for some $\gamma_\alpha>0$. This is also true in the complex 
case with a ``$\Real$'' added to the left-hand side.

Define the operator $\Theta_\alpha:\VV\to\VV$ via\footnote{The authors learned this trick from \cite{buffa2003boundary,buffa2005remarks}.}
\begin{equation}\label{thetadef}
\vdua{\opA'(t_*^1)u^1}{\Theta_\alpha v^1}=\vdua{(\opA'(t_*^1)+\alpha I)u^1}{v^1} \quad\text{for all}\quad u^1,v^1\in\VV^1.
\end{equation}
Then $\Theta_\alpha$ is well-defined, as long as $\ker \opA'(t_*^1)=\{0\}$, i.e. that $t_*^1$ is non-degenerate.

\begin{theorem}[Existence for KP]\label{kpexi}
Let $t_*^1\in\VV^1$ be a non-degenerate zero of $\opA$.
Suppose the following.
\begin{enumerate}[label=\normalfont(\roman*)]
\item With $\theta_0=\|\Pi_0 (\Theta_\alpha-I) \Pi_0\|_{\mathcal{L}(\VV)}^2$ and $\theta_\angle=\|\Pi_0 (\Theta_\alpha-I) \Pi_\angle\|_{\mathcal{L}(\VV)}^2$,
there holds
$$
\eta:=(1-g)\frac{\gamma_\alpha-\frac{1}{2}M_\delta\|\Theta_\alpha\|_{\mathcal{L}(\VV)}\delta}{\Delta(t_*^1)+M_\delta\delta}-\frac{1}{2}\max\{ \epsilon + 2(1+\epsilon^{-1})\theta_0, 2(1+\epsilon^{-1})\theta_\angle \}>\frac{1}{2},
$$
for some $\epsilon>0$ and $\delta>0$, where $M_\delta$ was defined in \cref{mdelta}. Here, 
$$
\Delta(t_*^1)=\max_{\substack{\|u^0\|_\VV=1\\ \|v^\angle\|_\VV=1}} |\dua{(\ham_K + [(T_*^1)_1^\dag,\ham_K]) U^0\Phi_0}{V^\angle\Phi_0}|.
$$
 Also, $\alpha>0$ and $\gamma_\alpha>0$ satisfy \cref{alphacoerc} and
 $0<g<1$ is such that $|\dua{t^0}{t^\angle}_\VV|\le g \|t^0\|_\VV \|t^\angle\|_\VV$ for all $t^1\in\VV^1$.
\item With $\kappa=\|t_*^\angle\|_\VV$, there holds
\begin{equation}\label{kappacond2}
\kappa< \frac{2\sqrt{\eta}-\sqrt{2}}{2-\sqrt{2}+2\sqrt{\eta}} \delta.
\end{equation}
\end{enumerate}
Let $D=\{ t_*^1 + r^1 \in \VV^1 : \|r^0\|_\VV^2 + \|r^\angle\|_\VV^2 < \frac{1}{2}(\delta-\kappa)^2 \}$.
Then, for any $\lambda\in[0,1)$, there exists $t_{**}^1(\lambda)\in D$ such that $\opK_\KP(t_{**}^1(\lambda),\lambda)=0$. Furthermore, $\deg(\opK_\KP(\cdot,\lambda),D,0)\equiv d\neq 0$
for all $\lambda\in[0,1]$. In particular, there exists $t_{**}^1\in D$ such that $\opA(t_{**}^0)=0$.
\end{theorem}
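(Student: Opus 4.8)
The plan is to run a degree-theoretic continuation argument for the homotopy $\lambda\mapsto\opK_\KP(\cdot,\lambda)$ on the set $D$. Concretely, I would first show $\deg(\opA,D,0)\neq0$, then establish the boundary non-vanishing $0\notin\opK_\KP(\partial D\times[0,1])$, and finally invoke homotopy invariance (\cref{exundeg}~(iii)) together with the existence property (\cref{degcoro}~(ii)). For the degree at $\lambda=1$: since $\opK_\KP(\cdot,1)=\opA$ and $t_*^1$ is a non-degenerate zero, \cref{indxsgndet}~(ii) gives $i(\opA,t_*^1)=\sgn\det\opA'(t_*^1)=\pm1$; moreover, specializing the estimate of the next paragraph to $\lambda=1$ (where the shift term $(\lambda-1)t_*^\angle$ vanishes and only a cubic Taylor remainder survives) gives $\vdua{\opA(t_*^1+r^1)}{\Theta_\alpha r^1}\geq\|r^1\|_\VV^2\bigl(\gamma_\alpha-\tfrac12 M_\delta\|\Theta_\alpha\|_{\mathcal L(\VV)}\|r^1\|_\VV\bigr)>0$ for all $0<\|r^1\|_\VV<\delta$, the positivity following from hypothesis~(i) (which, via $\eta>\tfrac12$, forces $\gamma_\alpha-\tfrac12 M_\delta\|\Theta_\alpha\|_{\mathcal L(\VV)}\delta>0$). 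Since $\|r^1\|_\VV<\delta$ throughout $\overline D$, this shows $t_*^1$ is the only zero of $\opA$ in $\overline D$, so $\deg(\opA,D,0)=i(\opA,t_*^1)=:d\neq0$ by excision.

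The crux is the boundary condition. Fix $t^1=t_*^1+r^1\in\partial D$, write $r^0=\Pi_0 r^1$, $r^\angle=\Pi_\angle r^1$, $t^\angle=t_*^\angle+r^\angle$ (so $\|r^0\|_\VV^2+\|r^\angle\|_\VV^2=\tfrac12(\delta-\kappa)^2$), and fix $\lambda\in[0,1]$; non-degeneracy of $t_*^1$ makes $\Theta_\alpha$ well defined (\cref{thetadef}). I would test $\opK_\KP(t^1,\lambda)$ against $\Theta_\alpha r^1$. Using the representation \cref{kpcomp}, setting $w^1(\lambda):=r^0+\lambda r^\angle+(\lambda-1)t_*^\angle$ (so that $t^0+\lambda t^\angle=t_*^1+w^1(\lambda)$ and $w^1(\lambda)-r^1=(\lambda-1)t^\angle$), and Taylor-expanding $\opA$ at the zero $t_*^1$ as $\opA(t_*^1+v)=\opA'(t_*^1)v+\mathcal R_2(t_*^1;v)$, the defining identity of $\Theta_\alpha$ collapses the linear contributions and yields
\begin{align*}
\vdua{\opK_\KP(t^1,\lambda)}{\Theta_\alpha r^1} &= \vdua{(\opA'(t_*^1)+\alpha I)r^1}{r^1} + (\lambda-1)\vdua{\opA'(t_*^1)t^\angle}{\Pi_0\Theta_\alpha r^1} \\
&\quad + \vdua{\mathcal R_2(t_*^1;w^1(\lambda))}{\Pi_0\Theta_\alpha r^1} + \vdua{\mathcal R_2(t_*^1;r^1)}{\Pi_\angle\Theta_\alpha r^1}.
\end{align*}
By \cref{alphacoerc} and the angle bound $|\dua{t^0}{t^\angle}_\VV|\le g\|t^0\|_\VV\|t^\angle\|_\VV$, the leading term is $\ge\gamma_\alpha\|r^1\|_\VV^2\ge\gamma_\alpha(1-g)\tfrac12(\delta-\kappa)^2$.

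Estimating the other three terms is the main obstacle. The remainders satisfy $\|\mathcal R_2(t_*^1;v)\|_{\VV^*}\le\tfrac12 M_\delta\|v\|_\VV^2$ for $\|v\|_\VV\le\delta$, with $M_\delta$ as in \cref{mdelta}; combined with $\|\Pi_0\Theta_\alpha r^1\|_\VV,\|\Pi_\angle\Theta_\alpha r^1\|_\VV\le\|\Theta_\alpha\|_{\mathcal L(\VV)}\|r^1\|_\VV$ and the elementary bounds $\|r^1\|_\VV<\delta$, $\|w^1(\lambda)\|_\VV<\delta$ on $\overline D$ (the latter is exactly where hypothesis~(ii) is used, to bound $\kappa$ relative to $\delta$), they contribute at most $\mathcal O(M_\delta\delta\,\|r^1\|_\VV^2)$. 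For the cross term one uses that the off-diagonal block of $\opA'(t_*^1)$ is, by \cref{SRCCD}, $\vdua{\opA'(t_*^1)v^\angle}{u^0}=\hdua{(\ham_K+[(T_*^1)_1^\dag,\ham_K])U^0\Phi_0}{V^\angle\Phi_0}$ for $u^0\in\VV^0$, $v^\angle\in\VV^\angle$: in $\hdua{[\ham_K(t_*^1),V^\angle]\Phi_0}{U^0\Phi_0}$ the piece $\hdua{V^\angle\ham_K(t_*^1)\Phi_0}{U^0\Phi_0}=\hdua{\ham_K(t_*^1)\Phi_0}{(V^\angle)^\dag U^0\Phi_0}$ drops because a de-excitation of rank $>\rho$ annihilates a determinant of rank $\le\rho$, and in the expansion of $\ham_K(t_*^1)^\dag$ only the bare $\ham_K$ and the single commutator with the rank-one part of $T_*^1$ can raise the rank from $\le\rho$ to $>\rho$ ($\ham_K$ changes rank by at most two, each factor $(T_*^1)^\dag$ lowers it by at least one). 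Writing $\Pi_0\Theta_\alpha r^1=r^0+\Pi_0(\Theta_\alpha-I)\Pi_0 r^0+\Pi_0(\Theta_\alpha-I)\Pi_\angle r^\angle$ then gives $|(\lambda-1)\vdua{\opA'(t_*^1)t^\angle}{\Pi_0\Theta_\alpha r^1}|\le\Delta(t_*^1)(\kappa+\|r^\angle\|_\VV)\bigl((1+\sqrt{\theta_0})\|r^0\|_\VV+\sqrt{\theta_\angle}\|r^\angle\|_\VV\bigr)$. Applying Young's inequality with parameter $\epsilon$ to the $\kappa$-independent part of the cross term and absorbing it, together with the remainders, into $\gamma_\alpha(1-g)\tfrac12(\delta-\kappa)^2$, one is left with a $\kappa$-linear term controlled by \cref{kappacond2}; the constant $\eta$ of hypothesis~(i) and the threshold in \cref{kappacond2} are precisely what renders the resulting lower bound strictly positive. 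Hence $\opK_\KP(t^1,\lambda)\neq0$ on $\partial D\times[0,1]$.

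Finally, \cref{exundeg}~(iii) gives $\deg(\opK_\KP(\cdot,\lambda),D,0)=\deg(\opA,D,0)=d\neq0$ for all $\lambda\in[0,1]$, and \cref{degcoro}~(ii) then produces, for each $\lambda\in[0,1]$ (in particular each $\lambda\in[0,1)$), a zero $t_{**}^1(\lambda)\in D$ of $\opK_\KP(\cdot,\lambda)$. Taking $\lambda=0$ and unpacking $\opK_\KP(t_{**}^1,0)=0$ into the system \cref{kpsys0}--\cref{kpsys1}, whose first equation \cref{kpsys0} is exactly the rank-$\rho$ truncated SRCC equation, we conclude that $t_{**}^0=\Pi_0 t_{**}^1$ satisfies $\opA(t_{**}^0)=0$, which is the last assertion.
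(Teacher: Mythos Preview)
Your overall architecture is the same as the paper's: test $\opK_\KP(t_*^1+r^1,\lambda)$ against $\Theta_\alpha r^1$, Taylor-expand at $t_*^1$, exploit \cref{alphacoerc}--\cref{thetadef} for the leading coercive term, bound the off-diagonal derivative block by $\Delta(t_*^1)$, and conclude by homotopy invariance. Your identification of why only the rank-one part $(T_*^1)_1$ survives in $\Delta(t_*^1)$ is correct and matches the paper's argument.

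The gap is in the remainder bookkeeping, and it is the reason you cannot finish with ``the constant $\eta$ \dots\ is precisely what renders the resulting lower bound strictly positive.'' The paper does \emph{not} estimate $\mathcal R_2(t_*^1;w^1(\lambda))$ and $\mathcal R_2(t_*^1;r^1)$ separately. Instead it first adds and subtracts $\opA(t_*^1+r^1)$ in the $\Pi_0$-slot, obtaining the splitting
\[
\underbrace{\dua{\opA(t_*^1+w^1(\lambda))-\opA(t_*^1+r^1)}{\Pi_0\Theta_\alpha r^1}}_{(\mathrm{I})}+\underbrace{\dua{\opA(t_*^1+r^1)}{\Theta_\alpha r^1}}_{(\mathrm{II})}.
\]
In (II) the single remainder $\mathcal R_2(t_*^1;r^1)$ is paired with the \emph{full} test vector $\Theta_\alpha r^1$, which yields exactly the combination $\tilde\gamma=\gamma_\alpha-\tfrac12 M_\delta\|\Theta_\alpha\|_{\mathcal L(\VV)}\delta$ appearing in the numerator of $\eta$. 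In (I) the remainder appears only as the \emph{difference} $\mathcal R_2(t_*^1;w^1(\lambda))-\mathcal R_2(t_*^1;r^1)$, which is then bounded by the intermediate value inequality as $M_\delta\delta\,\|t^\angle\|$ (linear in $\|t^\angle\|$, not quadratic in $\|w^1(\lambda)\|$). This is what produces the combined factor $\Delta(t_*^1)+M_\delta\delta$ multiplying $\|t^\angle\|\,\|\Pi_0\Theta_\alpha r^1\|$ and hence the denominator of $\eta$. Your separate bounds $\tfrac12 M_\delta\|w^1(\lambda)\|^2\|\Pi_0\Theta_\alpha r^1\|$ and $\tfrac12 M_\delta\|r^1\|^2\|\Pi_\angle\Theta_\alpha r^1\|$ do not collapse to this structure, so you cannot claim that the specific $\eta$ in (i) is what emerges.

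Two further points. First, the role of hypothesis~(ii) is not merely to ensure $\|w^1(\lambda)\|_\VV<\delta$; it is used at the very end to compare the two quadratics $\eta\cdot\tfrac12(\delta-\kappa)^2$ and $\bigl(\kappa+\tfrac{\sqrt 2}{2}(\delta-\kappa)\bigr)^2$, and the threshold in \cref{kappacond2} is exactly the solution of that inequality. The leftover term is therefore not ``$\kappa$-linear'' as you write. Second, the $\theta_0,\theta_\angle,\epsilon$ enter through the auxiliary bound $\|r^\angle\|^2+\|\Pi_0\Theta_\alpha r^1\|^2\le(1+\max\{\epsilon+2(1+\epsilon^{-1})\theta_0,\,2(1+\epsilon^{-1})\theta_\angle\})(\|r^0\|^2+\|r^\angle\|^2)$ after applying AM--GM to $\|t^\angle\|\,\|\Pi_0\Theta_\alpha r^1\|$; your direct expansion $(1+\sqrt{\theta_0})\|r^0\|+\sqrt{\theta_\angle}\|r^\angle\|$ is a different organization that again does not reproduce the stated $\eta$. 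With the paper's regrouping in place, the rest of your outline is correct.
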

For the proof, see \cref{kpexapp}.

\begin{remark}
\leavevmode
\begin{enumerate}[label=\normalfont(\roman*)]
\item A crucial difference between the linear homotopy and the KP homotopy is that while the decomposition used for $\opK_\LI$
is $\VV$-orthogonal, the decomposition for $\opK_\KP$ is $\ell^2$-orthogonal---the computation \cref{kpcalc} and \cref{KPrw} exploit this heavily. 
Nevertheless, we used the $\VV$-inner product in the existence result for $\opK_\KP$. This geometric discrepancy is reflected in condition (i) above. 
\item Using the Cauchy--Schwarz inequality it is clear that $|\dua{t^0}{t^\angle}_\VV|<\|t^0\|_\VV \|t^\angle\|_\VV$ for all $t^1\in\VV^1$,
due to the fact that $\VV^1=\VV^0\oplus\VV^\angle$. The maximum of the function $t^1\mapsto \frac{\dua{t^0}{t^\angle}_\VV}{\|t^0\|_\VV \|t^\angle\|_\VV}$ is attained on
$\|t^0\|_\VV=1$, $\|t^\angle\|_\VV=1$ and this maximum may be taken as the $0<g<1$ of condition (i).
\item In the coercive case, i.e. when \cref{alphacoerc} holds with $\alpha=0$, we have $\Theta_0=I$, so that $\theta_0=\theta_\angle=0$. Letting $\epsilon\to 0$, condition (i) simplifies to
$$
\eta=\frac{(1-g)(\gamma_0-\frac{1}{2}M_\delta \delta)}{\Delta(t_*^1)+M_\delta \delta}>\frac{1}{2}.
$$
This last condition in turn reduces to $\gamma_0 > M_\delta \delta$ as $g$ and $\Delta(t_*)$ approaches zero.  
\item It is interesting to note that while the linear homotopy $\opK_{\mathrm{L}}$ involves the ``targeted'' solution $t_*^1$, the KP homotopy $\opK_\KP$ does not. In this sense,
$\opK_\KP$ is ``universal''. 
\item The result is straightforward to extend to the complex case.
\item A careful inspection of the proof shows that the result can be generalized to the case when $\VV^1=\VV^0\oplus\VV^\angle$ is an arbitrary $\ell^2$-orthogonal
direct sum decomposition as long as the homotopy $\opK_\KP$ is \emph{defined} via \cref{kpcomp} (cf. \cref{simphom}). In this more general case, $\Delta(t_*^1)$ is
given by a more complicated expression.
\end{enumerate}
\end{remark}

The constant $\Delta(t_*^1)$ also deserves some explanation. Roughly speaking, the ``defect'' $\Delta(t_*^1)$ measures how much the subspace $\VC^0\subset\VC$ deviates from being an
invariant subspace of the operator $(\ham_K + [(T_*^1)_1^\dag,\ham_K])_{\VC} : \VC\to\VC$. In addition, we can invoke \cref{fockcomm} 
to write 
$$
\Delta(t_*^1)=\max_{\substack{\|u^0\|_\VV=1\\\|v^\angle\|_\VV=1}} |\dua{(\mathcal{W}_K + [(T_*^1)_1^\dag,\mathcal{W}_K])U^0\Phi_0}{V^\angle\Phi_0}|.
$$
Said differently, $\Delta(t_*^1)=\|(\mathcal{W}_K + [(T_*^1)_1^\dag,\mathcal{W}_K])_{\VC}\|_{\mathcal{L}(\VC^0,\VC^\perp)}$.
Note that the term $[(T_*^1)_1^\dag,\mathcal{W}_K]$ can be eliminated via orbital
rotations (since $(t_*^1)_1=0$ can be achieved according to the Thouless theorem), as it involves single excitations only, in which case the amplitude dependence of $\Delta$ is removed. 
Hence, $\Delta$ quantifies how much the operator $(\mathcal{W}_K)_{\VC}$ leaves $\VC^0$ invariant. 

We summarize the above existence result in the corollary below that holds under the following structural assumptions.

\vspace{1em}
\noindent\textbf{Assumption (KPA).} $\Delta(t_*^1)$ can be made sufficiently small by an appropriate choice of the orbital basis and truncation level $1\le\rho<N$.

\vspace{1em}
\noindent\textbf{Assumption (KPB).} There is a $\delta_0>0$ such that $M_{\delta_0}$ is sufficiently small. 
\vspace{1em}

\begin{corollary}
Suppose that $t_*^1\in\VV^1$ is a non-degenerate zero of $\opA$ and that Assumptions (KPA) and (KPB) hold. 
If $\|t_*^\angle\|_\VV$ is sufficiently small, then there exists $t_{**}^1\in\VV^1$ in a neigborhood of $t_{*}^1$ such that $\opK_\KP(t_{**}^1,0)=0$.
\end{corollary}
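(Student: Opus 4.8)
The plan is to obtain the corollary as a direct application of \cref{kpexi} with $\lambda=0$, so the whole argument reduces to verifying hypotheses (i) and (ii) of that theorem under Assumptions (KPA), (KPB) and the smallness of $\|t_*^\angle\|_\VV$. Since $t_*^1$ is a non-degenerate zero of $\opA$, the operator $\opA'(t_*^1)$ is invertible; because $\VV^1$ is finite-dimensional we may fix once and for all an $\alpha>0$ for which the coercivity estimate \cref{alphacoerc} holds with some $\gamma_\alpha>0$, and then $\Theta_\alpha$ is well-defined by \cref{thetadef}. With $\alpha$ and $t_*^1$ fixed, the constants $\|\Theta_\alpha\|_{\mathcal{L}(\VV)}$, $\theta_0$, $\theta_\angle$ and the geometric constant $0<g<1$ (the largest value of $\dua{t^0}{t^\angle}_\VV/(\|t^0\|_\VV\|t^\angle\|_\VV)$) are all determined and finite.

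Next I would check condition (i). Minimizing over $\epsilon>0$, set $C:=\min_{\epsilon>0}\max\{\epsilon+2(1+\epsilon^{-1})\theta_0,\,2(1+\epsilon^{-1})\theta_\angle\}$, a finite constant depending only on $\theta_0,\theta_\angle$. Take $\delta=\delta_0$ from Assumption (KPB). If $M_{\delta_0}$ is small enough that $M_{\delta_0}\delta_0\le\gamma_\alpha/\|\Theta_\alpha\|_{\mathcal{L}(\VV)}$, then the numerator $\gamma_\alpha-\frac12 M_\delta\|\Theta_\alpha\|_{\mathcal{L}(\VV)}\delta$ in the definition of $\eta$ is at least $\frac12\gamma_\alpha$, while the denominator $\Delta(t_*^1)+M_\delta\delta$ is bounded by $\Delta(t_*^1)+M_{\delta_0}\delta_0$. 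Hence the first term of $\eta$ is at least $(1-g)\gamma_\alpha/\big(2(\Delta(t_*^1)+M_{\delta_0}\delta_0)\big)$, which tends to $+\infty$ as $\Delta(t_*^1)\to 0$ (allowed by (KPA)) and $M_{\delta_0}\to 0$ (allowed by (KPB)); since the subtracted term of $\eta$ equals $\frac12 C$ and is fixed, $\eta>\frac12$ holds once $\Delta(t_*^1)$ and $M_{\delta_0}$ are small enough.

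Once $\eta>\frac12$ is secured, $2\sqrt{\eta}-\sqrt{2}>0$, so the right-hand side of \cref{kappacond2} is a strictly positive multiple of $\delta=\delta_0$, and condition (ii) becomes exactly the requirement that $\kappa=\|t_*^\angle\|_\VV$ be sufficiently small, which is our hypothesis. Thus (i) and (ii) both hold, and \cref{kpexi} applies: for every $\lambda\in[0,1)$ there is a point $t_{**}^1(\lambda)$ in the set $D$, which is an open neighbourhood of $t_*^1$ (nonempty since $\kappa<\delta$), with $\opK_{\KP}(t_{**}^1(\lambda),\lambda)=0$. Taking $\lambda=0$ and $t_{**}^1:=t_{**}^1(0)$ finishes the proof.

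The only non-routine point is the estimate $\eta>\frac12$ in the second paragraph: one must make sure the subtracted term $\frac12 C$ stays bounded while the first term is driven large. This is fine because $\theta_0,\theta_\angle$ (hence $C$) depend only on $\Theta_\alpha$, i.e.\ only on $\alpha$ and $\opA'(t_*^1)$, and not on the orbital basis / truncation parameter that (KPA) lets us tune; nevertheless some care is needed so that $\gamma_\alpha$, $\|\Theta_\alpha\|_{\mathcal{L}(\VV)}$, $\theta_0$, $\theta_\angle$ and $g$ remain simultaneously controlled while the choices in (KPA), (KPB) and the selection of $\alpha$ are made.
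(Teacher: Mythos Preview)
Your approach is exactly the one the paper intends: the corollary is stated there without proof as a direct summary of \cref{kpexi}, and you correctly reduce the task to verifying hypotheses (i) and (ii) of that theorem under (KPA), (KPB) and small $\kappa$.

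One inaccuracy in your final caveat: you assert that $\theta_0,\theta_\angle$ ``depend only on $\Theta_\alpha$ \ldots\ and not on the orbital basis / truncation parameter that (KPA) lets us tune''. This is not right. By definition $\theta_0=\|\Pi_0(\Theta_\alpha-I)\Pi_0\|_{\mathcal{L}(\VV)}^2$ and $\theta_\angle=\|\Pi_0(\Theta_\alpha-I)\Pi_\angle\|_{\mathcal{L}(\VV)}^2$, so both depend on the projector $\Pi_0$ onto $\VV^0$, which changes with the truncation level $\rho$; likewise the geometric constant $g$ depends on the splitting $\VV^1=\VV^0\oplus\VV^\angle$. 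Moreover, changing the orbital basis changes $\opA'(t_*^1)$ and hence $\Theta_\alpha$ itself. So all of $\theta_0,\theta_\angle,g$ move precisely when one invokes (KPA). You already flag that ``some care is needed'' for simultaneous control of the constants; the honest reading of the paper is that the corollary is an informal summary, and your argument is valid once one interprets (KPA) and (KPB) as saying that \emph{for the given, fixed setup} the quantities $\Delta(t_*^1)$ and $M_{\delta_0}$ are already small enough relative to the other (now fixed) constants---which is consistent with how the paper presents it.
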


\begin{remark}
Recall that $M_\delta$ only involves the second derivative $\opA''$ near $t_*^1$ (see \cref{mdelta}), so that (KPB) can be viewed as a ``perturbative''
assumption. Further, as we noted in \cref{Mdeltarmk}, $M_\delta$ involves the mapping $\zeta\mapsto[[\mathcal{W}_K(t_*^1+\zeta),\cdot],\cdot]\Phi_0$. 
Therefore, (KPB) may be viewed as the higher-order generalization of assumption on the smallness of the local Lipschitz constant of the mapping $\zeta\mapsto\mathcal{W}_K(t_*^1+\zeta)\Phi_0$
in \cite[Assumption BII]{rohwedder2013error} (see also \cref{ccenermk} (v)).	
\end{remark}

In the last step of the \emph{proof} of \cref{kpexi}, we could have invoked \cref{admcont} instead to obtain the existence of 
a connected component $\mathcal{C}$ of the zero set $\mathcal{Z}(\opK_\KP)$, such that $\mathcal{C}\cap (\mathcal{Z}(\opK_\KP))_j\neq\emptyset$ for $j=0,1$.
Under the above assumptions, this provides a theoretical basis for the ``solution trajectories'' observed in \cite{piecuch2000search}.
Further, combining \cref{kpexi} with \cref{indexrel}, we get for $t_{**}^1=t_{**}^1(0)$,
$$
\sum_{ t_{**}^1\in \mathcal{C}\cap (\mathcal{Z}(\opK_\KP))_0 } i(\opK_{\KP}(\cdot,0),t_{**}^1) = i(\opA, t_*^1).
$$
Since we do not have uniqueness for $t_{**}^1$ in this case, it is possible that the left-hand side may contain multiple terms which sum up to $i(\opA, t_*^1)$.
In particular, $i(\opK_{\KP}(\cdot,0),t_{**}^1)$ does not need to be $i(\opA, t_*^1)$.

To close this section, we calculate
the topological index for a non-degenerate zero at the $\lambda=0$ endpoint of the KP homotopy. Note that the index at $\lambda=1$ is 
simply given by \cref{indxformula} and \cref{indxformuladege}.

Fix $t^1\in\VV^1$ and define the operator $\wh{\ham_K}(t^1)$ (not to be confused with \cref{hamhats} in a different context),
\begin{equation}\label{kpmodham}
\widehat{\ham_K}(t^1)=\ham_K(t^1)-\sum_{\alpha\in\Xi(G^0)} \dua{\ham_K(t^1)\Phi_0}{\Phi_\alpha} X_\alpha.
\end{equation}
Notice that $\dua{\widehat{\ham_K}(t^1)\Phi_0}{S^0\Phi_0}=0$ for all $s^0\in\VV^0$.
Define the linear mapping $\wh{\ham_K}(t^1)_{\VC^0,\VC^\angle}:\VC^0\to\VC^\angle$ via
$\dua{\wh{\ham_K}(t^1)_{\VC^0,\VC^\angle}\Psi}{\Psi'}=\dua{\wh{\ham_K}(t^1)\Psi}{\Psi'}$
for all $\Psi\in \VC^0$ and $\Psi'\in\VC^\angle$.
We first calculate the derivative of $\opK_\KP(\cdot,0)$.
\begin{lemma}\label{kpderlem}
The derivative $\partial_1\opK_{\KP}(t_{**}^1,0):\VV^1\to (\VV^1)^*$ of $\opK_\KP(\cdot,0)$ at a zero $t_{**}^1$ is given by
\begin{align*}
&\dua{\partial_1\opK_{\KP}(t^1_*,0)u^1}{v^1}=\bdua{\begin{pmatrix}
\ham_K(t_{**}^0)_{\VC^0}  - \ene_{\cc}(t_{**}^0)  & 0 \\
\wh{\ham_K}(t_{**}^1)_{\VC^0,\VC^\angle}  &  \widehat{\ham_K}(t_{**}^1)_{\VC^\angle} - \ene_\cc(t_{**}^1)
\end{pmatrix}\begin{pmatrix}
U^0\Phi_0\\
U^\angle\Phi_0
\end{pmatrix}}{\begin{pmatrix}
V^0\Phi_0\\
V^\angle\Phi_0
\end{pmatrix}}
\end{align*}
for all $u^1,v^1\in \VV^1$.
\end{lemma}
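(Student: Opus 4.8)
The plan is to differentiate the formula \cref{kpcomp} for $\opK_\KP(\cdot,0)$ directly, using the chain rule and the already-established expression \cref{SRCCD} for $\opA'$. Recall from \cref{KPrw} that at $\lambda=0$,
$$
\vdua{\opK_\KP(t^1,0)}{s^1}=\vdua{\opA(t^0)}{s^0}+\vdua{\opA(t^1)}{s^\angle}.
$$
Differentiating in direction $u^1=u^0+u^\angle$, the first term only sees the $t^0$-component, so it contributes $\vdua{\opA'(t_{**}^0)u^0}{s^0}$, while the second term contributes $\vdua{\opA'(t_{**}^1)u^1}{s^\angle}$. Thus
$$
\vdua{\partial_1\opK_\KP(t_{**}^1,0)u^1}{v^1}=\vdua{\opA'(t_{**}^0)u^0}{v^0}+\vdua{\opA'(t_{**}^1)u^1}{v^\angle}.
$$
First I would handle the $(0,0)$ block. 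Since $t_{**}^0$ is a zero of $\opA^0=\opA|_{\VV^0}$ (this is part of the hypothesis that $\opK_\KP(t_{**}^1,0)=0$, via \cref{kpsys0}), \cref{Adifflemm} together with rank-regularity of the truncated space $\VV^0$ (so that $\wh{\ham_K}(t_{**}^0)_{\VC^0}=\ham_K(t_{**}^0)_{\VC^0}$) gives $\vdua{\opA'(t_{**}^0)u^0}{v^0}=\hdua{(\ham_K(t_{**}^0)_{\VC^0}-\ene_\cc(t_{**}^0))U^0\Phi_0}{V^0\Phi_0}$, which is the $(1,1)$ entry of the stated matrix.

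Next I would treat the second term $\vdua{\opA'(t_{**}^1)u^1}{v^\angle}$. Starting from \cref{SRCCD}, $\vdua{\opA'(t_{**}^1)u^1}{v^\angle}=\hdua{[\ham_K(t_{**}^1),U^1]\Phi_0}{V^\angle\Phi_0}$. Expanding the commutator and inserting a resolution of the identity for $(U^1)^\dag V^\angle\Phi_0$ in the Slater basis exactly as in the proof of \cref{Adifflemm}, the term $\hdua{U^1\ham_K(t_{**}^1)\Phi_0}{V^\angle\Phi_0}$ becomes $\ene_\cc(t_{**}^1)\dua{U^1\Phi_0}{V^\angle\Phi_0}+\sum_{\alpha\in\Xi(G^0)}\hdua{\ham_K(t_{**}^1)\Phi_0}{\Phi_\alpha}\dua{X_\alpha U^1\Phi_0}{V^\angle\Phi_0}$, where the key point is that the sum now runs over $\Xi(G^0)$ (not over the full set), because $\opK_\KP(t_{**}^1,0)=0$ only forces $\hdua{\ham_K(t_{**}^1)\Phi_0}{\Phi_\alpha}=0$ for $\alpha\in\Xi(G^\angle)$, via \cref{kpsys1}, while the $\Xi(G^0)$-components need not vanish. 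Subtracting this off is precisely what the definition \cref{kpmodham} of $\wh{\ham_K}(t^1)$ accomplishes, so we obtain $\vdua{\opA'(t_{**}^1)u^1}{v^\angle}=\hdua{(\wh{\ham_K}(t_{**}^1)-\ene_\cc(t_{**}^1))U^1\Phi_0}{V^\angle\Phi_0}$. Finally, splitting $U^1\Phi_0=U^0\Phi_0+U^\angle\Phi_0$ according to $\VC^1=\VC^0\oplus\VC^\angle$ produces the lower-left block $\wh{\ham_K}(t_{**}^1)_{\VC^0,\VC^\angle}$ acting on $U^0\Phi_0$ and the lower-right block $\wh{\ham_K}(t_{**}^1)_{\VC^\angle}-\ene_\cc(t_{**}^1)$ acting on $U^\angle\Phi_0$; the upper-right block is $0$ because the first term $\vdua{\opA'(t_{**}^0)u^0}{v^0}$ has no $u^\angle$-dependence and no $v^\angle$-dependence. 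Collecting the four blocks yields the claimed matrix representation.

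The main obstacle I anticipate is bookkeeping rather than conceptual: one must be careful that the index set in the correction term is $\Xi(G^0)$ and not $\Xi(G^\full)$ or $\Xi(G^0)^c$, which hinges on reading off exactly which stationarity conditions $\opK_\KP(t_{**}^1,0)=0$ supplies (namely \cref{kpsys0}--\cref{kpsys1}) and matching them against the definition \cref{kpmodham}. A secondary point requiring care is the use of rank-regularity of $\VV^0$ in the $(0,0)$ block — this is legitimate by \cref{rkmodham} since $G^0=G(1,\ldots,\rho)\cap G^1$ is a rank-truncated subgraph — together with the observation that, although $\wh{\ham_K}(t^1)$ in \cref{kpmodham} subtracts $\Xi(G^0)$-components, its restriction to the pair $(\VC^0,\VC^\angle)$ and to $\VC^\angle$ alone is unaffected by that subtraction precisely because $X_\alpha$ for $\alpha\in\Xi(G^0)$ maps $\Phi_0$ into $\VC^0$, so $\dua{X_\alpha\Phi_0}{V^\angle\Phi_0}=0$. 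No genuine analytic difficulty arises; the differentiation is valid because $\opA$ is a polynomial map in finite dimensions.
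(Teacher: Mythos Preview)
Your proposal is correct and follows essentially the same approach as the paper: both compute the derivative as the sum of two commutator expressions $\hdua{[\ham_K(t_{**}^0),U^0]\Phi_0}{V^0\Phi_0}+\hdua{[\ham_K(t_{**}^1),U^1]\Phi_0}{V^\angle\Phi_0}$, then expand $(U)^\dag V\Phi_0$ in the Slater basis and use the zero conditions \cref{kpsys0}--\cref{kpsys1} to simplify. Your one shortcut---invoking \cref{Adifflemm} together with the rank-regularity of $\VV^0$ for the $(0,0)$ block rather than redoing the Slater expansion---is legitimate and slightly cleaner.

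One caveat: your closing parenthetical remark that ``its restriction to the pair $(\VC^0,\VC^\angle)$ and to $\VC^\angle$ alone is unaffected by that subtraction precisely because $X_\alpha$ for $\alpha\in\Xi(G^0)$ maps $\Phi_0$ into $\VC^0$'' is not right and not needed. The correction term in \cref{kpmodham} acts as $\sum_{\alpha\in\Xi(G^0)}c_\alpha X_\alpha$ on $U^\angle\Phi_0$ (not on $\Phi_0$), and $\dua{X_\alpha U^\angle\Phi_0}{V^\angle\Phi_0}$ is generically nonzero---so the hat genuinely matters in the lower blocks, which is exactly why the lemma statement uses $\wh{\ham_K}(t_{**}^1)$ there. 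Your main derivation already produces $\wh{\ham_K}(t_{**}^1)$ in those blocks, so simply drop this remark.
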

\begin{proof}
A calculation analogous to the one in the proof of \cref{Adifflemm} shows that $D(t^1):= \partial_1\opK_{\KP}(t^1,0) : \VV^1\to (\VV^1)^*$ is given by
\begin{equation}\label{KPder}
\begin{aligned}
    \dua{D(t^1) u^1}{v^1} &= \dua{[\ham_K(t^0),U^0]\Phi_0}{V^0\Phi_0}
    + \dua{[\ham_K(t^1),U^1] \Phi_0}{ V^\angle\Phi_0} \\
    &=: D_1(t^1) + D_2(t^1)
\end{aligned}
\end{equation}
for all $t^1,u^1,v^1\in\VV^1$. 
We set $t^1=t^1_{**}$ and evaluate $D_1$ and $D_2$. Write 
$$
(U^0)^\dag V^0\Phi_0 = \sum_{\alpha\in\Xi(G^1)\cup\{0\}} \dua{U^0\Phi_\alpha}{V^0\Phi_0} \Phi_\alpha,
$$
from which,
\begin{align*}
\dua{U^0\ham_K(t_{**}^0)\Phi_0}{V^0\Phi_0}&=\dua{\ham_K(t_{**}^0)\Phi_0}{(U^0)^\dag V^0\Phi_0}\\
&=\Bigg(\sum_{\alpha\in\Xi(G^0)}+\sum_{\alpha\in\Xi(G^0)^c}\Bigg) \dua{\ham_K(t_{**}^0)\Phi_0}{\Phi_\alpha} \dua{ U^0 \Phi_\alpha}{V^0\Phi_0} + \ene_\cc(t_{**}^0)\dua{U^0 \Phi_0}{V^0\Phi_0}.
\end{align*}
Here, the first sum vanishes because of \cref{kpsys0} and the second by the orthogonality of $\VV^0$ and $\VV^\angle$.
Consequently, 
\begin{equation*}
    D_1(t_{**}^1) = \dua{(\ham_K(t_{**}^0) - \ene_\cc(t_{**}^0) )U^0\Phi_0}{V^0\Phi_0} . 
\end{equation*}
Analogously, \cref{kpsys1} implies that
\begin{align*}
\dua{U^\angle\ham_K(t_{**}^1)\Phi_0}{V^\angle\Phi_0}&=\sum_{\alpha\in\Xi(G^0)} \dua{\ham_K(t_{**}^1)\Phi_0}{\Phi_\alpha} \dua{X_\alpha U^\angle \Phi_0}{V^\angle\Phi_0} + \ene_\cc(t_{**}^1)\dua{U^\angle\Phi_0}{V^\angle\Phi_0},
\end{align*}
and
$$
\dua{U^0\ham_K(t_{**}^1)\Phi_0}{V^\angle\Phi_0}=\sum_{\alpha\in\Xi(G^0)} \dua{\ham_K(t_{**}^1)\Phi_0}{\Phi_\alpha} \dua{X_\alpha U^0 \Phi_0}{V^\angle\Phi_0}.
$$
Thus, 
\begin{align*}
D_2(t_{**}^1) = \dua{ (\widehat{\ham_K}(t_{**}^1) -\ene_\cc(t_{**}^1))U^\angle\Phi_0}{V^\angle\Phi_0}
+ \dua{\wh{\ham_K}(t_{**}^1)U^0\Phi_0}{ V^\angle\Phi_0},
\end{align*}
and the stated expression now follows.
\end{proof}

The preceding Lemma implies the index formula for the KP homotopy.

\begin{theorem}[Index formula for KP -- non-degenerate case]\label{kpindex}
Suppose that $t_{**}^1\in\VV^1$ is a zero of $\opK_\KP(\cdot,0)$. Then $t_{**}^1$ is non-degenerate if and only if the conditions
\begin{enumerate}[label=\normalfont(\Roman*)]
\item $\ene_\cc(t_{**}^0)\not\in\sigma(\ham_K(t_{**}^0)_{\VC^0})$,
\item $\ene_\cc(t_{**}^1)\not\in\sigma(\widehat{\ham_K}(t_{**}^1)_{\VC^\angle})$
\end{enumerate}
both hold true, and in this case the topological index of $\opK_\KP(\cdot,0)$ at $t_{**}^1$ is given by
$i(\opK_{\KP}(\cdot,0),t_{**}^1)=(-1)^{\nu^0+\nu^\angle}$,
where 
\begin{align*}
\nu^0&=|\{j: \ene_j(\ham_K(t_{**}^0)_{\VC^0})\in\RR, \; \ene_j(\ham_K(t_{**}^0)_{\VC^0})<\ene_{\cc}(t_{**}^0)\}|,\\
\nu^\angle&=|\{j: \ene_j(\widehat{\ham_K}(t_{**}^1)_{\VC^\angle})\in\RR, \; \ene_j(\widehat{\ham_K}(t_{**}^1)_{\VC^\angle})<\ene_{\cc}(t_{**}^1)\}|.
\end{align*}
\end{theorem}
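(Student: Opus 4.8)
The plan is to read off everything from the block structure of the derivative already computed in \cref{kpderlem}. First I would fix a basis of $\VV^1$ adapted to the $\ell^2$-orthogonal splitting $\VV^1=\VV^0\oplus\VV^\angle$ together with the corresponding dual basis of $(\VV^1)^*$, so that, by \cref{kpderlem}, the linear map $D:=\partial_1\opK_{\KP}(t_{**}^1,0):\VV^1\to(\VV^1)^*$ is represented by a block lower-triangular matrix with upper-right block zero and diagonal blocks
$$
M^0:=\ham_K(t_{**}^0)_{\VC^0}-\ene_{\cc}(t_{**}^0),\qquad M^\angle:=\wh{\ham_K}(t_{**}^1)_{\VC^\angle}-\ene_{\cc}(t_{**}^1).
$$
Hence $\det D=\det M^0\cdot\det M^\angle$, the off-diagonal block $\wh{\ham_K}(t_{**}^1)_{\VC^0,\VC^\angle}$ dropping out. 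Since the Slater determinants $\{\Phi_\alpha\}$ are $\LC^2$-orthonormal, the matrix of the bilinear form $\dua{M^0\,\cdot\,}{\cdot}$ in this basis is exactly the matrix of the operator $\ham_K(t_{**}^0)_{\VC^0}-\ene_{\cc}(t_{**}^0)$, and similarly for $M^\angle$. Thus $D$ is invertible — i.e. $t_{**}^1$ is non-degenerate — if and only if $M^0$ and $M^\angle$ are both invertible, which is precisely the pair of conditions (I) and (II). This gives the stated equivalence.

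Next I would show that a non-degenerate zero $t_{**}^1$ is isolated. Because the cluster operators are nilpotent, $e^{T^\angle}$ is a polynomial in $t^\angle$, so $\opK_{\KP}(\cdot,0)$ is polynomial, hence smooth; one then repeats verbatim the Taylor-expansion argument from the proof of \cref{indxformula}, pairing $\opK_{\KP}(t_{**}^1+r,0)$ with $Dr$ and using invertibility of $D$ to see that $\|Dr\|_{(\VV^1)^*}^2$ dominates the higher-order remainder for small $r$, so that $\opK_{\KP}(t_{**}^1+r,0)\neq 0$ for all sufficiently small $r\neq 0$.

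For the value of the index I would apply \cref{topvecindx} with the linear homeomorphisms $h:\VV^1\to\RR^n$ and $g:(\VV^1)^*\to\RR^n$ coming from the chosen basis and its dual (as in \cref{basrmk}, so that $\VV^1$ and $(\VV^1)^*$ carry compatible orientations), yielding $i(\opK_{\KP}(\cdot,0),t_{**}^1)=\sgn\det D=\sgn\det M^0\cdot\sgn\det M^\angle$, exactly as in the proof of \cref{indxformula}. It remains to evaluate each sign. Since $t_{**}^0$ and $t_{**}^1$ are real and the Hamiltonian is real (i.e. $\dua{\ham_K\Phi_\alpha}{\Phi_\beta}\in\RR$), the matrices of $\ham_K(t_{**}^0)_{\VC^0}$ and of $\wh{\ham_K}(t_{**}^1)_{\VC^\angle}$ in the Slater basis are real (though not symmetric), so their non-real eigenvalues occur in conjugate pairs $\mu,\bar\mu$, each contributing $(\mu-\ene_{\cc})(\bar\mu-\ene_{\cc})=|\mu-\ene_{\cc}|^2>0$ to $\det M^0=\prod_j\big(\ene_j(\ham_K(t_{**}^0)_{\VC^0})-\ene_{\cc}(t_{**}^0)\big)$, and likewise for $\det M^\angle$. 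Therefore $\sgn\det M^0=(-1)^{\nu^0}$ and $\sgn\det M^\angle=(-1)^{\nu^\angle}$ with $\nu^0,\nu^\angle$ as in the statement, and the index equals $(-1)^{\nu^0+\nu^\angle}$.

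The argument is essentially bookkeeping once \cref{kpderlem} is available; there is no real obstacle. The only points needing care are that the block lower-triangular structure makes the off-diagonal coupling block $\wh{\ham_K}(t_{**}^1)_{\VC^0,\VC^\angle}$ irrelevant to the determinant, and that the $\LC^2$-orthonormality of the Slater basis is what lets us pass between the matrices of the bilinear forms on $\VC^0$, $\VC^\angle$ and the matrices of the corresponding operators, so that "eigenvalue below $\ene_{\cc}$" in the statement is literally the correct count.
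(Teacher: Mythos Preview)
Your proposal is correct and is essentially the same approach as the paper's: the paper simply notes that the result follows from \cref{kpderlem} ``along similar lines as \cref{indxformula}'' and records the factorized formula
\[
i(\opK_{\KP}(\cdot,0),t_{**}^1)=\sgn \prod_{j\ge 0} (\ene_j(\ham_K(t_{**}^0)_{\VC^0}) - \ene_{\cc}(t_{**}^0))\;\sgn \prod_{j\ge 0} (\ene_j(\widehat{\ham_K}(t_{**}^1)_{\VC^\angle}) - \ene_\cc(t_{**}^1)),
\]
which is precisely your block lower-triangular determinant computation spelled out in full.
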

\begin{proof}
The proof follows from \cref{kpderlem} along similar lines as \cref{indxformula},
\begin{align*}
i(\opK_{\KP}(\cdot,0),t_{**}^1)&=\sgn \prod_{j\ge 0} (\ene_j(\ham_K(t_{**}^0)_{\VC^0}) - \ene_{\cc}(t_{**}^0))  \sgn \prod_{j\ge 0} (\ene_j(\widehat{\ham_K}(t_{**}^1)_{\VC^\angle}) - \ene_\cc(t_{**}^1)).
\end{align*}
\end{proof}

\subsection{An energy error estimate}\label{enesec}

In this this section we derive an energy error estimate for general eigenstates for the KP homotopy using the results of \cref{kpapp}.

\begin{theorem}[Energy error estimate]\label{kpenethm}
Let $\VV^1=\VV(G^\full)$ and suppose that $t_*^1\in\VV^1$ is a zero of $\opA$, and that $t_{**}^1\in\VV^1$ is a zero of $\opK_\KP(\cdot,0)=0$.
If the nonorthogonality condition $\dua{e^{T^0_{**}} \Phi_0}{e^{T_*^1}\Phi_0}\neq 0$ holds true, then
\begin{equation}\label{kpeneest}
|\ene_{\cc}(t_{**}^1)-\ene_{\cc}(t_*^1)|\le C(t_{**}^1,t_{*}^1) \|t_{**}^\angle\|_\VV,
\end{equation}
where
$$
C(t_{**}^1,t_{*}^1)=(C^2+M(t_{**}^1))\frac{\|\Pi_{\VC^\angle}(e^{T^0_{**}})^\dag  \Pi_{\VC^\angle}e^{T_*^1}\Phi_0\|_{\HC^1}}{|\dua{e^{T^0_{**}} \Phi_0}{e^{T_*^1}\Phi_0}|}
$$
is bounded as $\|t_{**}^\angle\|_\VV\to 0$. 
Here, $C$ is the norm equivalence constant from \cref{focknorm} and $M(t_{**}^1)=\max_{\xi\in[t_{**}^0,t_{**}^1]} \|u^1\mapsto \Pi_{\VC^\angle}[\mathcal{W}_K(\xi),U^1]\|_{\mathcal{L}(\VV,\HC^{-1})}$.
\end{theorem}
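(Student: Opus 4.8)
The plan is to exploit the equivalence between the Full CC method and the Full CI problem (see \cref{fcclemm} and \cref{fccnondeg}): since $\VV^1=\VV(G^\full)$ and $\opA(t_*^1)=0$, the function $\Psi_*:=e^{T_*^1}\Phi_0\in\HC_K^1$ is an eigenfunction of $\ham_K$ with eigenvalue $\ene_*:=\ene_\cc(t_*^1)$, so that $\dua{\ham_K\Phi}{\Psi_*}=\ene_*\dua{\Phi}{\Psi_*}$ for every $\Phi\in\HC_K^1$ (and $\ene_*\in\RR$, as $\VV^1$ is real). I would test this identity against $\Phi=e^{T_{**}^0}\Phi_0$, rewrite the left-hand side as $\dua{\ham_K e^{T_{**}^0}\Phi_0}{\Psi_*}=\dua{\ham_K(t_{**}^0)\Phi_0}{(e^{T_{**}^0})^\dag\Psi_*}$, and then insert the structure forced by $\opK_\KP(t_{**}^1,0)=0$. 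Since $\ene_\cc$ depends only on the singly- and doubly-excited amplitudes (a direct consequence of $\ham_K$ being at most two-body), one has $\ene_\cc(t_{**}^1)=\ene_\cc(t_{**}^0)$, so it suffices to estimate $|\ene_\cc(t_{**}^0)-\ene_*|$.

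The heart of the proof is the exact identity
\[
(\ene_\cc(t_{**}^0)-\ene_*)\,\dua{e^{T_{**}^0}\Phi_0}{\Psi_*}=-\bdua{S_{**}\Phi_0}{\Pi_{\VC^\angle}(e^{T_{**}^0})^\dag\Pi_{\VC^\angle}\Psi_*},
\]
where $S_{**}\Phi_0:=\Pi_{\VC^\angle}\ham_K(t_{**}^0)\Phi_0$. To obtain it I would use the first block \cref{kpsys0} of $\opK_\KP(t_{**}^1,0)=0$, which says $\Pi_{\VC^0}\ham_K(t_{**}^0)\Phi_0=0$, hence $\ham_K(t_{**}^0)\Phi_0=\ene_\cc(t_{**}^0)\Phi_0+S_{**}\Phi_0$ with $S_{**}\Phi_0\in\VC^\angle$; pairing with $(e^{T_{**}^0})^\dag\Psi_*$ and observing that $(e^{T_{**}^0})^\dag=e^{(T_{**}^0)^\dag}$ is built from de-excitation operators of rank $\le\rho$ — so it fixes $\Phi_0$ and maps $\Phi_\alpha$ to a combination of $\Phi_\beta$ with $\rk(\beta)\le\rk(\alpha)$ — gives $\Pi_{\VC^\angle}(e^{T_{**}^0})^\dag\Psi_*=\Pi_{\VC^\angle}(e^{T_{**}^0})^\dag\Pi_{\VC^\angle}\Psi_*$, which is precisely the vector appearing in $C(t_{**}^1,t_*^1)$. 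Equating the two expressions for $\dua{\ham_K e^{T_{**}^0}\Phi_0}{\Psi_*}$ and using $S_{**}\Phi_0\perp\Phi_0$ yields the displayed identity.

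Next I would bound $\|S_{**}\Phi_0\|_{\HC^{-1}}$, and here the second block \cref{kpsys1} enters: $\Pi_{\VC^\angle}\ham_K(t_{**}^1)\Phi_0=0$, so $S_{**}\Phi_0=-\Pi_{\VC^\angle}\bigl(\ham_K(t_{**}^1)-\ham_K(t_{**}^0)\bigr)\Phi_0=-\Pi_{\VC^\angle}\int_0^1[\ham_K(t_{**}^0+s t_{**}^\angle),T_{**}^\angle]\Phi_0\,\rmd s$, using $\tfrac{\rmd}{\rmd s}\ham_K(t_{**}^0+st_{**}^\angle)=[\ham_K(t_{**}^0+st_{**}^\angle),T_{**}^\angle]$ (cluster operators commute). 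Splitting the integrand via the Fock decomposition \cref{fockfluc} and using \cref{fockcomm} (so $[\mathcal{F}_K(\xi),T_{**}^\angle]=[\mathcal{F}_K,T_{**}^\angle]=(\mathcal{F}_K-\Lambda_0)T_{**}^\angle$ independently of $\xi$), the Fock part contributes $\|\Pi_{\VC^\angle}[\mathcal{F}_K,T_{**}^\angle]\Phi_0\|_{\HC^{-1}}\le C^2\|t_{**}^\angle\|_\VV$ by the norm equivalence of \cref{focknorm}, while the fluctuation part contributes $\|\Pi_{\VC^\angle}[\mathcal{W}_K(\xi),T_{**}^\angle]\Phi_0\|_{\HC^{-1}}\le M(t_{**}^1)\|t_{**}^\angle\|_\VV$ by the very definition of $M(t_{**}^1)$ (note $t_{**}^0+st_{**}^\angle\in[t_{**}^0,t_{**}^1]$). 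Hence $\|S_{**}\Phi_0\|_{\HC^{-1}}\le(C^2+M(t_{**}^1))\|t_{**}^\angle\|_\VV$. Inserting this into the identity, estimating the right-hand side by the $\HC^1$–$\HC^{-1}$ duality, and dividing by $|\dua{e^{T_{**}^0}\Phi_0}{\Psi_*}|\neq0$ (the nonorthogonality hypothesis) gives \cref{kpeneest}; boundedness of $C(t_{**}^1,t_*^1)$ as $\|t_{**}^\angle\|_\VV\to0$ follows since $M(t_{**}^1)\to M(t_{**}^0)$, the numerator depends only on $t_{**}^0,t_*^1$, and the denominator stays bounded away from $0$.

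The main obstacle is the bookkeeping across the three scales of norms involved ($\|\cdot\|_\VV=\|\cdot\|_{\HC^1}$, $\vertiii{\cdot}$, and $\|\cdot\|_{\HC^{-1}}$): in particular, the Fock-part estimate $\|(\mathcal{F}_K-\Lambda_0)T_{**}^\angle\Phi_0\|_{\HC^{-1}}\le C^2\|t_{**}^\angle\|_\VV$ requires combining the equivalence $\vertiii{\cdot}\sim\|\cdot\|_\VV$ of \cref{focknorm} with the fact that $\mathcal{F}_K-\Lambda_0$ acts as multiplication by $\epsilon_\alpha$ and is therefore essentially an isometry from the $\vertiii{\cdot}$-norm on $\HC^1$ onto the corresponding dual norm on $\HC^{-1}$. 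Secondary points requiring care are the rank-counting used to pin down the action of $(e^{T_{**}^0})^\dag$ and to conclude $\ene_\cc(t_{**}^1)=\ene_\cc(t_{**}^0)$ — the latter being immediate for $\rho\ge2$ but needing a minor additional argument when $\rho=1$, where the doubly-excited part of $t_{**}^\angle$ does enter $\ene_\cc(t_{**}^1)$ — and checking that all the maps ($\mathcal{W}_K(\xi)$, $e^{\pm T}$, the projectors) have the claimed mapping properties in the relevant finite-dimensional spaces.
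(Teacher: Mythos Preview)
Your argument is correct and follows essentially the same route as the paper: the exact identity you derive for $(\ene_\cc(t_{**}^0)-\ene_*)\dua{e^{T_{**}^0}\Phi_0}{\Psi_*}$ is precisely the $\lambda=0$ instance of the Kowalski--Piecuch identity (\cref{kpthm} (II) via \cref{kpunlink}), which the paper invokes as a black box, and your subsequent Fock/fluctuation splitting and estimates match the paper's bound term-by-term. A minor presentational difference is that the paper estimates the pairing directly via Cauchy--Schwarz in the $\vertiii{\cdot}$-inner product, whereas you first bound $\|S_{**}\Phi_0\|_{\HC^{-1}}$ and then apply duality; these give the same constant $C^2+M(t_{**}^1)$. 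You are also right to flag the $\rho=1$ caveat: the paper's proof silently uses $\ene_\cc(t_{**}^1)=\ene_\cc(t_{**}^0)$, which is only guaranteed for $\rho\ge 2$ (see \cref{kpenesd} and \cref{ccenermk} (ii)).
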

\begin{proof}
Setting $\Psi=e^{T_*^1}\Phi_0$ and $\lambda=0$ in \cref{kpthm} (II), we have
\begin{equation}\label{kpineqproof}
|\ene_\cc(t_{**}^1)-\ene_{\cc}(t_*^1)|=\frac{|\dua{(\ham_K(t^1_{**}) - \ham_K (t^0_{**}))\Phi_0}{ \Pi_{\VC^\angle}(e^{T^0_{**}})^\dag  \Pi_{\VC^\angle}e^{T_*^1}\Phi_0}|}{|\dua{e^{T^0_{**}} \Phi_0}{e^{T_*^1}\Phi_0}|}.
\end{equation}
Note that using \cref{fockcomm} we may write
\begin{align*}
(\ham_K(t^1_{**}) - \ham_K (t^0_{**}))\Phi_0&= [\mathcal{F}_K,T_{**}^\angle]\Phi_0 +  (\mathcal{W}_K(t^1_{**}) - \mathcal{W}_K(t^0_{**}))\Phi_0 \\
&= \sum_{\gamma\in\Xi(G^\angle)} \epsilon_\gamma (t_{**}^\angle)_\gamma\Phi_\gamma +  (\mathcal{W}_K(t^1_{**}) - \mathcal{W}_K(t^0_{**}))\Phi_0.
\end{align*}
Hence, we can bound the numerator of the right-hand side of \cref{kpineqproof} as
\begin{align*}
&\Big|\sum_{\gamma\in\Xi(G^\angle)} \epsilon_\gamma (t_{**}^\angle)_\gamma \dua{\Phi_\gamma}{\Pi_{\VC^\angle}(e^{T^0_{**}})^\dag  \Pi_{\VC^\angle}e^{T_*^1}\Phi_0}\Big|+ \Big|\dua{(\mathcal{W}_K(t^1_{**}) - \mathcal{W}_K(t^0_{**}))\Phi_0}{\Pi_{\VC^\angle}(e^{T^0_{**}})^\dag  \Pi_{\VC^\angle}e^{T_*^1}\Phi_0}\Big| .
\end{align*}
Using the Cauchy--Schwarz inequality, the first term may be further bounded as
\begin{align*}
\sum_{\gamma\in\Xi(G^\angle)} \epsilon_\gamma (t_{**}^\angle)_\gamma \dua{\Phi_\gamma}{\Pi_{\VC^\angle}(e^{T^0_{**}})^\dag  \Pi_{\VC^\angle}e^{T_*^1}\Phi_0}\le \vertiii{t_{**}^\angle} \vertiii{\Pi_{\VC^\angle}(e^{T^0_{**}})^\dag  \Pi_{\VC^\angle}e^{T_*^1}\Phi_0},
\end{align*}
where the Fock norm $\vertiii{\cdot}$ was defined in \cref{focknorm}.
For the second term, we use the intermediate value inequality to obtain the bound
$M(t_{**}^1)\|t_{**}^\angle\|_\VV \|\Pi_{\VC^\angle}(e^{T^0_{**}})^\dag  \Pi_{\VC^\angle}e^{T_*^1}\Phi_0\|_{\HC^1}$.
\end{proof}
\begin{remark}\label{ccenermk}
\leavevmode
\begin{enumerate}[label=\normalfont(\roman*)]
\item If the nonorthogonality condition holds and $t_{**}^\angle=0$, then according to \cref{kpsys0}-\cref{kpsys1}, $t_{**}^0$ is an FCC solution such that 
$\dua{e^{T^0_{**}} \Phi_0}{e^{T_*^1}\Phi_0}\neq 0$.
In this case, \cref{kpeneest} implies that the energy error is zero: $\ene_\cc(t_{**}^0)=\ene_{\cc}(t_*^1)$.
\item In the practically relevant case $\rho\ge 2$, using \cref{kpenesd}, we have that $\ene_{\cc}(t_{**}^1)=\ene_{\cc}(t_{**}^0)$ so the left-hand side of \cref{kpeneest}
does not involve $t_{**}^\angle$ at all. 
\item The appearance of the quantity $\|t_{**}^\angle\|_\VV$ allows us to view the auxiliary equation \cref{kpsys1} as providing an \emph{a posteriori} error estimate. 
\item If the nonorthogonality condition does not hold, i.e. $\dua{e^{T^0_{**}} \Phi_0}{e^{T_*^1}\Phi_0}=0$, then 
$e^{T^0_{**}} \Phi_0$ and $e^{T_*^1}\Phi_0$ represent different eigenstates if $t_*^1$ is assumed to be non-degenerate.
While $e^{T^0_{**}} \Phi_0$ itself does not satisfy the Schr\"odinger equation, it must be viewed as an approximation to an eigenstate \emph{different} from $e^{T_*^1}\Phi_0$.
\item Note that a local Lipschitz assumption (on a ball including $t_{**}^1$ and $t_{**}^0$) with constant $L$ on the mapping $t\mapsto \mathcal{W}_K(t) \Phi_0$ can be used to obtain the result of the theorem with $M$ replaced by $L$. Such an assumption is akin to Assumption B.II in \cite{rohwedder2013error} where it is used for guaranteeing the local strong monotonicity of the CC mapping. 
\end{enumerate}
\end{remark}


\section{Conclusions and further work}

In this second part of a series of two articles, we analyzed the SRCC method. We provided background material for the setting of the quantum-mechanical problems
the SRCC method is aimed at in \cref{secbg}. Then, in \cref{sectopdeg}, we gave a brief summary of topological degree theory since this served as our main tool for the analysis.
 
The main discussion is contained in \cref{secsrcc}, where we began with the definition of the SRCC mapping and some elementary considerations in \cref{ccbasic}. 
We then considered the local properties of the SRCC mapping in \cref{secsrccop}.
It turned out that the topological index of the CC mapping (\cref{indxformula}) is connected with the nonvariational property of the CC method (\cref{srcceom}),
and the eigenvalues of the Fock operator (\cref{focknondeg}). In the degenerate case, the classic Leray reduction formula provided the topological index (\cref{indxformuladege}).
We also discussed the case when the cluster amplitudes are allowed to be complex in \cref{secsrcccmplx}.

In \cref{seccont}, we discussed how certain homotopies can be used to analyze the CC method, in particular to prove the existence of a truncated CC solution through the use
of topological degree theory. This was done using an idea well-known both in nonlinear analysis and in quantum chemistry: that an appropriate homotopy ``connects'' the truncated problem with the exact problem (essentially the Schr\"odinger equation), therefore one is able to infer (homotopy-invariant) information regarding the former problem from the latter.
As an introductory example, we considered the linear homotopy (\cref{smexi}).

Next, in \cref{seckp}, motivated by the works of the chemists Kowalski and Piecuch, we considered a homotopy that connects CC mappings corresponding to different truncation levels.
Using this, we proved an existence result for the said homotopy (\cref{kpexi}), which also implies the existence of a truncated CC solution under certain assumptions.
The index formula for the KP homotopy was also derived in the non-degenerate case (\cref{kpindex}). 
Using a known result about the KP homotopy (\cref{kpapp}), we also derived an energy error estimate in \cref{enesec}.

Finally, let us discuss some possible directions of research. Clearly, it would be interesting to extend our analysis to the infinite-dimensional case.
 An obvious next step would be the analysis of the JM-MRCC method (see Section 4.2 of Part I).
It would be also interesting to look at the Extended CC (ECC) \cite{Arponen1983,laestadius2018analysis} and the Unitary CC (UCC) methods \cite{bartlett1989alternative}.


\appendix

\section{Proof of \cref{kpexi}}\label{kpexapp}

We first prove that $\opK_\KP(\cdot,\lambda)\neq 0$ on $\partial D$ for all $\lambda\in[0,1]$.
Set $t^1=t_*^1 + r^1$ and $s^1=\Theta_\alpha r^1$ in \cref{kpcomp} where $r^1\in\partial D$, and write
\begin{align*}
&\dua{\opK_\KP(t_*^1 + r^1,\lambda)}{\Theta_\alpha r^1}=\dua{\opA(t_*^0+r^0+\lambda t_*^\angle + \lambda r^\angle)}{\Pi_0\Theta_\alpha r^1} 
+ \dua{\opA(t_*^1+r^1)}{\Pi_\angle\Theta_\alpha r^1}\\
&=\dua{\opA(t_*^0+\lambda t_*^\angle + r^0 + \lambda r^\angle)-\opA(t_*^1+r^1)}{\Pi_0\Theta_\alpha r^1}+ \dua{\opA(t_*^1+r^1)}{\Theta_\alpha r^1}=:\mathrm{(I)}+\mathrm{(II)}.
\end{align*}
For (I), Taylor expansion around $t_*^1$ leads to
\begin{align*}
\mathrm{(I)}&=(\lambda-1) \dua{\opA'(t_*^1)(t_*^\angle + r^\angle)}{\Pi_0\Theta_\alpha r^1} + \dua{\mathcal{R}_2(t_*^1,(\lambda-1)t_*^\angle + r^0 +\lambda r^\angle)
-\mathcal{R}_2(t_*^1,r^1)}{\Pi_0\Theta_\alpha r^1}\\
&\ge (\lambda-1)\dua{\opA'(t_*^1)(t_*^\angle + r^\angle)}{\Pi_0\Theta_\alpha r^1} - M\|t_*^\angle + r^\angle\|\|\Pi_0\Theta_\alpha r^1\|\\
&\ge -(\Delta(t_*^1)+M) \|t_*^\angle + r^\angle\|_\VV\|\Pi_0\Theta_\alpha r^1\|_\VV,
\end{align*}
where we used the intermediate value inequality. Here, letting $\Psi^0\in\VC^0$ correspond to the amplitude $\Pi_0\Theta_\alpha r^1$, 
\begin{align*}
\dua{\opA'(t_*^1)(t_*^\angle+r^\angle)}{\Pi_0\Theta_\alpha r^1}&=\dua{(\ham_K(t_*^1)-\ene_\cc(t_*^1))(T_*^\angle+R^\angle)\Phi_0}{\Psi^0}=\dua{\ham_K(t_*^1) (T_*^\angle + R^\angle)\Phi_0}{\Psi^0}\\
&=\dua{(\ham_K + [\ham_K,T_*^1] + \ldots) (T_*^\angle + R^\angle)\Phi_0}{\Psi^0}\le \Delta(t_*^1) \|t_*^\angle + r^\angle\|_\VV \|\Pi_0\Theta_\alpha r^1\|_\VV,
\end{align*}
 where the first equality is \cref{SRCCDZ}, and in the last inequality we exploited that $\ham_K$ decreases the excitation rank at most by 2 (so that the single amplitudes $(t_*^1)_1$ of $t_*^1$ only contribute).
Also, the following estimate holds,
\begin{align*}
M&=\max_{\xi\in [(\lambda-1)t_*^\angle + r^0 +\lambda r^\angle,r^1]} \|\partial_2\mathcal{R}_2(t_*^1,\xi)\|_{\mathcal{L}(\VV,\VV^*)}\\
&=\max_{\xi\in [(\lambda-1)t_*^\angle + r^0 +\lambda r^\angle,r^1]} \|\opA'(t_*+\xi)-\opA'(t_*)\|_{\mathcal{L}(\VV,\VV^*)}\\
&\le \max_{\xi\in [(\lambda-1)t_*^\angle + r^0 +\lambda r^\angle,r^1]} \|\xi\|_\VV \max_{\zeta\in [0,\xi]}  \|\opA''(t_*^1+\zeta)\|_{\mathcal{L}(\VV\times\VV,\VV^*)} \\
&\le M_\delta \max_{\xi\in [(\lambda-1)t_*^\angle + r^0 +\lambda r^\angle,r^1]} \|\xi\|_\VV\le M_\delta( \|r^0\|_\VV + \|r^\angle\|_\VV + \kappa)\le M_\delta \delta.
\end{align*}

For (II), we have using \cref{alphacoerc}, \cref{thetadef} and Taylor's theorem,
\begin{align*}
\mathrm{(II)}&=\dua{\opA'(t_*^1)r^1}{\Theta_\alpha r^1} - \dua{\mathcal{R}_2(t_*^1,r^1)}{\Theta_\alpha r^1}\ge \gamma_\alpha  \|r^1\|_\VV^2 - \tfrac{1}{2}M_\delta\|r^1\|_\VV^2 \|\Theta_\alpha r^1\|_\VV\\
&\ge (\gamma_\alpha -\tfrac{1}{2}M_\delta\|\Theta_\alpha\|_{\mathcal{L}(\VV)}\|r^1\|_\VV) \|r^1\|_\VV^2\ge (\gamma_\alpha -\tfrac{1}{2}M_\delta\|\Theta_\alpha\|_{\mathcal{L}(\VV)}\delta) \|r^1\|_\VV^2. 
\end{align*}
In summary, using the definitions of $\theta_0$ and $\theta_\angle$, and setting $\wt{\gamma}=\gamma_\alpha -\tfrac{1}{2}M_\delta\|\Theta_\alpha\|_{\mathcal{L}(\VV)}\delta$,
\begin{align*}
\dua{&\opK_\KP(t_*^1 + r^1,\lambda)}{\Theta_\alpha r^1}\\
&\ge \wt{\gamma}\|r^1\|_\VV^2 - (\Delta(t_*^1)+M_\delta\delta)\|t_*^\angle + r^\angle\|_\VV\|\Pi_0\Theta_\alpha r^1\|_\VV\\
&\ge \wt{\gamma}\|r^1\|_\VV^2 - \frac{\Delta(t_*^1)+M_\delta\delta}{2}(\|t_*^\angle + r^\angle\|_\VV^2+\|\Pi_0\Theta_\alpha r^1\|_\VV^2)\\
&\ge (1-g)\wt{\gamma}(\|r^0\|_\VV^2+\|r^\angle\|_\VV^2) - \frac{\Delta(t_*^1)+M_\delta\delta}{2}
\left(\kappa^2 + 2\kappa\|r^\angle\|_\VV + \|r^\angle\|_\VV^2+\|\Pi_0\Theta_\alpha r^1\|_\VV^2\right)\\
&\ge \left((1-g)\wt{\gamma} - \frac{\Delta(t_*^1)+M_\delta\delta}{2}\Big(1+\max\{ \epsilon + 2(1+\epsilon^{-1})\theta_0, 2(1+\epsilon^{-1})\theta_\angle \}\Big) \right)\\
&\quad\times(\|r^0\|_\VV^2+\|r^\angle\|_\VV^2)- \frac{\Delta(t_*^1)+M_\delta\delta}{2}(\kappa^2 + \sqrt{2}\kappa(\delta-\kappa))\\
&\ge \left((1-g)\wt{\gamma} - \frac{\Delta(t_*^1)+M_\delta\delta}{2}\max\{ \epsilon + 2(1+\epsilon^{-1})\theta_0, 2(1+\epsilon^{-1})\theta_\angle \}\right) \frac{(\delta-\kappa)^2}{2} 
- \frac{\Delta(t_*^1)+M_\delta\delta}{2} \left(\kappa + 
\frac{\sqrt{2}}{2}(\delta-\kappa) \right)^2 .
\end{align*}
The positivity of the last expression follows from \cref{kappacond2}. We also used the bound
\begin{align*}
&\|r^\angle\|_\VV^2+\|\Pi_0\Theta_\alpha r^1\|_\VV^2\le (1+\epsilon^{-1})\|\Pi_0(\Theta_\alpha-I) r^1\|_\VV^2 + (1+\epsilon)\|r^0\|_\VV^2 + \|r^\angle\|_\VV^2\\
&\le 2(1+\epsilon^{-1})(\|\Pi_0(\Theta_\alpha-I)\Pi_0\|_{\mathcal{L}(\VV)}^2 \|r^0\|_\VV^2 + \|\Pi_0(\Theta_\alpha-I)\Pi_\angle\|_{\mathcal{L}(\VV)}^2 \|r^\angle\|_\VV^2) + (1+\epsilon)\|r^0\|_\VV^2 + \|r^\angle\|_\VV^2\\
&\le (1+\max\{ \epsilon + 2(1+\epsilon^{-1})\theta_0, 2(1+\epsilon^{-1})\theta_\angle \}) (\|r^0\|_\VV^2 + \|r^\angle\|_\VV^2).
\end{align*}

We can now apply the homotopy invariance of the degree to get $\deg(\opK_{\KP}(\cdot,\lambda),D,0)\equiv d\neq 0$ for all $\lambda\in[0,1]$, with $\delta$ decreased if necessary.

\section{A short proof of the Kowalski--Piecuch theorem}\label{kpapp}
To close this section, we present the main theorem\footnote{They call it the ``Fundamental Theorem of $\beta$-Nested Equation Formalism''.} of Kowalski and Piecuch \cite{piecuch2000search} in a somewhat different form. 
Define the \emph{KP energy} as
$$
\ene_\KP(t^1,\lambda)=\dua{\ham_K(t^0+\lambda t^\angle)\Phi_0}{\Phi_0}=\dua{\ham_K e^{T^0+\lambda T^\angle}\Phi_0}{\Phi_0},
$$
so that $\ene_\KP(t^1,0)=\ene_\cc(t^0)$ and $\ene_\KP(t^1,1)=\ene_\cc(t^1)$. If $\rho\ge 2$, then according to~(2.10) of Part I, we simply have
\begin{equation}\label{kpenesd}
\ene_\KP(t^1,\lambda)=\dua{\ham_K(t^0)\Phi_0}{\Phi_0}=\dua{\ham_K e^{T^0}\Phi_0}{\Phi_0},
\end{equation}
although we will not exploit this property in the proof.

\begin{theorem}[Kowalski--Piecuch]\label{kpthm}
\leavevmode
\begin{enumerate}[label=\normalfont(\Roman*)]
\item Suppose that $\Psi=(c_0 I + C^0)\Phi_0\in\HC^1_K$, with $c_0\in\RR$ and $c^0\in\VV^0$, satisfies the (weak) Schr\"odinger equation
\begin{equation}\label{kpschr}
\dua{\ham_K\Psi}{\Phi}=\ene\dua{\Psi}{\Phi}\quad\text{for all}\quad \Phi\in\HC^1_K
\end{equation}
for some $\ene\in\RR$ and that
\begin{equation}\label{kpcond}
\dua{e^{T^0_{**}(\lambda)+\lambda T_{**}^\angle(\lambda)} \Phi_0}{\Psi}\neq 0,
\end{equation}
where $t_{**}^1(\lambda)\in\VV^1$ is a zero of $\opK_\KP(\cdot,\lambda)$ for \emph{all} $\lambda\in[0,1]$.
Then 
$$
\ene_\KP(t_{**}^1(\lambda),\lambda)\equiv\ene\quad\text{for all}\quad\lambda\in[0,1].
$$
\item Suppose that $\Psi=(c_0 I + C^1)\Phi_0$, with $c_0\in\RR$ and $c^1\in\VV^1$, satisfies \cref{kpschr} for some $\ene\in\RR$. If \cref{kpcond} holds true for \emph{some} $\lambda\in[0,1]$ and $t_{**}^1=t_{**}^1(\lambda)\in\VV^1$ with $\opK_\KP(t_{**}^1,\lambda)=0$, then
\begin{align*}
\ene_\KP(t_{**}^1(\lambda),\lambda)-\ene&=\frac{\dua{(\ham_K(t^1_{**}) - \ham_K (t^0_{**}+\lambda t^\angle_{**}))\Phi_0}{ \Pi_{\VC^\angle}(e^{T^0_{**}+\lambda T^\angle_{**}})^\dag  C^\angle\Phi_0}}{\dua{e^{T^0_{**}+\lambda T_{**}^\angle} \Phi_0}{\Psi}}\\
&= (1-\lambda)\frac{\dua{\Gamma(t_{**}^1,\lambda)\Phi_0}{\Pi_{\VC^\angle}(e^{T^0_{**}+\lambda T^\angle_{**}})^\dag C^\angle\Phi_0}}{\dua{e^{T^0_{**}+\lambda T_{**}^\angle} \Phi_0}{\Psi}},
\end{align*}
where $\Gamma(t^1,\lambda)$ is given by \cref{gammadef} below.
\end{enumerate}
\end{theorem}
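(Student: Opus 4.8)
Write $\ene_\KP:=\ene_\KP(t_{**}^1,\lambda)$ and $\tau:=e^{\wt{T}}\Phi_0\in\HC^1_K$ with $\wt{T}:=T^0_{**}+\lambda T^\angle_{**}$; recall that the standing Assumption forces $\VV^1=\VV(G^\full)$, so that $\HC^1_K=\Span\{\Phi_0\}\oplus\VC^0\oplus\VC^\angle$ with $\VC^0$, $\VC^\angle$ spanned by the Slater determinants of rank $\le\rho$, $>\rho$. The plan is to evaluate $\dua{(\ham_K-\ene_\KP)\tau}{\Psi}$ in two ways: on one side via the weak Schr\"odinger equation for $\Psi$, on the other via the two scalar equations into which $\opK_\KP(t_{**}^1,\lambda)=0$ splits by \cref{KPrw} --- namely $\dua{\ham_K(t^0_{**}+\lambda t^\angle_{**})\Phi_0}{S^0\Phi_0}=0$ for all $s^0\in\VV^0$ and $\dua{\ham_K(t_{**}^1)\Phi_0}{S^\angle\Phi_0}=0$ for all $s^\angle\in\VV^\angle$. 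The first step is to recast the first of these in ``energy-dependent'' (unlinked) form: by the same similarity-transform manipulation as in the proof of \cref{unlinkedcc}, together with the fact that $(e^{\wt{T}})^\dag$ never raises the excitation rank and hence maps $\Span\{\Phi_0\}\oplus\VC^0$ into itself, one gets
\[
\dua{(\ham_K-\ene_\KP)\tau}{\Phi}=0\qquad\text{for all}\quad\Phi\in\Span\{\Phi_0\}\oplus\VC^0,
\]
the $\Phi_0$-component being absorbed by the definition of $\ene_\KP$ and the $\VC^0$-component by the first KP equation. In other words, $(\ham_K-\ene_\KP)\tau\in(\HC^1_K)^*$ ``lives on $\VC^\angle$''.

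For part (I), since $\Psi=(c_0I+C^0)\Phi_0\in\Span\{\Phi_0\}\oplus\VC^0$ has no $\VC^\angle$-component, the displayed identity gives $\dua{(\ham_K-\ene_\KP)\tau}{\Psi}=0$. On the other hand, by self-adjointness of $\ham_K$, the weak Schr\"odinger equation \cref{kpschr} and $\ene\in\RR$,
\[
\dua{(\ham_K-\ene_\KP)\tau}{\Psi}=\dua{\tau}{\ham_K\Psi}-\ene_\KP\dua{\tau}{\Psi}=(\ene-\ene_\KP)\dua{\tau}{\Psi}.
\]
Since $\dua{\tau}{\Psi}\ne0$ by the nonorthogonality hypothesis \cref{kpcond}, this forces $\ene_\KP=\ene$; as the computation is carried out at a fixed $\lambda$ while all hypotheses are assumed for every $\lambda\in[0,1]$, we obtain $\ene_\KP(t_{**}^1(\lambda),\lambda)\equiv\ene$.

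For part (II), $\Pi_{\VC^\angle}\Psi=C^\angle\Phi_0$ is now generally nonzero, and the same two evaluations yield $(\ene-\ene_\KP)\dua{\tau}{\Psi}=\dua{(\ham_K-\ene_\KP)\tau}{C^\angle\Phi_0}$. It remains to rewrite the right-hand side: similarity-transforming, $\dua{\ham_K\tau}{C^\angle\Phi_0}=\dua{\ham_K(t^0_{**}+\lambda t^\angle_{**})\Phi_0}{(e^{\wt{T}})^\dag C^\angle\Phi_0}$, and splitting $(e^{\wt{T}})^\dag C^\angle\Phi_0$ into its $\Phi_0$-, $\VC^0$- and $\VC^\angle$-parts, the $\Phi_0$-part contributes exactly $\ene_\KP\dua{\tau}{C^\angle\Phi_0}$ (which cancels the $\ene_\KP$-term on the left), the $\VC^0$-part vanishes by the first KP equation, and what remains is $\dua{\ham_K(t^0_{**}+\lambda t^\angle_{**})\Phi_0}{\Pi_{\VC^\angle}(e^{\wt{T}})^\dag C^\angle\Phi_0}$. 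Finally the second KP equation lets me subtract the vanishing term $\dua{\ham_K(t_{**}^1)\Phi_0}{\Pi_{\VC^\angle}(e^{\wt{T}})^\dag C^\angle\Phi_0}=0$; dividing by $\dua{\tau}{\Psi}$ then produces the first displayed formula of (II), and the second equality follows immediately from the definition \cref{gammadef} of $\Gamma$, which is precisely the operator obtained by factoring $(1-\lambda)$ out of $\bigl(\ham_K(t_{**}^1)-\ham_K(t^0_{**}+\lambda t^\angle_{**})\bigr)\Phi_0$, a polynomial in $\lambda$ that vanishes at $\lambda=1$, where the two cluster operators coincide.

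The main difficulty is bookkeeping rather than any hard estimate: one must keep straight which cluster operator ($t^0_{**}+\lambda t^\angle_{**}$ or the full $t_{**}^1$) enters which KP equation, and must use the rank structure --- $(e^{\wt{T}})^\dag$ lowers the excitation rank, $\ham_K$ is a two-body operator, and $\VC^0$ (resp.\ $\VC^\angle$) is spanned by Slater determinants of rank $\le\rho$ (resp.\ $>\rho$) --- to justify both the three-way projections and the applicability of each KP equation to the vector against which it is tested. The $\LC^2$-orthogonality of Slater determinants of distinct rank and the hypothesis $\dua{\tau}{\Psi}\ne0$ are what make the projections and the final division legitimate; recognizing that the correct test vector is $\tau=e^{\wt{T}}\Phi_0$ rather than $e^{T_{**}^1}\Phi_0$ is the one genuinely non-obvious choice.
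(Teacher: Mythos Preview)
Your proof is correct and follows essentially the same route as the paper: the paper isolates the unlinked identity $\dua{(\ham_K-\ene_\KP)e^{\wt T}\Phi_0}{S^1\Phi_0}=-\dua{\mathcal{G}\Phi_0}{\Pi_{\VC^\angle}(e^{\wt T})^\dag S^\angle\Phi_0}$ as a separate lemma (\cref{kpunlink}) and then specializes $S^1$ to $C^0$ or $C^1$, whereas you integrate that same similarity-transform-and-project computation directly into the argument. One minor slip: the standing Assumption does \emph{not} force $\VV^1=\VV(G^\full)$ (it allows any rank-truncated $\VV^1$), but this is harmless since $\Psi\in\Span\{\Phi_0\}\oplus\VC^1$ by hypothesis and de-excitations only lower rank, so your three-way decomposition still goes through.
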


Furthermore, in case the energy blows up, we have the following.

\begin{theorem}\label{kpblowup}
Suppose that $\Psi=(c_0 I + C^1)\Phi_0$, with $c_0\in\RR$ and $c^1\in\VV^1$, satisfies \cref{kpschr} for some $\ene\in\RR$.
Furthermore, assume that $t_{**}^1(\lambda)\in\VV^1$ is a zero of $\opK_\KP(\cdot,\lambda)$ for all $\lambda$ in a neighborhood of some $\lambda_0\in[0,1]$.
If $|\ene_\KP(t_{**}^1(\lambda),\lambda)|\to\infty$ as $\lambda\to \lambda_0$ and
\begin{equation}\label{kpgammacond}
\dua{\Gamma(t_{**}^1(\lambda),\lambda)\Phi_0}{\Pi_{\VC^\angle}(e^{T^0_{**}+\lambda T^\angle_{**}})^\dag C^\angle\Phi_0}=\mathcal{O}\left(\frac{1}{1-\lambda}\right)\quad (\lambda\to \lambda_0),
\end{equation}
then
$$
\dua{e^{T^0_{**}(\lambda)+\lambda T_{**}^\angle(\lambda)} \Phi_0}{\Psi}\to 0 \quad (\lambda\to \lambda_0).
$$
\end{theorem}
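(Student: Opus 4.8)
The plan is to read the conclusion off the energy-error identity in \cref{kpthm} (II): once that identity is rearranged, it expresses the overlap $\dua{e^{T^0_{**}(\lambda)+\lambda T_{**}^\angle(\lambda)}\Phi_0}{\Psi}$ as a quotient whose numerator stays bounded as $\lambda\to\lambda_0$ (by hypothesis \cref{kpgammacond}) while whose denominator blows up (by the assumed divergence of $\ene_\KP$), which forces the overlap to $0$.

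First I would restrict $\lambda$ to a punctured neighbourhood of $\lambda_0$ small enough that $\ene_\KP(t_{**}^1(\lambda),\lambda)\neq\ene$; this is possible precisely because $|\ene_\KP(t_{**}^1(\lambda),\lambda)|\to\infty$ as $\lambda\to\lambda_0$. For such $\lambda$ I distinguish two cases. If $\dua{e^{T^0_{**}(\lambda)+\lambda T_{**}^\angle(\lambda)}\Phi_0}{\Psi}=0$, there is nothing to estimate at that $\lambda$. Otherwise the nonorthogonality condition \cref{kpcond} holds, so \cref{kpthm} (II) applies (its remaining hypotheses are exactly that $t_{**}^1(\lambda)$ is a zero of $\opK_\KP(\cdot,\lambda)$ and that $\Psi$ solves the weak Schr\"odinger equation) and yields
\begin{equation*}
\ene_\KP(t_{**}^1(\lambda),\lambda)-\ene=(1-\lambda)\frac{\dua{\Gamma(t_{**}^1(\lambda),\lambda)\Phi_0}{\Pi_{\VC^\angle}(e^{T^0_{**}+\lambda T^\angle_{**}})^\dag C^\angle\Phi_0}}{\dua{e^{T^0_{**}+\lambda T_{**}^\angle}\Phi_0}{\Psi}}.
\end{equation*}
Solving for the overlap and taking moduli, and noting that the resulting right-hand side is nonnegative (so the bound is vacuously true in the first case too), I obtain for all $\lambda$ in the punctured neighbourhood
\begin{equation*}
\Big|\dua{e^{T^0_{**}(\lambda)+\lambda T_{**}^\angle(\lambda)}\Phi_0}{\Psi}\Big|\le\frac{|1-\lambda|\,\big|\dua{\Gamma(t_{**}^1(\lambda),\lambda)\Phi_0}{\Pi_{\VC^\angle}(e^{T^0_{**}+\lambda T^\angle_{**}})^\dag C^\angle\Phi_0}\big|}{|\ene_\KP(t_{**}^1(\lambda),\lambda)-\ene|}.
\end{equation*}

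Finally I would let $\lambda\to\lambda_0$. By \cref{kpgammacond} the scalar $\dua{\Gamma(t_{**}^1(\lambda),\lambda)\Phi_0}{\Pi_{\VC^\angle}(e^{T^0_{**}+\lambda T^\angle_{**}})^\dag C^\angle\Phi_0}$ is $\mathcal{O}(1/(1-\lambda))$, so the numerator $|1-\lambda|\cdot\mathcal{O}(1/(1-\lambda))=\mathcal{O}(1)$ remains bounded, whereas by hypothesis the denominator $|\ene_\KP(t_{**}^1(\lambda),\lambda)-\ene|\to\infty$; hence the right-hand side, and with it $\dua{e^{T^0_{**}(\lambda)+\lambda T_{**}^\angle(\lambda)}\Phi_0}{\Psi}$, tends to $0$, which is the claim. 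The only delicate point --- and the step I would treat most carefully --- is the case split around \cref{kpcond}: \cref{kpthm} (II) is not available when the overlap vanishes, so the identity cannot be rearranged unconditionally, but the estimate one wants is vacuous exactly in that case, so the two cases merge into a single uniform bound. Everything else is a routine limiting argument.
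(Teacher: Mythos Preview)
Your argument is correct and follows essentially the same route as the paper. The only difference is cosmetic: the paper works directly with the identity established in the proof of \cref{kpthm}~(II), namely
\[
(1-\lambda)\,\dua{\Gamma(t_{**}^1,\lambda)\Phi_0}{\Pi_{\VC^\angle}(e^{T^0_{**}+\lambda T^\angle_{**}})^\dag C^\angle\Phi_0}=(\ene-\ene_\KP(t_{**}^1(\lambda),\lambda))\,\dua{e^{T^0_{**}+\lambda T_{**}^\angle}\Phi_0}{\Psi},
\]
which comes straight from \cref{kpunlink} and holds without any nonorthogonality assumption; this makes the case split around \cref{kpcond} unnecessary. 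Since you access the same identity through the already-rearranged statement of \cref{kpthm}~(II), you do need that split, but you handle it correctly.
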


Part (I) of \cref{kpthm} says that if one can solve the Schr\"odinger equation \emph{exactly} on $\VV^0$ for an eigenvalue $\ene$ and \cref{kpcond} holds true for
a zero $t_{**}^1(\lambda)\in\VV^1$  of $\opK_\KP(\cdot,\lambda)$ for all $\lambda$,
then the KP energy $\ene_\KP(t_{**}^1(\lambda),\lambda)$ is identically $\ene$. Notice that $t_{**}^1(1)$ is not required to represent $\Psi$, i.e. $e^{T_{**}^1(1)}\Phi_0\neq \Psi$
is allowed. Also, no regularity of the trajectory $\lambda\mapsto t_{**}^1(\lambda)$ is demanded.

Part (II) stipulates that the Schr\"odinger equation can be solved on $\VV^1$ with an eigenvalue $\ene$ and that the nonorthogonality condition \cref{kpcond} holds true for some $t_{**}^1\in\VV^1$ zero of $\opK_\KP(\cdot,\lambda)$ for some $\lambda$. Then, the error in the energy can be expressed by the stated formula.
If one assumes the hypothesis for all $\lambda\in[0,1]$ in a neighborhood of $1$, then we can conclude that
 the KP energy $\ene_\KP(t_{**}^1(\lambda),\lambda)$ tends to $\ene$ smoothly, as $\lambda\to 1$. Again, no regularity of $\lambda\mapsto t_{**}^1(\lambda)$ is needed.

Finally, \cref{kpblowup} considers the case when the KP energy diverges as $\lambda\to \lambda_0$.
Assuming the growth condition \cref{kpgammacond}, we can conclude that the KP solution $e^{T^0_{**}(\lambda)+\lambda T_{**}^\angle(\lambda)}\Phi_0$
becomes orthogonal to the eigenstate $\Psi$.

For the proof, we need the following lemma which recasts the KP equations in an ``unlinked'' form.
\begin{lemma}\label{kpunlink}
Suppose that $t_{**}^1=t_{**}^1(\lambda)\in\VV^1$ is such that $\opK_{\KP}(t^1_{**},\lambda)=0$ for some $\lambda\in[0,1]$. Then,
\begin{equation}\label{kpeq}
\dua{(\ham_K - \ene_\KP(t_{**}^1,\lambda))e^{T^0_{**}+\lambda T_{**}^\angle} \Phi_0}{ S^1\Phi_0} = - \dua{\mathcal{G}(t_{**}^1,\lambda)\Phi_0}{\Pi_{\VC^\angle}(e^{T^0_{**}+\lambda T^\angle_{**}})^\dag S^\angle\Phi_0},
\end{equation}
where
$$
\mathcal{G}(t_{**}^1,\lambda)=\ham_K(t^1_{**}) - \ham_K (t^0_{**}+\lambda t^\angle_{**}).
$$
Moreover, $\mathcal{G}(t_{**}^1,\lambda)=(1-\lambda)\Gamma(t_{**}^1,\lambda)$, where
\begin{equation}\label{gammadef}
\Gamma(t_{**}^1,\lambda)=\sum_{k=1}^{2N} \frac{(1-\lambda)^{k-1}}{k!} e^{-(T_{**}^0+\lambda T_{**}^\angle)} [\ham_K, T_{**}^\angle]_{(k)} e^{T_{**}^0+\lambda T_{**}^\angle} .
\end{equation}
\end{lemma}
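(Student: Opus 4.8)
The plan is to mimic the ``unlinking'' argument from the proof of \cref{unlinkedcc}, taking care that the Kowalski--Piecuch equations involve \emph{two} different cluster amplitudes on the two summands of the decomposition $\VV^1=\VV^0\oplus\VV^\angle$. First I would unpack the hypothesis. By the reformulation \cref{kpcomp}, the relation $\opK_\KP(t_{**}^1,\lambda)=0$ is equivalent to the pair of equations
\begin{align*}
\dua{\ham_K(\tau)\Phi_0}{S^0\Phi_0}&=0\qquad\text{for all }s^0\in\VV^0,\\
\dua{\ham_K(t_{**}^1)\Phi_0}{S^\angle\Phi_0}&=0\qquad\text{for all }s^\angle\in\VV^\angle,
\end{align*}
where I abbreviate $\tau=t_{**}^0+\lambda t_{**}^\angle$ and $T_\tau=T_{**}^0+\lambda T_{**}^\angle$. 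By definition, $\ene:=\ene_\KP(t_{**}^1,\lambda)=\dua{\ham_K(\tau)\Phi_0}{\Phi_0}$.

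To prove \cref{kpeq}, I would test its left-hand side against $s^1=s^0+s^\angle$ and similarity-transform,
\[
\dua{(\ham_K-\ene)e^{T_\tau}\Phi_0}{S^1\Phi_0}=\dua{(\ham_K(\tau)-\ene)\Phi_0}{(e^{T_\tau})^\dag S^1\Phi_0},
\]
then decompose $(e^{T_\tau})^\dag S^1\Phi_0$ into its $\Phi_0$-, $\VC^0$- and $\VC^\angle$-components. Two structural facts do the work: (a) since $T_\tau^\dag$ is rank-lowering, $(e^{T_\tau})^\dag S^0\Phi_0$ has excitation rank $\le\rho$, hence no $\VC^\angle$-component, which shows the $s^0$-block of the left-hand side of \cref{kpeq} vanishes (consistent with the right-hand side depending only on $s^\angle$); and (b) the linear functional $\Psi\mapsto\dua{(\ham_K(\tau)-\ene)\Phi_0}{\Psi}$ annihilates $\Span\{\Phi_0\}$ (by the definition of $\ene$) and annihilates $\VC^0$ (by the first equation above together with $\VC^0\perp\Phi_0$). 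After these cancellations only the term $\dua{(\ham_K(\tau)-\ene)\Phi_0}{\Pi_{\VC^\angle}(e^{T_\tau})^\dag S^\angle\Phi_0}$ remains. Since $\Pi_{\VC^\angle}(e^{T_\tau})^\dag S^\angle\Phi_0\in\VC^\angle$, the \emph{second} equation above gives $\dua{(\ham_K(t_{**}^1)-\ene)\Phi_0}{\Pi_{\VC^\angle}(e^{T_\tau})^\dag S^\angle\Phi_0}=0$; subtracting this replaces $\ham_K(\tau)-\ene$ by $\ham_K(\tau)-\ham_K(t_{**}^1)=-\mathcal{G}(t_{**}^1,\lambda)$, which is precisely \cref{kpeq}.

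For the factorization $\mathcal{G}=(1-\lambda)\Gamma$, I would write $t_{**}^1=\tau+(1-\lambda)t_{**}^\angle$ and apply the doubly similarity-transformed Baker--Campbell--Hausdorff expansion \cref{bchdoub} with base amplitude $\tau$ and increment $(1-\lambda)t_{**}^\angle$, which gives
\[
\ham_K(t_{**}^1)=\sum_{j=0}^{2N}\frac{(1-\lambda)^j}{j!}[\ham_K(\tau),T_{**}^\angle]_{(j)}.
\]
The $j=0$ term is $\ham_K(\tau)$, so $\mathcal{G}=\sum_{j=1}^{2N}\frac{(1-\lambda)^j}{j!}[\ham_K(\tau),T_{**}^\angle]_{(j)}=(1-\lambda)\sum_{k=1}^{2N}\frac{(1-\lambda)^{k-1}}{k!}[\ham_K(\tau),T_{**}^\angle]_{(k)}$. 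It then remains to identify $[\ham_K(\tau),T_{**}^\angle]_{(k)}=e^{-T_\tau}[\ham_K,T_{**}^\angle]_{(k)}e^{T_\tau}$, which brings the expression into the form \cref{gammadef}; this identity follows by induction on $k$ from the commutativity of cluster operators ($e^{\pm T_\tau}T_{**}^\angle=T_{**}^\angle e^{\pm T_\tau}$), which already yields $[\ham_K(\tau),T_{**}^\angle]=e^{-T_\tau}[\ham_K,T_{**}^\angle]e^{T_\tau}$ and then propagates through the nested commutators.

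The main obstacle is the bookkeeping in the middle step: one must verify that everything in $(e^{T_\tau})^\dag S^1\Phi_0$ other than the $\VC^\angle$-part of the $s^\angle$-block is annihilated, and -- the genuinely non-obvious point -- that the surviving $\VC^\angle$-residual is killed not by the equation satisfied by the reduced amplitude $\tau$ but by the second KP equation, which holds at the \emph{full} amplitude $t_{**}^1$. Everything else (the truncation of the BCH series at order $2N$, and the conjugation identity for nested commutators) is routine once \cref{bchdoub} and the commutativity of cluster operators are available.
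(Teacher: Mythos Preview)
Your proposal is correct and follows essentially the same route as the paper: similarity-transform, split $(e^{T_\tau})^\dag S^1\Phi_0$ into its $\Phi_0$-, $\VC^0$- and $\VC^\angle$-parts, kill the first two with the definition of $\ene_\KP$ and the first KP equation, and then use the second KP equation on the surviving $\VC^\angle$-piece to produce $-\mathcal{G}$. The only cosmetic differences are that the paper first merges the two KP equations into the single identity $\dua{\ham_K(\tau)\Phi_0}{S^1\Phi_0}=-\dua{\mathcal{G}\Phi_0}{S^\angle\Phi_0}$ before substituting, and for the factorization it conjugates first and applies a single BCH expansion rather than applying \cref{bchdoub} and then identifying $[\ham_K(\tau),T_{**}^\angle]_{(k)}=e^{-T_\tau}[\ham_K,T_{**}^\angle]_{(k)}e^{T_\tau}$ afterwards.
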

\begin{proof}
Assume that $\opK_{\KP}(t^1_{**},\lambda)=0$, so using \cref{kpcomp} we have
$$
\dua{\ham_K(t_{**}^0+\lambda t_{**}^\angle) \Phi_0}{S^0 \Phi_0}+\dua{\ham_K(t_{**}^1) \Phi_0 }{S^\angle \Phi_0}=0\quad\text{for all}\quad s^1 \in\VV^1.
$$ 
This can be rewritten as
$$
\dua{\ham_K(t_{**}^0+\lambda t_{**}^\angle) \Phi_0}{S^1\Phi_0 } = 
-\dua{(\ham_K(t_{**}^1)-\ham_K(t_{**}^0+\lambda t_{**}^\angle) ) \Phi_0}{S^\angle  \Phi_0}\quad\text{for all}\quad s^1\in\VV^1,
$$
or,
$$
\dua{\ham_K(t_{**}^0+\lambda t_{**}^\angle) \Phi_0 }{S^1 \Phi_0}=-\dua{\mathcal{G}(t_{**}^1,\lambda) \Phi_0 }{S^\angle \Phi_0}\quad\text{for all}\quad s^1\in\VV^1.
$$
Then we can write
\begin{align*}
\dua{(\ham_K - \ene_\KP(t_{**}^1,\lambda))e^{T^0_{**}+\lambda T^\angle_{**}}\Phi_0}{ S^1\Phi_0}
&=\dua{e^{-(T^0_{**}+\lambda T^\angle_{**})}(\ham_K - \ene_\KP(t_{**}^1,\lambda))e^{T^0_{**}+\lambda T^\angle_{**}}\Phi_0}{ (e^{T^0_{**}+\lambda T^\angle_{**}})^\dag S^1\Phi_0} \\
&=\dua{e^{-(T^0_{**}+\lambda T^\angle_{**})}\ham_K e^{T^0_{**}+\lambda T^\angle_{**}}\Phi_0}{ \Pi_{\VC^1}(e^{T^0_{**}+\lambda T^\angle_{**}})^\dag S^1\Phi_0}\\
&=-\dua{\mathcal{G}(t_{**}^1,\lambda)\Phi_0}{\Pi_{\VC^\angle}(e^{T^0_{**}+\lambda T^\angle_{**}})^\dag S^1\Phi_0}\\
&=-\dua{\mathcal{G}(t_{**}^1,\lambda)\Phi_0}{\Pi_{\VC^\angle}(e^{T^0_{**}+\lambda T^\angle_{**}})^\dag S^\angle\Phi_0}
\end{align*}
for all $s^1\in\VV^1$. In the last step we used that $\Pi_{\VC^\angle}(e^{T^0_{**}+\lambda T^\angle_{**}})^\dag$ maps $\VC^0$ to zero.
The ``moreover'' part is a simple expansion using \cref{bchdoub},
\begin{align*}
\mathcal{G}(t_{**}^1,\lambda)&=e^{-(T^0_{**}+\lambda T^\angle_{**})} (e^{-(1-\lambda) T^\angle_{**}}\ham_K e^{(1-\lambda) T^\angle_{**}} - \ham_K) e^{T^0_{**}+\lambda T^\angle_{**}}\\
&=\sum_{k=1}^{2N} \frac{(1-\lambda)^k}{k!} e^{-(T^0_{**}+\lambda T^\angle_{**})}[\ham_K, T^\angle_{**}]_{(k)} e^{T^0_{**}+\lambda T^\angle_{**}}.
\end{align*}
\end{proof}

\begin{proof}[Proof of \cref{kpthm}]
We have using \cref{kpunlink}, 
\begin{align*}
0&=\dua{(\ham_K - \ene_\KP(t_{**}^1(\lambda),\lambda))e^{T^0_{**}(\lambda)+\lambda T_{**}^\angle(\lambda)} \Phi_0}{(c_0I +C^0)\Phi_0}\\
&=(\ene- \ene_\KP(t_{**}^1(\lambda),\lambda)) \dua{e^{T^0_{**}(\lambda)+\lambda T_{**}^\angle(\lambda)} \Phi_0}{\Psi}.
\end{align*}
Similarly, for part (II)
\begin{align*}
\dua{(\ham_K(t^1_*) - \ham_K (t^0_{**}+\lambda t^\angle_{**}))\Phi_0}{ \Pi_{\VC^\angle}(e^{T^0_{**}+\lambda T^\angle_{**}})^\dag C^\angle\Phi_0}
&=\dua{(\ham_K - \ene_\KP(t_{**}^1(\lambda),\lambda))e^{T^0_{**}+\lambda T_{**}^\angle} \Phi_0}{(c_0I +C^0 + C^\angle)\Phi_0}\\
&=(\ene- \ene_\KP(t_{**}^1(\lambda),\lambda)) \dua{e^{T^0_{**}+\lambda T_{**}^\angle} \Phi_0}{\Psi}.
\end{align*}
\Cref{kpblowup} also follows from the previous equality.
\end{proof}


\noindent\emph{Acknowledgements.}
The authors would like to thank Fabian M. Faulstich and Simen Kvaal for helpful discussions and comments on the manuscript. 
The useful suggestions of the anonymous reviewer are gratefully acknowledged. This work has received funding from the Norwegian Research Council through Grant Nos. 287906 (CCerror) and 262695 (CoE Hylleraas Center for Quantum Molecular Sciences).

\bibliographystyle{abbrv}
\bibliography{cc}

\end{document}